\newtheorem*{mainthm}{Main Theorem}
\newtheorem{dfn}{Definition}[section]
\newtheorem{thm}[dfn]{Theorem}
\newtheorem{lem}[dfn]{Lemma}
\newtheorem{cor}[dfn]{Corollary}
\theoremstyle{definition}
\newtheorem{rem}[dfn]{Remark}
\def\R{{\mathbb R}}
\def\Z{{\mathbb Z}}
\def\C{{\mathbb C}}
\def\Sph{{\mathbb S}}
\def\CP{{\mathbb C}{\mathrm P}}
\def\RP{{\mathbb R}{\mathrm P}}
\def\phi{\varphi}
\def\epsilon{\varepsilon}
\def\mod{\mathrm{mod}}
\newcommand{\sn}{\operatorname{sn}}
\newcommand{\cn}{\operatorname{cn}}
\newcommand{\dn}{\operatorname{dn}}
\newcommand{\id}{\operatorname{id}}
\renewcommand{\Re}{\operatorname{Re}}
\renewcommand{\Im}{\operatorname{Im}}
\newcommand{\const}{\mathrm{const}}
\renewcommand{\emptyset}{\varnothing}
\title[Flexible Kokotsakis polyhedra with quadrangular base]{Classification of
flexible Kokotsakis polyhedra with quadrangular base}
\author{Ivan Izmestiev}
\address{Institut f\"ur Mathematik\\
Freie Universit\"at Berlin\\
Arnimallee 2\\
D-14195 Berlin, Germany}
\email{izmestiev@math.fu-berlin.de}
\thanks{Supported by the European Research Council under the European Union's Seventh Framework Programme (FP7/2007-2013)/\allowbreak ERC Grant agreement no.~247029-SDModels}
\date{\today}
\begin{document}


\begin{abstract}
A Kokotsakis polyhedron with quadrangular base is a neighborhood of a quadrilateral in a quad surface.
Generically, a Kokotsakis polyhedron is rigid. Up to now, several flexible classes were known, but a complete classification was missing. In the present paper, we provide such a classification. The analysis is based on the fact that the dihedral angles of a Kokotsakis polyhedron are related by an Euler-Chasles correspondence. It results in a diagram of elliptic curves covering complex projective planes. A polyhedron is flexible if and only if this diagram commutes.
\end{abstract}

\maketitle

\section{Introduction}
\subsection{Kokotsakis polyhedra}
A \emph{Kokotsakis polyhedron} is a polyhedral surface in $\R^3$ which consists
of one $n$-gon (the \emph{base}), $n$ quadrilaterals attached to every side of
the $n$-gon, and $n$ triangles placed between each two consecutive
quadrilaterals. See Figure \ref{fig:KokoPol}, left, for the case $n=5$.

\begin{figure}[ht]
\centering
\begin{picture}(0,0)%
\includegraphics{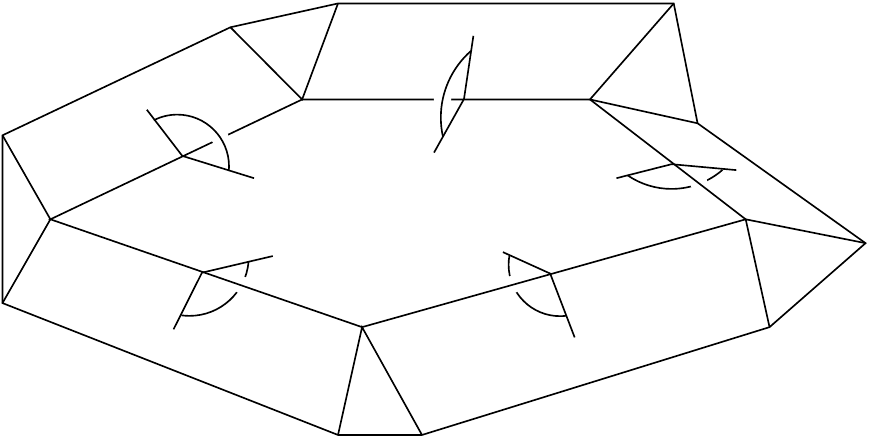}%
\end{picture}%
\setlength{\unitlength}{4144sp}%
\begingroup\makeatletter\ifx\SetFigFont\undefined%
\gdef\SetFigFont#1#2#3#4#5{%
  \reset@font\fontsize{#1}{#2pt}%
  \fontfamily{#3}\fontseries{#4}\fontshape{#5}%
  \selectfont}%
\fi\endgroup%
\begin{picture}(3970,1997)(4444,-2046)
\put(6723,-1582){\makebox(0,0)[lb]{\smash{{\SetFigFont{9}{10.8}{\rmdefault}{\mddefault}{\updefault}{\color[rgb]{0,0,0}$\phi_1$}%
}}}}
\put(5281,-525){\makebox(0,0)[lb]{\smash{{\SetFigFont{9}{10.8}{\rmdefault}{\mddefault}{\updefault}{\color[rgb]{0,0,0}$\phi_4$}%
}}}}
\put(6288,-409){\makebox(0,0)[lb]{\smash{{\SetFigFont{9}{10.8}{\rmdefault}{\mddefault}{\updefault}{\color[rgb]{0,0,0}$\phi_3$}%
}}}}
\put(5393,-1586){\makebox(0,0)[lb]{\smash{{\SetFigFont{9}{10.8}{\rmdefault}{\mddefault}{\updefault}{\color[rgb]{0,0,0}$\phi_5$}%
}}}}
\put(7348,-1015){\makebox(0,0)[lb]{\smash{{\SetFigFont{9}{10.8}{\rmdefault}{\mddefault}{\updefault}{\color[rgb]{0,0,0}$\phi_2$}%
}}}}
\end{picture}%
\caption{A Kokotsakis polyhedron with a pentagonal base.}
\label{fig:KokoPol}
\end{figure}

If all faces are viewed as rigid plates and edges represent hinges, then the polyhedron is in general rigid. Indeed, every interior vertex has only one degree of freedom, and as a result any two dihedral angles adjacent to a vertex are related by an equation:
\[
F_{12}(\phi_1, \phi_2) = 0, \quad F_{23}(\phi_2, \phi_3) = 0, \ldots\quad F_{n1}(\phi_n, \phi_1) = 0
\]
In a generic case, the solution set of this system is discrete, that is the polyhedron cannot be deformed. A natural question is to find all special shapes that allow isometric deformations.

Antonios Kokotsakis, a student of Carath\'eodory, studied these polyhedra in his PhD thesis in 1930's. As a result, he found a necessary and sufficient condition for the infinitesimal flexibility and described several classes of flexible polyhedra, \cite{Kok33}. At the same time, Sauer and Graf \cite{SG31} studied the case $n=4$ and also found several flexible special cases.

\subsection{Riemann surfaces and branched covers}
We give a complete classification of flexible Kokotsakis polyhedra with quadrangular base: $n=4$. Our approach is based on a diagram of branched covers between Riemann surfaces constructed as follows.

A substitution $z_i = \tan \frac{\phi_i}2$ transforms $F_{i,i+1}(\phi_i, \phi_{i+1}) = 0$ into a polynomial equation $P_{i,i+1}(z_i, z_{i+1}) = 0$ with $z_i \in \RP^1$. By allowing $z_i$ to take complex values we arrive at a Riemann surface
\begin{equation}
\label{eqn:ConfSpace}
C_{i,i+1} = \{(z_i, z_{i+1}) \mid P_{i,i+1}(z_i, z_{i+1}) = 0\}
\end{equation}
with two coordinate projections
\[
z_i \in \CP^1 \leftarrow C_{i,i+1} \rightarrow \CP^1 \ni z_{i+1}
\]
which form the following diagram
$$
\xymatrix@=1pc{
C_{34} \ar[r] \ar[d] & \CP^1 & C_{41} \ar[l] \ar[d]\\
\CP^1 && \CP^1\\
C_{23} \ar[r] \ar[u] & \CP^1 & C_{12} \ar[l] \ar[u]
}
$$
Now, if a polyhedron is flexible, then there are paths in $C_{i,i+1}$ whose projections to the common $\CP^1$'s coincide. These paths can be lifted to a bigger diagram involving fiber products of $C_{i-1,i}$ and $C_{i,i+1}$, see Section \ref{sec:DiagCovers} for more details. To find the conditions under which this big commutative diagram exists, we study the branch points of the involved branched covers. This approach is close in spirit to Ritt's solution of the problem of composite and commutative polynomials, \cite{Ritt22, Pak09}.

The equation of the configuration space \eqref{eqn:ConfSpace} has the form
\begin{equation}
\label{eqn:EulChasles}
a_{22}z^2w^2 + a_{20}z^2 + a_{02}w^2 + 2a_{11}zw + a_{00} = 0
\end{equation}
This is a special case of the so called Euler-Chasles correspondence, which describes in the generic situation an elliptic curve. The flexibility of a Kokotsakis polyhedron is equivalent to the composition of several correspondences of the form \eqref{eqn:EulChasles} being the identity (to be more exact, having an identity component). Compositions of Euler-Chasles correspondences were studied by Krichever \cite{Kri81} in the context of the quantum Yang-Baxter equation.

In the rigidity theory, elliptic curves came up in the works of Connelly and V.~Alexandrov \cite{Con74,AC11} who studied flexible bipyramids. Actually, a correspondence \eqref{eqn:EulChasles} appears each time when one has to do with deformations of a quadrilateral preserving its edge lengths. In our case this is a spherical quadrilateral associated with a vertex of degree four. The fact that the configuration space of a quadrilateral with fixed edge lengths is in general an elliptic curve was discovered by Darboux \cite{Dar79}. Darboux derived from it a porism on quadrilateral folding
, which fell into oblivion and was rediscovered several times since then.

\subsection{Related work}
Removing a neighborhood of a face of an octahedron yields a Kokotsakis polyhedron with a triangular base; conversely, by extending the faces of a $3$-Kokotsakis polyhedron one obtains an octahedron (or a projectively equivalent polyhedron). Both operations preserve the flexibility. Therefore flexible Kokotsakis polyhedra represent one possible generalization of Bricard's octahedra. This was noted already by Kokotsakis, and recently used by Nawratil \cite{Naw10}. As a generalization of Bricard's flexible polyhedra, Gaifullin \cite{Gai13} studies flexible cross-polytopes in space-forms of arbitrary dimension, using Euler-Chasles correspondences \eqref{eqn:EulChasles}.

A Kokotsakis polyhedron with a quadrangular base can be viewed as a part of a quadrangular mesh, see Figure \ref{fig:QuadMesh}. It is easy to see that a simply connected piece of a quadrangular mesh is flexible if and only if each of its Kokotsakis subpolyhedra is. Quad surfaces are a natural analog of parametrized smooth surfaces, and have been studied in this context since about a century, see the book \cite{Sau70}.

\begin{figure}[ht]
\centering
\includegraphics{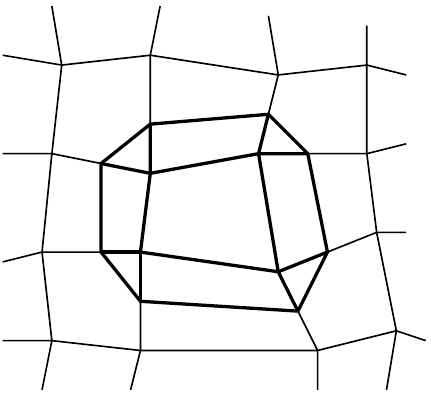}
\caption{Kokotsakis polyhedron with a quadrangular base as a part of a quad surface.}
\label{fig:QuadMesh}
\end{figure}

There was a recent surge of interest to quad surfaces. First, they give rise to various discrete integrable systems, \cite{SBH08, BS08}. Second, they have obvious applications in the architectural design \cite{PW08}: curved facades and roofs made of quadrangular glass panels can be found in abundance in the modern cities. Third, flexible Kokotsakis polyhedra (a special case called Miura-ori) found application in industry as solar panels for spacecraft \cite{Ni12}.

In Section \ref{sec:AlgReform} we survey recent works of Karpenkov \cite{Kar10}, Stachel \cite{Sta10} and Nawratil \cite{Naw11,Naw12} that inspired the present paper.

%

\subsection{Organization of the paper}
Section \ref{sec:Prelim} introduces the notation, provides more details on the previous work on the subject as well as on the approach used in the present work. It also explains the terminology used in Section \ref{sec:List} to list all flexible Kokotsakis polyhedra and describe their properties. In Section \ref{sec:ConfSpace} the configuration spaces of spherical four-bar linkages are classified, and their equations and parametrizations are written down. Section \ref{sec:Couplings} does a similar job for couplings of two four-bar linkages, with an emphasis on two classes: involutive and reducible couplings. Finally, in Section \ref{sec:Proof} the diagram of branched covers associated with a flexible Kokotsakis polyhedron is subject to a thorough analysis based on the results of two previous Sections, which results in the proof of Main Theorem.

\section{Notation and preliminaries}
\label{sec:Prelim}
Vertices of a Kokotsakis polyhedron and the values of its planar angles at the interior vertices are denoted as on Figure \ref{fig:NotPlanAngles}. Clearly, it is only a neighborhood of the base face $A_1A_2A_3A_4$ that matters: replacing, say, the vertex $B_1$ by any other point on the half-line $A_1B_1$ does not affect the flexibility or rigidity of the polyhedron. We will assume that all planar angles lie between $0$ and $\pi$: $0 < \alpha_i, \beta_i, \gamma_i, \delta_i < \pi$.

\begin{figure}[ht]
\centering
\begin{picture}(0,0)%
\includegraphics{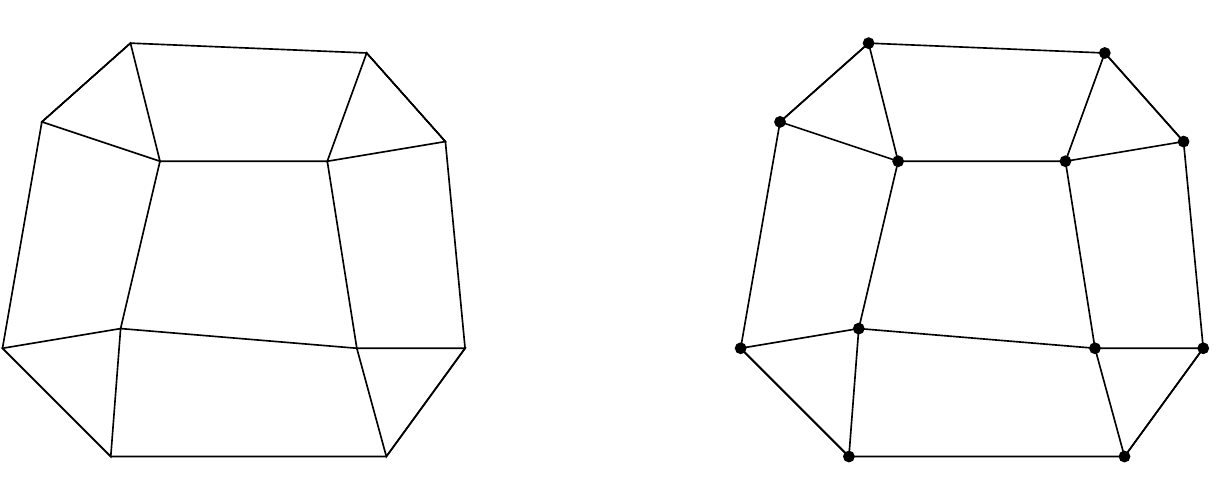}%
\end{picture}%
\setlength{\unitlength}{4144sp}%
\begingroup\makeatletter\ifx\SetFigFont\undefined%
\gdef\SetFigFont#1#2#3#4#5{%
  \reset@font\fontsize{#1}{#2pt}%
  \fontfamily{#3}\fontseries{#4}\fontshape{#5}%
  \selectfont}%
\fi\endgroup%
\begin{picture}(5562,2281)(-506,-1025)
\put(4276,-241){\makebox(0,0)[lb]{\smash{{\SetFigFont{10}{12.0}{\rmdefault}{\mddefault}{\updefault}{\color[rgb]{0,0,0}$A_1$}%
}}}}
\put(3511,-151){\makebox(0,0)[lb]{\smash{{\SetFigFont{10}{12.0}{\rmdefault}{\mddefault}{\updefault}{\color[rgb]{0,0,0}$A_2$}%
}}}}
\put(3646,344){\makebox(0,0)[lb]{\smash{{\SetFigFont{10}{12.0}{\rmdefault}{\mddefault}{\updefault}{\color[rgb]{0,0,0}$A_3$}%
}}}}
\put(4141,389){\makebox(0,0)[lb]{\smash{{\SetFigFont{10}{12.0}{\rmdefault}{\mddefault}{\updefault}{\color[rgb]{0,0,0}$A_4$}%
}}}}
\put(4681,-961){\makebox(0,0)[lb]{\smash{{\SetFigFont{10}{12.0}{\rmdefault}{\mddefault}{\updefault}{\color[rgb]{0,0,0}$B_1$}%
}}}}
\put(5041,-286){\makebox(0,0)[lb]{\smash{{\SetFigFont{10}{12.0}{\rmdefault}{\mddefault}{\updefault}{\color[rgb]{0,0,0}$C_1$}%
}}}}
\put(3016,-916){\makebox(0,0)[lb]{\smash{{\SetFigFont{10}{12.0}{\rmdefault}{\mddefault}{\updefault}{\color[rgb]{0,0,0}$B_2$}%
}}}}
\put(2566,-241){\makebox(0,0)[lb]{\smash{{\SetFigFont{10}{12.0}{\rmdefault}{\mddefault}{\updefault}{\color[rgb]{0,0,0}$C_2$}%
}}}}
\put(2791,704){\makebox(0,0)[lb]{\smash{{\SetFigFont{10}{12.0}{\rmdefault}{\mddefault}{\updefault}{\color[rgb]{0,0,0}$C_3$}%
}}}}
\put(3106,1109){\makebox(0,0)[lb]{\smash{{\SetFigFont{10}{12.0}{\rmdefault}{\mddefault}{\updefault}{\color[rgb]{0,0,0}$B_3$}%
}}}}
\put(4636,1064){\makebox(0,0)[lb]{\smash{{\SetFigFont{10}{12.0}{\rmdefault}{\mddefault}{\updefault}{\color[rgb]{0,0,0}$B_4$}%
}}}}
\put(4996,659){\makebox(0,0)[lb]{\smash{{\SetFigFont{10}{12.0}{\rmdefault}{\mddefault}{\updefault}{\color[rgb]{0,0,0}$C_4$}%
}}}}
\put(1171,-286){\makebox(0,0)[lb]{\smash{{\SetFigFont{10}{12.0}{\rmdefault}{\mddefault}{\updefault}{\color[rgb]{0,0,0}$\gamma_1$}%
}}}}
\put(1171,-466){\makebox(0,0)[lb]{\smash{{\SetFigFont{10}{12.0}{\rmdefault}{\mddefault}{\updefault}{\color[rgb]{0,0,0}$\beta_1$}%
}}}}
\put(901,-286){\makebox(0,0)[lb]{\smash{{\SetFigFont{10}{12.0}{\rmdefault}{\mddefault}{\updefault}{\color[rgb]{0,0,0}$\delta_1$}%
}}}}
\put(901,-466){\makebox(0,0)[lb]{\smash{{\SetFigFont{10}{12.0}{\rmdefault}{\mddefault}{\updefault}{\color[rgb]{0,0,0}$\alpha_1$}%
}}}}
\put( 46,-421){\makebox(0,0)[lb]{\smash{{\SetFigFont{10}{12.0}{\rmdefault}{\mddefault}{\updefault}{\color[rgb]{0,0,0}$\alpha_2$}%
}}}}
\put( 91,-241){\makebox(0,0)[lb]{\smash{{\SetFigFont{10}{12.0}{\rmdefault}{\mddefault}{\updefault}{\color[rgb]{0,0,0}$\delta_2$}%
}}}}
\put(-179,-421){\makebox(0,0)[lb]{\smash{{\SetFigFont{10}{12.0}{\rmdefault}{\mddefault}{\updefault}{\color[rgb]{0,0,0}$\beta_2$}%
}}}}
\put(226,389){\makebox(0,0)[lb]{\smash{{\SetFigFont{10}{12.0}{\rmdefault}{\mddefault}{\updefault}{\color[rgb]{0,0,0}$\delta_3$}%
}}}}
\put(  1,434){\makebox(0,0)[lb]{\smash{{\SetFigFont{10}{12.0}{\rmdefault}{\mddefault}{\updefault}{\color[rgb]{0,0,0}$\gamma_3$}%
}}}}
\put(  1,614){\makebox(0,0)[lb]{\smash{{\SetFigFont{10}{12.0}{\rmdefault}{\mddefault}{\updefault}{\color[rgb]{0,0,0}$\beta_3$}%
}}}}
\put(1036,614){\makebox(0,0)[lb]{\smash{{\SetFigFont{10}{12.0}{\rmdefault}{\mddefault}{\updefault}{\color[rgb]{0,0,0}$\beta_4$}%
}}}}
\put(1036,389){\makebox(0,0)[lb]{\smash{{\SetFigFont{10}{12.0}{\rmdefault}{\mddefault}{\updefault}{\color[rgb]{0,0,0}$\gamma_4$}%
}}}}
\put(811,389){\makebox(0,0)[lb]{\smash{{\SetFigFont{10}{12.0}{\rmdefault}{\mddefault}{\updefault}{\color[rgb]{0,0,0}$\delta_4$}%
}}}}
\put(766,569){\makebox(0,0)[lb]{\smash{{\SetFigFont{10}{12.0}{\rmdefault}{\mddefault}{\updefault}{\color[rgb]{0,0,0}$\alpha_4$}%
}}}}
\put(-134,-196){\makebox(0,0)[lb]{\smash{{\SetFigFont{10}{12.0}{\rmdefault}{\mddefault}{\updefault}{\color[rgb]{0,0,0}$\gamma_2$}%
}}}}
\put(226,569){\makebox(0,0)[lb]{\smash{{\SetFigFont{10}{12.0}{\rmdefault}{\mddefault}{\updefault}{\color[rgb]{0,0,0}$\alpha_3$}%
}}}}
\end{picture}%
\caption{Notation in a Kokotsakis polyhedron.}
\label{fig:NotPlanAngles}
\end{figure}

\subsection{Spherical linkage associated with a Kokotsakis polyhedron}
\label{sec:SpherLink}

For each of the four interior vertices $A_1$, $A_2$, $A_3$, $A_4$ consider its spherical image, that is the intersection of the cone of adjacent faces with
a unit sphere centered at the vertex. This yields four spherical quadrilaterals $Q_i$ with side
lengths $\alpha_i, \beta_i, \gamma_i, \delta_i$ in this cyclic order. Spherical images of two adjacent vertices are \emph{coupled} by means of a common dihedral angle. Equivalently, for every edge $A_iA_{i+1}$ we have a scissors-like coupling of two spherical quadrilaterals, see Figure \ref{fig:Scissors}.

\begin{figure}[ht]
\centering
\begin{picture}(0,0)%
\includegraphics{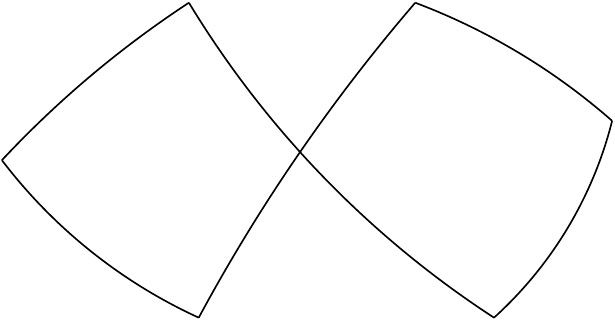}%
\end{picture}%
\setlength{\unitlength}{4144sp}%
\begingroup\makeatletter\ifx\SetFigFont\undefined%
\gdef\SetFigFont#1#2#3#4#5{%
  \reset@font\fontsize{#1}{#2pt}%
  \fontfamily{#3}\fontseries{#4}\fontshape{#5}%
  \selectfont}%
\fi\endgroup%
\begin{picture}(2806,1456)(-727,-1239)
\put(1879,-897){\makebox(0,0)[lb]{\smash{{\SetFigFont{10}{12.0}{\rmdefault}{\mddefault}{\updefault}{\color[rgb]{0,0,0}$\beta_1$}%
}}}}
\put(1716,  5){\makebox(0,0)[lb]{\smash{{\SetFigFont{10}{12.0}{\rmdefault}{\mddefault}{\updefault}{\color[rgb]{0,0,0}$\gamma_1$}%
}}}}
\put(997,-1125){\makebox(0,0)[lb]{\smash{{\SetFigFont{10}{12.0}{\rmdefault}{\mddefault}{\updefault}{\color[rgb]{0,0,0}$\alpha_1$}%
}}}}
\put(361,-1051){\makebox(0,0)[lb]{\smash{{\SetFigFont{10}{12.0}{\rmdefault}{\mddefault}{\updefault}{\color[rgb]{0,0,0}$\alpha_2$}%
}}}}
\put(-629,-151){\makebox(0,0)[lb]{\smash{{\SetFigFont{10}{12.0}{\rmdefault}{\mddefault}{\updefault}{\color[rgb]{0,0,0}$\gamma_2$}%
}}}}
\put(271,-16){\makebox(0,0)[lb]{\smash{{\SetFigFont{10}{12.0}{\rmdefault}{\mddefault}{\updefault}{\color[rgb]{0,0,0}$\delta_2$}%
}}}}
\put(766,-61){\makebox(0,0)[lb]{\smash{{\SetFigFont{10}{12.0}{\rmdefault}{\mddefault}{\updefault}{\color[rgb]{0,0,0}$\delta_1$}%
}}}}
\put(-584,-1006){\makebox(0,0)[lb]{\smash{{\SetFigFont{10}{12.0}{\rmdefault}{\mddefault}{\updefault}{\color[rgb]{0,0,0}$\beta_2$}%
}}}}
\put(1261,-511){\makebox(0,0)[lb]{\smash{{\SetFigFont{10}{12.0}{\rmdefault}{\mddefault}{\updefault}{\color[rgb]{0,0,0}$Q_1$}%
}}}}
\put(-89,-556){\makebox(0,0)[lb]{\smash{{\SetFigFont{10}{12.0}{\rmdefault}{\mddefault}{\updefault}{\color[rgb]{0,0,0}$Q_2$}%
}}}}
\end{picture}%
\caption{Two coupled spherical quadrilaterals associated with the edge $A_1A_2$.}
\label{fig:Scissors}
\end{figure}

On Figure \ref{fig:Couple4} two different ways to couple all four quadrilaterals $Q_i$ are shown. The left one is a closed chain of quadrilaterals on a sphere (due to $\delta_1 + \delta_2 + \delta_3 + \delta_4 = 2\pi$), so that it incorporates all four couplings. The right one fits better with Figure \ref{fig:NotPlanAngles}, but is in general not closed, so that the coupling between $Q_1$ and $Q_4$ must be encoded by requiring the equality of marked angles.

\begin{figure}[ht]
\centering
\begin{picture}(0,0)%
\includegraphics{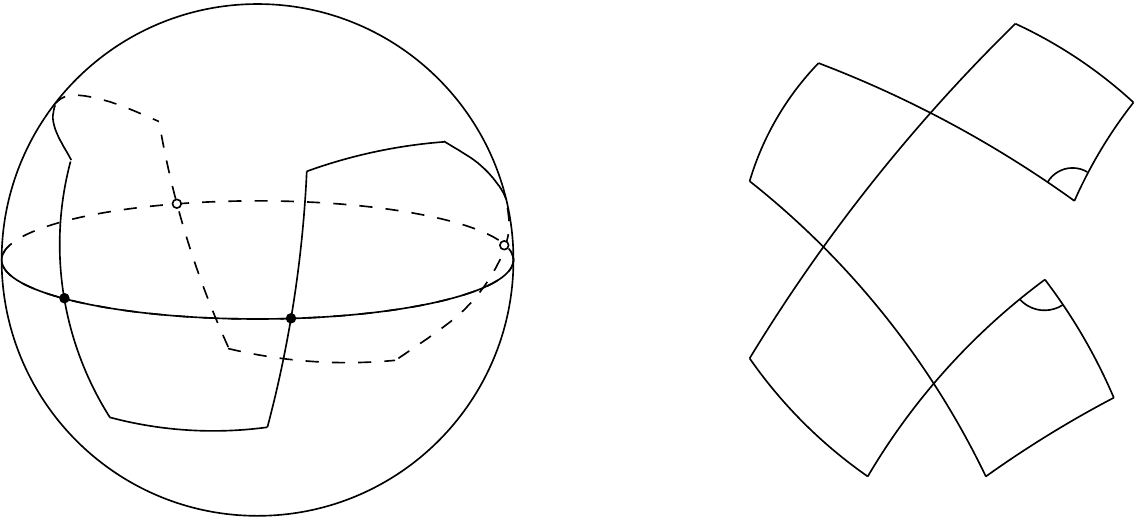}%
\end{picture}%
\setlength{\unitlength}{4144sp}%
\begingroup\makeatletter\ifx\SetFigFont\undefined%
\gdef\SetFigFont#1#2#3#4#5{%
  \reset@font\fontsize{#1}{#2pt}%
  \fontfamily{#3}\fontseries{#4}\fontshape{#5}%
  \selectfont}%
\fi\endgroup%
\begin{picture}(5191,2354)(-3427,-1328)
\put(916,-466){\makebox(0,0)[lb]{\smash{{\SetFigFont{10}{12.0}{\rmdefault}{\mddefault}{\updefault}{\color[rgb]{0,0,0}$\delta_1$}%
}}}}
\put(581,114){\makebox(0,0)[lb]{\smash{{\SetFigFont{10}{12.0}{\rmdefault}{\mddefault}{\updefault}{\color[rgb]{0,0,0}$\delta_3$}%
}}}}
\put(1063,209){\makebox(0,0)[lb]{\smash{{\SetFigFont{10}{12.0}{\rmdefault}{\mddefault}{\updefault}{\color[rgb]{0,0,0}$\delta_4$}%
}}}}
\put(586,-331){\makebox(0,0)[lb]{\smash{{\SetFigFont{10}{12.0}{\rmdefault}{\mddefault}{\updefault}{\color[rgb]{0,0,0}$\delta_2$}%
}}}}
\put(-1782,-342){\makebox(0,0)[lb]{\smash{{\SetFigFont{10}{12.0}{\rmdefault}{\mddefault}{\updefault}{\color[rgb]{0,0,0}$\delta_1$}%
}}}}
\put(-2826,-351){\makebox(0,0)[lb]{\smash{{\SetFigFont{10}{12.0}{\rmdefault}{\mddefault}{\updefault}{\color[rgb]{0,0,0}$\delta_2$}%
}}}}
\put(-2979,102){\makebox(0,0)[lb]{\smash{{\SetFigFont{10}{12.0}{\rmdefault}{\mddefault}{\updefault}{\color[rgb]{0,0,0}$\delta_3$}%
}}}}
\put(-1682,106){\makebox(0,0)[lb]{\smash{{\SetFigFont{10}{12.0}{\rmdefault}{\mddefault}{\updefault}{\color[rgb]{0,0,0}$\delta_4$}%
}}}}
\end{picture}%
\caption{Spherical linkages with scissors-like joints associated with a Kokotsakis polyhedron.}
\label{fig:Couple4}
\end{figure}

\begin{lem}
Every Kokotsakis polyhedron gives rise to spherical linkages as shown on Figure \ref{fig:Couple4}. Vice versa, every spherical linkage of this form corresponds to some Kokotsakis polyhedron, if we require $\delta_1 + \delta_2 + \delta_3 + \delta_4 = 2\pi$ for the linkage on the right.

A Kokotsakis polyhedron is flexible if and only if either of the corresponding spherical linkages on Figure \ref{fig:Couple4} is flexible. For the linkage on the right, the two marked angles are required to stay equal during the deformation.
\end{lem}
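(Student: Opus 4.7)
The plan is to unpack the spherical image construction at each interior vertex. Intersecting the four faces meeting at $A_i$ with a small sphere centered at $A_i$ yields a spherical quadrilateral $Q_i$ whose side lengths are the planar angles $\alpha_i, \beta_i, \gamma_i, \delta_i$ (in the cyclic order of the faces) and whose interior angles are the dihedral angles at the four edges incident to $A_i$. When $A_i$ and $A_{i+1}$ are joined by a base edge, the face containing that edge contributes an interior angle to both $Q_i$ and $Q_{i+1}$ equal to the dihedral angle along $A_iA_{i+1}$. Identifying these two angles yields exactly the scissors-like coupling of Figure \ref{fig:Scissors}.

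For the closed linkage on the left, I would use that the $\delta_i$ are the planar angles of the base quadrilateral $A_1A_2A_3A_4$ and hence sum to $2\pi$. This is exactly what allows the four $Q_i$ to be glued along their $\delta_i$-sides at a single point on the sphere; the scissors couplings along all four base edges then appear automatically along the four shared radii. For the linkage on the right, three of the four couplings are built into the chain $Q_1$--$Q_2$--$Q_3$--$Q_4$, while the fourth coupling along $A_4A_1$ is encoded by the requirement that the two marked angles coincide.

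For the converse I would reconstruct a Kokotsakis polyhedron from the linkage. Using $\delta_1+\delta_2+\delta_3+\delta_4 = 2\pi$, recover the planar base $A_1A_2A_3A_4$; at each $A_i$ erect four half-lines in the directions prescribed by $Q_i$, choose arbitrary points $B_j, C_j$ on them (these choices do not influence flexibility), and connect them by the prescribed triangles and quadrilaterals. The dihedral angles read off from $Q_i$ and from the scissors couplings then determine the polyhedron up to congruence. Finally, flexibility is equivalent on both sides almost tautologically: a face-preserving motion of the polyhedron is the same data as a continuous variation of the dihedral angles that keeps each $Q_i$ an isometric spherical quadrilateral (the vertex relation) and keeps each shared angle shared (the scissors coupling).

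The main point to verify with care is that in the left linkage the equation $\delta_1+\delta_2+\delta_3+\delta_4=2\pi$ must hold as a genuine equality of unsigned angles in $(0,2\pi)$, and not merely modulo $2\pi$, so that the four $Q_i$ truly fit around a single point of the sphere without wrap-around; this is inherited from the planarity of the base face and from the assumption $0<\delta_i<\pi$. All other verifications are local matchings of planar and dihedral angles, which are routine once the spherical image correspondence is set up.
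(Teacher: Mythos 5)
Your proof is correct and takes essentially the same route as the paper: the forward direction via the spherical images at the interior vertices with the dihedral angle along each base edge appearing as the common vertex angle of the two neighbouring $Q_i$, the converse by choosing a planar base quadrilateral with angles $\delta_1,\dots,\delta_4$ and erecting tetrahedral angles whose spherical images are the $Q_i$ (the coupling equalities making them fit), and the flexibility equivalence from the constancy of planar angles under isometric deformation. The only cosmetic slip is in describing the left linkage: the $\delta_i$-sides are consecutive arcs filling up a great circle (their lengths sum to $2\pi$), rather than quadrilaterals glued at a single point, but this does not affect the argument.
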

\begin{proof}
How to associate with a Kokotsakis polyhedron a spherical linkage was explained above.
To construct a Kokotsakis polyhedron for a given chain of quadrilaterals, start by choosing as the base any euclidean quadrilateral with the angles $\delta_1, \delta_2, \delta_3, \delta_4$. At the vertices of the base, construct tetrahedral angles that have spherical quadrilaterals $Q_i$ as their spherical images. The fact that the corresponding angles of $Q_i$ are equal in pairs means that these tetrahedral angles fit to form a Kokotsakis polyhedron.

During an isometric deformation of a Kokotsakis polyhedron the planar angles of its faces remain
constant. Thus isometric deformations of the polyhedron correspond to motions of the linkages on
Figure \ref{fig:Couple4} (with the equal angles condition for the right one).
\end{proof}

Thus, in order to classify all flexible Kokotsakis polyhedra, it suffices to classify all flexible spherical linkages as on Figure \ref{fig:Couple4} with $\delta_1 + \delta_2 + \delta_3 + \delta_4 = 2\pi$.


%

\subsection{Algebraic reformulation of the problem}
\label{sec:AlgReform}
For fixed lengths of the bars, the shape of the spherical linkage introduced in Section \ref{sec:SpherLink} is uniquely determined by the four angles $\phi$, $\psi_1$, $\psi_2$, and $\theta$ on Figure \ref{fig:SpherLinkAngles}. If we introduce the variables
\begin{equation}
\label{eqn:Variables}
z = \tan\frac{\phi}2, \quad w_1 = \tan\frac{\psi_1}2, \quad w_2 = \tan\frac{\psi_2}2, \quad u = \tan\frac{\theta}2,
\end{equation}
then the relation between each pair of adjacent angles can be expressed as a polynomial equation in the corresponding variables:
\begin{equation}
\label{eqn:PolSystem}
\begin{aligned}
P_1(z, w_1) &= 0 \quad & P_2(z, w_2) &= 0 \\
P_3(u, w_1) &= 0 \quad & P_4(u, w_2) &= 0
\end{aligned}
\end{equation}

\begin{figure}[ht]
\centering
\begin{picture}(0,0)%
\includegraphics{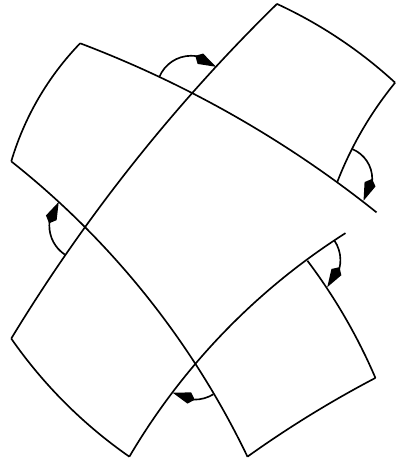}%
\end{picture}%
\setlength{\unitlength}{4144sp}%
\begingroup\makeatletter\ifx\SetFigFont\undefined%
\gdef\SetFigFont#1#2#3#4#5{%
  \reset@font\fontsize{#1}{#2pt}%
  \fontfamily{#3}\fontseries{#4}\fontshape{#5}%
  \selectfont}%
\fi\endgroup%
\begin{picture}(1814,2086)(2740,-1149)
\put(4304,-376){\makebox(0,0)[lb]{\smash{{\SetFigFont{10}{12.0}{\rmdefault}{\mddefault}{\updefault}{\color[rgb]{0,0,0}$\psi_1$}%
}}}}
\put(2755,-145){\makebox(0,0)[lb]{\smash{{\SetFigFont{10}{12.0}{\rmdefault}{\mddefault}{\updefault}{\color[rgb]{0,0,0}$\psi_2$}%
}}}}
\put(3514,-1011){\makebox(0,0)[lb]{\smash{{\SetFigFont{10}{12.0}{\rmdefault}{\mddefault}{\updefault}{\color[rgb]{0,0,0}$\phi$}%
}}}}
\put(3466,756){\makebox(0,0)[lb]{\smash{{\SetFigFont{10}{12.0}{\rmdefault}{\mddefault}{\updefault}{\color[rgb]{0,0,0}$\theta$}%
}}}}
\put(4464,134){\makebox(0,0)[lb]{\smash{{\SetFigFont{10}{12.0}{\rmdefault}{\mddefault}{\updefault}{\color[rgb]{0,0,0}$\psi_1$}%
}}}}
\end{picture}%
\caption{Angles determining the shape of a spherical linkage.}
\label{fig:SpherLinkAngles}
\end{figure}

Thus we have the following lemma.
\begin{lem}
A Kokotsakis polyhedron is flexible if and only if the system of polynomial equations \eqref{eqn:PolSystem} has a 1-parameter family of solutions over the reals.
\end{lem}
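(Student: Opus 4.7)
The plan is to combine the previous lemma with the algebraic reformulation just introduced. By the previous lemma the flexibility of the Kokotsakis polyhedron is equivalent to the flexibility of its associated spherical linkage; and since the side lengths of all four quadrilaterals $Q_i$ are fixed, the shape of the linkage is uniquely determined by the four scissors angles $\phi,\psi_1,\psi_2,\theta$ on Figure \ref{fig:SpherLinkAngles}. So a 1-parameter family of isometric deformations is the same as a 1-parameter family of real 4-tuples $(\phi,\psi_1,\psi_2,\theta)$ realizable as the scissors angles of the four coupled quadrilaterals.

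Next I would show that this realizability condition decouples into four independent constraints, one per quadrilateral. For each $Q_i$ the two scissors angles bounding $Q_i$ are linked only through its four side lengths $\alpha_i,\beta_i,\gamma_i,\delta_i$: applying the spherical law of cosines to a diagonal of $Q_i$ yields a single trigonometric identity relating these two angles. Under the half-tangent substitution \eqref{eqn:Variables}, any such trigonometric identity becomes a polynomial identity in the two corresponding variables among $(z,w_1,w_2,u)$. Collecting the four identities obtained in this way from $Q_1,\ldots,Q_4$ produces exactly the system \eqref{eqn:PolSystem}, and conversely any real solution of that system encodes a geometrically realizable configuration of the linkage.

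The last step is to pass between ``1-parameter family of real angles'' and ``1-parameter family of real solutions of \eqref{eqn:PolSystem}''. This is immediate because $\phi\mapsto\tan\frac{\phi}{2}$ is a real-analytic homeomorphism from the circle $\R/2\pi\Z$ onto $\RP^1=\R\cup\{\infty\}$, and analogously for $\psi_1,\psi_2,\theta$. Hence the set of realizable angle 4-tuples maps bijectively and bicontinuously onto the real solution set of \eqref{eqn:PolSystem} inside $(\RP^1)^4$, and 1-parameter families correspond under this homeomorphism. Combined with the previous lemma, this yields both directions of the claim.

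The only nontrivial ingredient is that, for each $Q_i$, the relation between its two scissors angles is indeed a single polynomial equation rather than a higher-codimension locus. This follows from the fact that a spherical quadrilateral with prescribed side lengths has a one-dimensional configuration space, together with the polynomiality of the cosine law in half-tangents. The explicit form of $P_i$, namely the Euler--Chasles correspondence \eqref{eqn:EulChasles}, is derived only in Section \ref{sec:ConfSpace}; for the present lemma we need just its existence and its real-algebraic character.
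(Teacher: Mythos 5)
Your argument is correct and is essentially the one the paper intends: the paper states this lemma without proof as an immediate consequence of the preceding construction (flexibility of the polyhedron equals flexibility of the spherical linkage, whose shape is determined by $\phi,\psi_1,\psi_2,\theta$, each vertex contributing one polynomial relation under the half-tangent substitution). Your write-up just makes explicit the same reduction, including the homeomorphism $\R/2\pi\Z\to\RP^1$ identifying angle families with real solution families.
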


Karpenkov \cite{Kar10} wrote down a similar system of equations, only with the cosines of angles in place of the tangents of half-angles as variables. He suggested to study the system \eqref{eqn:PolSystem} by the elimination method: let $R_{12}(w_1, w_2)$ be the resultant of $P_1$ and $P_2$ as polynomials in $z$, and let $R_{34}(w_1,w_2)$ be the resultant of $P_3$ and $P_4$ as polynomials in $u$. Then the system has a one-parameter set of solutions if and only if the algebraic sets $R_{12} = 0$ and $R_{34} = 0$ have a common irreducible component. This, in turn, is equivalent to the vanishing of the resultant of $R_{12}$ and $R_{34}$.

Stachel \cite{Sta10} used the substitution \eqref{eqn:Variables} that leads to simpler equations than in the case of cosines. All possible factorizations of the resultant $R_{12}$ were studied in two works \cite{Naw11, Naw12} of Nawratil. The case of irreducible $R_{12}$ and $R_{34}$ remained open.

\subsection{Branched covers between configuration spaces}
Our approach to the system \eqref{eqn:PolSystem} is as follows. Put
\begin{gather*}
Z_i := \{(z,w_i) \in (\CP^1)^2 \mid P_i(z,w_i) = 0\},\quad i = 1,2\\
Z_{12} := \{(z,w_1,w_2) \in (\CP^1)^3 \mid P_1(z,w_1) = 0 = P_2(z,w_2)\}
\end{gather*}
Then $Z_i$ is the complexified configuration space of the quadrilateral $Q_i$, and $Z_{12}$ is the complexified configuration space of the spherical linkage formed by coupling $Q_1$ and $Q_2$. (Note that the projection of $Z_{12}$ to the $(w_1,w_2)$-plane is the zero set of the resultant $R_{12}$ defined in Section \ref{sec:AlgReform}.)

A component of $Z_i$ is called \emph{trivial}, if it has the form $z = \const$ or $w_i = \const$.
For non-trivial components, the restrictions of the maps $Z_{12} \to Z_i$, $i = 1,2$ are branched covers between Riemann surfaces. If the solution set $Z_{all}$ of the system \eqref{eqn:PolSystem} is one-dimensional, then the map $Z_{all} \to Z_{12}$ is also a branched cover, and we obtain a diagram of branched covers shown on Figure \ref{fig:BigDiagram}, Section \ref{sec:Proof}. All maps in this diagram are at most two-fold, and the configuration spaces $Z_i$ can be classified as in Section \ref{sec:ConfSpace}. However, the analysis of the diagram is still complicated enough, and we bring more structure in it by distinguishing certain sorts of couplings.

First, a coupling $(Q_1,Q_2)$ is called \emph{involutive}, if its configuration space $Z_{12}$ carries an involution that changes the value of $z$ but preserves $w_i$:
$$
j_{12} \colon (w_1, z, w_2) \mapsto (w_1, z', w_2)
$$
This means that the involutions
\[
j_1 \colon (w_1, z) \mapsto (w_1, z') \quad \text{and} \quad j_2 \colon (w_2, z) \mapsto (w_2, z'')
\]
on $Z_1$ and $Z_2$ are compatible: $z' = z''$. Geometrically, $j_1$ is a ``folding'' of the quadrilateral $Q_1$. Therefore a coupling is involutive if and only if the two marked angles on Figure \ref{fig:InvCoupling}, left, remain equal during the deformation. (The coupling between $Q_1$ and $Q_2$ on Figure \ref{fig:InvCoupling} is chosen in the same way as on Figure \ref{fig:Couple4}, right; the involutivity has a more nice geometric meaning if the coupling is chosen as on Figure \ref{fig:Couple4}, left.)

\begin{figure}[ht]
\centering
\includegraphics{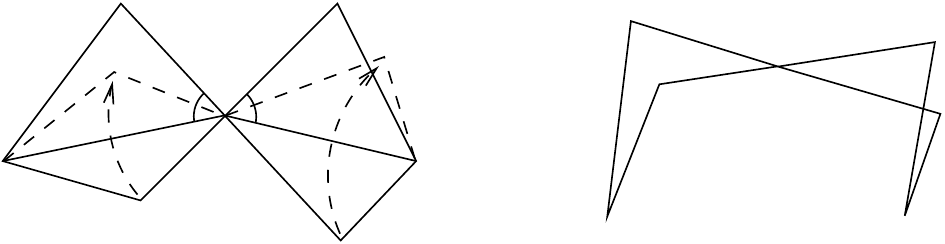}
\caption{An involutive coupling and the action of $j_{12}$ on its configuration space.}
\label{fig:InvCoupling}
\end{figure}

Involutive couplings are classified in Lemma \ref{lem:InvolLift}.

Second, a coupling $(Q_1, Q_2)$ is called \emph{reducible}, if the algebraic set $Z_{12}$ is reducible, while $Z_1$ and $Z_2$ are not. This property is hard to visualize: the real configuration space of a coupling may have several connected components all of which are parts of one complex component.

Reducibility of $(Q_1,Q_2)$ is related to the reducibility of the resultant $R_{12}(w_1,w_2)$ defined in Section \ref{sec:AlgReform}. Moreover, a coupling is involutive if and only if the resultant is a square. For more details see Section \ref{sec:Pullback}.

\subsection{Types of spherical quadrilaterals and associated parameters}
\label{sec:QuadParam}
Let $Q$ be a spherical quadrilateral with side lengths $\alpha$, $\beta$, $\gamma$, $\delta$, in this cyclic order.
The form of the configuration space of $Q$ depends on the number and the type of solutions of the equation
\begin{equation}
\label{eqn:Grashof}
\alpha \pm \beta \pm \gamma \pm \delta = 0 (\mod 2\pi)
\end{equation}

\begin{dfn}
\label{dfn:QuadTypes}
A spherical quadrilateral $Q$ is said to be
\begin{itemize}
\item
of \emph{elliptic} type, if equation \eqref{eqn:Grashof} has no solutions;
\item
of \emph{conic} type, if equation \eqref{eqn:Grashof} has exactly one solution;
\item
a \emph{deltoid}, if it has two pairs of equal adjacent sides, and an \emph{antideltoid}, if it has two pairs of adjacent sides complementing each other to $\pi$;
\item
an \emph{isogram}, if pairs of opposite sides have equal lengths, and an \emph{antiisogram}, if lengths of opposite sides complement each other to $\pi$.
\end{itemize}
\end{dfn}
Note that circumscribable spherical quadrilaterals are characterized by $\alpha + \gamma = \beta + \delta$ and thus ar of conic type according to our terminology (possibly degenerating to a deltoid). The link of a vertex of a quad surface satisfies this condition if and only if the faces adjacent to this vertex are tangent to a circular cone. Because of this, surfaces with this property are called \emph{conical meshes} by Pottman and Wallner \cite{PW08} and find an application in the freeform architecture. We call these quadrilaterals conic for a different reason: their configuration spaces are described by quadratic equations.

At the beginning of Section \ref{sec:ConfSpace} we show that every spherical quadrilateral belongs to one of the types above. The theorems below, also proved in Section \ref{sec:ConfSpace}, describe the configuration spaces of all types of quadrilaterals. They provide a vocabulary for the description of flexible Kokotsakis polyhedra that follows in Section \ref{sec:List}.

\begin{thm}
\label{thm:ParamLin}
The configuration space $Z$ of an (anti)isogram with side lengths $\alpha, \beta, \gamma, \delta$ has the following form.
\begin{enumerate}
\item
If $\alpha = \beta = \gamma = \delta = \frac{\pi}2$, then all components of $Z$ are trivial:
\[
Z = \{z = 0\} \cup \{z = \infty\} \cup \{w = 0\} \cup \{w = \infty\}
\]
\item
If $Q$ is an antiisogram and at the same time (anti)deltoid, then $Z$ has one non-trivial component with the equation
\begin{equation}
\label{eqn:ConfIsoOne}
z = \kappa w, \text{ where } \kappa =
\begin{cases}
\frac{1}{\cos \alpha}, &\text{if } \alpha = \beta = \pi - \gamma = \pi - \delta\\
-\cos\alpha, &\text{if } \alpha = \delta = \pi - \beta = \pi - \gamma
\end{cases}
\end{equation}
\item
If $Q$ is an antiisogram and not an (anti)deltoid, then $Z$ has two non-trivial components of the form
\[
z = \kappa w
\]
where
\begin{equation}
\label{eqn:ConfIsoTwo}
\kappa \in \left\{ \frac{\sin\frac{\alpha-\beta}2}{\sin\frac{\alpha+\beta}2}, \frac{\cos\frac{\alpha-\beta}2}{\cos\frac{\alpha+\beta}2} \right\}
\end{equation}
\item
If $Q$ is an isogram: $\alpha = \gamma$, $\beta = \delta$ with not all sides equal to $\frac{\pi}{2}$, then the non-trivial components of $Z$ are described by the equation
\begin{equation}
\label{eqn:ConfIso}
z = \frac1{\kappa w}
\end{equation}
where $\kappa$ taking one or two values according to whether $Q$ is a deltoid and given by the same formulas as in the antiisogram case.
\end{enumerate}

\end{thm}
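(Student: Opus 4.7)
The plan is to start from the Euler--Chasles form \eqref{eqn:EulChasles} of the defining polynomial of $Z$ and, for each of the four sub-cases, specialize the coefficients $a_{ij}$ (these will have been computed in the preceding part of Section \ref{sec:ConfSpace} as explicit trigonometric functions of $\alpha,\beta,\gamma,\delta$) to the relations that characterize (anti)isograms, and then read off the factorization directly.

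I would first establish a duality reducing case (4) to case (3): replacing the side lengths $\gamma,\delta$ by $\pi-\gamma,\pi-\delta$ converts an isogram into an antiisogram with the same $\alpha,\beta$, and on the level of the configuration space this amounts to the substitution $w\mapsto 1/w$. Hence \eqref{eqn:ConfIso} will follow from \eqref{eqn:ConfIsoTwo} once the latter is proved, and the (anti)deltoid degenerations match case by case. Similarly, the antiisogram which is simultaneously an (anti)deltoid (part (2)) is the degenerate subcase of (3) in which two of the side lengths coincide or complement to $\pi$.

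For the core antiisogram computation (part (3)), the hypotheses $\alpha+\gamma=\beta+\delta=\pi$ force, by a short check against the trigonometric expressions for the $a_{ij}$, the identities $a_{00}=a_{22}$ and $a_{02}=a_{20}$ (up to an overall sign). The left hand side of \eqref{eqn:EulChasles} then becomes invariant under $(z,w)\mapsto(-z,-w)$, so it is a polynomial in $z^2,w^2,zw$ of bidegree $(1,1)$ in those variables, and therefore factors as
\[
a_{22}(z-\kappa_1 w)(z-\kappa_2 w)\bigl(z+\kappa_1 w\bigr)(z+\kappa_2 w)/\text{something trivial},
\]
i.e.\ as the product of two forms linear in both $z$ and $w$. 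The product $\kappa_1\kappa_2$ and the sum $\kappa_1^2+\kappa_2^2$ are rational functions of $a_{ij}$; converting them to half-angles via standard product-to-sum formulas produces exactly the two values in \eqref{eqn:ConfIsoTwo}.

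For part (2), an antiisogram that is also a deltoid satisfies $\alpha=\beta$, which makes $\sin\frac{\alpha-\beta}{2}=0$, so one of the two linear factors degenerates to a trivial $z-0\cdot w$ component, leaving only $\kappa=\cos\frac{\alpha-\beta}{2}/\cos\frac{\alpha+\beta}{2}=1/\cos\alpha$; the antideltoid case $\alpha=\delta=\pi-\beta=\pi-\gamma$ symmetrically kills $\cos\frac{\alpha+\beta}{2}$ and collapses the other factor, leaving $\kappa=\sin\frac{\alpha-\beta}{2}/\sin\frac{\alpha+\beta}{2}=-\cos\alpha$ after simplification. For part (1), all four sides equal to $\pi/2$ make the coefficients $a_{00},a_{20},a_{02},a_{22}$ vanish simultaneously, while $a_{11}\neq 0$; the homogenization in $(z_0{:}z_1)\times(w_0{:}w_1)$ becomes $z_0z_1w_0w_1=0$, which is precisely the union of the four trivial components listed.

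The main obstacle is the trigonometric identification in the generic antiisogram case: recognizing the two roots of the factorized quadratic as exactly the expressions in \eqref{eqn:ConfIsoTwo}, and then carefully tracking which root survives in each of the four (anti)deltoid degenerations (with correct signs). Both are routine half-angle manipulations once the symmetries $a_{22}=a_{00}$, $a_{20}=a_{02}$ are in hand, but the sign bookkeeping in part (2) requires care to distinguish deltoids from antideltoids.
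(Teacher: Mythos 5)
Your overall plan (specialize the coefficients of \eqref{eqn:AdjZ}/\eqref{eqn:EulChasles} to the (anti)isogram relations and read off the factorization, handling (1), (2), (4) as degenerations/switchings of (3)) is the same as the paper's, and your treatments of parts (1) and (2) are essentially right. But the core step, part (3), contains a genuine error. The antiisogram relations $\gamma=\pi-\alpha$, $\delta=\pi-\beta$ do \emph{not} give the symmetries $a_{00}=a_{22}$, $a_{20}=a_{02}$ (up to sign) that you invoke: substituting into the coefficients of Lemma \ref{lem:EqConfSpace} gives $c_{22}=0$, $c_{00}=0$, while $c_{20}=\sin\alpha\,\sin(\alpha+\beta)$ and $c_{02}=\sin\alpha\,\sin(\alpha-\beta)$, which are unequal in general. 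Moreover, the invariance under $(z,w)\mapsto(-z,-w)$ that you use to justify the factorization is automatic for \emph{every} Euler--Chasles correspondence \eqref{eqn:EulChasles}, so it cannot imply a splitting into forms linear in $z$ and $w$ --- if it did, no configuration space would ever be an elliptic curve. The mechanism you actually need is precisely the vanishing $c_{22}=c_{00}=0$: the equation then becomes the binary quadratic form $\sin(\alpha+\beta)z^2-2\sin\alpha\,zw+\sin(\alpha-\beta)w^2=0$ (after dividing by $\sin\alpha$), which over $\C$ splits into two factors $z=\kappa w$; the quadratic formula together with $\sin(\alpha+\beta)\sin(\alpha-\beta)=\sin^2\alpha-\sin^2\beta$ and half-angle identities gives exactly \eqref{eqn:ConfIsoTwo}. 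This is the paper's argument, and your write-up is missing it.

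Two smaller points. For part (4), your reduction is also slightly off: by Lemma \ref{lem:Switching} the switchings act by $(z,w)\mapsto(-z^{-1},w)$ or $(z,w)\mapsto(z,-w^{-1})$, not by $w\mapsto 1/w$, so signs must be tracked (the value of $\kappa$ in \eqref{eqn:ConfIso} must come out with the stated sign); alternatively, the direct route is just as short: for an isogram $\alpha=\gamma$, $\beta=\delta$ one gets $c_{20}=c_{02}=0$, so the equation is a quadratic in the single variable $zw$, whose roots are $1/\kappa$ with the same two values of $\kappa$. Finally, in part (2) note that besides the non-trivial component the degeneration also produces \emph{two} trivial components (e.g.\ $z=0$ and $w=\infty$ when $\alpha=\beta$), visible only after bihomogenization \eqref{eqn:Bihom}; this does not affect the statement but should be acknowledged when you claim the factorization ``loses'' only one factor.
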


\begin{thm}
\label{thm:ParamDegenCon}
Let $Q$ be a deltoid or an antideltoid that is neither isogram nor antiisogram. Then the affine part of the non-trivial component of its configuration space has the following parametrization.
\begin{enumerate}
\item
If $\alpha = \delta,\ \beta = \gamma \quad \text{or} \quad \alpha + \delta = \pi = \beta + \gamma$,
then
\begin{equation}
\label{eqn:ParamDelt1}
z^m = p\sin t, \qquad w = \epsilon \sqrt{-\mu} e^{it},
\end{equation}
where
$$
\begin{aligned}
p &= \sqrt{\frac{\sin^2\delta}{\sin^2\gamma} - 1}, &\quad
m &=
\begin{cases}
1, &\text{if } \alpha = \delta\\
-1, &\text{if } \alpha + \delta = \pi
\end{cases}\\
\mu &= 
\frac{\tan\delta + \tan\gamma}{\tan\delta - \tan\gamma}, &\quad
\epsilon &=
\begin{cases}
m, &\text{if } \gamma + \delta > \pi,\\
-m, &\text{if } \gamma + \delta < \pi
\end{cases}
\end{aligned}
$$
\item
If $\alpha = \beta,\ \gamma = \delta \quad \text{or} \quad \alpha + \beta = \pi = \gamma + \delta$,
then
\begin{equation}
\label{eqn:ParamDelt2}
z = \iota \sqrt{-\lambda} e^{it}, \qquad w^n = q\sin t
\end{equation}
where
$$
\begin{aligned}
q &= \sqrt{\frac{\sin^2\delta}{\sin^2\alpha} - 1}, &\quad
n &=
\begin{cases}
1, &\text{if } \gamma = \delta\\
-1, &\text{if } \gamma + \delta = \pi
\end{cases}\\
\lambda &= 
\frac{\tan\delta + \tan\alpha}{\tan\delta - \tan\alpha}, &\quad
\iota &=
\begin{cases}
n, &\text{if } \alpha + \delta > \pi,\\
-n, &\text{if } \alpha + \delta < \pi
\end{cases}
\end{aligned}
$$
\end{enumerate}
For square roots we adopt the convention $\sqrt{x} \in \R_+ \cup i\R_+$.
\end{thm}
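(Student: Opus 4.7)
The plan is to derive the parametrization of the non-trivial component of the configuration space $Z$ from its defining equation. Starting from the Euler--Chasles correspondence \eqref{eqn:EulChasles} that describes $Z$ as a $(2,2)$-curve in $\CP^1\times\CP^1$, the deltoid or antideltoid hypothesis reduces the generic $(2,2)$-curve to one that splits off trivial components (lines of the form $z=\const$ or $w=\const$ corresponding to folded configurations along the axis of symmetry of $Q$), leaving the non-trivial component as a single smooth conic of bidegree $(1,2)$ to be parametrized.

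For case 1 with $\alpha=\delta$, $\beta=\gamma$ the reflection symmetry of $Q$ across the diagonal joining the two vertices where equal sides meet is what produces the trivial factor, and the remaining non-trivial factor has the schematic form
\[
p(w^2+\mu) = 2i\epsilon\sqrt{-\mu}\, w\, z^m,
\]
with $p$ and $\mu$ determined by matching against the coefficients $a_{ij}$ of \eqref{eqn:EulChasles}. A direct computation from the spherical cosine rule (as carried out at the opening of Section \ref{sec:ConfSpace}) yields, after simplification, $p=\sqrt{\sin^2\delta/\sin^2\gamma - 1}$ and $\mu=(\tan\delta+\tan\gamma)/(\tan\delta-\tan\gamma)$. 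The uniformization of this conic by $w=\epsilon\sqrt{-\mu}\, e^{it}$ then produces $z^m=p\sin t$ by elementary manipulation. The antideltoid subcase $\alpha+\delta=\pi=\beta+\gamma$ produces the same conic up to the substitution $z\mapsto z^{-1}$, which is precisely what the exponent $m=-1$ encodes.

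Case 2 is handled by the symmetric argument with the other diagonal of $Q$ playing the role of the axis of symmetry; this swaps the two pairs of sides and exchanges the roles of $z$ and $w$, producing the formulas of part~(2) with $n,\iota,\lambda$ in place of $m,\epsilon,\mu$.

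The main obstacle, as already in Theorem \ref{thm:ParamLin}, is the careful sign analysis for $\epsilon$ and $\iota$ under the convention $\sqrt{x}\in\R_+\cup i\R_+$. The case splits $\gamma+\delta \gtrless \pi$ and $\alpha+\delta \gtrless \pi$ correspond to the sign of $\mu$ (respectively $\lambda$), and hence to whether $\sqrt{-\mu}$ is real or imaginary, and thus to which branch of the conic carries the real locus. I would pin the signs down by evaluating the parametrization at a specific real reference configuration---e.g.\ the symmetric position where the quadrilateral collapses onto its axis of symmetry---comparing with the geometrically expected value of $w$, and then propagating by continuity.
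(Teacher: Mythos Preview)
Your proposal is correct and follows essentially the same route as the paper: specialize the biquadratic equation \eqref{eqn:AdjZ} under the (anti)deltoid hypothesis to get the conic $\sin(\delta-\gamma)w^2 - 2\sin\gamma\, zw + \sin(\delta+\gamma) = 0$, parametrize it, and obtain the remaining cases by the switching isomorphism $z\mapsto -z^{-1}$ (Lemma~\ref{lem:Switching}) and the $z\leftrightarrow w$, $\alpha\leftrightarrow\gamma$ symmetry. The only difference is in the sign bookkeeping for $\epsilon$: the paper extracts this algebraically via a standalone Lemma~\ref{lem:ParamDegCon} (the sign of $r$ in the parametrization of $aw^2+2bzw+c=0$ is governed by the sign of $bc$, which here is $-\sin\gamma\sin(\delta+\gamma)$ and hence by $\gamma+\delta\gtrless\pi$), whereas you propose to pin it down by evaluating at a reference configuration---both work, but the algebraic route is cleaner and avoids any continuity argument.
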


\begin{thm}
\label{thm:ParamCon}
The affine part of the configuration space of a spherical quadrilateral of the conic type has the parametrization
\begin{equation}
\label{eqn:ParamCon}
z^m = p \sin t, \qquad w^n = q \sin(t + t_0),
\end{equation}
Here the exponents $m$ and $n$ are determined according to the table
\begin{center}
{\renewcommand{\arraystretch}{1.5}
\begin{tabular}{c|cc}
& $n = 1$& $n = -1$\\
\hline
$m = 1$ & $\alpha + \gamma = \beta + \delta$ & $\alpha + \beta = \gamma + \delta$\\
$m = -1$ & $\alpha + \delta = \beta + \gamma$ & $\alpha + \beta + \gamma + \delta = 2\pi$
\end{tabular}
}
\end{center}
The amplitudes are given by
$$
p = \sqrt{\frac{\sin\alpha \sin\delta}{\sin\beta \sin\gamma} - 1} \in \R_{>0} \cup i\R_{>0}, \qquad q = \sqrt{\frac{\sin\gamma \sin\delta}{\sin\alpha \sin\beta} - 1} \in \R_{>0} \cup i\R_{>0};
$$
The phase shift satisfies
$$
\tan t_0 = i \sqrt{\frac{\sin\beta \sin\delta}{\sin\alpha \sin\gamma}}
$$
The indeterminacy in $t_0$ up to the summand $\pi$ is resolved in the table below: it gives an interval in which $t_0$ lies, depending on the values of $pq$ and~$\sigma$.
\begin{center}
{\renewcommand{\arraystretch}{1.5}
\begin{tabular}{c|ccc}
& $pq \in \R_{>0}$ & $pq \in i\R_{>0}$ & $pq \in R_{<0}$\\
\hline
$\sigma < \pi$ & $i\R_{>0}$ & $\frac{\pi}2 + i\R_{>0}$ & $\pi + i\R_{>0}$\\
$\sigma > \pi$ & $\pi + i\R_{>0}$ & $\frac{3\pi}2 + i\R_{>0}$ & $i\R_{>0}$
\end{tabular}
}
\end{center}
where
$$
\sigma =
\begin{cases}
\frac{\alpha + \beta + \gamma + \delta}2, &\text{if } \alpha + \gamma = \beta + \delta\\
\frac{-\alpha + \beta + \gamma - \delta}2 + \pi, &\text{if } \alpha + \beta = \gamma + \delta\\
\frac{\alpha + \beta - \gamma - \delta}2 + \pi, &\text{if } \alpha + \delta = \beta + \gamma\\
\frac{-\alpha + \beta - \gamma + \delta}2 + \pi, &\text{if } \alpha + \beta + \gamma + \delta = 2\pi
\end{cases}
$$



\end{thm}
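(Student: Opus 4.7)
The plan is to derive the explicit Euler-Chasles equation of $Z$ from spherical trigonometry, verify that the conic hypothesis makes it a rational curve, and extract the parametrization by matching coefficients.

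First I would set up the Euler-Chasles equation for a generic spherical quadrilateral. Pick one diagonal of $Q$ and apply the spherical cosine law to each of the two spherical triangles into which the diagonal splits $Q$; eliminating the diagonal length gives a linear relation between $\cos\phi$, $\cos\psi$, $\sin\phi \sin\psi$ (and $1$). Under the substitutions $z=\tan(\phi/2),\ w=\tan(\psi/2)$ this becomes the biquadratic \eqref{eqn:EulChasles}, with coefficients $a_{ij}$ explicit trigonometric polynomials in $\alpha,\beta,\gamma,\delta$.

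Next I would reduce to a single case. The three substitutions $\alpha\mapsto\pi-\alpha$, $\beta\mapsto\pi-\beta$, etc., correspond geometrically to replacing a vertex by its antipode and algebraically to $z\mapsto -1/z$ or $w\mapsto -1/w$. They also permute the four conic conditions $\alpha\pm\beta\pm\gamma\pm\delta\equiv 0\pmod{2\pi}$ and change the exponents $(m,n)$ in the expected way ($1\leftrightarrow -1$). Thus it suffices to treat the circumscribable case $\alpha+\gamma=\beta+\delta$ with $m=n=1$ and then transport the parametrization; the exponent table in the statement records exactly these transports.

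The key algebraic step is then to show that in this distinguished case the Euler-Chasles curve is rational. I would compute the discriminant of \eqref{eqn:EulChasles} viewed as a quadratic in $w^2$ and show, using standard product-to-sum identities applied to the hypothesis $\sin(\alpha+\gamma)=\sin(\beta+\delta)$, $\cos(\alpha+\gamma)=\cos(\beta+\delta)$, that this discriminant is a perfect square in the coefficients of $z$. Consequently the curve admits a proper rational parametrization. Plugging the ansatz $z=p\sin t$ into the equation and solving for $w$ yields a single-valued $w=q\sin(t+t_0)$; matching the coefficients of $\sin^2 t$ and of the constant term pins down $p^2$ and $q^2$ to the stated values $\frac{\sin\alpha\sin\delta}{\sin\beta\sin\gamma}-1$ and $\frac{\sin\gamma\sin\delta}{\sin\alpha\sin\beta}-1$, while the coefficient of the mixed term $\sin t \sin(t+t_0)$ yields $\tan t_0 = i\sqrt{\sin\beta\sin\delta/(\sin\alpha\sin\gamma)}$.

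The main obstacle is the final sign bookkeeping: both $p,q$ and $t_0$ are defined only up to sign or mod $\pi$, and one must use the stated convention $\sqrt{x}\in\R_+\cup i\R_+$ together with a real test point on $Z$ to resolve the ambiguity. I would do this by specializing $t$ to a distinguished configuration of $Q$ (for example the planar folding at which $\phi=0$, i.e.\ $z=0$, giving $\sin t=0$) and tracking which of the two flat states of the quadrilateral occurs. The sign of $\cos t_0$ then changes as $\sigma$ crosses $\pi$, and the sign of $\sin t_0$ as $pq$ rotates through $\R_{>0}, i\R_{>0}, \R_{<0}$, yielding precisely the six entries of the $t_0$-table. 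The other three conic subcases follow from the first by the substitutions identified in the second paragraph, with $\sigma$ adjusted by the corresponding $\pm\pi$ shift, giving the final formula for $\sigma$ in the statement.
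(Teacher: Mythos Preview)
Your overall plan matches the paper's: reduce to the circumscribable case $\alpha+\gamma=\beta+\delta$ via the antipodal substitutions $z\mapsto -1/z$, $w\mapsto -1/w$ (the paper's Lemma~\ref{lem:Switching}), parametrize the resulting curve by $z=p\sin t$, $w=q\sin(t+t_0)$ and match coefficients, then transport back. The amplitude and $\tan t_0$ computations you sketch are exactly those of the paper.

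One point is miscast. You write that you will ``compute the discriminant of \eqref{eqn:EulChasles} viewed as a quadratic in $w^2$'' and show it is a perfect square. This is not how the rationality arises, and as written it is not correct. In the circumscribable case the coefficient $c_{22}$ of $z^2w^2$ in \eqref{eqn:AdjZ} simply vanishes (since $c_{22}=\sin\overline\delta\sin(\sigma-\beta-\delta)$ and $\sigma=\beta+\delta$), so the equation is already a nondegenerate affine conic $c_{20}z^2+c_{02}w^2+2c_{11}zw+c_{00}=0$; this is genus~$0$ for free, no discriminant needed. If instead you solve that conic as a quadratic in $w$, the discriminant $4(c_{11}^2-c_{20}c_{02})z^2-4c_{02}c_{00}$ is \emph{not} a perfect square polynomial in $z$ (it would split the curve into two components, which it does not); rather, the substitution $z=p\sin t$ with $p^2=\frac{c_{02}c_{00}}{c_{11}^2-c_{20}c_{02}}$ turns this discriminant into $\cos^2 t$ up to a constant, which is what yields the single-valued $w=q\sin(t+t_0)$. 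The paper packages this step as a standalone Lemma~\ref{lem:ConicParam} about conics of this shape.

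For the sign bookkeeping, the paper does not specialize to a real configuration as you propose, but uses the closed formula $\cos t_0=\frac{\sin\sigma}{pq\sin\beta}$ (obtained from the $zw$-coefficient) together with the range of $\cos$ on the vertical half-lines $i\R_{>0}$, $\tfrac{\pi}{2}+i\R_{>0}$, $\pi+i\R_{>0}$, $\tfrac{3\pi}{2}+i\R_{>0}$ to fill the $t_0$-table directly. Your test-point idea would also work but is less clean, since the ``flat'' states $z=0$ or $z=\infty$ need not be attained by a real quadrilateral of conic type.
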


In the elliptic case introduce the notation
$$
\sigma = \frac{\alpha + \beta + \gamma + \delta}2, \qquad \overline\alpha = \sigma - \alpha = \frac{-\alpha + \beta + \gamma + \delta}2,
$$
and similarly $\overline\beta = \sigma - \beta$, $\overline\gamma = \sigma - \gamma$, $\overline\delta = \sigma - \delta$. Note that applying this transformation twice yields the initial quadruple of numbers. Denote
$$
M = \frac{\sin\alpha \sin\beta \sin\gamma \sin\delta}{\sin\overline\alpha \sin\overline\beta \sin\overline\gamma \sin\overline\delta}, 
$$
Due to the last equation of Lemma \ref{lem:BarRelations}, if equation \eqref{eqn:Grashof} has no solution, then we have $M \ne 1$. More exactly, we have
\begin{gather*}
M < 1 \Leftrightarrow \alpha_{\min} + \alpha_{\max} < \sigma < \pi \text{ or }\alpha_{\min} + \alpha_{\max} > \sigma > \pi\\
M > 1 \Leftrightarrow \alpha_{\min} + \alpha_{\max} > \sigma < \pi \text{ or }\alpha_{\min} + \alpha_{\max} < \sigma > \pi
\end{gather*}
where $\alpha_{\min}$, respectively $\alpha_{\max}$ denotes the smallest, respectively the biggest of the numbers $\alpha$, $\beta$, $\gamma$, $\delta$.

\begin{thm}
\label{thm:ParamSnCn}
The configuration space of a spherical quadrilateral of the elliptic type has the following parametrization.
\begin{enumerate}
\item
If $M < 1$, then
\begin{equation}
\label{eqn:ParamSn}
z = p \sn t, \quad w = q \sn(t + t_0),
\end{equation}
where $\sn$ is the elliptic sine function with modulus $k = \sqrt{1-M}$,
and the phase shift $t_0$ satisfies
\begin{equation*}
\dn t_0 = \sqrt{\frac{\sin\alpha \sin\gamma}{\sin\overline\alpha \sin\overline\gamma}}
\end{equation*}
\item
If $M > 1$, then
\begin{equation}
\label{eqn:ParamCn}
z = p \cn t, \quad w = q \cn(t + t_0)
\end{equation}
where $\cn$ is the elliptic cosine function with modulus $k = \sqrt{1-M^{-1}}$,
and the phase shift $t_0$ satisfies
\begin{equation*}
\dn t_0 = \sqrt{\frac{\sin\overline\alpha \sin\overline\gamma}{\sin\alpha \sin\gamma}}
\end{equation*}
\end{enumerate}
In both cases the amplitudes $p$ and $q$ are given by
$$
p = \sqrt{\frac{\sin\alpha \sin\delta}{\sin\overline\alpha \sin\overline\delta} - 1} \in \R_{>0} \cup i\R_{>0}, \qquad q = \sqrt{\frac{\sin\gamma \sin\delta}{\sin\overline\gamma \sin\overline\delta} - 1} \in \R_{>0} \cup i\R_{>0}
$$
The value of $\dn t_0$ determines the phase shift up to a real half-period $2K$. This indeterminacy is resolved in the table below: it gives an interval in which $t_0$ lies, depending on the values of $pq$ and $\sigma$.
\begin{center}
{\renewcommand{\arraystretch}{1.5}
\begin{tabular}{c|ccc}
& $pq \in \R_{>0}$ & $pq \in i\R_{>0}$ & $pq \in R_{<0}$\\
\hline
$\sigma < \pi$ & $(0,iK')$ & $(K,K+iK')$ & $(2K,2K+iK')$\\
$\sigma > \pi$ & $(2K,2K+iK')$ & $(3K,3K+iK')$ & $(0,iK')$
\end{tabular}
}
\end{center}
\end{thm}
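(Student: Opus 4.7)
My plan is to work directly with the Euler--Chasles biquadratic
\[
a_{22}z^2w^2+a_{20}z^2+a_{02}w^2+2a_{11}zw+a_{00}=0,
\]
which, by the earlier derivation in Section \ref{sec:ConfSpace}, describes the configuration space $Z$ with coefficients $a_{ij}$ expressible as explicit trigonometric functions of $\alpha,\beta,\gamma,\delta$. In the elliptic case \eqref{eqn:Grashof} has no solution, so, viewing the equation as quadratic in $w$, the discriminant
\[
\Delta(z)=a_{11}^2z^2-(a_{22}z^2+a_{02})(a_{20}z^2+a_{00})
\]
is an even quartic with four distinct roots, and the $z$-projection presents $Z$ as the elliptic curve that doubly covers $\CP^1_z$ branched over these roots.

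Next I uniformise. Factor $\Delta(z)=-a_{22}a_{20}(z^2-z_1^2)(z^2-z_2^2)$ and choose the normalisation $z=p\sn t$ with $p^2=z_1^2$ and $k^2=z_1^2/z_2^2$ (assuming $0<z_1^2<z_2^2$; if instead one pair of roots is purely imaginary then one must use $z=p\cn t$ with the complementary modulus, which will turn out to correspond to $M>1$). Computing $z_1^2+z_2^2$ and $z_1^2z_2^2$ as rational functions of the $a_{ij}$ and reducing via the product-to-sum identity
\[
\sin x\sin y=\tfrac12\bigl(\cos(x-y)-\cos(x+y)\bigr)
\]
together with Lemma \ref{lem:BarRelations} should yield the claimed $p^2=\sin\alpha\sin\delta/(\sin\overline\alpha\sin\overline\delta)-1$ and $k^2=1-M$.

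Because the biquadratic is symmetric under the interchange of $z$ and $w$ (with a corresponding relabelling of the sides), the same procedure in the $w$-direction yields $w=q\sn s$ with the \emph{same} modulus $k$ and the stated amplitude $q$. Both $t$ and $s$ are normalised uniformising parameters for the same elliptic curve, so $s=\pm t+t_0$ for some constant $t_0$; absorbing the sign through the oddness of $\sn$ puts the parametrisation in the claimed form $(z,w)=(p\sn t,\,q\sn(t+t_0))$. To pin down $t_0$, substitute this back into the biquadratic, expand $\sn(t+t_0)$ by the addition formula
\[
\sn(t+t_0)=\frac{\sn t\,\cn t_0\,\dn t_0+\sn t_0\,\cn t\,\dn t}{1-k^2\sn^2 t\,\sn^2 t_0},
\]
clear the denominator, and match the coefficients of $\sn^2 t$, $\sn t\,\cn t\,\dn t$, and~$1$; the three resulting equations collapse, using the already determined $p,q,k$, to the single relation $\dn t_0=\sqrt{\sin\alpha\sin\gamma/(\sin\overline\alpha\sin\overline\gamma)}$. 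The case $M>1$ is handled in parallel using the $\cn$ addition formula, yielding the complementary expression.

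The main obstacle is the final table. The formula for $\dn t_0$ pins $t_0$ only modulo the real half-period $2K$, so to split the six cases I would follow a distinguished real motion of the quadrilateral (for instance the one through a flat position) and track the signs of $z$, $w$, and $dz/dt$: the sign of $pq$ dictates which quadrant of the $(z,w)$-plane is visited first, while the position of $\sigma$ relative to $\pi$ controls the direction of traversal through the real period interval. This bookkeeping, though not conceptually deep, requires careful enumeration of orientations and is where I expect the bulk of the remaining verification work to lie.
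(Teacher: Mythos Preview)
Your approach is correct and would go through, but it differs from the paper's in two notable ways.

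First, for the body of the theorem the paper does not derive $p$, $q$, $k$ from the discriminant of the biquadratic. Instead it relies on Lemma~\ref{lem:EllParam}, which records once and for all the biquadratic satisfied by $x=\sn t$, $y=\sn(t+t_0)$ (and similarly for $\cn$); the proof then simply substitutes $z=px$, $w=qy$ with the \emph{stated} values of $p,q$ into \eqref{eqn:AdjZ}, and checks via Lemma~\ref{lem:BarRelations} that the result matches \eqref{eqn:EllParSn} or \eqref{eqn:EllParCn} for the stated $k$ and $\dn t_0$. The paper even remarks that it avoids solving for $p,q,k,t_0$ in terms of the $c_{ij}$ because those formulas are unpleasant, and refers to \cite{Izm_Conf4Bar} for how the values were guessed. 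Your discriminant route is more self-contained and explains \emph{why} the sn/cn split is governed by $M\lessgtr 1$, at the cost of heavier computation; the paper's verify-the-answer route is shorter once Lemma~\ref{lem:EllParam} is in hand.

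Second, and more substantively, your plan for the table is weaker than what the paper actually does. The paper extracts from the coefficient of $xy$ the closed formula
\[
\cn t_0=\frac{\sin\sigma}{pq\,\sin\overline\delta},
\]
so that the sign (and real/imaginary nature) of $\cn t_0$ is determined directly by the signs of $\sin\sigma$ and of $pq$. Combining this with the explicit ranges of $\cn$ on the four segments $(0,iK')$, $(K,K+iK')$, $(2K,2K+iK')$, $(3K,3K+iK')$ yields the table by pure inspection, with no need to follow a real motion or track orientations. Your sign-tracking scheme would eventually get there, but the $\cn t_0$ formula is both cleaner and what the paper uses; I would recommend replacing that part of your argument with it.
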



To finish setting up the notation, consider orthodiagonal quadrilaterals, i.~e. those whose side lengths satisfy the relation
$$
\cos\alpha\cos\gamma = \cos\beta\cos\delta
$$
An orthodiagonal quadrilateral is either (anti)deltoid, or of elliptic type, see Section \ref{sec:Orthodiag}. We exclude the case when $\alpha = \beta = \gamma = \delta = \frac\pi{2}$, as it leads only to trivial deformations, see Theorem \ref{thm:ParamLin} and Section \ref{sec:TrivClass}.

We refer to a vertex of a quadrilateral by naming the two sides incident to it. We say that an (anti)deltoid has apices $\alpha\delta$ and $\beta\gamma$, if $\alpha = \delta$, $\beta = \gamma$ or $\alpha+\delta = \pi = \beta+\gamma$.

\begin{dfn}
\label{dfn:InvFactors}
Let $Q$ be an orthodiagonal quadrilateral. We define the \emph{involution factors} at each of its vertices, excluding the apices if $Q$ is an (anti)deltoid, as follows.

The involution factor at the vertex $\alpha\delta$ is
$$
\lambda :=
\begin{cases}
\frac{\tan\delta + \tan\alpha}{\tan\delta - \tan\alpha}, &\text{if } \alpha \ne \frac{\pi}2 \text{ or } \delta \ne \frac{\pi}2\\
\frac{\cos\beta + \cos\gamma}{\cos\beta - \cos\gamma}, &\text{if } \alpha = \delta = \frac{\pi}2
\end{cases}
$$
Similarly, the involution factor at the vertex $\gamma\delta$ is
$$
\mu :=
\begin{cases}
\frac{\tan\delta + \tan\gamma}{\tan\delta - \tan\gamma}, &\text{if } \gamma \ne \frac{\pi}2 \text{ or } \delta \ne \frac{\pi}2\\
\frac{\cos\beta + \cos\alpha}{\cos\beta - \cos\alpha}, &\text{if } \gamma = \delta = \frac{\pi}2
\end{cases}
$$
Besides, for an orthodiagonal quadrilateral of elliptic type we put
$$
\nu :=
\begin{cases}
\frac{(\lambda - 1)(\mu - 1)}{\cos\delta}, &\text{if } \delta \ne \frac\pi{2}\\
2(\mu -1) \tan\alpha, &\text{if } \delta = \gamma = \frac\pi{2}\\
2(\lambda - 1) \tan\gamma, &\text{if } \delta = \alpha = \frac\pi{2}
\end{cases}
$$
for an (anti)deltoid with apex $\alpha\delta$ we put
$$
\zeta := 
\begin{cases}
\frac{\mu - 1}{\cos\delta}, &\text{if } \delta \ne \frac\pi{2}\\
2 \tan\gamma, &\text{if } \delta = \alpha = \frac\pi{2}
\end{cases}
$$
and for an (anti)deltoid with apex $\gamma\delta$ we put
$$
\xi :=
\begin{cases}
\frac{\lambda - 1}{\cos\delta}, &\text{if } \delta \ne \frac\pi{2}\\
2 \tan\alpha, &\text{if } \delta = \gamma = \frac\pi{2}
\end{cases}
$$
\end{dfn}
Note that in the (anti)deltoid case the values of $\lambda$ and $\mu$ coincide with those given in Theorem \ref{thm:ParamDegenCon}. Also there are the identities
\[
p^2 \zeta^2 = 4\mu, \quad q^2 \xi^2 = 4\lambda
\]

The involution factors are well-defined real numbers different from $0$. For example, if $\alpha = \frac{\pi}2$ and $\delta \ne \frac{\pi}2$, then $\lambda = \frac{\infty}{-\infty} = -1$. If $\alpha = \delta$ or $\alpha + \delta = \pi$, so that the denominator or numerator in the first formula for $\lambda$ vanish, then either $Q$ is an (anti)deltoid and has no involution factor at the vertex $\delta\alpha$, or $\alpha = \delta = \frac{\pi}2$, and $\lambda$ is computed by the second formula. The second formula makes sense since $\beta \ne \gamma$ by assumption that $\delta\alpha$ is not an apex.


\subsection{Switching a boundary strip}
\label{sec:Switching}
Here we describe an operation that transforms one flexible Kokotsakis polyhedron to another flexible one.
\emph{Switching the right boundary strip} consists in replacing on Figure \ref{fig:NotPlanAngles} the vertex $C_1$ by its mirror image with respect to $A_1$, and $C_4$ by its mirror image with respect to $A_4$. Switching the left, lower, and upper boundary strips is defined similarly.

In the intrinsic terms, switching the right boundary strip consists in replacing $\beta_1, \beta_4, \gamma_1, \gamma_4$ by their complements to $\pi$. It can transform the spherical quadrilateral $Q_i$, $i = 1, 4$ from an isogram to an antiisogram, or from deltoid to antideltoid, or change $n_i$ to $-n_i$ if $Q_i$ was of conic type. We will use this repeatedly to simplify the case distinction. The action of switching on the equation of the configuration space is described in Lemma~\ref{lem:Switching}.

\section{The list of flexible Kokotsakis polyhedra}
\label{sec:List}

We list all flexible polyhedra modulo the switching operations described in Section \ref{sec:Switching}. The description goes in terms of conditions on the angles $\alpha_i, \beta_i, \gamma_i, \delta_i$.

\begin{mainthm}
\label{thm:Class}
Every flexible Kokotsakis polyhedron with quadrangular base belongs to one of the following classes.
\begin{enumerate}
\item
Orthodiagonal (or T-surfaces, Graf-Sauer surfaces).

\item
Isogonal (or V-surfaces, discrete Voss surfaces).

\item
Equimodular.

\item
Conjugate-modular.

\item
Linear compounds.

\item
Linearly conjugate.

\item
Chimeras.

\item
Trivial.
\end{enumerate}
\end{mainthm}
Linear compounds include previously known polyhedra with reflectional or transitional symmetry. Trivial flexible polyhedra are characterized by constance of some of the dihedral angles during the deformation. Finally, chimeras are called so because they are based on spherical quadrilaterals of different types.

%

\subsection{Orthodiagonal types}
\label{sec:OrthType}
\subsubsection{Orthodiagonal involutive type}
\label{sec:OrthoInv}
A Kokotsakis polyhedron belongs to the \emph{orthodiagonal type}, if its planar angles satisfy the following conditions.

\begin{enumerate}
\item
All spherical quadrilaterals $Q_i$ are orthodiagonal:
$$
\cos\alpha_i \cos\gamma_i = \cos\beta_i \cos\delta_i
$$
Geometrically this means that the plane $C_1A_1A_2$ is orthogonal to the plane $B_1A_1A_4$ (this property is preserved during a deformation of a tetrahedral angle), and the same holds for the corresponding pairs of planes through the vertices $A_2$, $A_3$, $A_4$. We exclude the case $\alpha_i = \beta_i = \gamma_i = \delta_i = \frac\pi{2}$, as it leads to trivial deformations only, which are described in Section \ref{sec:TrivType}.
\item
The couplings of adjacent quadrilaterals are compatible, see Definition \ref{dfn:Compatible}. Geometrically this means that each of the polygonal lines $C_1A_1A_2C_2$, $B_2A_2A_3B_3$, $C_3A_3A_4C_4$, $B_4A_4A_1B_1$ on Figure \ref{fig:NotPlanAngles} remains planar during the deformation. Due to 1) each of the $AB$-planes is orthogonal to each of the $AC$-planes.
\item
The angles of the base quadrilateral satisfy the condition
$$
\cos\delta_1 \cos\delta_3 = \cos\delta_2 \cos\delta_4
$$
Together with $\delta_1 + \delta_2 + \delta_3 + \delta_4$ this implies that the base quadrilateral is a trapezoid.
\end{enumerate}

The compatibility condition 2) can be made more explicit. For example if $\delta_1 + \delta_4 = \delta_2 + \delta_3 = \pi$ and $\alpha_i, \delta_i \ne \frac{\pi}{2}$ for all $i$, then the compatibility is equivalent to
\[
\gamma_1 + \gamma_4 = \pi, \quad \gamma_2 + \gamma_3 = \pi, \quad
\frac{\tan\alpha_1}{\tan\alpha_2} = \frac{\tan\alpha_4}{\tan\alpha_3} = \frac{\tan\delta_1}{\tan\delta_2}
\]
so that, in particular, the $AC$-planes are parallel.

This class of flexible Kokotsakis polyhedra was described by Sauer and Graf in \cite{SG31} and called ``\emph{T-Flache}'' (from ``Trapezflache''). Sauer and Graf proved their flexibility by a geometric argument.


\subsubsection{Orthodiagonal antiinvolutive type}
\label{sec:OrthoAnti}

\begin{enumerate}
\item
All quadrilaterals $Q_i$ are orthodiagonal and elliptic:
$$
\cos\alpha_i \cos\gamma_i = \cos\beta_i \cos\delta_i, \quad \alpha_i \pm \beta_i \pm \gamma_i \pm \delta_i \ne 0 (\mod 2\pi)
$$
\item
The involution factors at common vertices are opposite:
$$
\lambda_1 = -\lambda_2, \quad \mu_1 = -\mu_4, \quad \mu_2 = -\mu_3, \quad \lambda_3 = -\lambda_4
$$
\item
The following relations hold:
$$
\frac{\nu_1^2}{\lambda_1\mu_1} = \frac{\nu_3^2}{\lambda_3\mu_3}, \quad \frac{\nu_2^2}{\lambda_2\mu_2} = \frac{\nu_4^2}{\lambda_4\mu_4}, \quad \frac{\nu_1^2}{\lambda_1\mu_1} + \frac{\nu_2^2}{\lambda_2\mu_2} = 1
$$
\end{enumerate}

\subsection{Isogonal type}
\label{sec:Voss}
A polyhedron of \emph{basic} isogonal type is characterized by the following conditions.
\begin{enumerate}
\item
All quadrilaterals $Q_i$ are antiisograms:
\[
\alpha_i + \gamma_i = \pi = \beta_i + \delta_i, \quad i = 1, 2, 3, 4
\]
\item
One of the following equalities hold:
$$
\kappa_1 \kappa_3 = \kappa_2 \kappa_4,
$$
where $\kappa_i$ are as in Theorem \ref{thm:ParamLin}. Note that $\kappa_i$ may take two values if $Q_i$ is not an (anti)deltoid.
\end{enumerate}

A general polyhedron of isogonal type can be obtained from a basic one by switching some of the boundary strips. For example, switching the left and the right or the upper and the lower boundary strips transforms all antiisograms to isograms.

Flexible polyhedra of isogonal type were described in \cite{SG31} and named \emph{discrete Voss surfaces}. They have two flat realizations corresponding to $\alpha_i + \beta_i + \gamma_i + \delta_i = 2\pi$ and to $\alpha_i - \beta_i + \gamma_i - \delta_i = 0$.
Miura-ori \cite{Ni12} is of this type.

\subsection{Equimodular type}
Here we have two subcases:

\subsubsection{Elliptic case}
\label{sec:EllCase}
Assume that
\begin{equation}
\label{eqn:GenericAlphas}
\alpha_i \pm \beta_i \pm \gamma_i \pm \delta_i \neq
0 (\mod 2\pi)
\end{equation}
for all choices of $\pm$. Introduce the notation
$$
\sigma_i := \frac{\alpha_i + \beta_i + \gamma_i + \delta_i}2,
\quad \overline{\alpha}_i := \sigma_i - \alpha_i = \frac{-\alpha_i +
\beta_i + \gamma_i + \delta_i}2
$$
Similarly, $\overline{\beta}_i := \sigma_i - \beta_i$,
$\overline{\gamma}_i := \sigma_i - \gamma_i$, $\overline{\delta}_i
:= \sigma_i - \delta_i$. Denote
$$
a_i := \frac{\sin\alpha_i}{\sin\overline\alpha_i}, \quad b_i :=
\frac{\sin\beta_i}{\sin\overline\beta_i}, \quad c_i :=
\frac{\sin\gamma_i}{\sin\overline\gamma_i}, \quad d_i :=
\frac{\sin\delta_i}{\sin\overline\delta_i}, \quad
M_i := a_ib_ic_id_i
$$

The polyhedron is of equimodular elliptic type if the following three conditions
are satisfied.
\begin{enumerate}
\item
Quadrilaterals have equal moduli:
$$
M_1 = M_2 = M_3 = M_4
$$
\item
Amplitudes at common vertices are equal:
$$
a_1d_1 = a_2d_2, \quad b_2c_2 = b_3c_3, \quad a_3d_3 = a_4d_4, \quad b_4c_4 =
b_1c_1
$$
\item
The sum of shifts is a period:
$$
t_1 \pm t_2 \pm t_3 \pm t_4 \in \Lambda,
$$
where $t_i$ is determined up to the real half-period $2K$ by
$$
\dn t_i =
\begin{cases}
\sqrt{a_ic_i}, &\text{if } M < 1\\
\frac1{\sqrt{a_ic_i}}, &\text{if } M > 1
\end{cases}
$$
and the indeterminacy is resolved at the end of Theorem \ref{thm:ParamSnCn}.
\end{enumerate}

Together with the restriction $\delta_1 + \delta_2 + \delta_3 + \delta_4 = 2\pi$ this gives 9 conditions on 16 angles of a Kokotsakis polyhedron. However, as follows from \cite[\S 9, 10]{Kok33}, these conditions are not independent, so that a polyhedron of elliptic equimodular type depends on 8 parameters instead of 7.

\subsubsection{Conic case}
\label{sec:ConEquimod}
The polyhedron is of \emph{basic} equimodular conic type if the following
conditions are satisfied.
\begin{enumerate}
\item
\label{it:CycQuad}
All spherical quadrilaterals $Q_i$ are circumscribed: $\alpha_i + \gamma_i = \beta_i +
\delta_i$.
\item
Their amplitudes at common vertices are equal:
\begin{equation*}
\begin{aligned}
\frac{\sin\alpha_1\sin\delta_1}{\sin\beta_1\sin\gamma_1} =
\frac{\sin\alpha_2\sin\delta_2}{\sin\beta_2\sin\gamma_2} &\quad&
\frac{\sin\alpha_2\sin\beta_2}{\sin\gamma_2\sin\delta_2} =
\frac{\sin\alpha_3\sin\beta_3}{\sin\gamma_3\sin\delta_3}\\
\frac{\sin\alpha_3\sin\delta_3}{\sin\beta_3\sin\gamma_3} =
\frac{\sin\alpha_4\sin\delta_4}{\sin\beta_4\sin\gamma_4} &\quad&
\frac{\sin\alpha_4\sin\beta_4}{\sin\gamma_4\sin\delta_4} =
\frac{\sin\alpha_1\sin\beta_1}{\sin\gamma_1\sin\delta_1}
\end{aligned}
\end{equation*}
\item
The sum of shifts is a multiple of $2\pi$:
$$
t_1 \pm t_2 \pm t_3 \pm t_4 \in 2\pi\Z
$$
\end{enumerate}

Here $t_i \in \C$ is determined up to $\pi$ by
$$
\tan t_i = i \sqrt{\frac{\sin\beta_i\sin\delta_i}{\sin\alpha_i\sin\gamma_i}},
$$
(one of the $i$'s on the right hand side is the imaginary unit), with the indeterminacy resolved as in Theorem \ref{thm:ParamCon}.

Switching all four boundary strips results in replacing condition 1) by
\begin{enumerate}
\item[\ref{it:CycQuad}')]
All spherical quadrilaterals $Q_i$ have perimeter $2\pi$:
$\alpha_i + \beta_i + \gamma_i + \delta_i$.
\end{enumerate}
This is an origami.

\subsection{Conjugate-modular type}
\label{sec:ConjMod}
\subsubsection{First elliptic conjugate-modular type}
\label{sec:ConjMod1}
\begin{enumerate}
\item
All quadrilaterals $Q_i$ are elliptic of $\cn$-type: $M_i > 1$ for all $i$.
\item
The moduli of $Q_1$ and $Q_3$ are equal and conjugate to those of $Q_2$ and $Q_4$:
$$
M_1 = M_3, \quad M_2 = M_4, \quad \frac{1}{M_1} + \frac{1}{M_2} = 1
$$
\item
The amplitudes satisfy the following relations:
$$
\frac{p_1}{p_2} = \pm i \frac{k}{k'}, \quad \frac{q_3}{q_2} = \pm i \frac{k}{k'}, \quad \frac{p_3}{p_4} = \pm i \frac{k}{k'}, \quad \frac{q_1}{q_4} = \pm i \frac{k}{k'},
$$
where $k = \sqrt{1 - M_1^{-1}}$ is the Jacobi modulus of $Q_1$.
\item
For some of the eight choices of the $\pm$-signs on the left hand side, the following equation hold:
$$
t_1 \pm it_2 \pm t_3 \pm it_4 =
\begin{cases}
0 (\mod \Lambda), &\text{if } \frac{p_1}{p_2} = \frac{q_3}{q_2}, \frac{q_1}{q_4} = \frac{p_3}{p_4} \text{ or } \frac{p_1}{p_2} = -\frac{q_3}{q_2}, \frac{q_1}{q_4} = -\frac{p_3}{p_4}\\
2K (\mod \Lambda), &\text{if } \frac{p_1}{p_2} = \frac{q_3}{q_2}, \frac{q_1}{q_4} = -\frac{p_3}{p_4} \text{ or } \frac{p_1}{p_2} = -\frac{q_3}{q_2}, \frac{q_1}{q_4} = \frac{p_3}{p_4}
\end{cases}
$$
\end{enumerate}
The last condition implies that $t_1 \pm t_3 \in K\Z + iK'\Z$ and $t_2 \pm t_4 \in K'\Z + iK\Z$.

\subsubsection{Second elliptic conjugate-modular type}
\label{sec:ConjMod2}
\begin{enumerate}
\item
All quadrilaterals $Q_i$ are elliptic of $\cn$-type: $M_i > 1$ for all $i$.
\item
The moduli of $Q_1$ and $Q_4$ are equal and conjugate to those of $Q_2$ and $Q_3$:
$$
M_1 = M_4, \quad M_2 = M_3, \quad \frac{1}{M_1} + \frac{1}{M_2} = 1
$$
\item
The amplitudes satisfy the following relations:
$$
\frac{p_1}{p_2} = \pm i \frac{k}{k'}, \quad q_2 = q_3, \quad \frac{p_3}{p_4} = \pm i \frac{k'}{k}, \quad q_4 = q_1,
$$
where $k = \sqrt{1 - M_1^{-1}}$ is the Jacobi modulus of $Q_1$.
\item
For some of the eight choices of the $\pm$-signs on the left hand side, the following equation hold:
$$
t_1 \pm it_2 \pm it_3 \pm t_4 =
\begin{cases}
0 (\mod \Lambda), &\text{if } \frac{p_1}{p_2} = \frac{p_4}{p_3}\\
2K (\mod \Lambda), &\text{if } \frac{p_1}{p_2} = -\frac{p_4}{p_3}
\end{cases}
$$
\end{enumerate}

The last condition is rather restrictive. It implies that $t_1 \pm t_4 \in K\Z + iK'\Z$ and $t_2 \pm t_3 \in K'\Z + iK\Z$. Note that if $t_1 \pm t_4$ is a half-period of $\Lambda$, then the polyhedron is of linear compound type described in Section \ref{sec:LinComp}.

\subsection{Linear compound type}
\label{sec:LinComp}
A coupling $(Q_1, Q_2)$ is called \emph{linear} if it results in a linear dependence between $\tan \frac{\psi_1}2$ and $\tan \frac{\psi_2}2$:
\begin{equation}
\label{eqn:LinCoupling}
w_1 = cw_2
\end{equation}
(Strictly speaking, if one of the components of the configuration space of the coupling has this equation.) If the coupling $(Q_3, Q_4)$ is also linear with the same value of $c$, then the polyhedron is flexible, and we say that it belongs to the \emph{linear compound type}.
Note that switching the right or the left boundary strip transforms the linear dependence to $w_1w_2 = c'$.

Planar-symmetric and translational types from \cite{Sta10} are special cases of linear compounds.

Below we list all linear couplings $(Q_1,Q_2)$ together with the corresponding values of $c$.

\subsubsection{Coupling of (anti)isograms}
\label{sec:LinIso}
Both $Q_1$ and $Q_2$ are isograms or both are antiisograms. The coefficient $c$ in \eqref{eqn:LinCoupling} equals
\[
c = \frac{\kappa_2}{\kappa_1}
\]
with $\kappa_i$ as in Theorem \ref{thm:ParamLin}. Note that switching the lower boundary strip transforms isograms to antiisograms.

\subsubsection{Linear lateral (anti)deltoid coupling}
\label{sec:LinLatDelt}
\begin{enumerate}
\item
$Q_1$ and $Q_2$ are both deltoids or both antideltoids, coupled laterally:
$$
\begin{aligned}
&\alpha_i = \beta_i, \quad \gamma_i = \delta_i, \quad \text{ or}\\
&\alpha_i + \beta_i = \pi = \gamma_i + \delta_i
\end{aligned}
$$
\item
The coupling is involutive, that is $\lambda_1 = \lambda_2$. If $\alpha_i, \delta_i \ne \frac{\pi}2$, then this is equivalent to
$$
\frac{\tan\alpha_1}{\tan\delta_1} = \frac{\tan\alpha_2}{\tan\delta_2}
$$
\end{enumerate}
The coefficient $c$ equals
$$
c =
\begin{cases}
\frac{\xi_2}{\xi_1}, &\text{if }Q_1 \text{ and }Q_2 \text{ are deltoids},\\
\frac{\xi_1}{\xi_2}, &\text{if }Q_1 \text{ and }Q_2 \text{ are antideltoids}
\end{cases}
$$
with $\xi_i$ as in Definition \ref{dfn:InvFactors}.
In particular, if $\delta_i \ne \frac{\pi}2$, then $\frac{\xi_2}{\xi_1} = \frac{\cos\delta_1}{\cos\delta_2}$.

\subsubsection{Linear frontal (anti)deltoid coupling}
\label{sec:LinFrontDelt}
\begin{enumerate}
\item
$Q_1$ and $Q_2$ are both deltoids or both antideltoids, coupled frontally:
$$
\begin{aligned}
&\alpha_i = \delta_i, \quad \beta_i = \gamma_i, \quad &&i = 1,2, \text{ or}\\
&\alpha_i + \delta_i = \pi = \beta_i + \gamma_i, \quad &&i = 1,2
\end{aligned}
$$
\item
The coupling is reducible, which means that
$$
\frac{\sin\alpha_1}{\sin\beta_1} = \frac{\sin\alpha_2}{\sin\beta_2}
$$
\end{enumerate}
The coefficient $c$ equals
$$
c = \frac{\epsilon_1 \sqrt{-\mu_1}}{\epsilon_2 \sqrt{-\mu_2}} = \pm \sqrt{\frac{\sin(\alpha_1+\beta_1) \sin(\alpha_2-\beta_2)}{\sin(\alpha_2+\beta_2) \sin(\alpha_1-\beta_1)}},
$$
see Theorem \ref{thm:ParamDegenCon}. Switching the lower boundary strip transforms deltoids to antideltoids while preserving the coefficient $c$.

\subsubsection{Linear elliptic coupling}
\label{sec:LinEll}
Configuration spaces of $Q_1$ and $Q_2$ are elliptic curves. Besides, in the notation of Section \ref{sec:EllCase} we have
\begin{enumerate}
\item
$Q_1$ and $Q_2$ have equal moduli $M_1 = M_2$, as well as equal amplitudes at the common vertex:
$$
a_1d_1 = a_2d_2, \quad b_1c_1 = b_2c_2
$$
\item
The shift difference is a (real) half-period: $t_1-t_2 \in \{0,2K\}$. Or, equivalently,
$$
a_1c_1 = a_2c_2
$$
\end{enumerate}
Then we have
$$
c =
\begin{cases}
\sqrt{\frac{c_1d_1 - 1}{c_2d_2 - 1}}, & \text{if }\sin\sigma_1\sin\sigma_2 > 0,\\
-\sqrt{\frac{c_1d_1 - 1}{c_2d_2 - 1}}, & \text{if }\sin\sigma_1\sin\sigma_2 < 0
\end{cases}
$$

\subsubsection{Linear conic coupling}
\label{sec:LinCon}
\begin{enumerate}
\item
Quadrilaterals $Q_1$ and $Q_2$ are either both circumscribed or both have perimeter $2\pi$:
\begin{subequations}
\begin{equation}
\label{eqn:BothCyc}
\alpha_i + \gamma_i = \beta_i + \delta_i, \quad i = 1,2
\end{equation}
\begin{equation}
\label{eqn:BothPer2Pi}
\alpha_i + \beta_i + \gamma_i + \delta_i = 2\pi, \quad i = 1,2
\end{equation}
\end{subequations}
\item
Their amplitudes at the common vertex are equal, and the shifts difference is a half-period:
$$
\frac{\sin\alpha_1}{\sin\beta_1} = \frac{\sin\alpha_2}{\sin\beta_2},
\quad \frac{\sin\gamma_1}{\sin\delta_1} =
\frac{\sin\gamma_2}{\sin\delta_2}
$$
\end{enumerate}
The value of $c$ is given by the following table
\begin{center}
{\renewcommand{\arraystretch}{1.5}
\begin{tabular}{c|cc}
& \eqref{eqn:BothCyc}& \eqref{eqn:BothPer2Pi}\\
\hline
$\sin\sigma_1 \sin\sigma_2 > 0$ & $\bar c$ & $\bar c^{-1}$\\
$\sin\sigma_1 \sin\sigma_2 < 0$ & $-\bar c$ & $- \bar c^{-1}$
\end{tabular}
}
\quad where $\bar c =\sqrt{\frac{\frac{\sin\gamma_1\sin\delta_1}{\sin\alpha_1\sin\beta_1}-1}{\frac{\sin\gamma_2\sin\delta_2}{\sin\alpha_2\sin\beta_2}-1}}$
\end{center}

\subsection{Linearly conjugate types}
\label{sec:LinConj}
These are polyhedra, where $Q_2$ and $Q_4$ are antiisograms, so that they result in linear dependencies $z = \kappa_2 w_2$ and $u = \kappa_4 w_1$. Polyhedra $Q_1$ and $Q_3$ must be of the same type, and their equations are related by the above linear substitutions.

\subsubsection{Linearly conjugate antideltoids}
\label{sec:LinDelt}
\begin{enumerate}
\item
$Q_1$ and $Q_3$ are ``parallel'' antideltoids:
$$
\alpha_1 + \delta_1 = \pi = \beta_1 + \gamma_1, \quad \alpha_3 + \beta_3 = \pi = \gamma_3 + \delta_3
$$
\item
$Q_2$ and $Q_4$ are antiisograms:
$$
\alpha_2 + \gamma_2 = \pi = \beta_2 + \gamma_2, \quad \alpha_4 + \gamma_4 = \pi = \beta_4 + \delta_4
$$
\item
The following conditions are satisfied:
$$
\kappa_4^2 \mu_1 = \lambda_3, \quad \kappa_4 \zeta_1 = \kappa_2 \xi_3,
$$
where $\mu_1, \zeta_1, \lambda_3, \xi_3$ are as in Definition \ref{dfn:InvFactors}, and $\kappa_2, \kappa_4$ as in Theorem \ref{thm:ParamLin}.
\end{enumerate}

\subsubsection{Linearly conjugate conics}
\label{sec:LinConjCon}
\begin{enumerate}
\item
$Q_1$ and $Q_3$ are conic quadrilaterals of perimeter $2\pi$:
$$
\alpha_1 + \beta_1 + \gamma_1 + \delta_1 = 2\pi = \alpha_3 + \beta_3 + \gamma_3 + \delta_3,
$$
and equations $\alpha_i \pm \beta_i \pm \gamma_i \pm \delta_i \equiv 0 (\mod\, 2\pi)$ have for $i = 1, 3$ no other solutions.
\item
$Q_2$ and $Q_4$ are antiisograms:
$$
\alpha_2 + \gamma_2 = \pi = \beta_2 + \gamma_2, \quad \alpha_4 + \gamma_4 = \pi = \beta_4 + \delta_4
$$
\item
The following relations hold:
$$
q_3 = |\kappa_2| p_1, \quad q_1 = |\kappa_4| p_3, \quad t_1 =
\begin{cases}
t_3, &\text{if }c_2c_4 > 0\\
t_3 + \pi, &\text{if }c_2c_4 < 0
\end{cases}
$$
\end{enumerate}

\subsubsection{Linearly conjugate elliptics}
\label{sec:LinConjEll}
\begin{enumerate}
\item
Quadrilaterals $Q_1$ and $Q_3$ are elliptic and have the same modulus:
$$
M_1 = M_3
$$
\item
$Q_2$ and $Q_4$ are antiisograms:
$$
\alpha_2 + \gamma_2 = \pi = \beta_2 + \gamma_2, \quad \alpha_4 + \gamma_4 = \pi = \beta_4 + \delta_4
$$
\item
The following relations hold:
$$
p_1 = |\kappa_2| q_3, \quad p_3 = |\kappa_4| q_1, \quad t_1 =
\begin{cases}
t_3, &\text{if }\kappa_2\kappa_4 > 0\\
t_3 + 2K, &\text{if }\kappa_2\kappa_4 < 0
\end{cases}
$$
\end{enumerate}

\subsection{Chimeras}
\label{sec:TransType}
In the Borges' taxonomy of animals these would correspond to the ``innumerable ones'' or ``et cetera''.

Kokotsakis polyhedra listed here contain quadrilaterals of different types. Potentially they could be used to join pieces of flexible quad surfaces of equimodular, orthodiagonal, or isogonal types.

\subsubsection{Conic-deltoid}
\label{sec:3e}
\begin{enumerate}
\item
Quadrilaterals $Q_2$ and $Q_3$ have perimeter $2\pi$:
$$
\alpha_2 + \beta_2 + \gamma_2 + \delta_2 = 2\pi, \quad \alpha_3 + \beta_3 + \gamma_3 + \delta_3 = 2\pi
$$
and form a reducible coupling: $q_2 = q_3$.
\item
Quadrilaterals $Q_1$ and $Q_4$ are antideltoids reducibly coupled to $Q_2$ and $Q_3$:
$$
\begin{aligned}
\alpha_1 + \delta_1 = \pi = \beta_1 + \gamma_1, &\quad p_1 = p_2\\
\alpha_4 + \delta_4 = \pi = \beta_4 + \gamma_4, &\quad p_4 = p_3
\end{aligned}
$$
\item
The number $t_0$ determined by
$$
\epsilon_1 \sqrt{-\mu_1} = \epsilon_4 \sqrt{-\mu_4} e^{it_0}
$$
is related to the shifts of $Q_2$ and $Q_3$ through
$$
t_0 = \pm t_1 \pm t_2,
$$
where any of the four combinations of signs are allowed.
\end{enumerate}

\subsubsection{First orthodiagonal-isogram}
\label{sec:TransType1}
\begin{enumerate}
\item
The quadrilateral $Q_1$ is orthodiagonal:
$$
\cos\alpha_1 \cos\gamma_1 = \cos\beta_1 \cos\delta_1
$$
\item
$Q_2$ and $Q_4$ are antideltoids that form involutive couplings with $Q_1$:
\begin{gather*}
\alpha_2 + \beta_2 = \pi = \gamma_2 + \delta_2, \quad \lambda_1 = \lambda_2\\
\alpha_4 + \delta_4 = \pi = \beta_4 + \gamma_4, \quad \mu_1 = \mu_4
\end{gather*}
(In particular, this implies that $Q_1$ is not an (anti)deltoid.)
\item
$Q_3$ is an isogram: $\alpha_3 = \gamma_3$, $\beta_3 = \delta_3$.
\item
the parameters of $Q_i$ must satisfy the relation
\[
\nu_1 = \kappa_3 \xi_2 \zeta_4
\]
with $\kappa$, $\nu$, $\xi$, and $\zeta$ as in Theorem \ref{thm:ParamLin} and Definition \ref{dfn:InvFactors}. In particular, if $\delta_i \ne \frac{\pi}2$ for $i = 1,2,4$, then this condition becomes
$$
\kappa_3 \cos\delta_1 = \cos\delta_2 \cos\delta_4
$$
\end{enumerate}

\subsubsection{Second orthodiagonal-isogram}
\label{sec:TransType2}
\begin{enumerate}
\item
The quadrilateral $Q_1$ is orthodiagonal:
$$
\cos\alpha_1 \cos\gamma_1 = \cos\beta_1 \cos\delta_1
$$
Besides, $Q_1$ is neither deltoid nor antideltoid.
\item
$Q_2$ is an antideltoid forming an involutive coupling with $Q_1$:
$$
\alpha_2 + \beta_2 = \pi = \gamma_2 + \delta_2, \quad \lambda_1 = \lambda_2
$$
\item
$Q_3$ is a deltoid coupled with $Q_2$ frontally, and $Q_4$ is an antiisogram:
$$
\alpha_3 = \beta_3, \quad \gamma_3 = \delta_3, \quad \alpha_4 + \gamma_4 = \beta_4 + \delta_4
$$
\item
The following two equations hold:
$$
\kappa_4^2 \mu_1 = \lambda_3, \quad \kappa_4 \nu_1 = \xi_2 \xi_3,
$$
with $\kappa$, $\lambda$, $\mu$, $\nu$, $\xi$ as in Theorem \ref{thm:ParamLin} and Definition \ref{dfn:InvFactors}.
\end{enumerate}

\subsubsection{Conic-isogram}
\label{sec:IConIso}
\begin{enumerate}
\item
The quadrilateral $Q_2$ is a conic quadrilateral of perimeter $2\pi$:
$$
\alpha_2 + \beta_2 + \gamma_2 + \delta_2 = 2\pi,
$$
equation $\alpha_2 \pm \beta_2 \pm \gamma_2 \pm \delta_2 \equiv 0 (\mod 2\pi)$ has no other solutions.
\item
Quadrilaterals $Q_1$ and $Q_3$ are antideltoids reducibly coupled with $Q_2$:
$$
p_1 = p_2, \quad q_2 = q_3
$$
\item
$Q_4$ is an isogram with
\[
2\epsilon_1 i \sqrt{-\mu_1} e^{\pm it_2} \kappa_4 = q_3 \xi_3
\]
\end{enumerate}

\subsubsection{Conic-antiisogram}
\label{sec:IConAntiiso}
\begin{enumerate}
\item
The quadrilateral $Q_2$ is a conic quadrilateral of perimeter $2\pi$:
$$
\alpha_2 + \beta_2 + \gamma_2 + \delta_2 = 2\pi,
$$
equation $\alpha_2 \pm \beta_2 \pm \gamma_2 \pm \delta_2 \equiv 0 (\mod 2\pi)$ has no other solutions.
\item
Quadrilaterals $Q_1$ and $Q_3$ are antideltoids reducibly coupled with $Q_2$:
$$
p_1 = p_2, \quad q_2 = q_3
$$
\item
$Q_4$ is an antiisogram with
\[
\epsilon_1 i \sqrt{-\mu_1} e^{\pm it_2} q_3 \xi_3 \kappa_4 = 2
\]
\end{enumerate}

\subsubsection{Frontally coupled deltoid and antideltoid vs. reducibly coupled elliptics}
\label{sec:DeltAntideltEll}
\begin{enumerate}
\item
$(Q_1,Q_2)$ is a frontal coupling of a deltoid with an antideltoid:
$$
\alpha_1 = \delta_1, \quad \beta_1 = \gamma_1, \quad \alpha_2 + \delta_2 = \pi = \beta_2 + \gamma_2,
$$
which is irreducible: $\frac{\sin\alpha_1}{\sin\beta_1} \ne \frac{\sin\alpha_2}{\sin\beta_2}$.
\item
Quadrilaterals $Q_3$ and $Q_4$ are elliptic with configuration spaces para\-met\-rized by $\sn$ and form a reducible coupling:
$$
M_3 = M_4 < 1, \quad p_3 = p_4
$$
\item
either the sum or the difference of shifts equals a quarter-period with the imaginary part $\frac{K'}2$:
$$
\pm t_3 \pm t_4 = lK + i\frac{K'}2
$$
\item
The following relations hold:
$$
\begin{aligned}
\mu_1 &= \frac{q_1^2}k, &\mu_2 &= \frac{q_2^2}k, &\zeta_1\zeta_2 &= \frac{2(1+k)}{k\sqrt{k}} q_1q_2, &\text{ if } l &= 0\\
\mu_1 &= -\frac{q_1^2}k, &\mu_2 &= -\frac{q_2^2}k, &\zeta_1\zeta_2 &= \frac{2i(1-k)}{k\sqrt{k}} q_1q_2, &\text{ if } l &= 1\\
\mu_1 &= \frac{q_1^2}k, &\mu_2 &= \frac{q_2^2}k, &\zeta_1\zeta_2 &= -\frac{2(1+k)}{k\sqrt{k}} q_1q_2, &\text{ if } l &= 2\\
\mu_1 &= -\frac{q_1^2}k, &\mu_2 &= -\frac{q_2^2}k, &\zeta_1\zeta_2 &= -\frac{2i(1-k)}{k\sqrt{k}} q_1q_2, &\text{ if } l &= 3
\end{aligned}
$$
\end{enumerate}

\subsubsection{Reducible conic-deltoid coupling vs. isogram-deltoid coupling}
\label{sec:2b}
\begin{enumerate}
\item
The quadrilateral $Q_3$ is a conic quadrilateral of perimeter $2\pi$:
$$
\alpha_3 + \beta_3 + \gamma_3 + \delta_3 = 2\pi,
$$
equation $\alpha_3 \pm \beta_3 \pm \gamma_3 \pm \delta_3 \equiv 0 (\mod 2\pi)$ has no other solutions.
\item
The quadrilateral $Q_4$ is an antideltoid:
$$
\alpha_4 + \delta_4 = \pi = \beta_4 + \gamma_4
$$
that forms a reducible coupling with $Q_3$:
$$
p_3 = p_4
$$
\item
The quadrilateral $Q_1$ is an antideltoid:
$$
\alpha_1 + \delta_1 = \pi = \beta_1 + \gamma_1
$$
\item
The quadrilateral $Q_2$ is an antiisogram:
$$
\alpha_2 + \gamma_2 = \pi = \beta_2 + \delta_2
$$
\item
The following relations hold:
$$
\mu_1 = \mu_4 e^{\pm 2it_3}, \quad \zeta_1 = c_2 \frac{2i \epsilon_4\sqrt{-\mu_4} e^{\pm it_3}}{q_2},
$$
where $\pm$ in the first equation must match the $\pm$ in the second equation, and $z = c_2 w_2$ is the equation of an irreducible component of~$Z_2$.
\end{enumerate}

\subsubsection{Three reducibly coupled conics vs. an isogram}
\label{sec:2ciiCon}
\begin{enumerate}
\item
Quadrilaterals $Q_3$, $Q_4$, $Q_1$ are conic with perimeter $2\pi$:
$$
\alpha_i + \beta_i + \gamma_i + \delta_i = 2\pi, \quad \text{for }i = 3, 4, 1,
$$
and equations $\alpha_i \pm \beta_i \pm \gamma_i \pm \delta_i \equiv 0 (\mod\, 2\pi)$ have no other solutions.
\item
Couplings $(Q_3, Q_4)$ and $(Q_4, Q_1)$ are reducible:
$$
p_3 = p_4, \quad q_4 = q_1
$$
\item
Quadrilateral $Q_2$ is an antiisogram:
$$
\alpha_2 + \gamma_2 = \pi = \beta_2 + \delta_2
$$
\item
The following relations hold:
$$
q_3 = |\kappa_2|p_1, \quad t_1 \pm t_3 \pm t_4 =
\begin{cases}
0, \text{if }\kappa_2 > 0\\
\pi, \text{if }\kappa_2 < 0
\end{cases}
$$
\end{enumerate}

\subsubsection{Three reducibly coupled elliptics vs. an isogram}
\label{sec:2ciiEll}
\begin{enumerate}
\item
Quadrilaterals $Q_3$, $Q_4$, $Q_1$ are elliptic and have equal moduli:
$$
M_1 = M_3 = M_4
$$
\item
Couplings $(Q_3, Q_4)$ and $(Q_4, Q_1)$ are reducible:
$$
p_3 = p_4, \quad q_4 = q_1
$$
\item
Quadrilateral $Q_2$ is an antiisogram:
$$
\alpha_2 + \gamma_2 = \pi = \beta_2 + \delta_2
$$
\item
The following relations hold:
$$
p_1 = |\kappa_2|q_3, \quad t_1 \pm t_3 \pm t_4 =
\begin{cases}
0, \text{if }\kappa_2 > 0\\
2K, \text{if }\kappa_2 < 0
\end{cases}
$$
\end{enumerate}

\subsubsection{Involutively coupled orthodiagonal and antideltoid vs. reducibly coupled conic and deltoid}
\label{sec:3bii}
\begin{enumerate}
\item
The quadrilateral $Q_1$ is orthodiagonal:
$$
\cos\alpha_1 \cos\gamma_1 = \cos\beta_1 \cos\delta_1,
$$
but is neither deltoid nor antideltoid.
\item
$Q_2$ is an antideltoid forming an involutive coupling with $Q_1$:
$$
\alpha_2 + \beta_2 = \pi = \gamma_2 + \delta_2, \quad \lambda_1 = \lambda_2
$$
\item
$Q_3$ is circumscribed:
$$
\alpha_3 = \gamma_3, \quad \beta_3 = \delta_3
$$
\item
$Q_4$ is a deltoid forming with $Q_3$ a reducible coupling:
$$
\alpha_4 = \delta_4, \quad \beta_4 = \gamma_4, \quad
\frac{\sin^2\alpha_4}{\sin^2\beta_4} = \frac{\sin\alpha_3\sin\delta_3}{\sin\beta_3\sin\gamma_3}
$$
\item
The following relations hold:
$$
\mu_1 = \mu_4 e^{\pm 2it_3}, \quad \frac{\nu_1}{\xi_2} =  \frac{2i \epsilon_4 \sqrt{-\mu_4} e^{\pm it_3}}{q_3}
$$
(the $\pm$ in the first equation must match the $\pm$ in the second equation).
\end{enumerate}

\subsection{Trivial types}
\label{sec:TrivType}
We call an isometric deformation of a Kokotsakis polyhedron \emph{trivial}, if it preserves one of the dihedral angles at the central face. In Section \ref{sec:TrivClass} trivial deformations are classified, which leads to the four types shown on Figure \ref{fig:TrivFlex}.
During an isometric deformation, shaded faces move while white faces stay fixed in $\R^3$. Any other trivially flexible polyhedron is obtained from one of these by switching some of the immobile boundary strips.

\begin{figure}[ht] 
\centering
\includegraphics{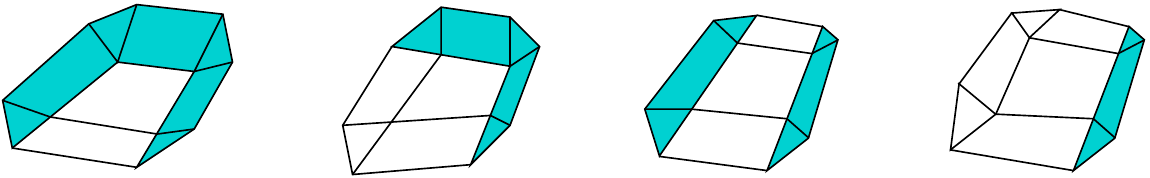}
\caption{Trivially flexible Kokotsakis polyhedra.}
\label{fig:TrivFlex}
\end{figure}

\section{Configuration space of a spherical four-bar linkage}
\label{sec:ConfSpace}
\subsection{Side lengths of a spherical quadrilateral}
A spherical quadrilateral is a collection of four points (vertices) on the unit sphere, together with a cyclic order such that no two consecutive vertices form a pair of antipodes. Thus for any two consecutive vertices there is a unique great circle passing through them; the shortest of its arcs joining the vertices is called a side of the quadrilateral. We allow the edges of a quadrilateral to intersect and overlap, except in one of the ways shown on Figure \ref{fig:DegenQuad}.

\begin{figure}[ht]
\centering
\includegraphics{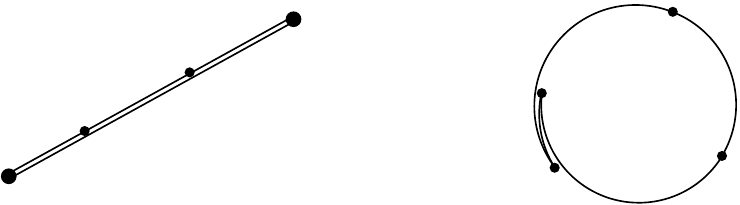}
\caption{The only degenerate configurations that don't count as quadrilaterals.}
\label{fig:DegenQuad}
\end{figure}

\begin{lem}
\label{lem:SpherQuadLengths}
There exists a spherical quadrilateral with side lengths $(\alpha, \beta, \gamma, \delta)$ if and only if the inequalities
\begin{equation}
\label{eqn:QuadSides}
0 < \alpha < \pi, \quad \alpha < \beta + \gamma + \delta < \alpha + 2\pi
\end{equation}
are fulfilled, as are all those obtained by exchanging $\alpha$ with $\beta$, $\gamma$, or $\delta$.
\end{lem}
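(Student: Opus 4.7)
The plan is to reduce both directions of the lemma to the standard spherical triangle conditions by decomposing a putative quadrilateral along its diagonal $V_1V_3$ into two spherical triangles with side-length triples $(\alpha,\beta,d)$ and $(\gamma,\delta,d)$, where $d\in(0,\pi)$ is the length of the diagonal. I will use the well-known fact that a triple $(x,y,z)$ of numbers in $(0,\pi)$ arises as the side lengths of a non-degenerate spherical triangle if and only if $|x-y|<z<x+y$ and $x+y+z<2\pi$.

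For necessity, I would first note that $0<\alpha,\beta,\gamma,\delta<\pi$ is built into the definition of a side as the shortest arc between two distinct non-antipodal points. Given a quadrilateral, I would set $d=d(V_1,V_3)$ and apply the triangle conditions on either side of the diagonal. Combining the four resulting strict inequalities yields
\[
|\alpha-\beta|<\min(\gamma+\delta,\,2\pi-\gamma-\delta),\qquad |\gamma-\delta|<\min(\alpha+\beta,\,2\pi-\alpha-\beta),
\]
and expanding each absolute value in both possible signs produces exactly the eight inequalities of \eqref{eqn:QuadSides} (the four variants obtained by exchanging $\alpha$ with $\beta$, $\gamma$, or $\delta$). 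Strictness is enforced by the non-degeneracy assumption of Figure~\ref{fig:DegenQuad}, which rules out the quadrilateral collapsing onto a single spherical triangle or onto an arc.

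For sufficiency, I would run the same algebra in reverse: the set
\[
\bigl(\max(|\alpha-\beta|,|\gamma-\delta|),\ \min(\alpha+\beta,\gamma+\delta,2\pi-\alpha-\beta,2\pi-\gamma-\delta)\bigr)
\]
of admissible diagonal lengths is a non-empty open subinterval of $(0,\pi)$, since the six pairwise comparisons of its endpoints unpack to the eight hypothesized inequalities together with trivial consequences of $\alpha,\beta,\gamma,\delta\in(0,\pi)$. Picking any $d$ in this interval, I would construct the two spherical triangles with the prescribed sides, glue them along a common arc of length $d$ placed on opposite sides of that arc, and read off the resulting spherical quadrilateral $V_1V_2V_3V_4$.

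The main obstacle is handling the boundary case $d(V_1,V_3)=\pi$ in the necessity argument, where $V_1$ and $V_3$ are antipodal, the diagonal is not unique, and the two ``triangles'' degenerate into great semicircles that the triangle inequalities do not directly cover; in that case I would simply repeat the argument along the diagonal $V_2V_4$, and check that at least one of the two decompositions is always non-degenerate. The remaining bookkeeping---verifying that the glued quadrilateral is not one of the excluded configurations of Figure~\ref{fig:DegenQuad}---follows from the strictness of all the triangle inequalities that went into the construction.
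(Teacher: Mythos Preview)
Your sufficiency argument---showing that the open interval of admissible diagonal lengths is non-empty and then gluing two spherical triangles along that diagonal---is exactly the paper's approach. For necessity the paper does something slightly different from you: rather than decomposing along a diagonal, it observes that $0<\alpha<\pi$ and $\alpha<\beta+\gamma+\delta$ follow directly from the definition (the latter is the generalized triangle inequality, with equality excluded by Figure~\ref{fig:DegenQuad}), and then obtains $\beta+\gamma+\delta<\alpha+2\pi$ by replacing one vertex by its antipode and applying the lower bound to the new quadrilateral. This antipode trick is shorter and avoids any case analysis on the diagonal.

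Your necessity argument has a genuine (though easily repaired) gap. Your fallback when $d(V_1,V_3)=\pi$ is to pass to the other diagonal and ``check that at least one of the two decompositions is always non-degenerate'', but this check fails: take $V_1,V_3$ to be the north and south poles and $V_2,V_4$ a pair of antipodal points on the equator; then all four sides have length $\pi/2$ and \emph{both} diagonals have length $\pi$. The fix is immediate: if $V_1$ and $V_3$ are antipodal then every point lies at complementary distances from them, so $\alpha+\beta=\pi$ and $\gamma+\delta=\pi$, and all of \eqref{eqn:QuadSides} reduces to $0<\alpha,\beta,\gamma,\delta<\pi$. Alternatively, adopting the paper's antipode trick sidesteps the issue entirely.
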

\begin{proof}
The necessity of the first three inequalities in \eqref{eqn:QuadSides} follows from our definition of a spherical quadrilateral. To prove the necessity of the fourth one, replace a vertex by its antipode and apply the third inequality. Note that the equality cases in the third and the fourth inequalities correspond to degenerate quadrilaterals shown on Figure \ref{fig:DegenQuad}.

To prove the sufficiency, note that a spherical triangle with side lengths $\alpha, \beta, \epsilon$ exists for all $\epsilon \in [|\alpha-\beta|, \min\{\alpha+\beta, 2\pi-\alpha-\beta\}]$. The inequalities \eqref{eqn:QuadSides} together with those obtained by permutations imply that
$$
[|\alpha-\beta|, \min\{\alpha+\beta, 2\pi-\alpha-\beta\}] \cap [|\gamma-\delta|, \min\{\gamma+\delta, 2\pi-\gamma-\delta\}] \ne \emptyset,
$$
therefore we can construct a quadrilateral by putting two triangles together.
\end{proof}

Recall the notation
$$
\sigma = \frac{\alpha + \beta + \gamma + \delta}2, \qquad \overline\alpha = \sigma - \alpha = \frac{-\alpha + \beta + \gamma + \delta}2,
$$
$\overline\beta = \sigma - \beta$, $\overline\gamma = \sigma - \gamma$, $\overline\delta = \sigma - \delta$.
Inequalities \eqref{eqn:QuadSides} imply similar inequalities for $\overline\alpha, \overline\beta, \overline\gamma, \overline\delta$, so that the latter are also side lengths of a quadrilateral.

The following lemma is proved by using standard trigonometric identities.
\begin{lem}
\label{lem:BarRelations}
For any $\alpha, \beta, \gamma, \delta \in \R$ and for $\sigma$, $\overline\alpha, \overline\beta, \overline\gamma, \overline\delta$ as defined above the following identities (and all obtained from them by permutations) hold.
$$
\overline\alpha + \overline\beta = \gamma + \delta
$$
$$
\sin\overline\alpha \sin\overline\beta - \sin\alpha \sin\beta = \sin\sigma \sin(\sigma-\alpha-\beta) = \sin\gamma\sin\delta - \sin\overline\gamma \sin\overline\delta
$$
\begin{multline*}
\sin\alpha \sin\beta - \sin\overline\gamma \sin\overline\delta = \sin(\sigma-\alpha-\gamma)\sin(\sigma-\beta-\gamma)\\
= \sin \overline \alpha \sin\overline\beta - \sin\gamma \sin\delta
\end{multline*}
\begin{multline*}
\sin\sigma \sin(\sigma-\alpha-\beta) \sin(\sigma-\beta-\gamma) \sin(\sigma-\alpha-\gamma)\\
= \sin\alpha \sin\beta \sin\gamma \sin\delta - \sin\overline\alpha \sin\overline\beta \sin\overline\gamma \sin\overline\delta
\end{multline*}
\end{lem}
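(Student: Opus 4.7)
The first identity $\overline\alpha+\overline\beta=\gamma+\delta$ is immediate from $2\sigma=\alpha+\beta+\gamma+\delta$, and all variants follow by permutation. For the remaining identities I would reduce everything to the two product-to-sum formulas
\[
\sin A\sin B=\tfrac12\bigl[\cos(A-B)-\cos(A+B)\bigr],\qquad \cos X-\cos Y=-2\sin\tfrac{X+Y}{2}\sin\tfrac{X-Y}{2},
\]
and combine them with the arithmetic identities $\overline\alpha-\overline\beta=-(\alpha-\beta)$, $\overline\alpha+\overline\beta=\gamma+\delta$, and their analogues.

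For the second identity, expanding both $\sin\overline\alpha\sin\overline\beta$ and $\sin\alpha\sin\beta$ via the first formula and using $\cos(\overline\alpha-\overline\beta)=\cos(\alpha-\beta)$ collapses their difference to $\tfrac12[\cos(\alpha+\beta)-\cos(\overline\alpha+\overline\beta)]=\tfrac12[\cos(\alpha+\beta)-\cos(\gamma+\delta)]$. Applying the sum-to-product formula and recognizing $\tfrac{(\alpha+\beta)+(\gamma+\delta)}{2}=\sigma$ and $\tfrac{(\alpha+\beta)-(\gamma+\delta)}{2}=-(\sigma-\alpha-\beta)$ produces $\sin\sigma\sin(\sigma-\alpha-\beta)$. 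The equality with $\sin\gamma\sin\delta-\sin\overline\gamma\sin\overline\delta$ follows from the symmetry of the calculation together with $\sin(\sigma-\gamma-\delta)=-\sin(\sigma-\alpha-\beta)$. The third identity is obtained by the same recipe, this time using $\overline\gamma-\overline\delta=-(\gamma-\delta)$ and $\overline\gamma+\overline\delta=\alpha+\beta$, which reduces $\sin\alpha\sin\beta-\sin\overline\gamma\sin\overline\delta$ to $\tfrac12[\cos(\alpha-\beta)-\cos(\gamma-\delta)]$; the sum-to-product formula then yields, up to sign bookkeeping, $\sin(\sigma-\alpha-\gamma)\sin(\sigma-\beta-\gamma)$.

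For the fourth identity, rather than expanding an eight-fold product I would leverage what has already been proved. Write $A=\sin\alpha\sin\beta$, $B=\sin\gamma\sin\delta$, $\overline A=\sin\overline\alpha\sin\overline\beta$, $\overline B=\sin\overline\gamma\sin\overline\delta$, and set $S=\sin\sigma\sin(\sigma-\alpha-\beta)$. By identity (2) we have $\overline A=A+S$ and $\overline B=B-S$, hence
\[
AB-\overline A\,\overline B=AB-(A+S)(B-S)=S(A-B+S).
\]
But $A-B+S=\sin\alpha\sin\beta-\sin\gamma\sin\delta+S=\sin\alpha\sin\beta-\overline B$, which by identity (3) equals $\sin(\sigma-\alpha-\gamma)\sin(\sigma-\beta-\gamma)$. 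Substituting gives exactly the required four-fold product.

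The whole argument is bookkeeping of signs and arguments; the only non-routine point is recognising that identity (4) is a formal algebraic consequence of (2) and (3) via the factorization $AB-\overline A\overline B=S(A-B+S)$. That short algebraic step is what prevents the proof from degenerating into a brute expansion, and it is where I would focus attention when writing out the details.
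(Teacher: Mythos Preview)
Your proposal is correct and is essentially what the paper has in mind: the paper's proof is the single sentence ``proved by using standard trigonometric identities,'' and your product-to-sum reductions are exactly that. Your derivation of identity (4) from (2) and (3) via the factorization $AB-\overline A\,\overline B=S(A-B+S)$ is a clean way to avoid a brute expansion and is more explicit than anything in the paper.
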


\subsection{The configuration space as an algebraic curve}
\label{sec:ConfSpaceDef}
For any numbers $\alpha, \beta, \gamma, \delta$ that satisfy inequalities \eqref{eqn:QuadSides}, the associated \emph{configuration space} is the set of all spherical quadrilaterals with side lengths $\alpha, \beta, \gamma, \delta$ in this cyclic order, up to an orientation-preserving isometry. A quadrilateral with given side lengths is uniquely determined by the values of two adjacent angles; on the other hand, these angles satisfy a certain relation. By performing the substitution
\begin{equation}
\label{eqn:ZWSubst}
z = \tan \frac{\phi}2, \quad w = \tan \frac{\psi}2,
\end{equation}
where $\phi$ and $\psi$ are as on Figure \ref{fig:FourbarZW},
we arrive at a polynomial equation in $z$ and $w$.

\begin{figure}[ht]
\centering
\begin{picture}(0,0)%
\includegraphics{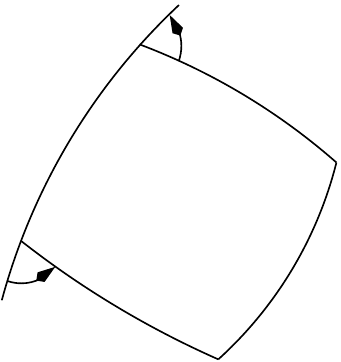}%
\end{picture}%
\setlength{\unitlength}{4144sp}%
\begingroup\makeatletter\ifx\SetFigFont\undefined%
\gdef\SetFigFont#1#2#3#4#5{%
  \reset@font\fontsize{#1}{#2pt}%
  \fontfamily{#3}\fontseries{#4}\fontshape{#5}%
  \selectfont}%
\fi\endgroup%
\begin{picture}(1546,1639)(533,-1239)
\put(616,-1028){\makebox(0,0)[lb]{\smash{{\SetFigFont{10}{12.0}{\rmdefault}{\mddefault}{\updefault}{\color[rgb]{0,0,0}$\phi$}%
}}}}
\put(997,-1125){\makebox(0,0)[lb]{\smash{{\SetFigFont{10}{12.0}{\rmdefault}{\mddefault}{\updefault}{\color[rgb]{0,0,0}$\alpha$}%
}}}}
\put(687,-190){\makebox(0,0)[lb]{\smash{{\SetFigFont{10}{12.0}{\rmdefault}{\mddefault}{\updefault}{\color[rgb]{0,0,0}$\delta$}%
}}}}
\put(1438,253){\makebox(0,0)[lb]{\smash{{\SetFigFont{10}{12.0}{\rmdefault}{\mddefault}{\updefault}{\color[rgb]{0,0,0}$\psi$}%
}}}}
\put(1716,  5){\makebox(0,0)[lb]{\smash{{\SetFigFont{10}{12.0}{\rmdefault}{\mddefault}{\updefault}{\color[rgb]{0,0,0}$\gamma$}%
}}}}
\put(1879,-897){\makebox(0,0)[lb]{\smash{{\SetFigFont{10}{12.0}{\rmdefault}{\mddefault}{\updefault}{\color[rgb]{0,0,0}$\beta$}%
}}}}
\end{picture}%
\caption{Angles $\phi$ and $\psi$ determine the shape of the quadrilateral.}
\label{fig:FourbarZW}
\end{figure}

\begin{lem}
\label{lem:EqConfSpace}
The configuration space of quadrilaterals with side lengths $\alpha$, $\beta$, $\gamma$, $\delta$ in this cyclic order is the solution set of the equation
\begin{equation}
\label{eqn:AdjZ}
c_{22} z^2 w^2 + c_{20} z^2 + c_{02} w^2 + 2 c_{11} zw + c_{00} = 0,
\quad \text{where}
\end{equation}
\begin{equation*}
\begin{aligned}
c_{22} &= \sin\frac{\alpha + \beta + \gamma - \delta}2
\sin\frac{\alpha - \beta + \gamma - \delta}2 = \sin\overline\delta \sin(\sigma-\beta-\delta)\\
c_{20} &= \sin\frac{\alpha - \beta - \gamma - \delta}2
\sin\frac{\alpha + \beta - \gamma - \delta}2 = \sin\overline\alpha \sin(\sigma-\beta-\alpha)\\
c_{02} &= \sin\frac{\alpha + \beta - \gamma + \delta}2
\sin\frac{\alpha - \beta - \gamma + \delta}2 = \sin\overline\gamma \sin(\sigma-\beta-\gamma)\\
c_{11} &= -\sin \alpha \sin \gamma\\
c_{00} &= \sin\frac{\alpha - \beta + \gamma + \delta}2
\sin\frac{\alpha + \beta + \gamma + \delta}2 = \sin\overline\beta \sin\sigma
\end{aligned}
\end{equation*}
\end{lem}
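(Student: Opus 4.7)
The plan is to translate the closure condition of the spherical quadrilateral into a trigonometric equation in $\phi$ and $\psi$, and then to apply the Weierstrass substitution $z=\tan(\phi/2)$, $w=\tan(\psi/2)$. First, I would place two adjacent vertices of the quadrilateral in a canonical position on the unit sphere---say $B$ at the north pole and $C$ along a fixed meridian at spherical distance~$\beta$. The angle $\phi$ at $B$ together with the lengths $\alpha,\beta$ then determines $A$ explicitly in spherical coordinates, and $\psi$ at $C$ together with $\beta,\gamma$ determines $D$. The remaining constraint is $|AD|=\delta$, i.e.\ $A\cdot D=\cos\delta$, and expanding this dot product gives a trigonometric relation of the shape
\[
A_0 + A_1\cos\phi + A_2\cos\psi + A_3\cos\phi\cos\psi + A_4\sin\phi\sin\psi = 0,
\]
where each $A_i$ is an explicit product of sines and cosines of $\alpha,\beta,\gamma,\delta$; in particular $A_4 = \sin\alpha\sin\gamma$. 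No terms linear in $\sin\phi$ or $\sin\psi$ alone appear, because the configuration is invariant under the reflection $(\phi,\psi)\mapsto(-\phi,-\psi)$ in the $BC$-plane.

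Next, substituting the Weierstrass formulae and multiplying through by $(1+z^2)(1+w^2)$ produces a polynomial of bidegree $(2,2)$ in $z,w$. The coefficient of $zw$ comes entirely from the $\sin\phi\sin\psi$-term and equals $4A_4=4\sin\alpha\sin\gamma$, while the four corner coefficients of $1,z^2,w^2,z^2w^2$ are the four sign-combinations $A_0+\epsilon_1A_1+\epsilon_2A_2+\epsilon_1\epsilon_2A_3$ with $\epsilon_1,\epsilon_2\in\{\pm 1\}$. After substituting the explicit values of the $A_i$ and grouping, each of these four combinations collapses into a single difference $\cos P - \cos Q$ of cosines of sums of side lengths, and the identity $\cos P - \cos Q = -2\sin\tfrac{P+Q}{2}\sin\tfrac{P-Q}{2}$ rewrites each of them in the product-of-sines form required by the lemma, with arguments matching $\overline\alpha,\overline\beta,\overline\gamma,\overline\delta,\sigma-\beta-\alpha,\sigma-\beta-\gamma,\sigma-\beta-\delta$. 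A global division by $2$, together with the sign fixed by the cyclic orientation of the quadrilateral, brings the $zw$-coefficient to $-2\sin\alpha\sin\gamma = 2c_{11}$.

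The main obstacle is the bookkeeping of the factoring step: each of the four corner coefficients has to be regrouped in precisely the right way for its $\cos P - \cos Q$ structure to emerge, and one then has to check that the resulting half-sums $\tfrac{P+Q}{2}$ and $\tfrac{P-Q}{2}$ really are the specific expressions listed in the lemma. Keeping the signs straight through the Weierstrass substitution and through the several applications of the sum-to-product identity is the other persistent source of difficulty; apart from that, everything is routine trigonometric manipulation.
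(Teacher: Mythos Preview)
The paper does not actually prove this lemma: it simply cites Stachel~\cite{Sta10} (and \cite{Izm_Conf4Bar}). Your outline is exactly the standard derivation that appears in those references---place the $\delta$-side in a fixed position, write the two free vertices in spherical coordinates governed by $\phi$ and $\psi$, impose the remaining side-length constraint via a dot product, and then Weierstrass-substitute. The symmetry argument you give for the absence of lone $\sin\phi$, $\sin\psi$ terms is correct, and the reduction of each corner coefficient to a single $\cos P-\cos Q$ followed by the product formula is precisely how the factored form arises.

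One caution on conventions: in the paper's Figure~\ref{fig:FourbarZW} the angles $\phi$ and $\psi$ sit at the $\alpha\delta$- and $\gamma\delta$-vertices, so the fixed side between them is $\delta$, not $\beta$; your setup with $|BC|=\beta$ has these roles exchanged. Moreover, depending on whether $\phi,\psi$ are taken as interior or exterior angles, the coefficients of $1,z^2,w^2,z^2w^2$ can get permuted among themselves (via $z\mapsto -z^{-1}$ or $w\mapsto -w^{-1}$). These are purely labeling issues and do not affect the argument, but you will need to align them with the paper's conventions to land on the exact expressions for $c_{22},c_{20},c_{02},c_{00}$ stated in the lemma rather than a permuted version.
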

Lemma \ref{lem:EqConfSpace} is proved in \cite{Sta10}. See also \cite{Izm_Conf4Bar}.

The substitution \eqref{eqn:ZWSubst} identifies $\R/2\pi\Z$ with $\R \cup \{\infty\} = \RP^1$. Therefore equation \eqref{eqn:AdjZ} must be viewed as an equation in two projective variables. This is achieved by \emph{bihomogenization}
\begin{equation}
\label{eqn:Bihom}
c_{22} z_1^2 w_1^2 + c_{20} z_1^2 w_0^2 + c_{02} z_0^2 w_1^2 + 2 c_{11} z_1 w_1 z_0 w_0 + c_{00} z_0^2 w_0^2 = 0,
\end{equation}
where $z = \frac{z_1}{z_0}$, $w = \frac{w_1}{w_0}$,
see \cite{Izm_Conf4Bar} for more details.

\begin{dfn}
The solution set of equation \eqref{eqn:Bihom} in $(\CP^1)^2$ is called the \emph{complexified configuration space} of quadrilaterals with side lengths $\alpha$, $\beta$, $\gamma$, $\delta$ and is denoted by $Z(\alpha,\beta,\gamma,\delta)$ or just briefly by $Z$.
\end{dfn}


The following lemma will be useful in the next section.
\begin{lem}
\label{lem:Switching}
Let $\alpha, \beta, \gamma, \delta$ be a quadruple of numbers satisfying inequalities \eqref{eqn:QuadSides}. Then the map
$$
\begin{aligned}
(\CP^1)^2 &\to (\CP^1)^2\\
(z,w) &\mapsto (-z^{-1}, w)
\end{aligned}
$$
restricts to a bijection $Z(\alpha,\beta,\gamma,\delta) \to Z(\pi-\alpha,\pi-\beta,\gamma,\delta)$, and the map
$$
\begin{aligned}
(\CP^1)^2 &\to (\CP^1)^2\\
(z,w) &\mapsto (z, -w^{-1})
\end{aligned}
$$
restricts to a bijection $Z(\alpha,\beta,\gamma,\delta) \to Z(\alpha,\pi-\beta,\pi-\gamma,\delta)$.
\end{lem}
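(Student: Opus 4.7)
The plan is to verify the lemma by a direct computation using the explicit equation \eqref{eqn:AdjZ} from Lemma~\ref{lem:EqConfSpace}. First I would reinterpret each map as a bihomogeneous automorphism of $(\CP^1)^2$, namely $(z_0{:}z_1,\,w_0{:}w_1)\mapsto(z_1{:}{-z_0},\,w_0{:}w_1)$ and $(z_0{:}z_1,\,w_0{:}w_1)\mapsto(z_0{:}z_1,\,w_1{:}{-w_0})$. Each is a global involution of $(\CP^1)^2$, so the statement to prove is that its pullback sends the bihomogeneous equation \eqref{eqn:Bihom} for one quadruple of side lengths to a nonzero scalar multiple of the bihomogeneous equation for the other quadruple.

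For the first map, the substitution $z\mapsto -z^{-1}$ in \eqref{eqn:AdjZ} followed by clearing by $z^2$ produces
\[
c_{02}z^2w^2+c_{00}z^2+c_{22}w^2-2c_{11}zw+c_{20}=0.
\]
I would then write out the coefficients $c'_{ij}$ for the quadruple $(\alpha',\beta',\gamma',\delta')=(\pi-\alpha,\pi-\beta,\gamma,\delta)$. A short calculation gives $\sigma'=\pi+\sigma-\alpha-\beta$, and hence
\[
\overline{\alpha'}=\pi-\overline{\beta},\quad \overline{\beta'}=\overline{\alpha},\quad \overline{\gamma'}=\pi-\overline{\delta},\quad \overline{\delta'}=\pi-\overline{\gamma},
\]
so that all sines of barred quantities are simply permuted among $\sin\overline\alpha,\sin\overline\beta,\sin\overline\gamma,\sin\overline\delta$. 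Plugging these together with $\sin(\sigma-\pi)=-\sin\sigma$ into the formulas of Lemma~\ref{lem:EqConfSpace} yields
\[
c'_{22}=-c_{02},\quad c'_{20}=-c_{00},\quad c'_{02}=-c_{22},\quad c'_{11}=c_{11},\quad c'_{00}=-c_{20},
\]
and the equation of $Z(\pi-\alpha,\pi-\beta,\gamma,\delta)$ is exactly $-1$ times the transformed equation obtained above. This identifies the two zero sets in $(\CP^1)^2$ and gives the first bijection.

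The second map is handled in the same fashion: substituting $w\mapsto -w^{-1}$ and clearing by $w^2$, and separately computing $\sigma'=\pi+\sigma-\beta-\gamma$ together with the corresponding permutation of barred angles for $(\alpha',\beta',\gamma',\delta')=(\alpha,\pi-\beta,\pi-\gamma,\delta)$, leads to $c'_{22}=-c_{20}$, $c'_{20}=-c_{22}$, $c'_{02}=-c_{00}$, $c'_{11}=c_{11}$, $c'_{00}=-c_{02}$, which again matches the transformed equation up to the overall sign. The main obstacle is purely bookkeeping: the arguments $\sigma-\eta-\zeta$ appearing after substitution do not all match the canonical form used in Lemma~\ref{lem:EqConfSpace}, and one must repeatedly apply the identities $\sin(\pi-x)=\sin x$ and $(\sigma-\alpha-\beta)+(\sigma-\gamma-\delta)=0$ (and the analogous relation $(\sigma-\alpha-\gamma)+(\sigma-\beta-\delta)=0$) to keep the signs straight. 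Once the coefficient table is assembled, the verification is immediate.
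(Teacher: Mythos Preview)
Your approach is correct and is essentially the same as the paper's: both proceed by direct substitution into the explicit equation \eqref{eqn:AdjZ}. The paper's proof is terser (it merely says ``make the substitutions $\alpha\to\pi-\alpha$, $\beta\to\pi-\beta$'') and adds a geometric remark that the map on real points corresponds to replacing the vertex between sides $\alpha$ and $\beta$ by its antipode, using $\tan\frac{\phi+\pi}2=-(\tan\frac{\phi}2)^{-1}$. One small bookkeeping slip: with $\sigma'=\pi+\sigma-\alpha-\beta$ one gets $\overline{\alpha'}=\sigma'-\alpha'=\overline\beta$ (not $\pi-\overline\beta$), though this is harmless since only the sines enter and your coefficient table $c'_{22}=-c_{02}$, $c'_{20}=-c_{00}$, $c'_{02}=-c_{22}$, $c'_{11}=c_{11}$, $c'_{00}=-c_{20}$ is correct.
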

\begin{proof}
It suffices to prove only the first part.
This can be done by making the substitutions $\alpha \to \pi-\alpha$, $\beta \to \pi-\beta$ in \eqref{eqn:AdjZ}. The bijection between the real parts of the configuration spaces is established by replacing the vertex $\alpha\beta$ by its antipode and using $\tan\frac{\phi + \pi}2 = - (\tan\frac{\phi}2)^{-1}$, see Figure~\ref{fig:Switching}.
\end{proof}

\begin{figure}[ht]
\centering
\begin{picture}(0,0)%
\includegraphics{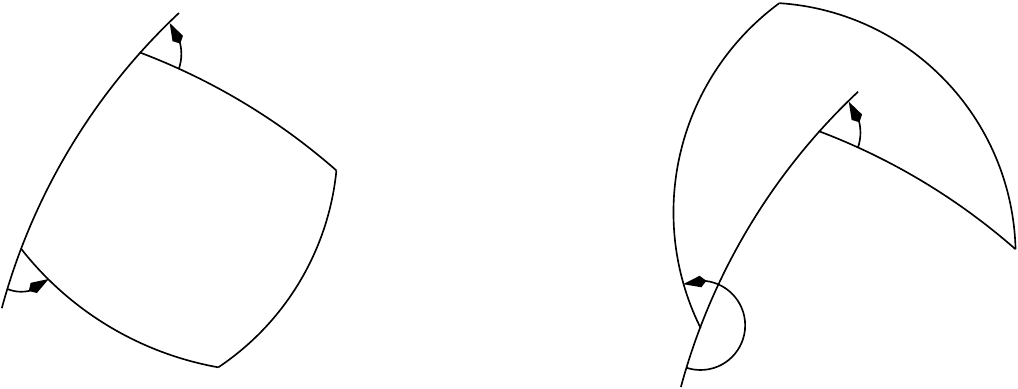}%
\end{picture}%
\setlength{\unitlength}{4144sp}%
\begingroup\makeatletter\ifx\SetFigFont\undefined%
\gdef\SetFigFont#1#2#3#4#5{%
  \reset@font\fontsize{#1}{#2pt}%
  \fontfamily{#3}\fontseries{#4}\fontshape{#5}%
  \selectfont}%
\fi\endgroup%
\begin{picture}(4651,1771)(533,-1329)
\put(616,-1028){\makebox(0,0)[lb]{\smash{{\SetFigFont{10}{12.0}{\rmdefault}{\mddefault}{\updefault}{\color[rgb]{0,0,0}$\phi$}%
}}}}
\put(1716,  5){\makebox(0,0)[lb]{\smash{{\SetFigFont{10}{12.0}{\rmdefault}{\mddefault}{\updefault}{\color[rgb]{0,0,0}$\gamma$}%
}}}}
\put(1438,253){\makebox(0,0)[lb]{\smash{{\SetFigFont{10}{12.0}{\rmdefault}{\mddefault}{\updefault}{\color[rgb]{0,0,0}$\psi$}%
}}}}
\put(687,-190){\makebox(0,0)[lb]{\smash{{\SetFigFont{10}{12.0}{\rmdefault}{\mddefault}{\updefault}{\color[rgb]{0,0,0}$\delta$}%
}}}}
\put(3241,-196){\makebox(0,0)[lb]{\smash{{\SetFigFont{10}{12.0}{\rmdefault}{\mddefault}{\updefault}{\color[rgb]{0,0,0}$\pi-\alpha$}%
}}}}
\put(4543,-107){\makebox(0,0)[lb]{\smash{{\SetFigFont{10}{12.0}{\rmdefault}{\mddefault}{\updefault}{\color[rgb]{0,0,0}$\psi$}%
}}}}
\put(4771,164){\makebox(0,0)[lb]{\smash{{\SetFigFont{10}{12.0}{\rmdefault}{\mddefault}{\updefault}{\color[rgb]{0,0,0}$\pi-\beta$}%
}}}}
\put(1899,-920){\makebox(0,0)[lb]{\smash{{\SetFigFont{10}{12.0}{\rmdefault}{\mddefault}{\updefault}{\color[rgb]{0,0,0}$\beta$}%
}}}}
\put(957,-1141){\makebox(0,0)[lb]{\smash{{\SetFigFont{10}{12.0}{\rmdefault}{\mddefault}{\updefault}{\color[rgb]{0,0,0}$\alpha$}%
}}}}
\put(3937,-1207){\makebox(0,0)[lb]{\smash{{\SetFigFont{10}{12.0}{\rmdefault}{\mddefault}{\updefault}{\color[rgb]{0,0,0}$\phi + \pi$}%
}}}}
\put(3943,-730){\makebox(0,0)[lb]{\smash{{\SetFigFont{10}{12.0}{\rmdefault}{\mddefault}{\updefault}{\color[rgb]{0,0,0}$\delta$}%
}}}}
\put(4590,-497){\makebox(0,0)[lb]{\smash{{\SetFigFont{10}{12.0}{\rmdefault}{\mddefault}{\updefault}{\color[rgb]{0,0,0}$\gamma$}%
}}}}
\end{picture}%
\caption{Isomorphism between the spaces $Z(\alpha,\beta,\gamma,\delta)$ and $Z(\alpha,\pi-\beta,\pi-\gamma,\delta)$.}
\label{fig:Switching}
\end{figure}




\subsection{Classifying configuration spaces}
\label{sec:ClassConfSp}
Here we prove Theorems \ref{thm:ParamLin}--\ref{thm:ParamSnCn}.

Let $Q$ be a spherical quadrilateral with side lengths $\alpha$, $\beta$, $\gamma$, $\delta$ in this cyclic order.
The shape of the configuration space will depend on the number of solutions of the equation
\begin{equation}
\label{eqn:Grashof2}
\alpha \pm \beta \pm \gamma \pm \delta \equiv 0 (\mod 2\pi)
\end{equation}
Because of \eqref{eqn:QuadSides}, there is no solution with one or three minus signs. Thus, every solution of \eqref{eqn:Grashof2} corresponds either to the sum of two sides being equal to the sum of two others or to the sum of all sides being equal to $2\pi$.

If equation \eqref{eqn:Grashof2} has at least two solutions, then it is easy to show that $Q$ has either two pairs of sides of equal lengths or two pairs of sides, lengths in each pair complementing each other to $\pi$. If these are pairs of opposite sides, then $Q$ is an isogram or antiisogram; if these are pairs of adjacent sides, then $Q$ is a deltoid or antideltoid. It follows that every quadrilateral belongs to one of the types described in Definition \ref{dfn:QuadTypes}.


\begin{proof}[Proof of Theorem \ref{thm:ParamLin}]
If $Q$ is an antiisogram, then equation \eqref{eqn:AdjZ} becomes
\begin{equation}
\label{eqn:IsoAdjZ}
\sin(\alpha-\delta) z^2 + 2\sin\alpha zw + \sin(\alpha+\delta) w^2 = 0
\end{equation}
If $\alpha = \delta = \frac{\pi}2$, then the bihomogenization \eqref{eqn:Bihom} yields $z_1z_0w_1w_0 = 0$. That is, the configuration space consists of four trivial components: $z = 0$, $z = \infty$, $w = 0$, and $w = \infty$. Similarly, if $\alpha = \beta$ or $\alpha + \beta = \pi$, then there are two trivial components and one non-trivial of the form $z = \kappa w$ with $\kappa$ given by \eqref{eqn:ConfIsoOne}. Finally, if $\alpha \ne \beta$ and $\alpha + \beta \ne \pi$, then by solving the quadratic equation \eqref{eqn:IsoAdjZ} (for which the identity $\sin(\alpha-\beta)\sin(\alpha+\beta) = \sin^2\alpha - \sin^2\beta$ might be useful) we find two non-trivial components with $\kappa$ as given in \eqref{eqn:ConfIsoTwo}.

If $Q$ is an isogram, then equation \eqref{eqn:AdjZ} becomes
$$
\sin(\alpha-\beta) z^2w^2 - 2\sin\alpha zw + \sin(\alpha+\beta) = 0,
$$
and the argument is similar. Alternatively, one can use the first switching isomorphism of Lemma \ref{lem:Switching}.
\end{proof}

To deal with the (anti)deltoid case, we need the following lemma.
\begin{lem}
\label{lem:ParamDegCon}
The affine algebraic curve $aw^2 + 2bzw + c=0$ with $a,b,c \ne 0$ has the parametrization $z = p \sin t$, $w = re^{it}$, where
$$
p = \sqrt{\frac{ac}{b^2}}, \quad r =
\begin{cases}
\sqrt{-\frac{c}{a}}, &\text{if } bc > 0\\
-\sqrt{-\frac{c}{a}}, &\text{if }bc < 0
\end{cases}
$$
\end{lem}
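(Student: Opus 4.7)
The plan is to verify the claimed parametrization by direct substitution. Substituting $z = p\sin t$ and $w = r e^{it}$ into $aw^2 + 2bzw + c = 0$ and using $\sin t = (e^{it} - e^{-it})/(2i)$, the equation becomes
$$
ar^2 e^{2it} - ibpr (e^{2it} - 1) + c = 0,
$$
which after collecting terms is
$$
(ar^2 - ibpr)\,e^{2it} + (ibpr + c) = 0.
$$
Since this must hold identically in $t$, both coefficients must vanish, giving the two conditions $ar^2 = ibpr$ and $ibpr = -c$. Solving these simultaneously yields $r^2 = -c/a$ and $p^2 = ac/b^2$, which match the claimed formulas for the magnitudes.

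Next I would pin down the signs, which is the only delicate point. The two conditions above are equivalent to the single relation $bpr = ic$, so once the root convention $\sqrt{x} \in \R_{>0} \cup i\R_{>0}$ is invoked to fix $p = \sqrt{ac/b^2}$, the value of $r$ is determined. I would handle the two cases $ac > 0$ and $ac < 0$ separately: in each case, computing $r = ic/(bp)$ explicitly and using $c/\sqrt{ac} = \operatorname{sign}(c)\sqrt{c/a}$ (when $ac > 0$, with the analogous identity in the other case) shows that $r = \operatorname{sign}(bc)\sqrt{-c/a}$. This agrees with the sign rule stated in the lemma.

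Finally, I would remark on why this parametrization covers the whole (affine) curve. The equation is quadratic in $w$ and of degree two in $z$, so for generic $z$ there are two values of $w$; the map $t \mapsto (p\sin t, r e^{it})$ is two-to-one onto $z$ (as $t$ and $\pi - t$ give the same $z$) and the two branches of $w$ correspond to these two preimages, so the image is all of the affine curve. The hypothesis $a, b, c \ne 0$ ensures no degeneracy.

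The main obstacle is purely the sign bookkeeping with the convention $\sqrt{x} \in \R_{>0} \cup i\R_{>0}$, since the relation $bpr = ic$ forces a specific sign on $r$ that depends on $\operatorname{sign}(bc)$; this is what produces the case split in the statement. The algebraic identity itself is a short computation.
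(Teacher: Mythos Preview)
Your proposal is correct and is exactly the direct verification the paper has in mind; the paper itself says only ``The proof is straightforward'' and reminds the reader of the square root convention $\sqrt{x}\in\R_+\cup i\R_+$. Your derivation of the two conditions $ar^2=ibpr$ and $ibpr=-c$, and the resulting sign analysis leading to $r=\operatorname{sign}(bc)\sqrt{-c/a}$, fill in precisely the details the paper omits.
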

The proof is straightforward. Recall our convention $\sqrt{x} \in i\R_+$ for $x<0$.

Note that the $(1,2)$-bihomogenization of the curve $aw^2 + 2bzw + c = 0$ contains two additional points $(\infty, 0)$ and $(\infty, \infty)$.

\begin{proof}[Proof of Theorem \ref{thm:ParamDegenCon}]
If $Q$ satisfies $\alpha = \delta$, $\beta = \gamma$, then equation \eqref{eqn:AdjZ} becomes
\begin{equation}
\label{eqn:AffDelt}
\sin(\delta-\gamma) w^2 - 2\sin\gamma zw + \sin(\delta+\gamma) = 0
\end{equation}
The $(2,2)$-bihomogenization \eqref{eqn:Bihom} contains a trivial component $z = \infty$. The affine part of the non-trivial component can be parametrized according to Lemma \ref{lem:ParamDegCon}, which yields parametrization \eqref{eqn:ParamDelt1} for the case $m = 1$. The case $m = 1$ follows by the first switching isomorphism of Lemma \ref{lem:Switching}. Finally, parametrizations \eqref{eqn:ParamDelt2} are obtained by exchanging $z$ with $w$ and $\alpha$ with~$\gamma$.
\end{proof}

Let now $Q$ be a quadrilateral of conic type. Consider first the case when the unique solution of equation \eqref{eqn:Grashof2} is $\alpha + \gamma = \beta + \delta$.
Then we have
\begin{gather*}
\sigma = \alpha + \gamma = \beta + \delta,\\
\overline\alpha = \gamma, \quad \overline\beta = \delta, \quad \overline\gamma = \alpha, \quad \overline\delta = \beta,\\
\sigma - \beta - \alpha = \delta - \alpha, \quad \sigma - \beta - \gamma = \delta - \gamma
\end{gather*}
It follows that $c_{22}=0$ in \eqref{eqn:AdjZ}, and that the other coefficients are
\begin{equation}
\label{eqn:CoeffCon}
\begin{split}
c_{20} = \sin\gamma \sin(\delta-\alpha), \quad c_{02} = \sin\alpha \sin(\delta-\gamma),\\
c_{11} = -\sin\alpha \sin\gamma, \quad c_{00} = \sin\sigma \sin\delta
\end{split}
\end{equation}

\begin{lem}
\label{lem:ConicParam}
The affine algebraic curve
\begin{equation}
\label{eqn:ConNonDeg}
c_{20}z^2 + c_{02}w^2 + 2c_{11}zw + c_{00} = 0
\end{equation}
with $c_{20} \ne 0, c_{02} \ne 0, c_{00} \ne 0, c_{11}^2 - c_{20}c_{02} \ne 0$ has the parametrization
$$
z = p \sin t, \quad w = q \sin(t+t_0),
$$
where the amplitudes and the phase shift are given by
$$
p = \sqrt{\frac{c_{02}c_{00}}{c_{11}^2 - c_{20}c_{02}}}, \quad q = \sqrt{\frac{c_{02}c_{00}}{c_{11}^2 - c_{20}c_{02}}}, \quad \cos t_0 = -\frac{c_{11}c_{00}}{(c_{11}^2 - c_{20}c_{02})pq}
$$
(the phase shift is determined only up to the sign).
\end{lem}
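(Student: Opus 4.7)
The plan is to verify the parametrization by direct substitution and to show that matching the coefficients of the resulting trigonometric polynomial forces $p$, $q$, and $t_0$ to be exactly as stated.

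Substitute $z=p\sin t$ and $w=q\sin(t+t_0)=q(\sin t\cos t_0+\cos t\sin t_0)$ into the equation, expand, and replace $\cos^2 t$ by $1-\sin^2 t$. The left-hand side becomes a linear combination of the three linearly independent functions $1$, $\sin^2 t$, and $\sin t\cos t$, so each coefficient must vanish identically. The constant coefficient is
\[
c_{02}q^2\sin^2 t_0+c_{00},
\]
the coefficient of $\sin t\cos t$ is
\[
2\sin t_0\bigl(c_{02}q^2\cos t_0+c_{11}pq\bigr),
\]
and the coefficient of $\sin^2 t$ is
\[
c_{20}p^2+c_{02}q^2(\cos^2 t_0-\sin^2 t_0)+2c_{11}pq\cos t_0.
\]

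The assumption $c_{00}\ne 0$ forces $\sin t_0\ne 0$ via the first equation, so the second yields $\cos t_0=-c_{11}p/(c_{02}q)$. Substituting this into the third equation and eliminating $c_{02}q^2\sin^2 t_0$ via the first gives $p^2(c_{11}^2-c_{20}c_{02})=c_{02}c_{00}$, which is the stated formula for $p^2$. Combining $\sin^2 t_0+\cos^2 t_0=1$ with the first equation and the expression for $p^2$ then produces the analogous formula for $q^2$, and substituting these into $\cos t_0=-c_{11}p/(c_{02}q)$ recasts it as $\cos t_0=-c_{11}c_{00}/\bigl((c_{11}^2-c_{20}c_{02})pq\bigr)$, exactly as in the statement.

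The four non-degeneracy hypotheses enter naturally: $c_{00}\ne 0$ rules out $\sin t_0=0$; $c_{02}\ne 0$ permits the division producing $\cos t_0$; $c_{11}^2-c_{20}c_{02}\ne 0$ keeps $p$ and $q$ finite; and $c_{20}\ne 0$, together with the other conditions, ensures $p,q\ne 0$ so that the parametrization is nonconstant. Since only $\cos t_0$ is determined by the equations, $t_0$ is fixed up to sign, which matches the lemma's assertion and corresponds to the symmetry $t\mapsto -t$ of the curve. There is no real obstacle here; the proof is a short coefficient comparison, and the only thing to watch is the bookkeeping when combining the three vanishing conditions.
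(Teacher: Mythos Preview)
Your proof is correct and essentially equivalent to the paper's: the paper first observes that $x=\sin t$, $y=\sin(t+t_0)$ parametrize $x^2+y^2-2\cos t_0\,xy-\sin^2 t_0=0$, then substitutes $x=z/p$, $y=w/q$ and matches coefficients via a proportion, while you substitute the parametrization directly and match the coefficients of $1$, $\sin^2 t$, $\sin t\cos t$. The underlying algebra is the same, just organized slightly differently.
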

\begin{proof}
It is easy to show that the functions $x = \sin t$, $y = \sin(t+t_0)$ parametrize the curve
\begin{equation}
\label{eqn:ConParPrim}
x^2 + y^2 - 2\cos t_0 xy - \sin^2 t_0 = 0
\end{equation}
Substitution $x = \frac{z}{p}$, $y = \frac{w}{q}$ transforms this to
$$
q^2z^2 + p^2w^2 - 2pq \cos t_0 zw - p^2q^2 \sin^2 t_0 = 0
$$
and the formulas for $p$, $q$, $\cos t_0$ are found by solving the proportion
$$
q^2 : p^2 : -pq \cos t_0 : -p^2q^2 \sin^2 t_0 = c_{20} : c_{02} : c_{11} : c_{00}
$$
\end{proof}

In order to apply Lemma \ref{lem:ConicParam} to the equation \eqref{eqn:AdjZ}, we need the following specialization of Lemma \ref{lem:BarRelations}.
\begin{lem}
\label{lem:BarIdentCon}
If $\alpha + \gamma = \beta + \delta =: \sigma$, then the following identities hold.
$$
\begin{aligned}
\sin\alpha\sin\gamma - \sin\beta\sin\delta &= \sin(\delta-\alpha)\sin(\delta-\gamma)\\
\sin\gamma\sin\delta - \sin\alpha\sin\beta &= \sin\sigma \sin(\delta-\alpha)\\
\sin\alpha\sin\delta - \sin\beta\sin\gamma &= \sin\sigma \sin(\delta-\gamma)
\end{aligned}
$$
\end{lem}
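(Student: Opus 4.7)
The plan is to derive each of the three identities directly from product-to-sum formulas, using the hypothesis $\alpha+\gamma = \beta+\delta$ in the equivalent forms $\alpha-\delta = \beta-\gamma$ and $\alpha-\beta = \delta-\gamma$. These give the key simplifications $\cos(\alpha+\gamma) = \cos(\beta+\delta)$ and $\cos(\alpha-\beta) = \cos(\delta-\gamma)$, which will make two terms cancel in each expansion and leave a difference of cosines that refactors as the required product of sines.

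For the first identity, I would expand
\[
\sin\alpha\sin\gamma - \sin\beta\sin\delta = \tfrac{1}{2}\bigl[\cos(\alpha-\gamma) - \cos(\alpha+\gamma)\bigr] - \tfrac{1}{2}\bigl[\cos(\beta-\delta) - \cos(\beta+\delta)\bigr].
\]
The terms $\cos(\alpha+\gamma)$ and $\cos(\beta+\delta)$ cancel by the hypothesis. The sum-to-product formula for $\cos(\alpha-\gamma)-\cos(\beta-\delta)$ involves the half-sum $\tfrac12[(\alpha-\gamma)+(\beta-\delta)] = \alpha-\delta$ and the half-difference $\tfrac12[(\alpha-\gamma)-(\beta-\delta)] = \delta-\gamma$ (both simplifications using $\alpha-\delta = \beta-\gamma$), which produces $2\sin(\delta-\alpha)\sin(\delta-\gamma)$, as required.

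For the second identity, I would expand $\sin\gamma\sin\delta - \sin\alpha\sin\beta$ in the same way; this time the terms $\cos(\gamma-\delta)$ and $\cos(\alpha-\beta)$ cancel, leaving $\tfrac12[\cos(\alpha+\beta) - \cos(\gamma+\delta)]$. Converting this to a product gives $-\sin\tfrac{\alpha+\beta+\gamma+\delta}{2}\sin\tfrac{\alpha+\beta-\gamma-\delta}{2}$, and under the hypothesis $\tfrac{\alpha+\beta+\gamma+\delta}{2} = \sigma$ and $\tfrac{\alpha+\beta-\gamma-\delta}{2} = \alpha-\delta$, so the right-hand side is $\sin\sigma\sin(\delta-\alpha)$. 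The third identity is obtained from the second by the swap $\alpha \leftrightarrow \gamma$, which preserves the hypothesis.

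There is no genuine obstacle here — the proof is a routine bookkeeping exercise in product-to-sum and sum-to-product formulas, and it can alternatively be read off directly from Lemma~\ref{lem:BarRelations} by specializing to $\sigma = \alpha+\gamma = \beta+\delta$, where $\overline\alpha = \gamma$, $\overline\beta = \delta$, $\overline\gamma = \alpha$, $\overline\delta = \beta$ and $\sigma - \alpha - \beta = \delta - \alpha$. The only care needed is tracking the signs when converting differences $\delta-\alpha$ versus $\alpha-\delta$, and noting that the half-sum and half-difference computations collapse cleanly only because of the hypothesis.
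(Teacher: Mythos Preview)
Your proof is correct and matches the paper's approach: the paper simply states this lemma as a specialization of Lemma~\ref{lem:BarRelations} without writing out details, and you both carry out the direct product-to-sum verification and note the specialization route explicitly. There is nothing to add.
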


\begin{proof}[Proof of Theorem \ref{thm:ParamCon}]
If $\alpha + \gamma = \beta + \delta$, then equation \eqref{eqn:AdjZ} takes the form \eqref{eqn:ConNonDeg}
with $c_{ij}$ as in \eqref{eqn:CoeffCon}. If equation \eqref{eqn:Grashof2} has no other solutions, then $c_{20}, c_{02}, c_{00} \ne 0$. Besides, due to Lemma \ref{lem:BarIdentCon} we have
$$
c_{11}^2 - c_{20}c_{02} = \sin\alpha \sin\beta \sin\gamma \sin\delta \ne 0
$$
Thus we are in a position to apply Lemma \ref{lem:ConicParam}. It yields
$$
p = \sqrt{\frac{\sin\sigma\sin(\delta-\gamma)}{\sin\beta\sin\gamma}}, \quad q = \sqrt{\frac{\sin\sigma\sin(\delta-\alpha)}{\sin\alpha\sin\beta}},
$$
which coincides with the formulas given in Theorem \ref{thm:ParamCon} due to Lemma \ref{lem:BarIdentCon}. Furthermore we have
\begin{equation}
\label{eqn:t0Con}
\cos t_0 = \frac{\sin\sigma}{pq \sin \beta}, \quad
\cos^2 t_0 = \frac{\sin\alpha\sin\gamma}{\sin\alpha\sin\gamma - \sin\beta\sin\delta} \in \R \setminus [0,1]
\end{equation}
It follows that $\tan^2 t_0 = -\frac{\sin\beta\sin\delta}{\sin\alpha\sin\gamma}$. Since the phase shift is determined only up to the sign, we choose $t_0$ with $\Im t_0 > 0$, that is $\Im \tan t_0 > 0$, which leads to the formula in Theorem \ref{thm:ParamCon}. Finally, the range of cosine
$$
i\R_+ \mapsto [0,+\infty), \quad \frac{\pi}2 + i\R_+ \mapsto (-\infty, 1]
$$
and equations \eqref{eqn:t0Con} lead to the table determining $\Re t_0$.

The other cases can be reduced to $\alpha + \gamma = \beta + \delta$ with the help of switching isomorphisms of Lemma \ref{lem:Switching}. For example, if $\alpha + \beta = \gamma + \delta$, then we denote
$$
\alpha' := \alpha, \beta' := \pi - \beta, \gamma' := \pi - \gamma, \delta' := \delta,
$$
so that $\alpha' + \gamma' = \beta' + \delta'$. Hence the configuration space $Z(\alpha', \beta', \gamma', \delta')$ has the parametrization $z = p'\sin t$, $w = q' \sin(t \pm t'_0)$, where $p'$, $q'$, and $t'_0$ are computed by applying the formulas of Theorem \ref{thm:ParamCon} to the angles $\alpha'$, $\beta'$, $\gamma'$, $\delta'$. By composing this with the second switching isomorphism of Lemma \ref{lem:Switching}, we obtain the parametrization
$$
z = p' \sin t, \quad w^{-1} = -q' \sin(t + t'_0) = q' \sin(t + (t'_0 + \pi))
$$
of the space $Z(\alpha,\beta,\gamma,\delta)$. It remains to note that $p'=p$, $q'=q$, $\tan t_0 = \tan t'_0$. On the other hand, we have 
$$
\sigma' = \frac{\alpha'+\beta'+\gamma'+\delta'}2 = \frac{\alpha - \beta - \gamma + \delta}2 + \pi,
$$
which is $<\pi$ if and only if $\sigma > \pi$ with $\sigma$ given by the table in Theorem \ref{thm:ParamCon}. This accounts for $t_0 = t'_0 + \pi$.
\end{proof}

In the elliptic case we use the following lemma.

\begin{lem}
\label{lem:EllParam}
For any given $k \in (0,1)$ and $t_0 \in \C$, the functions
$$
x(t) = \sn(t;k), \quad y(t) = \sn(t+t_0;k)
$$
parametrize the affine algebraic curve
\begin{equation}
\label{eqn:EllParSn}
x^2 + y^2 - k^2 \sn^2 t_0 x^2 y^2 - 2 \cn t_0 \dn t_0 xy - \sn^2 t_0 = 0
\end{equation}
(all elliptic functions in the formula have Jacobi modulus $k$).

Similarly, the functions
$$
x(t) = \cn(t;k), \quad y(t) = \cn(t+t_0;k)
$$
parametrize the affine algebraic curve
\begin{equation}
\label{eqn:EllParCn}
x^2 + y^2 + k^2 \frac{\sn^2 t_0}{\dn^2 t_0} x^2y^2 - 2 \frac{\cn t_0}{\dn^2 t_0} xy - (k')^2 \frac{\sn^2 t_0}{\dn^2 t_0} = 0,
\end{equation}
where $k' = \sqrt{1-k^2}$ is the conjugate modulus.
\end{lem}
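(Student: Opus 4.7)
The strategy is to verify each algebraic identity by direct substitution into the Jacobi addition theorems, using the fundamental relations $\sn^2 u+\cn^2 u=1$ and $\dn^2 u+k^2\sn^2 u=1$ to eliminate all elliptic functions other than $x$, $y$, and the constants depending on $t_0$.

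For part~(a), set $x=\sn t$ and $y=\sn(t+t_0)$. The $\sn$ addition formula
$$\sn(t+t_0)=\frac{\sn t\,\cn t_0\,\dn t_0+\sn t_0\,\cn t\,\dn t}{1-k^2\sn^2 t\,\sn^2 t_0}$$
rearranges to
$$y\bigl(1-k^2 x^2\sn^2 t_0\bigr)-x\,\cn t_0\,\dn t_0=\sn t_0\,\cn t\,\dn t.$$
Squaring both sides turns the right-hand side into $\sn^2 t_0\,(1-x^2)(1-k^2 x^2)$, a polynomial in $x$ alone. Expanding the square on the left and then invoking $\cn^2 t_0\,\dn^2 t_0=(1-\sn^2 t_0)(1-k^2\sn^2 t_0)$ to merge the constant, $x^2$, and $x^4$ contributions, I would check that the resulting bidegree-$(2,2)$ identity in $(x,y)$ agrees term by term with \eqref{eqn:EllParSn}.

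Part~(b) proceeds in parallel with the $\cn$ addition formula
$$\cn(t+t_0)=\frac{\cn t\,\cn t_0-\sn t\,\sn t_0\,\dn t\,\dn t_0}{1-k^2\sn^2 t\,\sn^2 t_0}.$$
With $x=\cn t$ and $y=\cn(t+t_0)$, and using $\sn^2 t=1-x^2$, $\dn^2 t=k'^2+k^2 x^2$, I would isolate the term containing $\sn t\,\dn t$, square, and collect. Dividing the resulting polynomial identity through by $\dn^2 t_0$ realigns the coefficients so that they match those of \eqref{eqn:EllParCn}; the only nontrivial simplification needed is $\cn^2 t_0+k'^2\sn^2 t_0=\dn^2 t_0-k^2\sn^2 t_0+k'^2\sn^2 t_0=\dn^2 t_0\bigl(1-\sn^2 t_0\bigr)/\cdots$, which I expect to fall out of the same regrouping as in part~(a).

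The main technical burden is purely bookkeeping: each expansion produces roughly a dozen monomials in $x$ and $y$ whose coefficients must be identified with the prescribed ones. Two sanity checks keep the signs honest: at $t_0=0$ both identities collapse to $(x-y)^2=0$, and at $t_0=K$ the specialisations $\sn(t+K)=\cn t/\dn t$ and $\cn(t+K)=-k'\sn t/\dn t$ reduce \eqref{eqn:EllParSn} and \eqref{eqn:EllParCn} to the elementary relations $y^2(1-k^2 x^2)=1-x^2$ and $y^2(k'^2+k^2 x^2)=k'^2(1-x^2)$, which are readily verified and pin down the signs of the mixed and quartic terms.
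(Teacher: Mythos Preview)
Your approach is sound and is the standard direct verification. The paper itself does not prove this lemma; it simply defers to \cite{Izm_Conf4Bar}.

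One point to sharpen: after squaring, the identity you obtain is not of bidegree $(2,2)$ but of bidegree $(4,2)$, since the left-hand side $[y(1-k^2 x^2\sn^2 t_0) - x\cn t_0\dn t_0]^2$ already contains the term $k^4 x^4 y^2 \sn^4 t_0$. The $x^4$ and $x^4 y^2$ contributions do not merely ``merge'' away by the substitution $\cn^2 t_0\,\dn^2 t_0=(1-\sn^2 t_0)(1-k^2\sn^2 t_0)$; what actually happens is that the whole squared relation factors as
\[
(1 - k^2 x^2 \sn^2 t_0)\bigl[x^2 + y^2 - k^2 \sn^2 t_0\, x^2 y^2 - 2\cn t_0 \dn t_0\, xy - \sn^2 t_0\bigr] = 0,
\]
the first factor being precisely the (generically nonzero) denominator of the addition formula. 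In part~(b) the same denominator, rewritten as $1 - k^2(1-x^2)\sn^2 t_0 = \dn^2 t_0 + k^2 x^2 \sn^2 t_0$ when $x=\cn t$, must likewise be divided out to reach \eqref{eqn:EllParCn}; this is also what makes the final division by $\dn^2 t_0$ come out cleanly, replacing the somewhat uncertain ``$/\cdots$'' in your sketch. Once this factoring is made explicit, the remaining coefficient-matching is routine, and your sanity checks at $t_0=0$ and $t_0=K$ are well chosen.
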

See \cite{Izm_Conf4Bar} for a proof and historical references.

\begin{proof}[Proof of Theorem \ref{thm:ParamSnCn}]
If $Q$ is of elliptic type, then neither of the coefficients in equation \eqref{eqn:AdjZ} vanishes. We need to show that the substitution $z = px$, $w = qy$ transforms \eqref{eqn:AdjZ} into one of the equations \eqref{eqn:EllParSn} or \eqref{eqn:EllParCn}, where $p$, $q$, $k$, and $t_0$ are as described in Theorem \ref{thm:ParamSnCn}.

We don't give an analog of Lemma \ref{lem:ConicParam}, because the formulas expressing $p$, $q$, $k$, and $t_0$ through the coefficients $c_{ij}$ are rather complicated. On the other hand, in \cite{Izm_Conf4Bar} it is explained how the values of $p$ and $q$ can be guessed. So let's just substitute
$$
z = \sqrt{\frac{\sin\sigma \sin(\sigma-\beta-\gamma)}{\sin\overline\alpha \sin\overline\delta}} x, \quad w = \sqrt{\frac{\sin\sigma \sin(\sigma-\beta-\alpha)}{\sin\overline\gamma \sin\overline\delta}}
$$
(due to Lemma \ref{lem:BarRelations}, these coefficients are $p$ and $q$ of Theorem \ref{thm:ParamSnCn}) into
\begin{multline*}
\sin\overline\alpha \sin(\sigma-\beta-\alpha) z^2 + \sin\overline\gamma \sin(\sigma-\beta-\gamma) w^2\\
+ \sin\overline\delta \sin(\sigma - \beta - \delta) z^2 w^2 - 2\sin\alpha\sin\gamma zw + \sin\overline\beta \sin\sigma = 0
\end{multline*}
We obtain
\begin{multline}
\label{eqn:AdjZEll}
x^2 + y^2 - \frac{\sin\sigma \sin(\sigma-\alpha-\gamma)}{\sin\overline\alpha \sin\overline\gamma} x^2y^2\\
- \frac{2pq \sin\alpha \sin\gamma \sin\overline\delta}{\sin\sigma \sin(\sigma-\alpha-\beta) \sin(\sigma-\beta-\gamma)} xy + \frac{\sin\overline\beta \sin\overline\delta}{\sin(\sigma-\alpha-\beta) \sin(\sigma-\beta-\gamma)} = 0
\end{multline}

Two cases must be distinguished: $M := \frac{\sin\alpha\sin\beta\sin\gamma\sin\delta}{\sin\overline\alpha \sin\overline\beta \sin\overline\gamma \sin\overline\delta} < 1$ and $M > 1$.

If $M < 1$, then we have to show that equation \eqref{eqn:AdjZEll} arises from \eqref{eqn:EllParSn} by substituting
$$
k = \sqrt{1-M} = \sqrt{\frac{\sin\sigma \sin(\sigma - \alpha - \beta) \sin(\sigma - \beta - \gamma) \sin(\sigma - \beta - \delta)}{\sin\overline\alpha \sin\overline\beta \sin\overline\gamma \sin\overline\delta}}
$$
and $t_0$ as described in Theorem \ref{thm:ParamSnCn}. From
$$
\dn t_0 = \sqrt{\frac{\sin\alpha \sin\gamma}{\sin\overline\alpha \sin\overline\gamma}}
$$
and $\sn^2 t_0 + k^2 \dn^2 t_0 = 1$ we compute
$$
\sn^2 t_0 = \frac{-\sin\overline\beta \sin\overline\delta}{\sin(\sigma-\alpha-\beta)\sin(\sigma-\beta-\gamma)} = \frac{-\sin\overline\beta \sin\overline\delta}{\sin\alpha\sin\gamma - \sin\overline\beta\sin\overline\delta}
$$
It follows that the constant term and the coefficient at $x^2y^2$ in \eqref{eqn:AdjZEll} are equal to $-\sn^2 t_0$ and $-k^2 \sn^2 t_0$, respectively. Further, $\sn^2 t_0 + \cn^2 t_0 = 1$ implies that
$$
\cn^2 t_0 = \frac{\sin\alpha\sin\gamma}{\sin(\sigma-\alpha-\beta)\sin(\sigma-\beta-\gamma)}  \in (-\frac{(k')^2}{k^2}, 0) \cup (1,+\infty)
$$
It follows that the square of the coefficient at $xy$ in \eqref{eqn:AdjZEll} equals $4 \cn^2 t_0 \dn^2 t_0$. The sign of the coefficient resolves the $+2K$-indeterminacy of $t_0$. Indeed, equating the coefficient with $-2 \cn t_0 \dn t_0$ leads to
\begin{equation}
\label{eqn:Cnt0}
\cn t_0 = \frac{\sin\sigma}{pq \sin\overline\delta}
\end{equation}
Together with the following information about the range of elliptic cosine:
$$
\begin{aligned}
(0,iK') &\mapsto (1,+\infty), &\quad (K,K+iK') &\mapsto (0, -i\frac{k'}{k}),\\
(2K,2K+iK') &\mapsto (-\infty, -1), &\quad (3K,3K+iK') &\mapsto (0, i\frac{k'}{k})
\end{aligned}
$$
this yields the table in Theorem \ref{thm:ParamSnCn}.

If $M>1$, then a similar argument shows that equation \eqref{eqn:AdjZEll} arises from \eqref{eqn:EllParCn} by substituting
$$
k = \sqrt{\frac{\sin\sigma \sin(\sigma - \alpha - \beta) \sin(\sigma - \beta - \gamma) \sin(\sigma - \alpha - \gamma)}{\sin\alpha \sin\beta \sin\gamma \sin\delta}}
$$
and $t_0$ as described in Theorem \ref{thm:ParamSnCn}. The value of $\dn t_0$ implies this time that
$$
\sn^2 t_0 = \frac{-\sin\beta \sin\delta}{\sin\overline\alpha \sin\overline\gamma - \sin\beta \sin\delta}, \quad \cn^2 t_0 = \frac{\sin\overline\alpha \sin\overline\gamma}{\sin\overline\alpha \sin\overline\gamma - \sin\beta \sin\delta}
$$
The coefficient at $xy$ yields the same formula \eqref{eqn:Cnt0} for $\cn t_0$.
\end{proof}

\subsection{Branch points and involutions}

Let $Z = Z(\alpha,\beta,\gamma,\delta) \subset (\CP^1)^2$ be the complexified configuration space of a quadrilateral with side lengths $\alpha$, $\beta$, $\gamma$, $\delta$, see Section \ref{sec:ConfSpaceDef}.
An irreducible component of $Z$ is called \emph{trivial}, if it is described by an equation of the form $z = \const$ or $w = \const$.

If $Z^0$ is a non-trivial component of $Z$, then both projections
$$
\begin{aligned}
f \colon Z^0 &\to \CP^1 \qquad &\qquad g \colon Z^0 &\to \CP^1\\
(z,w) &\mapsto z &\qquad (z,w) &\mapsto w
\end{aligned}
$$
are branched covers. Since equation \eqref{eqn:AdjZ} has degree~2 both in $z$ and $w$, the degrees of $f$ and $g$ are at most 2.
Let $A \subset \CP^1$ and $B \subset \CP^1$ be the branch sets of the maps $f$ and $g$ respectively.
Denote by
$$
\begin{aligned}
i \colon Z^0 &\to Z^0 \qquad &\qquad j \colon Z^0 &\to Z^0\\
(z,w) &\mapsto (z,w') &\qquad (z,w) &\mapsto (z',w)
\end{aligned}
$$
the deck transformations of $f$ and $g$ (defined only if the corresponding branched cover is two-fold). Geometrically, involutions $i$ and $j$ act by folding the quadrilateral along one of its diagonals, see Figure \ref{fig:Involutions}.

\begin{figure}[ht]
\centering
\begin{picture}(0,0)%
\includegraphics{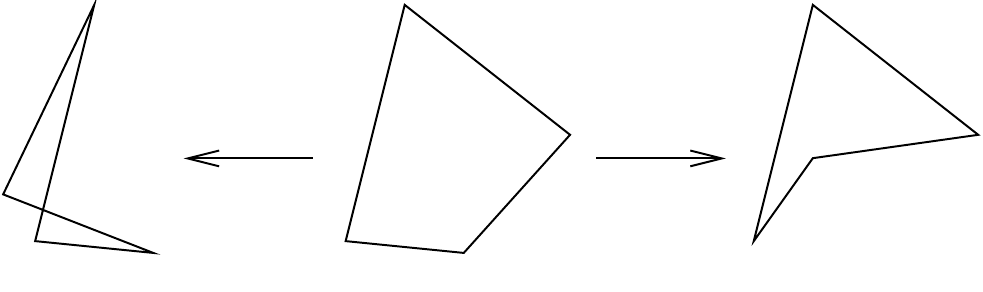}%
\end{picture}%
\setlength{\unitlength}{4972sp}%
\begingroup\makeatletter\ifx\SetFigFont\undefined%
\gdef\SetFigFont#1#2#3#4#5{%
  \reset@font\fontsize{#1}{#2pt}%
  \fontfamily{#3}\fontseries{#4}\fontshape{#5}%
  \selectfont}%
\fi\endgroup%
\begin{picture}(3739,1105)(-1101,-479)
\put(-179, 74){\makebox(0,0)[lb]{\smash{{\SetFigFont{10}{12.0}{\rmdefault}{\mddefault}{\updefault}{\color[rgb]{0,0,0}$i$}%
}}}}
\put(179,173){\makebox(0,0)[lb]{\smash{{\SetFigFont{10}{12.0}{\rmdefault}{\mddefault}{\updefault}{\color[rgb]{0,0,0}$\delta$}%
}}}}
\put(774,404){\makebox(0,0)[lb]{\smash{{\SetFigFont{10}{12.0}{\rmdefault}{\mddefault}{\updefault}{\color[rgb]{0,0,0}$\gamma$}%
}}}}
\put(1306, 74){\makebox(0,0)[lb]{\smash{{\SetFigFont{10}{12.0}{\rmdefault}{\mddefault}{\updefault}{\color[rgb]{0,0,0}$j$}%
}}}}
\put(860,-193){\makebox(0,0)[lb]{\smash{{\SetFigFont{10}{12.0}{\rmdefault}{\mddefault}{\updefault}{\color[rgb]{0,0,0}$\beta$}%
}}}}
\put(321,-424){\makebox(0,0)[lb]{\smash{{\SetFigFont{10}{12.0}{\rmdefault}{\mddefault}{\updefault}{\color[rgb]{0,0,0}$\alpha$}%
}}}}
\end{picture}%
\caption{Involutions $i$ and $j$ on the configuration space of a quadrilateral.}
\label{fig:Involutions}
\end{figure}

\begin{lem}
\label{lem:BranchInvol}
Branch sets and involutions of the configuration space $Z$ of a spherical quadrilateral $Q$ have the following form.
\begin{enumerate}
\item
If $Q$ is an (anti)isogram, then for each of the non-trivial components of $Z$ (there can be $0$, $1$ or $2$ of them) the maps $f$ and $g$ are homeomorphisms.
\item
If $Q$ is an (anti)deltoid with apex $\alpha\delta$ (that is if either $\alpha = \delta$ and $\beta = \gamma$ or $\alpha + \delta = \pi = \beta + \gamma$), then the map $g$ is a homeomorphism, and the map $f$ is two-fold with the branch set and involution
$$
A = \{\pm p^m\}, \quad i \colon t \mapsto \pi - t
$$
Similarly, if $Q$ is an (anti)deltoid with apex $\alpha\beta$, then $f$ is a homeomorphism, and $g$ is a two-fold branched cover with
$$
B = \{\pm q^n\}, \quad j \colon t \mapsto \pi - t
$$
\item
If $Q$ is a conic quadrilateral, then the curve $Z \subset (\CP^1)^2$ is irreducible and has a unique singular point $(0^{-m}, 0^{-n})$ (where $0^{-1} := \infty$). On the regular part of $Z$, both maps $f$ and $g$ are two-fold branched covers with branch sets and involutions
$$
\begin{aligned}
A &= \{\pm p^m\}, &\quad i \colon t &\mapsto \pi - t\\
B &= \{\pm q^n\}, &\quad j \colon t &\mapsto (\pi - 2t_0) - t
\end{aligned}
$$
\item
If $Q$ is an elliptic quadrilateral with $M<1$ (that is $Z$ is parametrized by $\sn$), then both maps $f$ and $g$ are two-fold branched covers with branch sets and involutions
$$
\begin{aligned}
A &= \{\pm p, \pm \frac{p}{k}\}, &\quad i \colon t &\mapsto 2K - t\\
B &= \{\pm q, \pm \frac{q}{k}\}, &\quad j \colon t &\mapsto (2K - 2t_0) - t
\end{aligned}
$$
If $Q$ has $M>1$ (that is $Z$ parametrized by $\cn$), then the branch sets and involutions are given by
$$
\begin{aligned}
A &= \{\pm p, \pm ip \frac{k'}{k}\}, &\quad i \colon t &\mapsto - t\\
B &= \{\pm q, \pm iq \frac{k'}{k}\}, &\quad j \colon t &\mapsto - 2t_0 - t
\end{aligned}
$$
\end{enumerate}
\end{lem}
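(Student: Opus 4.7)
The plan is to derive the branch data in each case directly from the explicit parametrizations established in Theorems \ref{thm:ParamLin}--\ref{thm:ParamSnCn}. For a non-trivial component $Z^0$ parametrized by $t \mapsto (z(t), w(t))$, the involution $i$ (resp.\ $j$) is identified as the unique nontrivial symmetry of the parameter space that fixes $z(t)$ (resp.\ $w(t)$), and the branch points of $f$ (resp.\ $g$) are the values of $z(\cdot)$ (resp.\ $w(\cdot)$) at the fixed points of that symmetry.

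For case (1), Theorem \ref{thm:ParamLin} shows that each non-trivial component of an (anti)isogram has equation $z = \kappa w$ or $z\kappa w = 1$, both of which are graphs of Möbius isomorphisms $\CP^1 \to \CP^1$; hence $f$ and $g$ are homeomorphisms. For case (2) with apex $\alpha\delta$, Theorem \ref{thm:ParamDegenCon}(1) gives $w = \epsilon\sqrt{-\mu}\,e^{it}$, a bijection (after compactification) $\CP^1 \to \CP^1$, so $g$ is a homeomorphism; the relation $z^m = p\sin t$ factors $f$ through the branched cover $t \mapsto \sin t$, whose critical values $\sin t = \pm 1$ yield $z = \pm p^m$ and whose deck involution $t \mapsto \pi - t$ transports to $i$. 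The apex $\alpha\beta$ case follows by interchanging the roles of $z$ and $w$.

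For case (3), the vanishing of $c_{22}$ (or its analogue, after applying the switching isomorphisms of Lemma \ref{lem:Switching}) reduces \eqref{eqn:AdjZ} to a non-degenerate affine conic: Lemma \ref{lem:BarIdentCon} gives $c_{11}^2 - c_{20}c_{02} = \sin\alpha\sin\beta\sin\gamma\sin\delta \neq 0$, and the hypothesis that \eqref{eqn:Grashof2} has a unique solution ensures $c_{20}, c_{02}, c_{00} \ne 0$. Such a conic is smooth and irreducible, hence so is its projective closure in $(\CP^1)^2$; a local analysis of the bihomogenized equation \eqref{eqn:Bihom} at the corner $(0^{-m}, 0^{-n})$ (to which the two asymptotes of the affine conic tend) identifies that corner as the unique singular point, with smooth model $\CP^1$. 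On this smooth model the parametrization $z^m = p\sin t$, $w^n = q\sin(t+t_0)$ of Lemma \ref{lem:ConicParam} applies, and the involutions $t \mapsto \pi - t$ and $t \mapsto \pi - 2t_0 - t$ fix $z^m$ and $w^n$ respectively (and hence $z$ and $w$), yielding the stated branch sets $\{\pm p^m\}$ and $\{\pm q^n\}$.

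For case (4), Theorem \ref{thm:ParamSnCn} presents $Z^0$ as an elliptic curve with parameter $t$ modulo $\Lambda = 4K\Z + 2iK'\Z$. When $M < 1$, the identity $\sn(2K - t) = \sn t$ identifies $i$ with $t \mapsto 2K - t$; its four fixed points $K, 3K, K + iK', 3K + iK'$ modulo $\Lambda$ map under $\sn$ to $\pm 1, \pm 1/k$, giving the branch set $\{\pm p, \pm p/k\}$, and similarly $j\colon t \mapsto 2K - 2t_0 - t$ from the analogous symmetry of $\sn(t+t_0)$. When $M > 1$, the identity $\cn(-t) = \cn t$ gives $i\colon t \mapsto -t$; its fixed points $0, 2K, iK', 2K + iK'$ yield $\cn$-values $\pm 1$ and $\pm ik'/k$, producing the branch set $\{\pm p, \pm ip\, k'/k\}$. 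The main obstacle will be in case (3): one must carefully verify, via the bihomogenized equation \eqref{eqn:Bihom} near the boundary corner, that the projective closure acquires exactly one singular point (a node, as opposed to two separate branches or a cuspidal degeneration), which requires checking that the two asymptotic directions of the affine conic give distinct tangent lines at $(0^{-m}, 0^{-n})$.
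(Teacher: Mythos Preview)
Your approach matches the paper's almost exactly: read off homeomorphisms, involutions, and branch points from the explicit parametrizations of Theorems~\ref{thm:ParamLin}--\ref{thm:ParamSnCn}, and check the boundary behavior in $(\CP^1)^2$ for the conic case. Two points need fixing, however.

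In case~(4), $M>1$, you state the period lattice as $\Lambda = 4K\Z + 2iK'\Z$, but this is the $\sn$-lattice; the period lattice of $\cn$ is $4K\Z + (2K+2iK')\Z$. With the wrong lattice your listed fixed points $0,\,2K,\,iK',\,2K+iK'$ are not the fixed points of $t\mapsto -t$ (indeed $iK'$ and $2K+iK'$ are poles of $\cn$, not points where $\cn = \pm ik'/k$). The correct fixed points modulo the $\cn$-lattice are $0,\,2K,\,K+iK',\,3K+iK'$, at which $\cn$ takes the values $1,\,-1,\,-ik'/k,\,ik'/k$, giving the branch set you claim. So your conclusion is right, but the computation as written does not support it.

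In case~(3) you write that the affine conic is smooth and irreducible ``hence so is its projective closure in $(\CP^1)^2$'', and then immediately identify a singular point on that closure. The first clause is simply false for smoothness: the biprojective closure of a smooth affine conic in $(\CP^1)^2$ is not smooth; the added corner point is a node. This is precisely what the local expansion $c_{20}v^2 + 2c_{11}uv + c_{02}u^2 + \text{(higher order)} = 0$ near $(0^{-m},0^{-n})$ shows, and the discriminant condition $c_{11}^2 - c_{20}c_{02}\ne 0$ that you already invoke is exactly what guarantees two distinct tangents (a node rather than a cusp). Drop the erroneous ``hence smooth'' and the rest of your argument goes through; this is also how the paper handles it, recording that the neighborhood is equivalent to $z_0^2w_0^2 = 0$.
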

\begin{proof}
In the (anti)isogram case, the assertion follows from the fact that each non-trivial component has the form $z = c w$ or $zw = c w^{-1}$, see Theorem \ref{thm:ParamLin}. The affine curve $z = c w$ contains in its biprojective completion the point $(\infty,\infty)$, and the curve $z = cw^{-1}$ contains $(0,\infty)$ and $(\infty,0)$, which ensures that both maps $f$ and $g$ are homeomorphisms.

If $Q$ is a deltoid with apex $\alpha\delta$, then $Z^0$ is the $(1,2)$-biprojective completion of the affine curve \eqref{eqn:AffDelt}. It is readily seen that $g$ is a homeomorphism (we have $g^{-1}(0) = (\infty, 0)$ and $g^{-1}(\infty) = (\infty,\infty)$). Formulas for the branch points of $f$ and for the involution $i$ follow from the parametrization \eqref{eqn:ParamDelt1} of the affine part of $Z^0$.

If $Q$ is a conic quadrilateral with $\alpha + \gamma = \beta + \delta$, then the affine part of $Z$ has the form \eqref{eqn:ConNonDeg} which is a nonsingular affine curve. The biprojective completion adds a single point $(\infty,\infty)$, whose neighborhood is equivalent to $z_0^2w_0^2 = 0$. Formulas for branch sets $A$ and $B$ and involutions $i$ and $j$ follow from the parametrization \eqref{eqn:ParamCon}.
For other types of conic quadrilaterals the situation is similar.

Finally, if $Q$ is elliptic, then the parametrization \eqref{eqn:ParamSn}, respectively \eqref{eqn:ParamCn}, defines an analytic diffeomorphism $\C/\Lambda \to \Z$. The branch sets $A$ and $B$ can be computed as the sets of critical values of the functions $z = z(t)$ and $w = w(t)$, respectively. Formulas for the involutions follow from the properties of the functions $\sn$ and $\cn$.
\end{proof}



\subsection{Orthodiagonal quadrilaterals}
\label{sec:Orthodiag}
A quadrilateral is called \emph{orthodiagonal}, if its diagonals are orthogonal to each other.

\begin{lem}
\label{lem:OrthodiagCosSin}
A spherical quadrilateral with side lengths $\alpha, \beta, \gamma, \delta$ in this cyclic order is orthodiagonal if and only if any of the following equivalent conditions is fulfilled.
\begin{subequations}
\begin{equation}
\label{eqn:OrthodiagCos}
\cos\alpha \cos\gamma = \cos\beta \cos\delta
\end{equation}
\begin{equation}
\label{eqn:OrthodiagSin}
\sin\overline{\alpha} \sin\overline{\gamma} = \sin\overline{\beta} \sin\overline{\delta}
\end{equation}
\end{subequations}
In particular, isometric deformations preserve the orthodiagonality.
\end{lem}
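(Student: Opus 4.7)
The plan is to establish the chain
\[
\text{(diagonals orthogonal)} \iff \eqref{eqn:OrthodiagCos} \iff \eqref{eqn:OrthodiagSin},
\]
after which the assertion about isometric deformations is automatic, since the two algebraic conditions depend only on the side lengths. The equivalence $\eqref{eqn:OrthodiagCos} \Leftrightarrow \eqref{eqn:OrthodiagSin}$ is in fact an identity in $\alpha,\beta,\gamma,\delta$: applying the product-to-sum formula together with $\overline\alpha-\overline\gamma = \gamma-\alpha$, $\overline\alpha+\overline\gamma = \beta+\delta$, $\overline\beta-\overline\delta = \delta-\beta$, $\overline\beta+\overline\delta = \alpha+\gamma$, one checks immediately that both sides of
\[
\cos\alpha\cos\gamma - \cos\beta\cos\delta \;=\; \sin\overline\alpha\sin\overline\gamma - \sin\overline\beta\sin\overline\delta
\]
collapse to $\tfrac12\bigl(\cos(\alpha-\gamma)+\cos(\alpha+\gamma)-\cos(\beta-\delta)-\cos(\beta+\delta)\bigr)$, so the two conditions are literally the same equation.

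For the geometric equivalence I would cut $Q$ along its two diagonals. Let $O$ be one of the two intersection points of the underlying great circles, let $a,b,c,d$ be the distances from $O$ to the four vertices taken in cyclic order, and let $\theta$ be the angle at $O$ in the triangle with sides $a,b,\alpha$; by vertical-angle symmetry the four angles at $O$ are $\theta,\pi-\theta,\theta,\pi-\theta$. The spherical law of cosines in these four triangles gives
\[
\begin{aligned}
\cos\alpha &= \cos a\cos b + \sin a\sin b\cos\theta, & \cos\beta &= \cos b\cos c - \sin b\sin c\cos\theta,\\
\cos\gamma &= \cos c\cos d + \sin c\sin d\cos\theta, & \cos\delta &= \cos d\cos a - \sin d\sin a\cos\theta.
\end{aligned}
\]
In the difference $\cos\alpha\cos\gamma - \cos\beta\cos\delta$ the constant and $\cos^2\theta$ contributions cancel, and grouping the four remaining $\cos\theta$-terms pairwise produces the clean factorisation
\[
\cos\alpha\cos\gamma - \cos\beta\cos\delta \;=\; \cos\theta\,\sin(a+c)\sin(b+d),
\]
where $a+c$ and $b+d$ are the spherical lengths of the two diagonals. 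For a non-degenerate $Q$ these lengths lie in $(0,\pi)$, so the sine factors are non-zero, and \eqref{eqn:OrthodiagCos} is equivalent to $\cos\theta=0$, i.e.\ to orthogonality of the diagonals.

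The main thing to watch out for is the existence of a usable $O$ with $a,b,c,d$ well defined. For a self-intersecting or non-convex quadrilateral the diagonal arcs may not meet inside $Q$, but the underlying great circles still intersect at two antipodal points; passing from one to the other replaces some of $a,b,c,d$ by their complements to $\pi$, and both the system of four cosine formulas and the factorisation above are invariant under this change. The excluded boundary cases, where one of $a+c$ or $b+d$ equals $0$ or $\pi$, correspond precisely to the degenerate configurations ruled out by the definition of a spherical quadrilateral at the beginning of this section.
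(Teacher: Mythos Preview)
Your argument is correct and follows the same route the paper sketches: the spherical law of cosines at the intersection of the diagonal great circles (of which the spherical Pythagorean theorem is the $\theta=\pi/2$ case) for the geometric equivalence, and product-to-sum identities for $\eqref{eqn:OrthodiagCos}\Leftrightarrow\eqref{eqn:OrthodiagSin}$. Your explicit factorisation $\cos\alpha\cos\gamma-\cos\beta\cos\delta=\cos\theta\,\sin(a+c)\sin(b+d)$ is a nice addition that makes the ``if and only if'' transparent; the paper merely cites the Pythagorean theorem without spelling this out.
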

\begin{proof}
Condition \eqref{eqn:OrthodiagCos} is equivalent to the orthodiagonality due to the spherical Pythagorean theorem. Equivalence between \eqref{eqn:OrthodiagCos} and \eqref{eqn:OrthodiagSin} follows by simple trigonometry.
\end{proof}

Clearly, deltoids and antideltoids are orthodiagonal. Also, an (anti)isogram is orthodiagonal only if it is an (anti)deltoid at the same time.

%
%

\begin{lem}
\label{lem:QuartPeriod}
If an orthodiagonal quadrilateral is not an (anti)deltoid, then it is of elliptic type, its configuration space has the $\sn$-parametrization (case~1 of Theorem \ref{thm:ParamSnCn}), and the phase shift $t_0$ is a quarter-period: $\Im t_0 = \frac{K'}2$. Conversely, every quadrilateral with these properties is orthodiagonal.
\end{lem}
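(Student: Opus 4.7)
The plan is to introduce the shorthand $A := \sin\alpha\sin\gamma$, $B := \sin\beta\sin\delta$, $C := \sin\overline\alpha\sin\overline\gamma$, $D := \sin\overline\beta\sin\overline\delta$. A direct consequence of Lemma \ref{lem:BarRelations} (applied to the pair $\alpha\gamma$ and, separately, to $\beta\delta$, using $\sigma-\alpha-\gamma = -(\sigma-\beta-\delta)$) is the unconditional identity
\[
A + B = C + D.
\]
The orthodiagonality condition \eqref{eqn:OrthodiagSin} is precisely $C = D$, so under orthodiagonality $C = D = \tfrac{A+B}{2}$ and $M = \tfrac{AB}{CD} = \tfrac{4AB}{(A+B)^2} \le 1$.

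For the forward direction I would first rule out the conic type. Each of the four equations in \eqref{eqn:Grashof} gives $\cos(\alpha+\gamma) = \pm\cos(\beta+\delta)$ for an appropriate choice of signs; combined with $\cos\alpha\cos\gamma = \cos\beta\cos\delta$ this yields in addition $\cos(\alpha-\gamma) = \pm\cos(\beta-\delta)$, which forces $\{\alpha,\gamma\} = \{\beta,\delta\}$ as multisets of angles (possibly after $\pi$-complementation of one pair), i.e. $Q$ is (anti)deltoid. The same case analysis shows that the equality case $A = B$ in $M \le 1$ also forces (anti)deltoid. Hence under the hypothesis of the lemma $Q$ is of elliptic type with $M < 1$ strictly, and Theorem \ref{thm:ParamSnCn}(1) supplies the sn-parametrization.

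To pin down the quarter-period, compute
\[
k = \sqrt{1 - M} = \frac{|A-B|}{A+B}, \qquad \dn^2 t_0 = \frac{A}{C} = \frac{2A}{A+B} = 1 \pm k.
\]
The standard values $\dn^2(iK'/2) = 1+k$ and $\dn^2(K + iK'/2) = 1-k$, together with the evenness and the real half-period $2K$ of $\dn$, imply that the preimages of $\{1 \pm k\}$ under $\dn^2$ lying in the strip $0 < \Im t < K'$ prescribed by Theorem \ref{thm:ParamSnCn} are exactly $\{\, rK + iK'/2 : r \in \{0,1,2,3\}\,\}$, all of which have imaginary part $K'/2$.

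For the converse, start from $\dn^2 t_0 = 1 \pm k$ and substitute $\dn^2 t_0 = A/C$, $k^2 = 1 - AB/(CD)$; clearing denominators yields $D(A-C)^2 = C(CD-AB)$, and using the unconditional identity $A - C = D - B$ this collapses to
\[
(D - C)(D - B) = 0.
\]
The alternative $D = B$ would, by the identity, also give $A = C$, hence $\sin\sigma\sin(\sigma - \alpha - \gamma) = 0$, contradicting the elliptic hypothesis. Therefore $C = D$, i.e., $Q$ is orthodiagonal. The main technical obstacle is the quarter-period step: one must carefully resolve the real half-period indeterminacy of $t_0$ in Theorem \ref{thm:ParamSnCn}, ensuring that the condition $\dn^2 t_0 = 1 \pm k$ corresponds exactly to $\Im t_0 = K'/2$ (and not to $\Im t_0 \equiv K'/2 \pmod{K'}$), and that the sign choices in the table of Theorem \ref{thm:ParamSnCn} are consistent in both directions. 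The remaining algebra is bookkeeping of the identities in Lemma \ref{lem:BarRelations}.
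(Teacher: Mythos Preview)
Your argument is correct and takes a genuinely different route from the paper's. One small slip: the claim that each Grashof equation yields $\cos(\alpha+\gamma) = \pm\cos(\beta+\delta)$ is not literally true for $\alpha+\beta = \gamma+\delta$ and $\alpha+\delta = \beta+\gamma$, which instead give $\cos(\alpha-\gamma) = \cos(\beta-\delta)$ directly. But combining that with $\cos\alpha\cos\gamma = \cos\beta\cos\delta$ recovers $\sin\alpha\sin\gamma = \sin\beta\sin\delta$ and hence also $\cos(\alpha+\gamma) = \cos(\beta+\delta)$, so the conclusion survives with the sum/difference roles swapped. The quarter-period step is fine: Theorem~\ref{thm:ParamSnCn} already pins $t_0$ to a segment $rK + i(0,K')$, and on each such segment $\dn$ is real and monotone (via $\dn(iy,k) = \dn(y,k')/\cn(y,k')$ and $\dn(K+u) = k'/\dn u$), so $\dn^2 t_0 \in \{1\pm k\}$ forces $\Im t_0 = K'/2$ uniquely. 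Your factorization $(D-C)(D-B)=0$ for the converse checks out.

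The paper's proof is conceptual rather than computational. It uses the two folding involutions $i\colon (z,w)\mapsto (z,w')$ and $j\colon (z,w)\mapsto (z',w)$ on $Z$, which satisfy $i\circ j(t) = t + 2t_0$ in the uniformizing parameter, and observes geometrically that $Q$ is orthodiagonal if and only if $i$ and $j$ commute, i.e.\ $4t_0$ lies in the period lattice. It then rules out the conic case ($\Im t_0 > 0$ while the ``lattice'' $2\pi\Z$ is real) and the $\cn$ case ($\tfrac14\Lambda = K\Z + \tfrac{K+iK'}{2}\Z$ misses $K\Z + i(0,K')$), leaving the $\sn$ case with $\Im t_0 = K'/2$. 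This explains structurally \emph{why} the quarter-period arises and is shorter; your direct computation with $A,B,C,D$ and $\dn^2 t_0 = 1\pm k$ is more self-contained in that it avoids the folding picture, and it gives the converse by pure algebra rather than by re-invoking the commutation criterion.
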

\begin{proof}
By the remark preceding the theorem, if $Q$ is not an (anti)deltoid, then it is either conic or elliptic. Thus its configuration space has one of the parametrizations from Theorems \ref{thm:ParamCon} and \ref{thm:ParamSnCn}, and hence by Lemma \ref{lem:BranchInvol} carries the involutions $i$ and $j$.

A simple geometric argument shows that $Q$ is orthodiagonal if and only if $i$ and $j$ commute or, equivalently, if and only if $(i \circ j)^2 = \id$. On the other hand, since
$$
i \circ j(t) = t+2t_0,
$$
the involutions $i$ and $j$ commute if and only if $t_0$ is a quarter-period. In the conic case, $t_0$ cannot be a quarter-period since $\Im t_0 > 0$. Neither can it in the $\cn$ case, since there we have $\frac14\Lambda = K\Z + \frac{K+iK'}2\Z$, which has an empty intersection with $K\Z + (0,iK') \ni t_0$. Thus the configuration space is parametrized by $\sn$, and the lemma is proved.
\end{proof}


If in an orthodiagonal quadrilateral we have $\delta = \frac{\pi}2$, then due to \eqref{eqn:OrthodiagCos} we must have either $\alpha = \frac{\pi}2$ or $\gamma = \frac{\pi}2$.

\begin{lem}
The configuration space of an elliptic orthodiagonal quadrilateral is described by one of the following equations.
\begin{enumerate}
\item
If $\delta \ne \frac{\pi}2$, then
\begin{equation*}
\left( \sin(\delta-\alpha) z + \frac{\sin(\delta+\alpha)}{z} \right)
\left( \sin(\delta-\gamma) w + \frac{\sin(\delta+\gamma)}{w} \right) 
= 4 \sin\alpha\sin\gamma\cos\delta,
\end{equation*}
\item
If $\alpha = \delta = \frac{\pi}2$, then
\begin{equation*}
\left( (\cos\beta - \cos\gamma)z + \frac{\cos\beta + \cos\gamma}{z} \right) \left(w + \frac1w\right) = 4\sin\gamma
\end{equation*}
\item
If $\gamma = \delta = \frac{\pi}2$, then
$$
\left(z + \frac1z\right)\left( (\cos\beta - \cos\alpha) w + \frac{\cos\beta + \cos\alpha}{w}\right) = 4\sin\alpha
$$
\end{enumerate}

\end{lem}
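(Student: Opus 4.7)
The plan is to derive each of the three equations by direct algebraic manipulation of the biquadratic equation of Lemma \ref{lem:EqConfSpace}, using the orthodiagonality condition of Lemma \ref{lem:OrthodiagCosSin}. My first step is to rewrite each coefficient $c_{22}, c_{20}, c_{02}, c_{00}$ by applying $2\sin A\sin B = \cos(A-B) - \cos(A+B)$ to the two half-angle sines. This yields the uniform expressions $c_{22} = \frac12(\cos\beta - \cos(\alpha+\gamma-\delta))$, $c_{20} = \frac12(\cos\beta - \cos(\alpha-\gamma-\delta))$, $c_{02} = \frac12(\cos\beta - \cos(\alpha-\gamma+\delta))$, $c_{00} = \frac12(\cos\beta - \cos(\alpha+\gamma+\delta))$, so that the dependence of every $c_{ij}$ on $\beta$ is reduced to a single $\cos\beta$.

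For case~1, where $\delta \ne \frac{\pi}{2}$, I would substitute $\cos\beta = \cos\alpha\cos\gamma/\cos\delta$ coming from \eqref{eqn:OrthodiagCos}. Then each coefficient has the form $2\cos\delta\cdot c_{ij} = \cos\alpha\cos\gamma - \cos\delta\cos(\pm\alpha\pm\gamma\pm\delta)$. Expanding the second summand by the product-to-sum rule $2\cos\delta\cos X = \cos(\delta-X)+\cos(\delta+X)$ and then applying $\cos X - \cos Y = -2\sin\frac{X+Y}{2}\sin\frac{X-Y}{2}$ factors the right-hand side as $2\sin(\delta\mp\alpha)\sin(\delta\mp\gamma)$, with the signs lining up to match the index $ij$. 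Combined with $c_{11} = -\sin\alpha\sin\gamma$, the equation of $Z$ becomes, after multiplying by $2\cos\delta$ and dividing by $zw$, exactly the asserted product of two binomials.

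For cases 2 and 3, orthodiagonality with $\delta = \frac{\pi}{2}$ forces either $\alpha = \frac{\pi}{2}$ or $\gamma = \frac{\pi}{2}$. Specializing $\alpha = \delta = \frac{\pi}{2}$ in the product-to-sum expressions above and using $\sin(\frac{\pi}{2}\pm x) = \cos x$ collapses the coefficients to $c_{22} = c_{20} = \frac12(\cos\beta - \cos\gamma)$ and $c_{02} = c_{00} = \frac12(\cos\beta + \cos\gamma)$, together with $c_{11} = -\sin\gamma$. Grouping by $(w^2+1)$ then gives $[(\cos\beta - \cos\gamma)z^2 + (\cos\beta + \cos\gamma)](w^2+1) = 4\sin\gamma\, zw$, which upon division by $zw$ is the equation in case~2. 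Case~3 follows by the analogous specialization $\gamma = \delta = \frac{\pi}{2}$, or equivalently from case~2 by the symmetry $z \leftrightarrow w$, $\alpha \leftrightarrow \gamma$.

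The only real obstacle is the bookkeeping of signs in case~1 across the four coefficients: each of the four $c_{ij}$ demands its own choice of $\pm$ inside the cosines and inside the resulting product of sines, and one has to verify that all four factorizations are governed by the same scalar $1/(2\cos\delta)$. No geometric input beyond Lemma \ref{lem:OrthodiagCosSin} is needed, and the identity $c_{11} = -\sin\alpha\sin\gamma$ matches the target $zw$-coefficient automatically once the scale factor is fixed.
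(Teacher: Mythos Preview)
Your proposal is correct and follows essentially the same route as the paper. The paper states the four identities $\sin(\delta\pm\alpha)\sin(\delta\pm\gamma) = 2\cos\delta\cdot c_{ij}$ and remarks that their proof ``can start with $\sin\overline{\delta}\sin(\sigma-\beta-\delta) = \frac12(\cos\beta - \cos(\alpha+\gamma-\delta))$'', which is exactly your first step; your write-up simply makes the intermediate trigonometric manipulations explicit.
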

\begin{proof}
Equation $\cos\alpha \cos\gamma = \cos\beta \cos\delta$ can be shown to imply
$$
\begin{aligned}
\sin(\delta - \alpha)\sin(\delta - \gamma) &= 2\cos\delta \sin\overline{\delta} \sin(\sigma - \beta - \delta)\\
\sin(\delta - \alpha)\sin(\delta + \gamma) &= 2\cos\delta \sin\overline{\alpha} \sin(\sigma - \beta - \alpha)\\
\sin(\delta + \alpha)\sin(\delta - \gamma) &= 2\cos\delta \sin\overline{\gamma} \sin(\sigma - \beta - \gamma)\\
\sin(\delta + \alpha)\sin(\delta + \gamma) &= 2\cos\delta \sin\overline{\beta} \sin\sigma
\end{aligned}
$$
(the proof can start with $\sin\overline{\delta} \sin(\sigma - \beta - \delta) = \frac12 (\cos\beta - \cos(\alpha+\gamma-\delta))$).
This shows that the equation in the first part of the lemma is equivalent to \eqref{eqn:AdjZ}. The second and the third part are proved by similar transformations.
\end{proof}

\begin{rem}
With the help of the identities from Lemma \ref{lem:BarRelations} one can show that
$$
\det
\begin{pmatrix}
c_{22} & c_{20}\\
c_{02} & c_{00}
\end{pmatrix}
= \sin\alpha \sin\gamma (\sin\overline{\beta} \sin\overline{\delta} - \sin\overline{\alpha} \sin\overline{\gamma}),
$$
where $c_{ij}$ are the coefficients from \eqref{eqn:AdjZ}. By Lemma \ref{lem:OrthodiagCosSin}, the right hand side vanishes if and only if the quadrilateral is orthodiagonal. It follows that equation \eqref{eqn:AdjZ} takes the form $(az^2 + b)(cw^2 + d) = zw$ if and only if the quadrilateral is orthodiagonal. This fact is also expressed and proved in a different way in Lemma \ref{lem:DescendInvol} below.
\end{rem}

The involution factors and other parameters introduced in Definition \ref{dfn:InvFactors} allow to abbreviate the equation of the configuration space of an orthodiagonal quadrilateral in the following way.

\begin{cor}
\label{cor:ConfOrthInv}
The configuration space of an orthodiagonal quadrilateral has the equation
\begin{subequations}
\label{eqn:ConfOrthodiag}
\begin{equation}
\label{eqn:LambdaMuNu}
(z + \lambda z^{-1})(w + \mu w^{-1}) = \nu, \quad\text{ if } Q \text{ is not an (anti)deltoid}
\end{equation}
\begin{equation}
z + \lambda z^{-1} = \xi w^n, \quad\text{ if } Q \text{ is an (anti)deltoid with apex }\alpha\beta
\end{equation}
\begin{equation}
w + \mu w^{-1} = \zeta z^m, \quad\text{if } Q \text{ is an (anti)deltoid with apex }\alpha\delta
\end{equation}
\end{subequations}
Here $m = 1$, respectively $n = 1$, if $Q$ is a deltoid, and $m = -1$, respectively $n = -1$, if $Q$ is antideltoid; $\lambda$, $\mu$, $\nu$, $\xi$, $\zeta$ are as in Definition \ref{dfn:InvFactors}.
\end{cor}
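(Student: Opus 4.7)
The corollary is a direct rewriting of the equations of the preceding lemma using the abbreviations of Definition~\ref{dfn:InvFactors}, so the plan is to match coefficients in each of the three sub-cases of that definition.

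First I would treat the main case, where $Q$ is not an (anti)deltoid and $\delta \ne \frac{\pi}{2}$. Starting from the factored form in case~1 of the preceding lemma, I divide both sides by $\sin(\delta-\alpha)\sin(\delta-\gamma)$ to obtain an equation of the shape
\[
(z + A z^{-1})(w + B w^{-1}) = C.
\]
The key trigonometric identity $\sin(\delta \pm \alpha) = \cos\delta \cos\alpha \,(\tan\delta \pm \tan\alpha)$ gives
$A = \sin(\delta+\alpha)/\sin(\delta-\alpha) = (\tan\delta+\tan\alpha)/(\tan\delta-\tan\alpha) = \lambda$, and analogously $B = \mu$. For the right-hand side, write $\lambda - 1 = 2\tan\alpha/(\tan\delta-\tan\alpha)$ and $\mu - 1 = 2\tan\gamma/(\tan\delta-\tan\gamma)$; using $\sin\alpha/\sin(\delta-\alpha) = \tan\alpha/(\cos\delta(\tan\delta-\tan\alpha))$ one verifies directly that
\[
C = \frac{4\sin\alpha \sin\gamma \cos\delta}{\sin(\delta-\alpha)\sin(\delta-\gamma)} = \frac{(\lambda-1)(\mu-1)}{\cos\delta} = \nu.
\]

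Next I would handle the two degenerate sub-cases $\alpha = \delta = \frac{\pi}{2}$ and $\gamma = \delta = \frac{\pi}{2}$ using parts~2 and~3 of the preceding lemma. Here one of $\lambda,\mu$ is forced to equal $1$ (since $\sin(\frac{\pi}{2}+\gamma)/\sin(\frac{\pi}{2}-\gamma) = 1$), and the alternative formulas for $\lambda$ or $\mu$ in Definition~\ref{dfn:InvFactors} (the $\cos\beta \pm \cos\gamma$ expressions) arise by dividing through by the leading coefficient of the remaining quadratic factor. A short computation then shows that $\nu$, given by the corresponding branch $2(\lambda - 1)\tan\gamma$ or $2(\mu-1)\tan\alpha$, reproduces the constant term.

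For the (anti)deltoid cases, I would go back to \eqref{eqn:AdjZ} directly rather than to the preceding lemma. For a deltoid with apex $\alpha\delta$ (so $\alpha = \delta$, $\beta = \gamma$) a direct computation with the formulas of Lemma~\ref{lem:EqConfSpace} shows $c_{22} = c_{20} = 0$, reducing \eqref{eqn:AdjZ} to $c_{02} w^2 + 2 c_{11} zw + c_{00} = 0$. Dividing by $c_{02} w$ and identifying coefficients yields $\mu = c_{00}/c_{02} = \sin(\delta+\gamma)/\sin(\delta-\gamma)$ and $\zeta = -2c_{11}/c_{02} = 2\sin\gamma/\sin(\delta-\gamma) = (\mu-1)/\cos\delta$, which matches Definition~\ref{dfn:InvFactors}. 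The antideltoid case with apex $\alpha\delta$ (where $\alpha+\delta = \pi = \beta+\gamma$), giving $m = -1$, can either be done by the same direct computation (now $c_{02} = c_{00} = 0$, so solve for $z^{-1}$ instead of $z$), or more economically be deduced from the deltoid case via the switching isomorphism of Lemma~\ref{lem:Switching}. The two apex-$\alpha\beta$ cases follow by exchanging the roles of $z$ and $w$ (equivalently, cyclically relabelling the sides). The degenerate sub-case $\delta = \alpha = \frac{\pi}{2}$ inside the deltoid family is again handled by the alternative formula for $\zeta$.

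There is no real obstacle here; the only slightly delicate point is the bookkeeping needed to check that the piecewise definitions of $\lambda,\mu,\nu,\xi,\zeta$ in Definition~\ref{dfn:InvFactors} extend continuously through the loci where $\tan\delta$, $\tan\alpha$ or $\tan\gamma$ blows up, so that the three formulas \eqref{eqn:ConfOrthodiag} remain valid throughout the orthodiagonal locus.
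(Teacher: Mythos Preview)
Your proposal is correct and is exactly the verification the paper has in mind: the corollary is stated without proof, as an immediate rewriting of the preceding lemma (for the elliptic case) and of equation~\eqref{eqn:AffDelt} (for the (anti)deltoid case) in terms of the parameters of Definition~\ref{dfn:InvFactors}. Your trigonometric identifications and the handling of the $\delta=\frac{\pi}{2}$ sub-cases via the alternative branches of the definition are precisely the bookkeeping needed.
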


The involution factor $\lambda$ is defined if and only if the projection $g \colon (z,w) \mapsto w$ restricted to the non-trivial component $Z^{0}$ is a two-fold branched cover of $\CP^1$. Recall that $j \colon (z,w) \mapsto (z',w)$ denotes the deck transformation of $g$. In general, $z'$ depends both on $z$ and $w$. But equations \eqref{eqn:ConfOrthodiag} show that in the orthodiagonal case $z'$ depends only on $z$. In other words, the involution $j$ descends to an involution $f_*(j) \colon \CP^1 \to \CP^1$.

The next lemma explains the term ``involution factor'' and shows that the pushout $f_*(j)$ exists only in the orthodiagonal case.

\begin{lem}
\label{lem:DescendInvol}
The involution $j \colon (z,w) \mapsto (z',w)$ on the configuration space of an orthodiagonal quadrilateral acts by $z' = \lambda z^{-1}$, where $\lambda$ is as in Definition \ref{dfn:InvFactors}.

Conversely, if the involution $j$ descends to an involution on $\CP^1$, then $Q$ is an orthodiagonal quadrilateral.
\end{lem}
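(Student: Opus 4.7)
The forward direction is an immediate consequence of Corollary \ref{cor:ConfOrthInv}. Whenever $Q$ is orthodiagonal and $g$ is genuinely two-fold (so that $j$ is defined at all), the non-trivial component of $Z$ has equation either
\[
(z + \lambda z^{-1})(w + \mu w^{-1}) = \nu \quad \text{or} \quad z + \lambda z^{-1} = \xi w^n.
\]
Fixing $w$ and clearing denominators in either case yields a quadratic in $z$ of the shape $z^2 - C(w)\, z + \lambda = 0$ for some rational function $C(w)$; by Vieta's formulas the two $z$-coordinates in the fibre $g^{-1}(w)$ multiply to $\lambda$, so $z' = \lambda z^{-1}$. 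The remaining case $w + \mu w^{-1} = \zeta z^m$ of Corollary \ref{cor:ConfOrthInv} corresponds to an (anti)deltoid with apex $\alpha\delta$, in which $g$ is a homeomorphism and $j$ is undefined, so no issue arises there.

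For the converse, I would argue directly from the general biquadratic \eqref{eqn:AdjZ}. Regarding it as a quadratic in $z$ with $w$ held fixed, Vieta gives
\[
zz' = \frac{c_{02}w^2 + c_{00}}{c_{22}w^2 + c_{20}}.
\]
The hypothesis that $j$ descends to $\CP^1$ says precisely that this quantity is a function of $z$ alone. For a generic $z$, however, \eqref{eqn:AdjZ} viewed as a quadratic in $w$ has two distinct roots $w$ and $\widetilde w$, and requiring the displayed expression to take the same value at both of them (then clearing denominators) yields
\[
(c_{02}c_{20} - c_{00}c_{22})(w^2 - \widetilde w^2) = 0.
\]
The non-vanishing of $c_{11} = -\sin\alpha\sin\gamma$ (forced by $0 < \alpha, \gamma < \pi$) ensures that $w + \widetilde w = -2c_{11}z/(c_{02}z^2 + c_{00})$ is not identically zero, so $w^2 \ne \widetilde w^2$ for generic $z$, and we conclude $c_{00}c_{22} = c_{20}c_{02}$.

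By the determinant identity in the preceding remark, this equality is equivalent to $\sin\overline\alpha\sin\overline\gamma = \sin\overline\beta\sin\overline\delta$, which by Lemma \ref{lem:OrthodiagCosSin} characterises orthodiagonality of $Q$. The main point requiring care is the exclusion of the degenerate alternative $w^2 \equiv \widetilde w^2$ on a whole component: $w = \widetilde w$ holds only at the (finite) branch set of $g$, and $w + \widetilde w \equiv 0$ would force $c_{11} = 0$, which is excluded a priori; everything else is a routine Vieta computation.
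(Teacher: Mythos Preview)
Your forward direction matches the paper's. For the converse you take a genuinely different route: the paper argues that if $f_*(j)$ is well-defined then $f\circ j = f\circ j\circ i$, hence $i$ and $j$ commute, hence $t_0$ is a quarter-period, and then invokes Lemma~\ref{lem:QuartPeriod}. You instead compute $zz'$ by Vieta, impose independence from $w$ to get $c_{00}c_{22}=c_{20}c_{02}$, and finish with the determinant identity from the Remark preceding the lemma together with Lemma~\ref{lem:OrthodiagCosSin}. Your approach is more elementary in that it avoids the elliptic parametrisation and Lemma~\ref{lem:QuartPeriod}, and it makes explicit the connection with the Remark (which the paper only alludes to). The paper's approach, on the other hand, gives the geometric insight that orthodiagonality is equivalent to the commutativity of the two folding involutions.

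Two small slips. First, your Vieta denominator for $w+\widetilde w$ should be $c_{22}z^2+c_{02}$, not $c_{02}z^2+c_{00}$; this does not affect the argument since you only use $c_{11}\ne 0$. Second, ``branch set of $g$'' should read ``branch set of $f$'': the double roots of the quadratic in $w$ over a fixed $z$ occur at the branch points of $f$. Relatedly, you should note the case in which $f$ is \emph{one}-fold on the non-trivial component, namely when $Q$ is an (anti)deltoid with apex $\alpha\beta$ (then $c_{22}=c_{02}=0$, so there is no second root $\widetilde w$ at all). In that case your Vieta argument does not apply, but $Q$ is already orthodiagonal, so the conclusion holds trivially; one sentence dispatches this. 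The paper's proof has the same tacit restriction (it opens with ``If $f$ is two-fold'').
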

\begin{proof}
The first part is immediate from equations \eqref{eqn:ConfOrthodiag}.

If $f$ is two-fold, then the map $f_*(j) \colon z \mapsto z'$ is well-defined if and only if $f \circ j = f \circ j \circ i$. The last equation implies that either $j = j \circ i$ or $i \circ j = j \circ i$ holds. As $j \ne j \circ i$, we conclude that $i$ and $j$ must commute. This implies that $t_0$ is a quarter-period, and thus by Lemma \ref{lem:QuartPeriod} the quadrilateral is orthodiagonal.
\end{proof}

\begin{lem}
\label{lem:InvFactAmpl}
Coefficients $\lambda$, $\mu$, $\nu$ in the equation \eqref{eqn:LambdaMuNu} are expressed in terms of the modulus and amplitudes as follows:
$$
(\lambda, \mu, \nu) =
\begin{cases}
(\frac{p^2}{k}, \frac{q^2}k, \frac{2(1+k)}{k\sqrt{k}}pq), &\text{if } t_0 = \frac{iK'}2\\
(\frac{p^2}{k}, \frac{q^2}k, -\frac{2(1+k)}{k\sqrt{k}}pq), &\text{if } t_0 = 2K + \frac{iK'}2\\
(-\frac{p^2}{k}, -\frac{q^2}k, \frac{2i(1-k)}{k\sqrt{k}}pq), &\text{if } t_0 = K + \frac{iK'}2\\
(-\frac{p^2}{k}, -\frac{q^2}k, -\frac{2i(1-k)}{k\sqrt{k}}pq), &\text{if } t_0 = 3K + \frac{iK'}2
\end{cases}
$$
\end{lem}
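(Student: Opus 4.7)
The plan is to compare the two descriptions of the configuration space of $Q$: on one hand the canonical form \eqref{eqn:LambdaMuNu}, and on the other hand the $\sn$-parametrization of Theorem \ref{thm:ParamSnCn} together with the underlying algebraic equation \eqref{eqn:EllParSn}. By Lemma \ref{lem:QuartPeriod}, the orthodiagonal elliptic case corresponds exactly to $\Im t_0 = K'/2$, so $t_0$ is one of $iK'/2$, $K+iK'/2$, $2K+iK'/2$, $3K+iK'/2$; the four cases of the lemma correspond to these four values.

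First, I would expand $(z+\lambda z^{-1})(w+\mu w^{-1})=\nu$ into the polynomial form
\[
z^{2}w^{2}+\mu z^{2}+\lambda w^{2}-\nu zw+\lambda\mu=0.
\]
Next, starting from \eqref{eqn:EllParSn} with $x=z/p$, $y=w/q$, I clear denominators and multiply by $-p^{2}q^{2}/(k^{2}\sn^{2}t_{0})$ to obtain
\[
z^{2}w^{2}-\frac{q^{2}}{k^{2}\sn^{2}t_{0}}\,z^{2}-\frac{p^{2}}{k^{2}\sn^{2}t_{0}}\,w^{2}+\frac{2pq\,\cn t_{0}\,\dn t_{0}}{k^{2}\sn^{2}t_{0}}\,zw+\frac{p^{2}q^{2}}{k^{2}}=0.
\]
Matching coefficients gives
\[
\lambda=-\frac{p^{2}}{k^{2}\sn^{2}t_{0}},\qquad \mu=-\frac{q^{2}}{k^{2}\sn^{2}t_{0}},\qquad \nu=-\frac{2pq\,\cn t_{0}\,\dn t_{0}}{k^{2}\sn^{2}t_{0}},
\]
together with the consistency condition $\lambda\mu=p^{2}q^{2}/k^{2}$, which amounts to $k^{2}\sn^{4}t_{0}=1$; this is automatic for quarter-periods, as the next step shows.

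The remaining work is to plug in the specific values of $\sn t_{0}$ and $\cn t_{0}\dn t_{0}$ for each of the four quarter-periods. Using the standard values
\[
\sn(iK'/2)=\frac{i}{\sqrt{k}},\qquad \cn(iK'/2)=\sqrt{\frac{1+k}{k}},\qquad \dn(iK'/2)=\sqrt{1+k},
\]
one gets $\sn^{2}(iK'/2)=-1/k$ and $\cn(iK'/2)\dn(iK'/2)=(1+k)/\sqrt{k}$, which yields the $t_{0}=iK'/2$ row. The three remaining cases are obtained by the shift rules $\sn(t+2K)=-\sn t$, $\cn(t+2K)=-\cn t$, $\dn(t+2K)=\dn t$, and $\sn(t+K)=\cn t/\dn t$, $\cn(t+K)=-k'\sn t/\dn t$, $\dn(t+K)=k'/\dn t$. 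For instance at $t_{0}=K+iK'/2$ one finds $\sn t_{0}=1/\sqrt{k}$ and $\cn t_{0}\dn t_{0}=-i(1-k)/\sqrt{k}$, giving $\lambda=-p^{2}/k$, $\mu=-q^{2}/k$ and $\nu=2i(1-k)pq/(k\sqrt{k})$; adding $2K$ flips the sign of $\cn t_{0}$ only, which flips the sign of $\nu$ while leaving $\lambda,\mu$ unchanged, producing the remaining two rows.

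The only delicate point is the consistent choice of the square roots $\sqrt{k}$, $\sqrt{1+k}$ implicit in $\sn(iK'/2)$ and in the final formulas; once the branch conventions from Theorem \ref{thm:ParamSnCn} are respected, the four cases match the table line by line. I do not expect any genuine obstacle beyond this bookkeeping.
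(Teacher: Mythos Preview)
Your proposal is correct and follows essentially the same route as the paper: both compare the factored form \eqref{eqn:LambdaMuNu} (after substituting $z=px$, $w=qy$) with the $\sn$-parametrized equation \eqref{eqn:EllParSn}, and then evaluate at the four quarter-periods. The paper is terser, simply recording the normalized identities $(x\pm\tfrac{1}{kx})(y\pm\tfrac{1}{ky})=\text{const}$ for $x=\sn t$, $y=\sn(t+t_0)$ and invoking the substitution; you explicitly carry out the coefficient matching and compute $\sn t_0$, $\cn t_0\dn t_0$ at each quarter-period, but the underlying argument is the same.
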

\begin{proof}
The formulas are obtained by substituting $z = px$ and $w = qy$ in the equations
$$
\left(x + \frac1{kx}\right)\left(y + \frac1{ky}\right) = \pm \frac{2(1+k)}{k\sqrt{k}}, \quad \left(x - \frac1{kx}\right)\left(y - \frac1{ky}\right) = \pm \frac{2i(1-k)}{k\sqrt{k}}
$$
describing the curves $x = \sn t$, $y = \sn(t+t_0)$ for the values of $t_0$ given above.

\end{proof}



\begin{dfn}
\label{dfn:Compatible}
Orthodiagonal quadrilaterals $Q_1$ and $Q_2$ are called \emph{compatible} if one of the following holds:
\begin{itemize}
\item
the involution factors of $Q_1$ and $Q_2$ at their common vertex are equal: $\lambda_1 = \lambda_2$;
\item
$Q_1$ and $Q_2$ are frontally coupled deltoids;
\item
$Q_1$ and $Q_2$ are frontally coupled antideltoids.
\end{itemize}
\end{dfn}

Geometrically, a coupling of compatible orthodiagonal quadrilaterals is characterized by the property that during a deformation the angles marked on Figure \ref{fig:CompatCoupling} remain equal.

\begin{figure}[htb]
\includegraphics{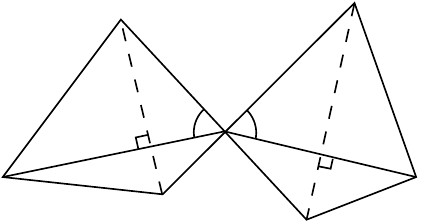}
\caption{A compatible coupling of orthodiagonal quadrilaterals.}
\label{fig:CompatCoupling}
\end{figure}

The coupling $(Q_1,Q_2)$ on Figure \ref{fig:Scissors} is compatible orthodiagonal if and only if during a deformation of the polyhedron on Figure \ref{fig:NotPlanAngles} the points $A_1$, $A_2$, $C_1$, $C_2$ remain in one plane, and this plane is always orthogonal to the planes $B_1A_1A_4$ and $B_2A_2A_3$.

\section{Configuration space of two coupled four-bar linkages}
\label{sec:Couplings}
In Section \ref{sec:ConfSpace} we studied the complexified configuration space $Z$ of a spherical four-bar linkage on Figure \ref{fig:FourbarZW}, together with the projections $f \colon Z \to \CP^1$ and $g \colon Z \to \CP^1$ that output the tangents of half the angles $\phi$ and $\psi$. In this Section we study the configuration space of two four-bar linkages coupled as shown on Figure \ref{fig:CoupledQuad}.

\begin{figure}[htb]
\begin{picture}(0,0)%
\includegraphics{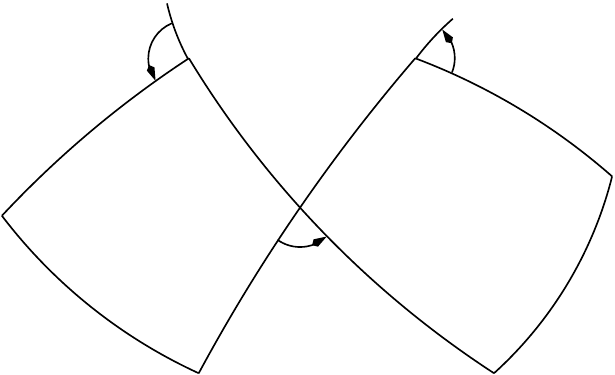}%
\end{picture}%
\setlength{\unitlength}{4144sp}%
\begingroup\makeatletter\ifx\SetFigFont\undefined%
\gdef\SetFigFont#1#2#3#4#5{%
  \reset@font\fontsize{#1}{#2pt}%
  \fontfamily{#3}\fontseries{#4}\fontshape{#5}%
  \selectfont}%
\fi\endgroup%
\begin{picture}(2806,1707)(-727,-1239)
\put(1879,-897){\makebox(0,0)[lb]{\smash{{\SetFigFont{10}{12.0}{\rmdefault}{\mddefault}{\updefault}{\color[rgb]{0,0,0}$\beta_1$}%
}}}}
\put(1716,  5){\makebox(0,0)[lb]{\smash{{\SetFigFont{10}{12.0}{\rmdefault}{\mddefault}{\updefault}{\color[rgb]{0,0,0}$\gamma_1$}%
}}}}
\put(997,-1125){\makebox(0,0)[lb]{\smash{{\SetFigFont{10}{12.0}{\rmdefault}{\mddefault}{\updefault}{\color[rgb]{0,0,0}$\alpha_1$}%
}}}}
\put(361,-1051){\makebox(0,0)[lb]{\smash{{\SetFigFont{10}{12.0}{\rmdefault}{\mddefault}{\updefault}{\color[rgb]{0,0,0}$\alpha_2$}%
}}}}
\put(-629,-151){\makebox(0,0)[lb]{\smash{{\SetFigFont{10}{12.0}{\rmdefault}{\mddefault}{\updefault}{\color[rgb]{0,0,0}$\gamma_2$}%
}}}}
\put(271,-16){\makebox(0,0)[lb]{\smash{{\SetFigFont{10}{12.0}{\rmdefault}{\mddefault}{\updefault}{\color[rgb]{0,0,0}$\delta_2$}%
}}}}
\put(766,-61){\makebox(0,0)[lb]{\smash{{\SetFigFont{10}{12.0}{\rmdefault}{\mddefault}{\updefault}{\color[rgb]{0,0,0}$\delta_1$}%
}}}}
\put(568,-792){\makebox(0,0)[lb]{\smash{{\SetFigFont{10}{12.0}{\rmdefault}{\mddefault}{\updefault}{\color[rgb]{0,0,0}$\phi$}%
}}}}
\put(1405,233){\makebox(0,0)[lb]{\smash{{\SetFigFont{10}{12.0}{\rmdefault}{\mddefault}{\updefault}{\color[rgb]{0,0,0}$\psi_1$}%
}}}}
\put(-321,267){\makebox(0,0)[lb]{\smash{{\SetFigFont{10}{12.0}{\rmdefault}{\mddefault}{\updefault}{\color[rgb]{0,0,0}$\psi_2$}%
}}}}
\put(-584,-1006){\makebox(0,0)[lb]{\smash{{\SetFigFont{10}{12.0}{\rmdefault}{\mddefault}{\updefault}{\color[rgb]{0,0,0}$\beta_2$}%
}}}}
\end{picture}%
\caption{Coupled four-bar linkages.}
\label{fig:CoupledQuad}
\end{figure}



\subsection{Coupled four-bar linkages and fiber products of branched coverings}
\label{sec:Pullback}
The configuration space $Z_{12}$ of the coupling on Figure \ref{fig:CoupledQuad} is the solution set of a system of equations:
$$
Z_{12} := \{(w_1, z, w_2) \in (\CP^1)^3 \mid P_1(z, w_1) = 0, P_2(z, w_2) = 0\}
$$
Here $P_1$ and $P_2$ are polynomials whose zero sets are the configuration spaces $Z_1$ and $Z_2$. Thus we have a commutative diagram
\begin{equation}
\label{eqn:Z12CD}
\xymatrix {
\CP^1 && Z_{12} \ar[dl]_{\widetilde{f_1}} \ar[dr]^{\widetilde{f_2}} && \CP^1 \\
& Z_2 \ar[ul]^{g_2} \ar[dr]_{f_2} && Z_1 \ar[ur]_{g_1} \ar[dl]^{f_1} \\
&& \CP^1 &&
}
\end{equation}
where each of the maps is the restriction of a projection $(\CP^1)^3 \to (\CP^1)^2$ or $(\CP^1)^2 \to \CP^1$.
It is easy to see that the commutative square in \eqref{eqn:Z12CD} is a fiber product diagram:
$$
Z_{12} = Z_1 \times_{\CP^1} Z_2 := \{(x_1, x_2) \in Z_1 \times Z_2 \mid f_1(x_1) = f_2(x_2)\}
$$

The set $Z_{12}$ is an algebraic set and can have several irreducible components. In particular, this is the case when $Z_1$ or $Z_2$ is reducible. We are not interested in trivial components of $Z_i$ given by $z_i = \const$ or $w = \const$, which will be dealt with in Section \ref{sec:TrivClass}. Therefore let us restrict the diagram \eqref{eqn:Z12CD} to some non-trivial components $Z_1^0$ and $Z_2^0$ of $Z_1$ and $Z_2$.

Each of the maps $f_i \colon Z_i^0 \to \CP^1$ is either a homeomorphism or a two-fold branched cover. If $f_1$ is a homeomorphism, then so is $\widetilde{f_1}$, and the branched cover $\widetilde{f_2}$ is equivalent to $f_2 \colon Z_2^0 \to \CP^1$.

The following lemma describes the fiber product $Z_1^0 \times_{\CP^1} Z_2^0$ in the case when both maps $f_1|_{Z_1^0}$ and $f_2|_{Z_2^0}$ are two-fold branched covers. This is a simple special case of the general description of the components of a fibre product over $\CP^1$, see \cite[Section 2]{Pak09}.

\begin{lem}
\label{lem:FibProd}
Let $f_1 \colon Z_1^0 \to \CP^1$ and $f_2 \colon Z_2^0 \to \CP^1$ be two-fold branched covers with branch sets $A_1, A_2 \subset \CP^1$, respectively. Then their fiber product $Z_{12}$ has one of the following forms.
\begin{enumerate}
\item 
If $A_1 \ne A_2$, then both $\widetilde{f_1}$ and $\widetilde{f_2}$ are two-fold branched covers with branch sets $f_1^{-1}(A_2)$ and $f_2^{-1}(A_1)$, respectively. The composition $f_1 \circ \widetilde{f_2} = f_2 \circ \widetilde{f_1}$ is four-fold, and its monodromy has the form $(12)(34)$ around $A_1 \setminus A_2$, $(13)(24)$ around $A_1 \cap A_2$, $(14)(23)$ around $A_2 \setminus A_1$. Here $1, 2, 3, 4$ is an appropriate labeling of the preimage of some point.
\item
If $A_1 = A_2$ (so that the covers $f_1$ and $f_2$ are equivalent), then $Z_{12}$ is equivalent to the union of two copies of $Z_1^0$:
$$
Z_{12} = Z_1^{0,+} \sqcup Z_1^{0,-}
$$
The restrictions of $\widetilde{f_1}$ and $\widetilde{f_2}$ to $Z_1^{0,+}$ and $Z_1^{0,-}$ are homeomorphisms, and the compositions
$$
Z_1^0 \stackrel{\widetilde{f_2}^{-1}}{\longrightarrow} Z_1^{0,+} \stackrel{\widetilde{f_1}}{\longrightarrow} Z_2^0 \quad \text{and} \quad Z_1^0 \stackrel{\widetilde{f_2}^{-1}}{\longrightarrow} Z_1^{0,-} \stackrel{\widetilde{f_1}}{\longrightarrow} Z_2^0
$$
differ by postcomposition with the deck transformation of $Z_2^0$ (equivalently, by precomposition with the deck transformation of $Z_1^0$).
\end{enumerate}
\end{lem}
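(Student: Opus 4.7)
The plan is to analyze $Z_{12}=Z_1^0\times_{\CP^1}Z_2^0$ by working in analytic coordinates above each point $p$ of the base $\CP^1$, using the fact that a $2$-fold branched cover is locally modeled either by a homeomorphism or by the map $w\mapsto w^2$. Over any $p\in\CP^1$ the fiber of the composition $f_1\circ\widetilde{f_2}=f_2\circ\widetilde{f_1}$ is the Cartesian product $f_1^{-1}(p)\times f_2^{-1}(p)$; generically this has four points, so the composition is $4$-fold, while $\widetilde{f_1}$ and $\widetilde{f_2}$ are each $2$-fold as pullbacks of $f_2$ and $f_1$ respectively.

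For part~(1) I would examine each type of branch point separately. If $p\in A_1\setminus A_2$, model $f_1$ as $w\mapsto w^2=z$ and $f_2$ locally as two disjoint sections $z\mapsto u_i$: then $Z_{12}$ is locally the disjoint union of the two smooth parabolas $\{(w,u_i)\}$, and a small loop around $p$ swaps the two points $(\pm w,u_i)$ within each parabola, yielding monodromy $(12)(34)$ in a suitable labeling. The case $p\in A_2\setminus A_1$ is symmetric and produces $(14)(23)$. If $p\in A_1\cap A_2$, both $f_i$ have local model $w_i\mapsto w_i^2=z$, so $Z_{12}$ is locally cut out by $w_1^2=w_2^2$, which splits into the two smooth branches $w_1=\pm w_2$; each branch is a disc covering the base via $w\mapsto w^2$, and a loop around $p$ lifts to $(w,\pm w)\mapsto(-w,\mp w)$ on each branch, producing monodromy $(13)(24)$. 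These three double transpositions generate the Klein four-group, which is transitive on four letters, so $Z_{12}$ is connected. Reading off the ramification loci from the local analysis identifies the branch sets of $\widetilde{f_1}$ and $\widetilde{f_2}$ as $f_2^{-1}(A_1)\subset Z_2^0$ and $f_1^{-1}(A_2)\subset Z_1^0$, respectively.

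For part~(2), when $A_1=A_2=:A$ every branch point contributes the splitting $w_1^2=w_2^2$, so $Z_{12}$ globally decomposes into two components. Since $f_1$ and $f_2$ are both $2$-fold branched covers of $\CP^1$ ramified at exactly $A$, the classification of connected covers of $\CP^1\setminus A$ forces them to be equivalent: pick a homeomorphism $\phi\colon Z_1^0\to Z_2^0$ with $f_2\circ\phi=f_1$. Denoting by $j$ the deck transformation of $f_2$, the two sections $x_1\mapsto(x_1,\phi(x_1))$ and $x_1\mapsto(x_1,j\phi(x_1))$ of the first projection parametrize two closed subsets $Z_1^{0,+}$ and $Z_1^{0,-}$ whose union is $Z_{12}$; each projects homeomorphically onto $Z_1^0$ via $\widetilde{f_2}$ and onto $Z_2^0$ via $\widetilde{f_1}$, and the two lifts manifestly differ by postcomposition with $j$, as claimed.

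The main subtlety I expect to require careful handling is the local model at a point of $A_1\cap A_2$: the scheme-theoretic fiber product acquires a node there, so one must either pass to the normalization or simply treat $Z_{12}$ as the disjoint union of its irreducible branches in order to extract the clean branched-cover statements of the lemma. Once this local picture is set up, the rest is routine monodromy bookkeeping combined with the classification of covers of a punctured sphere.
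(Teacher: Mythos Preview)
The paper does not actually prove this lemma: it simply remarks that the statement is ``a simple special case of the general description of the components of a fibre product over $\CP^1$'' and cites \cite[Section~2]{Pak09}. Your local-model plus monodromy argument supplies what the paper omits, and it is precisely the standard way to see this result.

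One small gap worth patching: when you write ``these three double transpositions generate the Klein four-group, which is transitive'', you are tacitly assuming that all three of $A_1\setminus A_2$, $A_2\setminus A_1$, $A_1\cap A_2$ are nonempty. In general only two of the three need occur (for instance if $A_2\subsetneq A_1$). The fix is easy: $A_1\ne A_2$ forces the symmetric difference to be nonempty, and since each $Z_i^0$ is an irreducible two-fold cover of the simply connected $\CP^1$ we have $A_i\ne\emptyset$; together these guarantee that at least two of the three sets are nonempty, and any two of the three nontrivial elements of $(\Z/2)^2$ generate the whole group, so transitivity and hence connectedness of $Z_{12}$ follow. Your remark about the node over points of $A_1\cap A_2$ and the need to pass to the normalization is exactly right and is the only genuine subtlety here.
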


\subsection{Reducible couplings}
\label{sec:ReduceCoupled}
\begin{dfn}
A coupling of spherical four-bar linkages on Figure \ref{fig:CoupledQuad} is called \emph{reducible}, if there exist non-trivial irreducible components $Z_1^0$ and $Z_2^0$ of $Z_1$ and $Z_2$, respectively, such that the fiber product $Z_1^0 \times_{\CP^1} Z_2^0$ is reducible.
\end{dfn}
By Lemma \ref{lem:FibProd}, a coupling is reducible if and only if the maps $f_1|_{Z_1^0}$ and $f_2|_{Z_2^0}$ are two-fold branched covers and their sets of branch points coincide: $A_1 = A_2$. Each of the components $Z_i^0$ can have one of the following forms, see Section \ref{sec:ConfSpace}:
\begin{itemize}
\item
Equation \eqref{eqn:Grashof2} has no solutions; then
$Z_i^0 = Z_i$ is an elliptic curve with the parametrization
\begin{equation}
\label{eqn:EllParam}
Z_i = \{(z,w_i) \mid z = p_iF_i(t), w_i = q_iF_i(t+t_i)\},
\end{equation}
where $F_i(t) = \sn(t,k_i)$ or $F_i(t) = \cn(t,k_i)$.
\item
Equation \eqref{eqn:Grashof2} has exactly one solution; then
$Z_i^0 = Z_i$ is a conic with the parametrization
\begin{equation}
\label{eqn:ConParam}
Z_i = \{(z,w_i) \mid z^{m_i} = p_i \sin t, w_i^{n_i} = q_i \sin(t+t_i)\},
\end{equation}
where $m_i, n_i = \pm 1$ depending on the form of the solution of \eqref{eqn:Grashof2}, as described in Theorem \ref{thm:ParamCon}.
\item
The quadrilateral $Q_i$ is either deltoid with $\alpha_i = \delta_i, \beta_i = \gamma_i$ or antideltoid with $\alpha_i + \delta_i = \pi = \beta_i + \gamma_i$, and
\begin{equation}
\label{eqn:DegConParam}
Z_i^0 = \{(z,w_i) \mid z_i^{m_i} = p_i\sin t, w_i = \epsilon_i \sqrt{-\mu_i} e^{it},\},
\end{equation}
where $m_i = 1$ if $Q_i$ is a deltoid, and $m_i = -1$ if $Q_i$ is anti-deltoid, and with $\epsilon_i$ and $\mu_i$ as in Theorem \ref{thm:ParamDegenCon}.
\end{itemize}

We have $|A_i| = 4$ when $Q_i$ is elliptic, and $|A_i|=2$ when $Q_i$ is a conic quadrilateral or an (anti)deltoid. Therefore a reducible coupling is only possible between two elliptics, or between two conics, or between (anti)deltoid and conic, or between two (anti)deltoids.


\subsubsection{Reducible couplings of elliptic quadrilaterals}
\label{sec:RedEll}
\begin{lem}
\label{lem:RedCoupEll}
A coupling of two elliptic quadrilaterals $Q_1$ and $Q_2$ is reducible if and only if the parametrizations \eqref{eqn:EllParam} of their configuration spaces satisfy one of the following sets of conditions:
\begin{enumerate}
\item
\label{it:EllPlain}
$F_1$ and $F_2$ is the same elliptic function ($\sn$ or $\cn$), and
$$
k_1 = k_2, \quad p_1 = p_2 =: p
$$
In terms of the side lengths of $Q_1$ and $Q_2$ this is equivalent to
\begin{equation}
\label{eqn:EqAmpEll}
\frac{\sin\alpha_1 \sin\delta_1}{\sin\overline{\alpha}_1 \sin\overline{\delta}_1} = \frac{\sin\alpha_2 \sin\delta_2}{\sin\overline{\alpha}_2  \sin\overline{\delta}_2}, \quad
\frac{\sin\beta_1 \sin\gamma_1}{\sin\overline{\beta}_1  \sin\overline{\gamma}_1} = \frac{\sin\beta_2 \sin\gamma_2}{\sin\overline{\beta}_2 \sin\overline{\gamma}_2},
\end{equation}

The components of the configuration space $Z_{12}$ of the coupling have the following parametrizations:
\begin{equation}
\label{eqn:ParamEllPlain}
\begin{aligned}
\{w_1 = q_1 F(t-t_1), z = p F(t), w_2 = q_2 F(t + t_2) &\} \\
\cup \{w_1 = q_1 F(t-t_1), z = p F(t), w_2 = q_2 F(t - t_2) &\}
\end{aligned}
\end{equation}
\item
\label{it:EllSkew}
$F_1 = \cn(\,\cdot\,,k_1)$ and $F_2 = \cn(\,\cdot\,,k_2)$ and
$$
k_1^2 + k_2^2 = 1, \qquad \frac{p_1}{p_2} = \pm i \frac{k_1}{k_2}
$$
The components of the configuration space of the coupling have the following parametrizations:
\begin{equation}
\label{eqn:ParamEllSkew}
\begin{aligned}
\{w_1 = q_1 \cn(t-t_1, k_1), z = p_1 \cn (t, k_1), w_2 = \frac{p_1q_2}{p_2} \cn(t + it_2, k_1) &\} \\
\cup \{w_1 = q_1 \cn(t-t_1, k_1), z = p_1 \cn (t, k_1), w_2 = \frac{p_1q_2}{p_2} \cn(t - it_2, k_1) &\}
\end{aligned}
\end{equation}
\end{enumerate}
\end{lem}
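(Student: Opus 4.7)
The plan is to combine the fibre-product criterion of Lemma~\ref{lem:FibProd} with the explicit branch sets from Lemma~\ref{lem:BranchInvol}, and then enumerate the ways in which two elliptic branch sets can coincide. Both projections $f_i \colon Z_i \to \CP^1$ are two-fold branched covers with
\[
A_i = \{\pm p_i,\ \pm p_i/k_i\} \text{ if } F_i = \sn, \qquad A_i = \{\pm p_i,\ \pm i p_i k'_i/k_i\} \text{ if } F_i = \cn,
\]
and by Lemma~\ref{lem:FibProd} the coupling is reducible if and only if $A_1 = A_2$. So it suffices to classify when this set equality can hold and translate the resulting relations into side length data.

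I would run through the three possibilities for $(F_1, F_2)$. The mixed case $(\sn, \cn)$ is ruled out at once: $A_1$ lies on a single axis through the origin, whereas $A_2$ always has two points on each of two perpendicular axes. In the case $(\sn, \sn)$ all four elements of $A_i$ lie on one axis, with $\{\pm p_i\}$ strictly closer to the origin than $\{\pm p_i/k_i\}$ (since $0 < k_i < 1$), so inner-with-inner matching forces $p_1 = p_2$ and $k_1 = k_2$; the alternative crossed matching would give $k_1 k_2 = 1$, a contradiction. In the case $(\cn, \cn)$ there are exactly two sub-cases. If the same-axis pairs match each other, we again get $p_1 = p_2$ and $k_1 = k_2$, which together with the $(\sn,\sn)$ analysis gives case~(\ref{it:EllPlain}). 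If the axis pairs cross, then $\{\pm p_1\} = \{\pm i p_2 k'_2/k_2\}$ and $\{\pm p_2\} = \{\pm i p_1 k'_1/k_1\}$; eliminating between these and using $k_i^2 + k'_i{}^2 = 1$ yields $k'_1 k'_2 = k_1 k_2$, hence $k_1^2 + k_2^2 = 1$, together with $p_1/p_2 = \pm i k_1/k_2$, which is case~(\ref{it:EllSkew}).

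The side length version of $p_1 = p_2$ and $k_1 = k_2$ follows from Theorem~\ref{thm:ParamSnCn}: since $p_i^2 + 1 = \sin\alpha_i \sin\delta_i/(\sin\overline\alpha_i \sin\overline\delta_i) = a_i d_i$ and $M_i = a_i b_i c_i d_i$ determines $k_i$, the condition $p_1 = p_2$ is the first equality in \eqref{eqn:EqAmpEll}, and combined with $M_1 = M_2$ it yields the second by dividing. The parametrizations of the two components of $Z_{12}$ then follow from part~(2) of Lemma~\ref{lem:FibProd}: the two lifts of the identity $\CP^1 \to \CP^1$ along $f_2$ differ by the deck involution of $Z_2$. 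In case~(\ref{it:EllPlain}) this involution is $s \mapsto 2K - s$ for $F = \sn$ and $s \mapsto -s$ for $F = \cn$; combined with the symmetries $\sn(2K - x) = \sn x$ and $\cn(-x) = \cn x$ it reproduces the two sign choices $w_2 = q_2 F(t \pm t_2)$ in \eqref{eqn:ParamEllPlain}. In case~(\ref{it:EllSkew}) one expresses the parameter $s$ of $Z_2$ as an affine function of the parameter $t$ of $Z_1$ by means of the imaginary-argument identity $\cn(iu, k_1) = 1/\cn(u, k'_1)$ combined with a shift by a quarter-period, which converts $p_2 \cn(s, k_2)$ into $p_1 \cn(t, k_1)$ and produces the shift $\pm i t_2$ and the amplitude factor $p_1 q_2/p_2$ in \eqref{eqn:ParamEllSkew}.

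The main technical obstacle will be the skew case~(\ref{it:EllSkew}): once $k_2 = k'_1$ is forced, carrying the addition formula for $\cn$ through to write down the explicit change of parameter $s = s(t)$ and the resulting form of $w_2$ requires some bookkeeping, particularly to keep track of the two components of $Z_{12}$ and of the square-root convention $\sqrt{x} \in \R_+ \cup i\R_+$ that pins down the signs of $p_i$ and $q_i$. The $(\sn, \sn)$ and same-axis $(\cn, \cn)$ matchings, by contrast, are essentially immediate from the explicit forms of $A_i$.
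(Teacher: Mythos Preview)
Your proposal is correct and follows essentially the same approach as the paper: both use the fibre-product criterion $A_1=A_2$ from Lemma~\ref{lem:FibProd}, the explicit branch sets from Lemma~\ref{lem:BranchInvol}, the real/imaginary-axis structure to rule out the mixed $(\sn,\cn)$ case, a direct-versus-crossed matching analysis in the $(\sn,\sn)$ and $(\cn,\cn)$ cases, and Jacobi's imaginary transformation to obtain the parametrization~\eqref{eqn:ParamEllSkew}. Your derivation of $k_1^2+k_2^2=1$ via $k'_1k'_2=k_1k_2$ is a minor rephrasing of the paper's $k'_1=k_2$, and your side-length argument via $p_i^2+1=a_id_i$ and $M_i=a_ib_ic_id_i$ is slightly more explicit than the paper's, but the substance is the same.
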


\begin{proof}
Branch points are given by part 4) of Lemma \ref{lem:BranchInvol}. Note that in the $\sn$ case either all four points are real or all purely imaginary, while in the $\cn$ case there are two real and two imaginary points. Thus we have two possibilities:
$$
\begin{aligned}
\{\pm p_1, \pm \frac{p_1}{k_1}\} &= \{\pm p_2, \pm \frac{p_2}{k_2}\} &\text{ if } F_i = \sn(\,\cdot\,,k_i)\\
\{\pm p_1, \pm ip_1\frac{k'_1}{k_1}\} &= \{\pm p_2, \pm ip_2 \frac{k'_2}{k_2}\} &\text{ if } F_i = \cn(\,\cdot\,, k_i)
\end{aligned}
$$
If $p_1 = p_2$, then in both cases we have $k_1 = k_2$, which results in the case \ref{it:EllPlain} of the Lemma. The parametrization \eqref{eqn:ParamEllPlain} follows from parametrizations \eqref{eqn:EllParam} and from the action of the involutions $i_1$ and $i_2$ on $Z_1$ and $Z_2$.

Recall that $p_i \in \R_+ \cup i\R_+$, so that $p_1 = -p_2$ is not possible. It remains to check if the elements of $A_1$ can be equal to those of $A_2$ ``crosswise''.

In the $\sn$-case because of $p_i \in \R_+ \cup i\R_+$ we have only the possibility
$$
p_1 = \frac{p_2}{k_2}, \quad p_2 = \frac{p_1}{k_1}
$$
which cannot occur because of $0<k_i<1$. In the $\cn$-case we have
$$
p_1 = \pm ip_2 \frac{k'_2}{k_2} \text{ and } p_2 = \pm ip_1 \frac{k'_1}{k_1} \quad \Leftrightarrow \quad k'_1 = k_2 \text{ and } \frac{p_1}{p_2} = \pm i\frac{k_1}{k_2}
$$
which leads to the case \ref{it:EllSkew} of the Lemma.

To obtain a parametrization of $Z_{12}$ in the second case, use Jacobi's imaginary transformation:
$$
\cn(t,k') = \frac1{\cn(it,k)} = i \frac{k}{k'} \cn(it + K + iK', k)
$$
It leads to the following parametrization of $Z_2$:
$$
Z_2 = \{ (z,w_2) \mid z = ip_2\frac{k_1}{k'_1} \cn t, w_2 = iq_2 \frac{k_1}{k'_1} \cn(t+it_2) \}
$$
By making, if needed, the parameter change $t \to t+2K$, this can be rewritten as $z = p_1\cn t$, $w_2 = \frac{p_1q_2}{p_2} \cn(t+it_2)$, and we obtain \eqref{eqn:ParamEllSkew}.
\end{proof}

\subsubsection{Reducible couplings of conic quadrilaterals}
\begin{lem}
\label{lem:RedCoupCon}
A coupling of two conic quadrilaterals $Q_1$ and $Q_2$ is reducible if and only if the parametizations \eqref{eqn:ConParam} of their configuration spaces satisfy one of the following sets of conditions:
\begin{enumerate}
\item
\label{it:ConPlain}
$m_1 = m_2 =: m$ and $p_1 = p_2 =: p$. In terms of the side lengths, this is equivalent to
\begin{equation}
\label{eqn:RedCondSides}
\frac{\sin\alpha_1\sin\delta_1}{\sin\beta_1\sin\gamma_1} = \frac{\sin\alpha_2\sin\delta_2}{\sin\beta_2\sin\gamma_2}
\end{equation}
with an additional condition that each of $(\alpha_i, \beta_i, \gamma_i, \delta_i)$ satisfies one of the equations
$$
\alpha_i + \gamma_i = \beta_i + \delta_i, \quad \alpha_i + \beta_i = \gamma_i + \delta_i \quad (\Leftrightarrow m_i = 1)
$$
or each satisfies one of the equations
$$
\alpha_i + \delta_i = \beta_i + \gamma_i, \quad \alpha_i + \beta_i + \gamma_i + \delta_i = 2\pi \quad (\Leftrightarrow m_i = -1)
$$
The components of the configuration space of the coupling have the following parametrizations:
\begin{equation}
\label{eqn:ParamConPlain}
\begin{aligned}
\{w_1^{n_1} = q_1 \sin(t-t_1), z^m = p \sin t, w_2^{n_2} = q_2 \sin(t + t_2) &\} \\
\cup \{w_1^{n_1} = q_1 \sin(t-t_1), z^m = p \sin t, w_2^{n_2} = q_2 \sin(t - t_2) &\}
\end{aligned}
\end{equation}
\item
\label{it:ConSkew}
$m_1 = - m_2$ and $p_1 = \pm\frac{1}{p_2}$.
The components of the configuration space of the coupling have the following parametrizations:
\begin{equation}
\label{eqn:ParamConSkew}
\begin{aligned}
\{w_1^{n_1} = q_1 \sin(t-t_1), z^{m_1} = p_1 \sin t, w_2^{n_2} = q_2 \frac{\cos t_2 + i \sin t_2 \cos t}{\sin t} &\} \\
\cup \{w_1^{n_1} = q_1 \sin(t-t_1), z^{m_1} = p_1 \sin t, w_2^{n_2} = q_2 \frac{\cos t_2 - i \sin t_2 \cos t}{\sin t} &\}
\end{aligned}
\end{equation}
\end{enumerate}
\end{lem}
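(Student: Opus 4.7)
The strategy parallels that of Lemma \ref{lem:RedCoupEll}. First I would invoke Lemma \ref{lem:FibProd} to reduce reducibility of the coupling to the equality of branch sets $A_1 = A_2$ of the projections $f_i \colon Z_i \to \CP^1$. Part 3 of Lemma \ref{lem:BranchInvol} identifies $A_i = \{\pm p_i^{m_i}\}$ in the conic case, so the question becomes a set equality
\[
\{ p_1^{m_1}, -p_1^{m_1} \} \;=\; \{ p_2^{m_2}, -p_2^{m_2} \}.
\]

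Next I would carry out the case analysis using the sign convention $p_i \in \R_{>0} \cup i\R_{>0}$. This convention rules out the cross-equation $p_1^{m_1} = -p_2^{m_2}$, leaving only $p_1^{m_1} = p_2^{m_2}$. This splits into case (\ref{it:ConPlain}), where $m_1 = m_2$ and hence $p_1 = p_2$, and case (\ref{it:ConSkew}), where $m_1 = -m_2$ and hence $p_1 p_2 = 1$ if both $p_i$ are real positive and $p_1 p_2 = -1$ if both are positive imaginary, which together amount to $p_1 = \pm p_2^{-1}$. To obtain the equivalent reformulation \eqref{eqn:RedCondSides} in terms of side lengths I would substitute the amplitude formula for $p_i$ from Theorem \ref{thm:ParamCon} and then read off the $m$-table there to see that $m_1 = m_2$ forces $(\alpha_1,\beta_1,\gamma_1,\delta_1)$ and $(\alpha_2,\beta_2,\gamma_2,\delta_2)$ to lie in the same column of that table.

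Finally I would read off the parametrizations of the two components of $Z_{12}$ from a common parameter. Write $z^{m_i} = p_i \sin \tau_i$ and $w_i^{n_i} = q_i \sin(\tau_i + t_i)$. In case (\ref{it:ConPlain}), matching the $z$-coordinates gives $\sin \tau_1 = \sin \tau_2$; the two solutions $\tau_2 = \tau_1$ and $\tau_2 = \pi - \tau_1$, combined with the identity $\sin((\pi - \tau_1) + t_2) = \sin(\tau_1 - t_2)$, produce the two components in \eqref{eqn:ParamConPlain}. In case (\ref{it:ConSkew}) the matching condition is $\sin \tau_1 \sin \tau_2 = \pm 1$, so $\sin \tau_2 = \pm 1/\sin \tau_1$ and $\cos \tau_2 = \pm i \cos \tau_1 / \sin \tau_1$; expanding $\sin(\tau_2 + t_2) = \sin \tau_2 \cos t_2 + \cos \tau_2 \sin t_2$ gives \eqref{eqn:ParamConSkew}, with the two sign choices for $\cos \tau_2$ labeling the two irreducible components.

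The main obstacle will be the sign bookkeeping in the skew case. Unlike the elliptic setting of Lemma \ref{lem:RedCoupEll}, where Jacobi's imaginary transformation absorbs the mismatch between $k$ and $k'$ neatly, here the imaginary factor is produced by taking a complex square root in the expression for $\cos \tau_2$, and one must verify that the two sign choices in $p_1 = \pm p_2^{-1}$ and in $\cos \tau_2$ are correctly correlated so that what is produced is exactly the two irreducible components guaranteed by part~2 of Lemma \ref{lem:FibProd}, rather than a doubled or incorrectly labelled cover.
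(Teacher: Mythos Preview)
Your proposal is correct and follows essentially the same route as the paper: reduce reducibility to $A_1=A_2$ via Lemma~\ref{lem:FibProd}, read off $A_i=\{\pm p_i^{m_i}\}$ from Lemma~\ref{lem:BranchInvol}, split into the cases $m_1=m_2$ and $m_1=-m_2$, and in the skew case expand $\sin(\tau_2+t_2)$ after solving $\sin\tau_1\sin\tau_2=1$ (the paper handles the $p_1p_2=-1$ subcase by the parameter shift $t\mapsto t+\pi$, which is the clean way to absorb the sign you flagged as an obstacle). One small slip: your sentence ``the convention rules out the cross-equation $p_1^{m_1}=-p_2^{m_2}$'' is only valid when $m_1=m_2$; when $m_1=-m_2$ and both $p_i\in i\R_{>0}$ it is precisely the cross-equation that holds, which is why you (correctly) end up with $p_1p_2=-1$ there---also, in the side-length translation the condition $m_1=m_2$ corresponds to the same \emph{row}, not column, of the table in Theorem~\ref{thm:ParamCon}.
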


\begin{proof}
Branch points are given by part 3) of Lemma \ref{lem:BranchInvol}. Thus $A_1 = A_2$ is equivalent to
$$
\begin{aligned}
\{\pm p_1\} &= \{\pm p_2\} &&\text{ if } m_1 = m_2\\
\{\pm p_1\} &= \left\{\pm \frac1{p_2}\right\} &&\text{ if } m_1 = -m_2
\end{aligned}
$$
The parametrization of $Z_{12}$ in the first case is obvious. In the second case we must set
$$
p_1 \sin t = z^{m_1} = \frac{1}{p_2 \sin t'} \Leftrightarrow \sin t = \frac1{\sin t'},
$$
where $t$ and $t'$ are parameters on $Z_1$ and $Z_2$, respectively. (If we have $p_1 = -\frac1{p_2}$, which happens when the amplitudes are imaginary, then make the parameter change $t \mapsto t + \pi$.) This determines two different automoprhisms of $\CP^1 = (\C / 2\pi \Z) \cup \{i\infty\}$. Since $\sin t' = \frac1{\sin t}$ implies $\cos t' = \pm i \cot t$, we have
$$
\sin(t' \pm t_2) = \frac{\cos t_2 \pm i \sin t_2 \cos t}{\sin t},
$$
which leads to the parametrization in the Lemma.
\end{proof}

\subsubsection{Reducible couplings involving deltoids}
An (anti)deltoid $Q_1$ is said to be \emph{frontally} coupled with $Q_2$ if the common vertex of $Q_1$ and $Q_2$ is an apex of $Q_1$. Otherwise, $Q_1$ is said to be \emph{laterally} coupled with $Q_2$. The map $f_1|_{Z_1^0}$ in the diagram \eqref{eqn:Z12CD} is two-fold if and only if $Q_1$ is coupled frontally.

\begin{lem}
\label{lem:RedCoupDelt}
A coupling $(Q_1,Q_2)$ with $Q_1$ a frontally coupled (anti)deltoid is reducible if and only if one of the following sets of conditions is satisfied.
\begin{enumerate}
\item
\label{it:ConDeltPlain}
The quadrilateral $Q_2$ is conic, $m_1 = m_2 =: m$ and $p_1 = p_2 =: p$. In terms of the side lengths, this is equivalent to
$$
\frac{\sin^2\alpha_1}{\sin^2\beta_1} = \frac{\sin\alpha_2\sin\delta_2}{\sin\beta_2\sin\gamma_2}
$$
with the additional condition:
$$
\begin{aligned}
Q_1 \text{ deltoid } &\Rightarrow \alpha_2 + \gamma_2 = \beta_2 + \delta_2 \text{ or } \alpha_2 + \beta_2 = \gamma_2 + \delta_2\\
Q_1 \text{ antideltoid } &\Rightarrow \alpha_2 + \delta_2 = \beta_2 + \gamma_2 \text{ or } \alpha_2 + \beta_2 + \gamma_2 + \delta_2 = 2\pi
\end{aligned}
$$
The components of $Z_{12}$ can in this case be parametrized as
\begin{equation}
\label{eqn:ParamConDeltPlain}
\begin{aligned}
\{w_1 = \epsilon_1 \sqrt{-\mu_1} e^{it}, z^m = p \sin t, w_2^{n_2} = q_2 \sin(t + t_2) &\} \\
\cup \{w_1 = \epsilon_1 \sqrt{-\mu_1} e^{it}, z^m = p \sin t, w_2^{n_2} = q_2 \sin(t - t_2) &\}
\end{aligned}
\end{equation}

\item
\label{it:ConDeltSkew}
The quadrilateral $Q_2$ is conic, $m_1 = -m_2$ and $p_1 = \pm \frac{1}{p_2}$.

The components of the configuration space of the coupling have the following parametrizations:
\begin{equation}
\label{eqn:ParamConDeltSkew}
\begin{aligned}
\{w_1 = \epsilon_1 \sqrt{-\mu_1} e^{it}, z^{m_1} = p_1 \sin t, w_2^{n_2} = q_2 \frac{\cos t_2 + i \sin t_2 \cos t}{\sin t} &\} \\
\cup \{w_1 = \epsilon_1 \sqrt{-\mu_1} e^{it}, z^m = p_1 \sin t, w_2^{n_2} = q_2 \frac{\cos t_2 - i \sin t_2 \cos t}{\sin t} &\}
\end{aligned}
\end{equation}

\item
\label{it:DeltDelt}
Both $Q_1$ and $Q_2$ are frontally coupled deltoids or frontally coupled antideltoids (so that $m_1 = m_2 =: m$) such that $p_1 = p_2 =: p$ in \eqref{eqn:DegConParam}. In terms of side lengths:
$$
\frac{\sin\alpha_1}{\sin\beta_1} = \frac{\sin\alpha_2}{\sin\beta_2} \quad \text{and} \quad
\begin{aligned}
&\text{either } \alpha_i = \delta_i, \beta_i = \gamma_i\\
&\text{or } \alpha_i + \delta_i = \pi = \beta_i + \gamma_i
\end{aligned}
\quad \text{for }i = 1,2
$$

The components of the configuration space of the coupling have the following parametrizations.
$$
\begin{aligned}
&\{w_1 = \epsilon_1 \sqrt{-\mu_1} e^{it}, z^{m} = p \sin t, w_2 = \epsilon_2 \sqrt{-\mu_2} e^{it} \} \\
\cup &\{w_1 = \epsilon_1 \sqrt{-\mu_1} e^{it}, z^{m} = p \sin t, w_2 = -\epsilon_2 \sqrt{-\mu_2} e^{-it} \}
\end{aligned}
$$

\item
\label{it:DeltAnti}
$Q_1$ is a deltoid, $Q_2$ is an antideltoid such that $p_1p_2 = 1$.

The components of the configuration space of the coupling have the following parametrizations.
$$
\begin{aligned}
&\{w_1 = \epsilon_1 \sqrt{-\mu_1} e^{it}, z^{m_1} = p_1 \sin t, w_2 = \epsilon_2 \sqrt{-\mu_2} \frac{e^{it} - 1}{e^{it} + 1} \} \\
\cup &\{w_1 = \epsilon_1 \sqrt{-\mu_1} e^{it}, z^{m_1} = p_1 \sin t, w_2 = -\epsilon_2 \sqrt{-\mu_2} \frac{e^{it} + 1}{e^{it} - 1} \}
\end{aligned}
$$
\end{enumerate}
\end{lem}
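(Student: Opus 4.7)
The plan is to follow the same strategy as in Lemmas \ref{lem:RedCoupEll} and \ref{lem:RedCoupCon}. By the reducibility criterion derived from Lemma \ref{lem:FibProd}, the coupling $(Q_1,Q_2)$ is reducible if and only if both projections $f_1|_{Z_1^0}$ and $f_2|_{Z_2^0}$ are two-fold branched covers of $\CP^1$ with coinciding branch sets $A_1=A_2$. Since by hypothesis $Q_1$ is a frontally coupled (anti)deltoid, Lemma \ref{lem:BranchInvol}(2) tells us that $f_1$ is two-fold with $A_1=\{\pm p_1^{m_1}\}$, so $|A_1|=2$. Consequently $|A_2|=2$ as well, and inspecting Lemma \ref{lem:BranchInvol} we see that $Q_2$ must be either conic (part~3, producing Cases \ref{it:ConDeltPlain} and \ref{it:ConDeltSkew}) or itself a frontally coupled (anti)deltoid (part~2, producing Cases \ref{it:DeltDelt} and \ref{it:DeltAnti}).

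Next I match the branch sets. In every case $A_i=\{\pm p_i^{m_i}\}$, so $A_1=A_2$ as unordered pairs is equivalent to $p_1^{m_1}=\pm p_2^{m_2}$. Here I will use the convention $p_i\in\R_{>0}\cup i\R_{>0}$ to discard or retain signs: when $m_1=m_2$ the only possibility is $p_1=p_2$ (the negative sign forces one amplitude to leave $\R_{>0}\cup i\R_{>0}$), while when $m_1=-m_2$ both $p_1p_2=1$ and $p_1p_2=-1$ can occur (the latter precisely when the amplitudes are purely imaginary). When $Q_2$ is conic, translating $p_1=p_2$ via Theorem \ref{thm:ParamCon} gives the side-length identity in Case \ref{it:ConDeltPlain}, and the sign of $m_2$ dictates which of the four dispatching equations $\alpha_2\pm\beta_2\pm\gamma_2\pm\delta_2\equiv 0\pmod{2\pi}$ must hold (matched to $m_1$ according to whether $Q_1$ is deltoid or antideltoid); the conditions of Cases \ref{it:ConDeltSkew}--\ref{it:DeltAnti} arise similarly.

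With the branch sets identified, Lemma \ref{lem:FibProd}(2) says $Z_{12}=Z_1^{0,+}\sqcup Z_1^{0,-}$, where the two sheets are distinguished by post-composition with the deck involution $j_2$ of $Z_2^0$. To produce the explicit parametrizations I substitute the parametrizations \eqref{eqn:ConParam} and \eqref{eqn:DegConParam} and express $w_2$ through the common variable $t$ used to parametrize $Z_1^0$. In Cases \ref{it:ConDeltPlain} and \ref{it:DeltDelt} the identification is immediate from $z^m=p\sin t$ on both sides, and the deck involution of $Z_2^0$ acts by $t\mapsto\pi-t$ (Lemma \ref{lem:BranchInvol}); using $\sin(\pi-t)=\sin t$ and $e^{i(\pi-t)}=-e^{-it}$ produces the second sheet and its closed form. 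For the skew cases \ref{it:ConDeltSkew} and \ref{it:DeltAnti} the identification $p_1\sin t=1/(p_2\sin t')$ forces $\sin t'=1/\sin t$, hence $\cos t'=\pm i\cot t$, exactly as in the proof of Lemma \ref{lem:RedCoupCon}(2); expanding $\sin(t'\pm t_2)$ yields the rational parametrization in Case \ref{it:ConDeltSkew}, and in Case \ref{it:DeltAnti} the analogous substitution $e^{it'}=\mp 1/e^{it}$ (coming from $\sin t'=1/\sin t$ for the antideltoid parametrization $w_2=\epsilon_2\sqrt{-\mu_2}e^{it'}$) rewrites $w_2$ as the M\"obius expressions $\epsilon_2\sqrt{-\mu_2}(e^{it}\pm 1)/(e^{it}\mp 1)$ shown.

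The main obstacle is the careful sign bookkeeping: deciding in each subcase which of $p_1^{m_1}=\pm p_2^{m_2}$ actually survives the convention $p_i\in\R_{>0}\cup i\R_{>0}$, showing that the surviving signs are correctly absorbed into the parameter shift $t\mapsto t+\pi$ where necessary, and verifying that the M\"obius rewriting in Case \ref{it:DeltAnti} produces precisely the stated formulas. Once this has been done, the equivalences between the branch-matching conditions and the side-length identities stated in the lemma follow by the same trigonometric manipulations used to derive the amplitudes $p_i$ in Theorems \ref{thm:ParamCon} and \ref{thm:ParamDegenCon}.
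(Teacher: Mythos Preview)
Your overall strategy matches the paper's proof exactly: use Lemma~\ref{lem:FibProd} to reduce reducibility to the equality of branch sets, observe that $|A_1|=2$ forces $Q_2$ to be conic or (anti)deltoid, match the sets $\{\pm p_i^{m_i}\}$, and then parametrize the two sheets using the deck involution. Cases~\ref{it:ConDeltPlain}--\ref{it:DeltDelt} go through just as you describe.

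There is, however, a genuine computational error in your treatment of Case~\ref{it:DeltAnti}. You assert that the relation $\sin t'\sin t=1$ yields $e^{it'}=\mp 1/e^{it}$, and that this then produces the M\"obius expressions $(e^{it}\pm 1)/(e^{it}\mp 1)$. But if $e^{it'}=\mp e^{-it}$ then $\sin t'=\mp\sin t$, which gives $\sin t\sin t'=\mp\sin^2 t$, not $1$. The two statements are inconsistent, and the final formula does not follow from the intermediate one you wrote. What actually happens (and what the paper does) is to rewrite $\sin t\sin t'=1$ in exponentials and factor:
\[
(e^{i(t+t')}-e^{it}+e^{it'}+1)(e^{i(t+t')}+e^{it}-e^{it'}+1)=0,
\]
from which one solves each factor for $e^{it'}$ as a genuine M\"obius function of $e^{it}$, namely $e^{it'}=(e^{it}-1)/(e^{it}+1)$ or $e^{it'}=-(e^{it}+1)/(e^{it}-1)$. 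Substituting into $w_2=\epsilon_2\sqrt{-\mu_2}\,e^{it'}$ then gives the stated parametrizations directly. Once you replace your incorrect intermediate step with this factorization, the proof is complete and identical in spirit to the paper's.
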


\begin{proof}
Cases \ref{it:ConDeltPlain}) and \ref{it:ConDeltSkew}) are treated similarly to Lemma \ref{lem:RedCoupCon}.

In the case \ref{it:DeltDelt}) we have $A_1 = A_2$ if and only if $p_1 = p_2$, see part 2) of Lemma \ref{lem:BranchInvol}.
Parametrizations of the two components of the fiber product is straightforward from \eqref{eqn:DegConParam} by taking into account the involution action $t \mapsto \pi - t$ (or, alternatively, its description in Lemma \ref{lem:DescendInvol}).

In the case \ref{it:DeltAnti}) we must adjoin to the parametrization \eqref{eqn:DegConParam} the equation $w_2 = \epsilon_2 \sqrt{-\mu_2} e^{it'}$, where $\sin t' \sin t = 1$. The latter equation is equivalent to
\begin{equation}
\label{eqn:ExpPrime}
(e^{i(t + t')} - e^{it} + e^{it'} + 1)(e^{i(t + t')} + e^{it} - e^{it'} + 1) = 0,
\end{equation}
which implies the formula in the lemma.
\end{proof}

%

\subsection{Fiber product and resultant}
The fiber product $Z_{12} = Z_1^0 \times_{\CP^1} Z_2^0$ lies in the space $(\CP^1)^3$ with coordinates $(w_1, z, w_2)$, and its projections to the $(z, w_1)$ and to the $(z, w_2)$ planes are the spaces $Z_1^0$ and $Z_2^0$, respectively. Consider now the projection to the $(w_1, w_2)$-plane and denote by $W$ its image:
\begin{equation}
\label{eqn:Pi}
\begin{aligned}
\pi \colon Z_{12} &\to W,\\
(w_1, z, w_2) &\mapsto (w_1, w_2),
\end{aligned}
\end{equation}
which we call the \emph{partial configuration space} of the coupling.
Then we have
$$
W := \{(w_1, w_2) \mid \exists z \in \CP^1 \text{ such that } (z, w_1) \in Z_1^0, (z, w_2) \in Z_2^0\}
$$
If $P_1^0$ and $P_2^0$ are polynomials defining the components $Z_1^0$ and $Z_2^0$, respectively, then the set $W$ (at least its affine part) is the zero set of the resultant of the polynomials $P_1^0$ and $P_2^0$ viewed as polynomials in $z$.


If the coupling is reducible, then the resultant is a reducible polynomial, so that $W$ consists of several irreducible components. In this case it is convenient to use the parametrizations of the irreducible components of $Z_{12}$ that we obtained in Section \ref{sec:ReduceCoupled} in order to obtain the descriptions of the components of $W$.

\begin{lem}
\label{lem:WRed}
The following are the equations of the irreducible components of the space $W$ for some of the reducible couplings.
\begin{enumerate}
\item
\label{it:WEll}
If $(Q_1, Q_2)$ is a reducible non-involutive coupling of elliptic quadrilaterals, then
$$
a_{22} w_1^2 w_2^2 + a_{20} w_1^2 + a_{02} w_2^2 + 2a_{11} w_1w_2 + a_{00} = 0
$$
\item\label{it:WConPlain}
If $(Q_1, Q_2)$ is a reducible coupling of conic quadrilaterals such that $m_1 = m_2$ and $p_1 = p_2$ (Case 1 of Lemma \ref{lem:RedCoupCon}), then
$$
a_{20} w_1^{2n_1} + a_{02} w_2^{2n_2} + 2 a_{11} w_1^{n_1} w_2^{n_2} + a_{00} = 0
$$
for some $a_{ij} \in \R$, except for the components corresponding to $t_1 - t_2 \equiv 0 (\mod \pi)$.
\item
\label{it:WConSkew}
If $(Q_1, Q_2)$ is a reducible coupling of conic quadrilaterals such that $m_1 = -m_2$ and $p_1 = \pm \frac1{p_2}$ (Case 2 of Lemma \ref{lem:RedCoupCon}), then
\begin{equation}
\label{eqn:WConSkew}
w_1^{2n_1}w_2^{2n_2} + a_{20} w_1^{2n_1} + a_{02} w_2^{2n_2} + 2 a_{11} w_1^{n_1} w_2^{n_2} + a_{10} w_1^{n_1} + a_{01} w_2^{n_2} + a_{00} = 0
\end{equation}
for some $a_{ij} \in \C$ with $a_{20}, a_{02}, a_{11}, a_{10}, a_{01} \ne 0$. The polynomial is irreducible.
\item
\label{it:WConDeltPlain}
If $Q_1$ is an (anti)deltoid, and $Q_2$ is a conic quadrilateral such that $m_1 = m_2$ and $p_1 = p_2$ (Case 1 of Lemma \ref{lem:RedCoupDelt}), then
$$
w_1 + \mu_1 e^{\pm 2it_2} w_1^{-1} = \epsilon_1 \frac{2i\sqrt{-\mu_1} e^{\pm it_2}}{q_2} w_2^{n_2}
$$
\item
\label{it:WConDeltSkew}
If $Q_1$ is an (anti)deltoid, and $Q_2$ is a conic quadrilateral such that $m_1 = \pm m_2$ and $p_1 = \pm \frac1{p_2}$ (Case 2 of Lemma \ref{lem:RedCoupDelt}), then
\begin{equation}
\label{eqn:WConDeltSkew}
\frac{w_1^2 + 2a w_1 + b}{w_1^2 - b} = cw_2^{n_2},
\end{equation}
where the fraction on the left hand side is irreducible.
\item
\label{it:WDeltAnti}
If $Q_1$ is a deltoid, and $Q_2$ is an antideltoid such that $p_1p_2 = 1$ (Case \ref{it:DeltAnti} of Lemma \ref{lem:RedCoupDelt}), then
$$
w_1w_2 \mp \epsilon_2 \sqrt{-\mu_2} w_1 \pm \epsilon_1 \sqrt{-\mu_1} w_2 + \epsilon_1 \epsilon_2 \sqrt{-\mu_1} \sqrt{-\mu_2} = 0
$$
\end{enumerate}
\end{lem}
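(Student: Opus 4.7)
The strategy in each of the six parts is uniform: take the explicit parametrization of a non-trivial irreducible component of $Z_{12}$ provided by Lemmas \ref{lem:RedCoupEll}, \ref{lem:RedCoupCon}, and \ref{lem:RedCoupDelt}, and eliminate the common parameter $t$ from the pair $(w_1(t),w_2(t))$ to obtain the defining equation of the projection $\pi(Z_{12})$ in the $(w_1,w_2)$-plane.

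Parts \ref{it:WEll} and \ref{it:WConPlain} are essentially free. In both cases the component is, after the shift $\tau:=t-t_1$, parametrized by $w_1=q_1F(\tau)$ and $w_2=q_2' F(\tau+T)$, where $F\in\{\sn,\cn\}$ in the elliptic case (with $T=t_1\pm t_2$ or $t_1\pm i t_2$ according to whether we are in the plain or skew subcase of Lemma \ref{lem:RedCoupEll}) and $F=\sin$ applied to $w_i^{n_i}/q_i$ in the conic case, with $T$ a non-zero shift. Lemma \ref{lem:EllParam} for part \ref{it:WEll} and equation \eqref{eqn:ConParPrim} for part \ref{it:WConPlain} then give the stated equations after clearing the $q_i$'s. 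The subcase $T\equiv 0\pmod\pi$ excluded in \ref{it:WConPlain} is exactly when the resulting relation $u_1=\pm u_2$ between $u_i=w_i^{n_i}/q_i$ degenerates to a linear one and does not produce a biquadratic.

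Parts \ref{it:WConDeltPlain} and \ref{it:WDeltAnti} reduce to direct substitution. For \ref{it:WConDeltPlain}, using $w_1=\epsilon_1\sqrt{-\mu_1}\,e^{it}$ together with $(\epsilon_1\sqrt{-\mu_1})^2=-\mu_1$ one checks that $\mu_1 w_1^{-1}=-\epsilon_1\sqrt{-\mu_1}\,e^{-it}$, so
\[
w_1+\mu_1 e^{\pm 2it_2}w_1^{-1}=\epsilon_1\sqrt{-\mu_1}\,e^{\pm it_2}\bigl(e^{i(t\mp t_2)}-e^{-i(t\mp t_2)}\bigr)=2i\epsilon_1\sqrt{-\mu_1}\,e^{\pm it_2}\sin(t\mp t_2),
\]
which, since $w_2^{n_2}=q_2\sin(t\mp t_2)$ on the corresponding component, gives the claimed equation (the two signs labelling the two components of $Z_{12}$). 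For \ref{it:WDeltAnti}, setting $\zeta:=e^{it}=w_1/(\epsilon_1\sqrt{-\mu_1})$ and $w_2=\pm\epsilon_2\sqrt{-\mu_2}(\zeta\mp 1)/(\zeta\pm 1)$, a short manipulation shows that the four listed monomials sum to zero.

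The two genuinely computational cases are the \emph{skew} ones, \ref{it:WConSkew} and \ref{it:WConDeltSkew}, because there the second parametrization contains $\sin t$ and $\cos t$ rationally rather than through a pure phase shift. In both I would treat the two parametric equations as a linear system in $(\sin t,\cos t)$, solve for $\sin t$ and $\cos t$ as rational functions of $(w_1,w_2^{n_2})$, and impose $\sin^2 t+\cos^2 t=1$. For \ref{it:WConSkew} this produces a polynomial of bidegree $(2,2)$ in $(w_1^{n_1},w_2^{n_2})$ with leading term $w_1^{2n_1}w_2^{2n_2}$; expanding the Cramer-rule expressions one finds $a_{20}=-q_2^2\sin^2 t_2$, $a_{02}=-q_1^2\sin^2 t_1$, $a_{10}=\mp 2iq_1q_2^2\sin t_1\sin t_2\cos t_2$, $a_{01}=\pm 2iq_1^2q_2\sin t_1\sin t_2\cos t_1$, all nonzero because the phase shifts $t_i$ of Theorem \ref{thm:ParamCon} lie strictly off the real axis. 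For \ref{it:WConDeltSkew} the first equation already expresses $\sin t$ and $\cos t$ as linear combinations of $w_1$ and $w_1^{-1}$, so the same elimination collapses immediately to the rational form \eqref{eqn:WConDeltSkew} with $b=-\mu_1$. The main obstacle is confirming that the bidegree-$(2,2)$ polynomial in \eqref{eqn:WConSkew} is irreducible and that the fraction in \eqref{eqn:WConDeltSkew} is in lowest terms; both follow once one checks that the parametrizing map $t\mapsto(w_1,w_2)$ is generically injective in the skew regime $m_1=-m_2$, $p_1=\pm p_2^{-1}$, which rules out any proper factorization of the Zariski closure.
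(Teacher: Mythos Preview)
Your proposal is correct and follows essentially the same approach as the paper: in each case take the explicit parametrization of the component of $Z_{12}$ from Lemmas \ref{lem:RedCoupEll}--\ref{lem:RedCoupDelt} and eliminate the parameter $t$, in the skew cases by solving linearly for $(\sin t,\cos t)$ and imposing $\sin^2 t+\cos^2 t=1$. The only presentational difference is in the irreducibility arguments for \ref{it:WConSkew} and \ref{it:WConDeltSkew}: the paper counts degrees directly (the projection $W^0\to\CP^1$ is two-fold over each factor because the coupling is not involutive, forcing bidegree exactly $(2,2)$; and for \ref{it:WConDeltSkew} it checks the concrete condition $\tan t_2\ne i$), whereas you invoke generic injectivity of $t\mapsto(w_1,w_2)$ --- this is equivalent once you note that the bidegree of the image matches that of the polynomial, which you should make explicit.
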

\begin{proof}
In the case \ref{it:WEll}) each of the components of $W$ has the parametrization of the form
$$
w_1 = q_1 F(t), \quad w_2 = q'_2 F(t + t_{12}),
$$
where $t_{12}$ is either $t_1 \pm t_2$ or $t_1 \pm it_2$. For an irreducible component, $t_{12}$ is not a half-period of the elliptic function $F$, and the component is described by a biquadratic equation of the above form.

Similarly, in the case \ref{it:WConPlain}), the components of $W$ are parametrized by
$$
w_1 = q_1 \sin t, \quad w_2 = q_2 \sin(t + (t_1 \pm t_2))
$$
If $t_1 \pm t_2 \notin \{0,\pi\}$, then the corresponding component is described by a quadratic equation without a linear part.

In the case \ref{it:WConSkew}) by Lemma \ref{lem:RedCoupCon}, part \ref{it:ConSkew} we have a parametrization of the form
\[
w_1^{n_1} = a\sin t + b\cos t, \quad w_2^{n_2} = \frac{c + d\cos t}{\sin t}
\]
with $a, b, c, d \ne 0$. It suffices to consider the case $n_1 = n_2 = 1$. One easily computes
\[
\sin t = \frac{dw_1+ bc}{bw_2 + ad}, \quad \cos t = \frac{w_1w_2 - ac}{bw_2 + ad}
\]
By substituting this into $\sin^2 t + \cos^2 t = 1$ we obtain the equation
\[
w_1^2w_2^2 + d^2w_1^2 - b^2w_2^2 - 2acw_1w_2 + 2bcdw_1 - 2abdw_2 + (a^2c^2 + b^2c^2 - a^2d^2) = 0
\]
The polynomial at the left hand side is irreducible, since it describes a $2$-$2$ correspondence, and $W_1^0$ is also a double cover over both $Z_1$ and $Z_2$.

In the cases \ref{it:WConDeltPlain}) and \ref{it:WConDeltSkew}) the equations are found by substituting $w_1 = c_1 e^{it}$ into $\sin(t \pm t_2) = \frac{e^{i(t \pm t_2)} - e^{-i(t \mp t_2)}}{2i}$ and $\cos(t \pm t_2) = \frac{e^{i(t \pm t_2)} + e^{-i(t \mp t_2)}}{2}$ in the formulas of Lemma \ref{lem:RedCoupDelt}. The irreducibility of the rational function in the case \ref{it:WConDeltSkew}) is equivalent to the non-divisibility of the numerator by $e^{it} \pm 1$. This, in turn, is equivalent to $\tan t_2 \ne i$ that was observed in Section \ref{sec:ClassConfSp} after the proof of Theorem \ref{thm:ParamCon}.

Finally, equations in the case \ref{it:WDeltAnti}) follow from equation \eqref{eqn:ExpPrime}.
\end{proof}

\subsection{Involutive couplings}
\label{sec:InvCoupl}
Consider the projection \eqref{eqn:Pi}, where $Z_{12}$ is the fiber product of non-trivial components $Z_1^0$ and $Z_2^0$ of the configuration spaces $Z_1$ and $Z_2$. Let $Z_{12}^0$ be some irreducible component of $Z_{12}$. Then the map $\pi$ restricted to $Z_{12}^0$ is either an isomorphism or a two-fold (possibly branched) cover.

\begin{dfn}
\label{dfn:InvCoupl}
A coupling $(Q_1,Q_2)$ is called \emph{involutive}, if there exists an irreducible component $Z_{12}^0$ of $Z_1^0 \times_{\CP^1} Z_2^0$ such that the restriction of the projection \eqref{eqn:Pi} to $Z_{12}^0$ is two-fold.
\end{dfn}

\begin{lem}
The coupling $(Q_1,Q_2)$ is involutive if and only if the maps $g_1$ and $g_2$ on the diagram \eqref{eqn:Z12CD} (with $Z_i^0$ in place of $Z_i$) are two-fold, and the corresponding deck transformations $j_i \colon Z_i^0 \to Z_i^0$ lift to a common involution $j_{12} \colon Z_{12}^0 \to Z_{12}^0$.
\end{lem}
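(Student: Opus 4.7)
My plan is to derive the equivalence directly from the definitions by analyzing fiber sizes in the cube $(\CP^1)^3 \supset Z_{12}$, reading off the consequences of $\pi|_{Z_{12}^0}$ being two-fold versus one-fold. The key observation is that $\pi^{-1}(w_1,w_2) \cap Z_{12}^0$ consists of triples $(w_1,z,w_2) \in Z_{12}^0$ with $(z,w_1) \in Z_1^0$ and $(z,w_2) \in Z_2^0$, so its cardinality is bounded above by $\min(\deg g_1, \deg g_2) \le 2$.

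For the forward implication, I assume that some irreducible component $Z_{12}^0$ renders $\pi|_{Z_{12}^0}$ two-fold. A generic point $(w_1,w_2)$ in the image then has two preimages $(w_1,z,w_2) \ne (w_1,z',w_2)$ with $z \ne z'$. Since $(z,w_1)$ and $(z',w_1)$ both lie in $Z_1^0$, the fiber $g_1^{-1}(w_1)$ has size at least two, so $g_1$ is a two-fold branched cover; the same argument via $g_2$ yields two-foldness of $g_2$. The sheet-swap of $\pi$ defines a rational involution on $Z_{12}^0$ that extends uniquely to a regular involution $j_{12} \colon Z_{12}^0 \to Z_{12}^0$ by smoothness of the Riemann surface. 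By construction $j_{12}$ preserves the coordinates $w_1$ and $w_2$, and the composition $\widetilde{f_2} \circ j_{12}$ swaps the two elements of each fiber of $g_1$, hence equals $j_1 \circ \widetilde{f_2}$; symmetrically $\widetilde{f_1} \circ j_{12} = j_2 \circ \widetilde{f_1}$.

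For the converse, I assume $g_1, g_2$ are both two-fold with deck transformations $j_1, j_2$ that lift to a common involution $j_{12}$ on some irreducible $Z_{12}^0$. Because $j_i$ fixes $w_i$, the lifting relations $\widetilde{f_2} \circ j_{12} = j_1 \circ \widetilde{f_2}$ and $\widetilde{f_1} \circ j_{12} = j_2 \circ \widetilde{f_1}$ imply that $j_{12}$ preserves both $w_1$ and $w_2$, so $\pi \circ j_{12} = \pi$. Since $j_{12}$ is non-trivial (it projects to the non-trivial $j_i$), the generic fiber of $\pi|_{Z_{12}^0}$ contains at least two points; combined with the upper bound of $2$ noted above, this forces $\pi|_{Z_{12}^0}$ to be exactly two-fold, witnessing involutivity.

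The only subtle point I anticipate is the regularity of the sheet-swap on $Z_{12}^0$, which is handled by the standard fact that a birational involution on a smooth projective curve extends uniquely to a regular one; and one should keep track that the same component $Z_{12}^0$ serves as the witness in both directions, which is immediate from the construction.
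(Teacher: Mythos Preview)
Your proof is correct and follows essentially the same approach as the paper: in both directions the argument hinges on the deck transformation of $\pi|_{Z_{12}^0}$ having the form $(w_1,z,w_2)\mapsto(w_1,z',w_2)$, which forces $g_1,g_2$ to be two-fold and identifies $j_{12}$ as the common lift of $j_1,j_2$, while conversely $\pi\circ j_{12}=\pi$ with $j_{12}\ne\id$ gives the two-foldness of $\pi$. You add more explicit justification (the fiber-size bound $\min(\deg g_1,\deg g_2)$ and the extension of the rational sheet-swap to a regular involution), but the underlying idea is identical to the paper's brief argument.
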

\begin{proof}
If $(Q_1,Q_2)$ is involutive, then the deck transformation of the two-fold cover $Z_{12}^0 \to W^0$ has the form
\begin{equation}
\label{eqn:J12}
(w_1, z, w_2) \mapsto (w_1, z', w_2)
\end{equation}
It follows that $j_1$ and $j_2$ are two-fold and that \eqref{eqn:J12} is a common lift of their deck transformations.

In the opposite direction, if $j_1$ and $j_2$ lift to an involution $j_{12}$, then we have $\pi \circ j_{12} = \pi$, and since $j_{12} \ne \id$, this means that $\pi$ is two-fold.
\end{proof}

\subsubsection{Classification of involutive couplings}
\begin{lem}
\label{lem:InvolLift}
Any involutive coupling $(Q_1,Q_2)$ has one of the following forms.
\begin{enumerate}
\item
\label{it:OrthoCompat}
The quadrilaterals $Q_1$ and $Q_2$ are orthodiagonal and have equal involution factors at their common vertex: $\lambda_1 = \lambda_2$, see Definition \ref{dfn:InvFactors}. (In particular this means that $\lambda_i$ are defined, that is if $Q_i$ is an (anti)deltoid, then it is coupled laterally.)

\item
\label{it:EllShifts}
The quadrilaterals $Q_1$ and $Q_2$ are elliptic and form a reducible elliptic coupling of the first type from Lemma \ref{lem:RedCoupEll}. Besides, $t_1 \pm t_2$ is a half-period of the corresponding elliptic function.

In terms of side lengths this means that, in addition to equations \eqref{eqn:EqAmpEll}, we have either
\begin{equation*}
\frac{\sin\alpha_1 \sin\gamma_1}{\sin\overline{\alpha}_1 \sin\overline{\gamma}_1} = \frac{\sin\alpha_2 \sin\gamma_2}{\sin\overline{\alpha}_2 \sin\overline{\gamma}_2},
\end{equation*}
which is equivalent to $t_1 - t_2 \in \{0, 2K\}$, or
\begin{equation*}
\frac{\sin\alpha_1 \sin\gamma_1}{\sin\overline{\alpha}_1 \sin\overline{\gamma}_1} = \frac{\sin\alpha_2 \sin\gamma_2}{\sin\overline{\beta}_2 \sin\overline{\delta}_2},
\end{equation*}
which is equivalent to $t_1 + t_2$ being a half-period with $\Im(t_1 + t_2) = K'$.

\item
\label{it:ConShifts}
The quadrilaterals $Q_1$ and $Q_2$ are conic and form a reducible conic coupling of the first type from Lemma \ref{lem:RedCoupCon}. Besides, $t_1 - t_2 \equiv 0 (\mod \pi)$.

In terms of the side lengths this is equivalent to
$$
\frac{\sin\alpha_1}{\sin\beta_1} = \frac{\sin\alpha_2}{\sin\beta_2}, \quad
\frac{\sin\gamma_1}{\sin\delta_1} = \frac{\sin\gamma_2}{\sin\delta_2}
$$
with an additional condition that each of $(\alpha_1, \beta_1, \gamma_1, \delta_1)$ and $(\alpha_2, \beta_2, \gamma_2, \delta_2)$ satisfies one of the equations
$$
\alpha + \gamma = \beta + \delta, \quad \alpha + \beta = \gamma + \delta \quad (\Leftrightarrow m = 1)
$$
or each of them satisfies one of the equations
$$
\alpha + \delta = \beta + \gamma, \quad \alpha + \beta + \gamma + \delta = 2\pi \quad (\Leftrightarrow m = -1)
$$
\end{enumerate}
\end{lem}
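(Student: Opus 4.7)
The plan is to translate the involutivity condition into a compatibility requirement between the deck involutions $j_1$, $j_2$ on $Z_1^0$, $Z_2^0$, and then split the analysis according to whether the $j_i$ descend to involutions of $\CP^1$. By the previous lemma, involutivity is equivalent to $g_1|_{Z_1^0}$ and $g_2|_{Z_2^0}$ being two-fold branched covers together with the existence of a common lift $j_{12}$ of $j_1$ and $j_2$ to some irreducible component $Z_{12}^0$ of the fiber product. Written out in coordinates, the lifting condition says that for every $(z, w_1, w_2) \in Z_{12}^0$, the root of $P_1(\,\cdot\,, w_1)$ distinct from $z$ coincides with the root of $P_2(\,\cdot\,, w_2)$ distinct from $z$; equivalently, the quadratic polynomials $P_1(\,\cdot\,, w_1)$ and $P_2(\,\cdot\,, w_2)$ are proportional on a dense subset of $W^0$.

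I would first dispose of the orthodiagonal case. If $j_1$ descends to an involution of $\CP^1$, then Lemma \ref{lem:DescendInvol} forces $Q_1$ to be orthodiagonal, with descent $z \mapsto \lambda_1/z$. The compatibility condition then forces $j_2$ to descend to the same involution, so $Q_2$ is orthodiagonal and $\lambda_1 = \lambda_2$. This is case \ref{it:OrthoCompat}. The sub-condition that $\lambda_i$ be defined at the common vertex is automatic: if $Q_i$ were an (anti)deltoid coupled at its apex, then by Lemma \ref{lem:BranchInvol}(2) the map $g_i$ would be a homeomorphism and $j_i$ would not even exist. Sufficiency under $\lambda_1 = \lambda_2$ is read off directly from the closed forms \eqref{eqn:ConfOrthodiag}.

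Next I would handle the case in which neither $j_i$ descends, so neither $Q_i$ is orthodiagonal. The polynomial-proportionality requirement forces the branch sets of $f_1|_{Z_1^0}$ and $f_2|_{Z_2^0}$ in $\CP^1$ to coincide, $A_1 = A_2$, so by Lemma \ref{lem:FibProd}(2) the coupling is reducible. Comparing $|A_i|=4$ in the elliptic case with $|A_i|=2$ in the conic and (anti)deltoid cases, the only non-orthodiagonal possibilities are two elliptic quadrilaterals or two conic quadrilaterals. Substituting the explicit parametrizations from Lemmas \ref{lem:RedCoupEll}(\ref{it:EllPlain}) and \ref{lem:RedCoupCon}(\ref{it:ConPlain}), I would determine when $\pi \colon Z_{12}^0 \to W^0$ is two-fold by seeking nontrivial solutions of $(w_1(t), w_2(t)) = (w_1(t'), w_2(t'))$. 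For elliptic reducible couplings this gives $t + t' \equiv 2t_1 + 2K \equiv \mp 2t_2 + 2K \pmod \Lambda$, equivalent to $t_1 \pm t_2$ being a half-period, i.e.\ case \ref{it:EllShifts}; the refinement to the specific half-period type stated in the lemma follows from the restrictions on $\Im t_i$ given in Theorem \ref{thm:ParamSnCn}. The analogous computation for the conic case produces $t_1 \pm t_2 \equiv 0 \pmod \pi$, which is case \ref{it:ConShifts}.

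The main obstacle will be ruling out involutivity for the \emph{skew} reducible families (case \ref{it:EllSkew} of Lemma \ref{lem:RedCoupEll} and case \ref{it:ConSkew} of Lemma \ref{lem:RedCoupCon}) and for the (anti)deltoid-involving reductions of Lemma \ref{lem:RedCoupDelt}. For the elliptic skew case the two parametrizing functions have conjugate moduli related by Jacobi's imaginary transformation, so the involution on $Z_1^0$ (acting as $t \mapsto 2t_1 + 2K - t$) and the involution on $Z_2^0$ (acting in the same uniformizing parameter by a translation along the imaginary period direction) cannot coincide, since their combined fixed-point locus is empty. For the conic skew case the obstruction is analogous: $w_2(t)$ is a Möbius expression in $\cos t$ rather than a translate of $\sin$, so the equations $w_1(t) = w_1(t')$ and $w_2(t) = w_2(t')$ cannot hold simultaneously for a generic $t' \ne t$. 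Finally, in every reducible coupling of Lemma \ref{lem:RedCoupDelt} at least one (anti)deltoid is coupled at its apex, so $g_i$ is a homeomorphism and $j_i$ does not exist; laterally coupled (anti)deltoids are automatically orthodiagonal and therefore already covered by case \ref{it:OrthoCompat}.
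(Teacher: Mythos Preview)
Your overall structure parallels the paper's closely: both arguments funnel into (i) the orthodiagonal case where the $j_i$ descend, and (ii) a case-by-case analysis of the reducible parametrizations from Lemmas \ref{lem:RedCoupEll}--\ref{lem:RedCoupDelt}. The paper splits first on whether $f_1$ is an isomorphism and then on reducible versus irreducible, while you split on whether $j_1$ descends; these are minor reorganizations leading to the same endpoints.

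There is, however, a genuine gap at the entrance to your second case. You assert that when neither $j_i$ descends, ``the polynomial-proportionality requirement forces the branch sets \ldots\ to coincide, $A_1 = A_2$''. This does not follow from what you have written: your proportionality statement concerns $P_i(\,\cdot\,,w_i)$ as polynomials in $z$, while $A_i$ is the zero locus of the discriminant of $P_i(z,\,\cdot\,)$ in $w_i$, so there is no direct link. The implication you want is in fact the contrapositive of the most substantial step in the paper's proof, the irreducible case. There one notes that on the full fiber product $Z_{12}$ both $(j_1,j_2)$ and $(i_1,\id)$ act, computes $\bigl((i_1,\id)\circ(j_1,j_2)\bigr)^2(t,t')=(t+4t_1,\,t')$, and concludes that $4t_1$ is a period; hence $t_1$ is a quarter-period and $Q_1$ is orthodiagonal, so $j_1$ descends after all. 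In your language: if $A_1\ne A_2$ then $Z_{12}^0=Z_{12}$, and for generic $z$ all four pairs $(w_1^{(a)},w_2^{(b)})$ lie in $W^0$; chaining proportionality through $P_2(\,\cdot\,,w_2^{(1)})$ yields $P_1(\,\cdot\,,w_1^{(1)})\propto P_1(\,\cdot\,,w_1^{(2)})$, forcing $j_1$ to descend. Either way this is the crux of the lemma and must be argued, not asserted.

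Your handling of the skew reducible families is also loose. For the elliptic $\cn$-skew case the paper's argument is concrete: involutivity would force $t_1\pm it_2$ to be a half-period of $\cn$, which fails because $\Im t_1\in(0,K')$ while $\Im(it_2)$ is an integer multiple of $K'$. Your ``combined fixed-point locus is empty'' remark gestures at this but does not pin it down. The conic skew case is likewise dispatched in the paper by checking that $j_1(t)=\pi-2t_1-t$ does not preserve $w_2(t)$; your description is correct in spirit but should be made explicit.
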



%

\begin{proof}
By assumption, both covers $g_1$ and $g_2$ are two-fold. Assume that the map $f_1$ is an isomorphism. Then, according to the classification of configuration spaces (in particular, Lemma \ref{lem:BranchInvol}), the quadrilateral $Q_1$ is an (anti)deltoid coupled to $Q_2$ frontally. Then, by Lemma \ref{lem:DescendInvol}, the deck transformation $j_1$ acts by
$$
z \mapsto \lambda_1 z^{-1}
$$
If $j_1$ and $j_2$ have a common lift \eqref{eqn:J12}, then $j_2$ must descend to an involution on $\CP^1$ given by the same formula. By Lemma \ref{lem:DescendInvol}, this happens if and only if the quadrilateral $Q_2$ is orthodiagonal and has the same involution factor at the $\delta\alpha$-vertex as $Q_1$.
Thus we arrive at a special case of the situation described in part \ref{it:OrthoCompat}) of the Lemma, with a laterally coupled (anti)deltoid as one of the orthodiagonal quadrilaterals.

From now on assume that both $f_1$ and $g_1$ are two-fold, that is each of $Z_1^0 = Z_1$ and $Z_2^0 = Z_2$ is either a conic or an elliptic curve. We will distinguish two cases: when the coupling $(Q_1,Q_2)$ is reducible and when not.

If $(Q_1,Q_2)$ is reducible, then consider case-by-case possible parametrizations of the components of $Z_{12}$ given in Lemmas \ref{lem:RedCoupEll} and \ref{lem:RedCoupCon}.
\begin{itemize}
\item $w_1 = q_1 F(t-t_1),\quad z = p F(t),\quad w_2 = q_2 F(t \pm t_2)$,
\end{itemize}
where $F = \sn$ or $\cn$, and the choice of $+t_2$ or $-t_2$ shift yields two different components of $Z_{12}$. The involution $j_1$ acts by
$$
j_1(t) =
\begin{cases}
2K + 2t_1 - t, &\text{if } F = \sn\\
2t_1 - t, &\text{if } F = \cn
\end{cases}
$$
In order for $j_1$ to preserve the value of $w_2$, we must have
$$
\sn(t \pm t_2) = \sn(2K + 2t_1 - t \pm t_2) \quad \text{or} \quad \cn(t \pm t_2) = \cn(2t_1 - t \pm t_2) \quad \forall t,
$$
respectively. This is equivalent to $t_1 \pm t_2$ being a half-period of $\sn$, respectively $\cn$. Hence we have the situation described in part \ref{it:EllShifts}) of the Lemma. Note that due to $\Im t_1, \Im t_2 \in (0, K')$ the possible half-periods are
\begin{equation}
\label{eqn:SumDiffShifts}
t_1 - t_2 \in \{0, 2K\} \quad \text{or} \quad t_1 + t_2 \in
\begin{cases}
\{iK', 2K + iK'\}, &\text{in the } \sn \text{ case}\\
\{K+iK', 3K+iK'\}, &\text{in the } \cn \text{ case}
\end{cases}
\end{equation}
The corresponding conditions on the side lengths follow from the formulas for $\dn t_0$ in Theorem \ref{thm:ParamSnCn} and from the identities $\dn(iK'- t_0) = i \frac{\cn t_0}{\sn t_0}$ and $\dn(K + iK' - t_0) = -ik' \frac{\sn t_0}{\cn t_0}$.

\begin{itemize}
\item $w_1 = q_1 \cn(t-t_1),\quad z = p_1 \cn t,\quad w_2 = \frac{p_1q_2}{p_2} \cn(t \pm it_2)$
\end{itemize}
Similarly to the previous case, $t_1 \pm it_2$ must be a half-period of $\cn$, that is belong to $2K\Z + (K+iK')\Z$. This is impossible, since $\Im t_1 \in (0,K')$ and $\Im (it_2)$ is a multiple of $K'$. Thus this coupling cannot be involutive.

\begin{itemize}
\item $w_1^{n_1} = q_1 \sin(t-t_1),\quad z^m = p \sin t,\quad w_2^{n_2} = q_2 \sin(t \pm t_2)$
\end{itemize}
Arguing as before, we see that $t_1 \pm t_2$ must be a multiple of $\pi$. Since $t_1$ and $t_2$ have positive imaginary parts, we have $t_1 - t_2 = n\pi$. By the formula for $\tan t_i$ from \ref{thm:ParamCon} this is equivalent to
$$
\frac{\sin\beta_1\sin\delta_1}{\sin\alpha_1\sin\gamma_1} = \frac{\sin\beta_2\sin\delta_2}{\sin\alpha_2\sin\gamma_2}
$$
Together with \eqref{eqn:RedCondSides}, this is equivalent to the condition on the side lengths in the Lemma.

\begin{itemize}
\item $w_1^{n_1} = q_1 \sin(t-t_1),\quad z^{m_1} = p_1 \sin t,\quad w_2^{n_2} = q_2 \frac{\cos t_2 + i \sin t_2 \cos t}{\sin t}$
\end{itemize}
The lift on involution $j_1$ acts by $t \mapsto \pi - 2t_1 - t$ which doesn't preserve the value of $w_2(t)$, so this coupling cannot be involutive.

If the coupling $(Q_1,Q_2)$ is irreducible, then we have
$$
Z_{12} = \{(t,t') \in Z_1 \times Z_2 \mid f_1(t) = f_2(t')\},
$$
where we identified $Z_1$ and $Z_2$ with their parameter domains. The common lift $j_{12}$ of the involutions $j_1$ and $j_2$ is the restriction to $Z_{12}$ of the map
$$
(j_1, j_2) \colon Z_1 \times Z_2 \to Z_1 \times Z_2
$$
On the other hand, we have another map of $Z_{12}$ to itself:
$$
(i_1, \id) \colon Z_{12} \to Z_{12},
$$
which changes $w_1$ while preserving $z$ and $w_2$. We have
$$
(i_1, \id) \circ (j_1, j_2) (t,t') = (t+2t_1, j_2(t')),
$$
hence
$$
((i_1, \id) \circ (j_2, j_2))^2 (t,t') = (t+4t_1, t')
$$
Since this maps $Z_{12}$ to itself, we have $f_1(t+4t_1) = f_1(t)$ for all $t$. That is, the phase shift $t_1$ is a quarter-period of the function parametrizing $Z_1$. By an argument from Lemma \ref{lem:QuartPeriod}, this implies that $Q_1$ is orthodiagonal. Similarly, by using $(\id, i_2)$ in place of $(i_1, \id)$, we show that $Q_2$ is orthodiagonal. But then, by Lemma \ref{lem:DescendInvol}, the involutions $j_1$ and $j_2$ descend to $\CP^1$. They descend to the same involution $z \mapsto z'$ if and only if $\lambda_1 = \lambda_2$.
Thus we are in the situation of the part~\ref{it:OrthoCompat}) of the Lemma.
\end{proof}

\subsubsection{The partial configuration space of an involutive coupling}

\begin{lem}
\label{lem:CompatOrthodiag}
Let $(Q_1,Q_2)$ be an involutive coupling of orthodiagonal quadrilaterals. Then the quotient space $W := Z_{12}/j_{12}$ is the solution set of the following equation.
\begin{enumerate}
\item
If neither $Q_1$ nor $Q_2$ is a deltoid, then
\begin{equation}
\label{eqn:CompatNonDelt}
w_1 + \mu_1 w_1^{-1} = \frac{\nu_1}{\nu_2} (w_2 + \mu_2 w_2^{-1}),
\end{equation}
\item
If $Q_2$ is an (anti)deltoid laterally coupled to $Q_1$ that is not an (anti)\-del\-toid, then
\begin{equation}
\label{eqn:CompatOneDelt}
w_1 + \mu_1 w_1^{-1} = \frac{\nu_1}{\xi_2} w_2^{-n_2}
\end{equation}
\item
If $Q_1$ and $Q_2$ are laterally coupled (anti)deltoids, then
$$
w_1^{n_1} = \frac{\xi_2}{\xi_1}w_2^{n_2}
$$
\end{enumerate}
Here $\mu_i$ are the involution factors from Definition \ref{dfn:InvFactors}, and $\nu_i$, $\xi_i$, $n_i$ are as in Corollary \ref{cor:ConfOrthInv}.
\end{lem}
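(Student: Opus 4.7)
The plan is to use Corollary \ref{cor:ConfOrthInv} to write down explicit equations for the non-trivial components $Z_1^0$ and $Z_2^0$, then eliminate the variable $z$ modulo the common involution. First, I will set $\lambda := \lambda_1 = \lambda_2$ (this equality is exactly the involutivity condition coming from Lemma \ref{lem:InvolLift}, part \ref{it:OrthoCompat}, or the fact that, by Lemma \ref{lem:DescendInvol}, the involutions $j_1$ and $j_2$ descend to $\CP^1$ as $z \mapsto \lambda_i z^{-1}$ and agree). The key observation is that the function
\[
u := z + \lambda z^{-1}
\]
is $j_{12}$-invariant and expresses $Z_{12}/j_{12} \to \CP^1$ as a quotient by the common deck transformation; hence the equation of $W$ in the $(w_1,w_2)$-coordinates is obtained by simultaneously eliminating $u$ (equivalently $z$) from the defining equations of $Z_1^0$ and $Z_2^0$.

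Concretely, I would treat the three cases separately. In Case 1 (neither $Q_i$ an (anti)deltoid), Corollary \ref{cor:ConfOrthInv} gives
\[
u \,(w_1 + \mu_1 w_1^{-1}) = \nu_1, \qquad u \,(w_2 + \mu_2 w_2^{-1}) = \nu_2,
\]
so eliminating $u$ produces \eqref{eqn:CompatNonDelt} directly. In Case 2 ($Q_2$ a laterally coupled (anti)deltoid, so its apices are $\alpha\beta$ and $\gamma\delta$), Corollary \ref{cor:ConfOrthInv} gives $u = \xi_2 w_2^{n_2}$, while the equation for $Z_1^0$ is as in Case~1; substituting yields \eqref{eqn:CompatOneDelt}. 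In Case 3 (both $Q_1,Q_2$ laterally coupled (anti)deltoids), both equations are of the form $u = \xi_i w_i^{n_i}$, and equating them gives $w_1^{n_1} = (\xi_2/\xi_1) w_2^{n_2}$.

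The only non-routine point is verifying that the resulting equation actually defines $Z_{12}/j_{12}$ and not some further quotient or proper subvariety. For this I would argue as follows: the map $(w_1, z, w_2) \mapsto (w_1, u, w_2)$ from $Z_{12}^0$ onto the subvariety of $(\CP^1)^3$ cut out by the two $u$-equations is two-to-one with generic fibre $\{z, \lambda z^{-1}\}$, and this fibre is exactly the orbit of $j_{12}$; the projection $(w_1,u,w_2) \mapsto (w_1,w_2)$ is then a birational isomorphism onto the zero locus of the $u$-eliminant, since $u$ is uniquely recovered from either $w_1$ or $w_2$ (apart from a finite locus where the relevant factor vanishes). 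Thus the equations displayed above cut out $W = Z_{12}^0/j_{12}$ as claimed. The only mild care needed is to check that the denominators appearing in the elimination are not identically zero on the relevant component, which follows from the fact that in the (anti)deltoid cases the relations $\xi_i \ne 0$ and $\nu_i \ne 0$ hold by Definition \ref{dfn:InvFactors} (and by the nondegeneracy assumptions excluding $\alpha_i=\beta_i=\gamma_i=\delta_i = \pi/2$).
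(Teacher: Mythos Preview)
Your approach is correct and is essentially the same as the paper's: the paper's proof is a one-liner stating that the result follows directly from the equations in Corollary \ref{cor:ConfOrthInv}, and your proposal simply unpacks this by introducing the $j_{12}$-invariant quantity $z+\lambda z^{-1}$ and eliminating it from the two configuration-space equations. One minor notational remark: you use $u$ for $z+\lambda z^{-1}$, but in this paper $u$ already denotes $\tan\frac{\theta}{2}$, so you may wish to rename the auxiliary variable.
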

\begin{proof}
Follows directly from the equations of the configuration spaces given in Corollary \ref{cor:ConfOrthInv}.
\end{proof}

\begin{lem}
\label{lem:InvolRed}
Let $(Q_1,Q_2)$ be a reducible involutive coupling as described in parts 2) and 3) of Lemma \ref{lem:InvolLift}, and let $Z_{12}^0$ be the component of its configuration space carrying the involution $j_{12}$. Then the quotient space $W^0 := Z_{12}^0/j_{12}$ has the following form.
\begin{enumerate}
\item
If $(Q_1,Q_2)$ is elliptic, and $t_1 - t_2$ is a half-period, then
$$
w_1 =
\begin{cases}
c w_2, &\text{if } \sin\sigma_1 \sin\sigma_2 > 0\\
-c w_2, &\text{if } \sin\sigma_1 \sin\sigma_2 < 0
\end{cases},
\quad \text{where} \quad
c = \sqrt{\frac{\frac{\sin\gamma_1\sin\delta_1}{\sin\overline\gamma_1\sin\overline\delta_1} - 1}{\frac{\sin\gamma_2\sin\delta_2}{\sin\overline\gamma_2\sin\overline\delta_2} - 1}}
$$
\item
If $(Q_1,Q_2)$ is conic such that
$$
\alpha_i + \gamma_i = \beta_i + \delta_i, \quad i = 1, 2,
$$
and $t_1 - t_2 \in \{0, \pi\}$, then
$$
w_1 =
\begin{cases}
c w_2, &\text{if } \sin\sigma_1 \sin\sigma_2 > 0\\
-c w_2, &\text{if } \sin\sigma_1 \sin\sigma_2 < 0
\end{cases},
\quad \text{where} \quad
c = \sqrt{\frac{\frac{\sin\gamma_1\sin\delta_1}{\sin\alpha_1\sin\beta_1} - 1}{\frac{\sin\gamma_2\sin\delta_2}{\sin\alpha_2\sin\beta_2} - 1}}
$$
\item
If $(Q_1,Q_2)$ is conic such that
$$
\alpha_i + \beta_i + \gamma_i + \delta_i = 2\pi, \quad i = 1, 2,
$$
and $t_1 - t_2 \in \{0, \pi\}$, then
$$
w_1 =
\begin{cases}
c^{-1} w_2, &\text{if } \sin\sigma_1 \sin\sigma_2 > 0\\
-c^{-1} w_2, &\text{if } \sin\sigma_1 \sin\sigma_2 < 0
\end{cases},
$$
where $c$ is given by the same formula as in the previous case.
\end{enumerate}
\end{lem}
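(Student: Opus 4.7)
The plan is to read off the equation of $W^0$ directly from the parametrizations of $Z_{12}^0$ already produced in Lemma \ref{lem:RedCoupEll}, part 1, and Lemma \ref{lem:RedCoupCon}, part 1. Since $j_{12}$ preserves both $w_1$ and $w_2$ while acting non-trivially on $z$, the quotient $W^0 = Z_{12}^0/j_{12}$ is isomorphic to the image of $Z_{12}^0$ under the projection to the $(w_1, w_2)$-plane, and its equation is obtained by eliminating the parameter $t$ between the two formulas for $w_1(t)$ and $w_2(t)$.

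For part 1, the involutive component is parametrized by $w_1 = q_1 F(t-t_1)$ and $w_2 = q_2 F(t-t_2)$ with $F \in \{\sn,\cn\}$. The identity $F(u+2K) = -F(u)$ combined with the hypothesis $t_1 - t_2 \in \{0,2K\}$ yields at once a linear relation $w_1 = \pm (q_1/q_2) w_2$, the sign being $+$ when $t_1 = t_2$ and $-$ when $t_1 - t_2 = 2K$. Substituting the formula for $q_i$ from Theorem \ref{thm:ParamSnCn} reproduces the stated value of $c$. Parts 2 and 3 proceed in the same way with $F$ replaced by $\sin$, the half-period $2K$ replaced by $\pi$, and $q_i$ taken from Theorem \ref{thm:ParamCon}; in part 3 the exponents $m_i = n_i = -1$ read off from the table of Theorem \ref{thm:ParamCon} invert the linear relation, producing $c^{-1}$ rather than $c$.

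The one delicate point, and the only real obstacle, is tying the sign to $\sin\sigma_1\sin\sigma_2$. I would locate each phase shift $t_i$ in the cell table that resolves the $+2K$ (resp.\ $+\pi$) ambiguity of Theorem \ref{thm:ParamSnCn} (resp.\ Theorem \ref{thm:ParamCon}). The reducibility assumption gives $p_1 = p_2 \in \R_{>0} \cup i\R_{>0}$, and the condition $t_1 - t_2 \in \{0,2K\}$ (or $\{0,\pi\}$) is compatible only when $p_1q_1$ and $p_2q_2$ lie in the same column of the table, since the two allowed half-period offsets connect cells only within a single column. Consequently the row is selected by $\sigma_i$ alone, and $t_1 = t_2$ holds precisely when $\sigma_1, \sigma_2$ lie on the same side of $\pi$, i.e.\ $\sin\sigma_1\sin\sigma_2 > 0$; otherwise $t_1 - t_2$ equals the nontrivial half-period, producing the minus sign. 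The column equality also ensures $(q_1/q_2)^2 \in \R_{>0}$, so under the convention $\sqrt{x} \in \R_+ \cup i\R_+$ the displayed expression for $c$ is a positive real number, consistent with the $\pm c$ notation in the statement.
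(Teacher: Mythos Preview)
Your proof is correct and follows the same route as the paper: extract $w_1 = \pm(q_1/q_2)\,w_2$ from the parametrizations of Lemmas \ref{lem:RedCoupEll} and \ref{lem:RedCoupCon}, then determine the sign from the tables in Theorems \ref{thm:ParamSnCn} and \ref{thm:ParamCon}. The only difference is that the paper invokes geometric realizability of the coupling to force $q_1$ and $q_2$ to be of the same type (both real or both imaginary), whereas you derive this from the column structure of the table combined with $p_1 = p_2$ --- a slightly more self-contained argument, though you should make explicit that columns 1 and 3, which share the real parts $\{0,2K\}$, are ruled out precisely because $p_1 = p_2$ forbids $p_1q_1 \in \R_{>0}$ and $p_2q_2 \in \R_{<0}$ simultaneously.
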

In a reducible equimodular coupling of conic quadrilaterals, the side lengths can satisfy conditions other than those in parts 2) and 3) (see Lemma \ref{lem:RedCoupCon}), but we will not need the equation of $W$ in these cases.
\begin{proof}
Let $t_1 - t_2 \in \{0, 2K\}$ be a real half-period of an elliptic function $F$. Then $Z_{12}^0$ is the second component in \eqref{eqn:ParamEllPlain}, and its quotient (obtained by forgetting the coordinate $w_2$) is described by the equation
$$
w_1 =
\begin{cases}
\frac{q_1}{q_2} w_2, &\text{if } t_1 - t_2 = 0\\
-\frac{q_1}{q_2} w_2, &\text{if } t_1 - t_2 = 2K
\end{cases}
$$
If the coupling is geometrically realizable (that is $W \cap (\RP^1)^2$ is a curve), then $q_1$ and $q_2$ are either both real or both imaginary. Since we also have $p_1 = p_2$, formulas in Theorem \ref{thm:ParamSnCn} determining $\Re t_0$ imply that $\Re t_1 = \Re t_2$ if and only if either $\sigma_1, \sigma_2 < \pi$ or $\sigma_1, \sigma_2 > \pi$. By observing finally that $\frac{\sqrt{x}}{\sqrt{y}} = \sqrt{\frac{x}{y}}$ if $x$ and $y$ have the same sign, we arrive at the formula in the Lemma.

The argument in the conic case is the same.
\end{proof}

\subsection{Lateral coupling of deltoids}

\begin{lem}
\label{lem:LatDelt}
Let $Q_1$ and $Q_2$ be laterally coupled deltoids:
$$
\alpha_i = \beta_i, \gamma_i = \delta_i, i = 1,2
$$
If their coupling is non-involutive, then the configuration space $Z_{12}$ is described by the equation
\begin{equation}
\label{eqn:LatDelt}
aw_1^2 + 2b w_1w_2 + cw_2^2 + d = 0,
\end{equation}
where
$$
a = \frac{1-d_2^2}{\cos^2\delta_1}, \quad b = -\frac{1-d_1d_2}{\cos\delta_1\cos\delta_2}, \quad c = \frac{1-d_1^2}{\cos^2\delta_2}, \quad d = (d_1-d_2)^2
$$
and $d_1 = \dfrac{\tan\delta_1}{\tan\alpha_1}$, $d_2 = \dfrac{\tan\delta_2}{\tan\alpha_2}$.
\end{lem}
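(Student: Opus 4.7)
The plan is to derive the equation of $Z_{12}$ by direct elimination: specialize the configuration space equations of $Q_1$ and $Q_2$ given by Lemma~\ref{lem:EqConfSpace} to the deltoid case, then eliminate the shared coordinate $z$ via the Sylvester resultant.

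First I would specialize. With $\alpha_i = \beta_i$ and $\gamma_i = \delta_i$, one has $\sigma_i = \alpha_i + \delta_i$, so $\sigma_i - \beta_i - \delta_i = 0$ and $\sigma_i - \beta_i - \gamma_i = 0$; hence $c_{22} = c_{02} = 0$ in \eqref{eqn:AdjZ}, and after dividing by the common factor $\sin\delta_i$ the equation of $Z_i$ reduces to
\[
\sin(\delta_i - \alpha_i)\, z^2 - 2 \sin \alpha_i\, z w_i + \sin(\delta_i + \alpha_i) = 0, \qquad i = 1, 2.
\]
This equation is quadratic in $z$ and linear in $w_i$; for fixed $w_i$, the product of the two $z$-roots equals $\sin(\delta_i + \alpha_i)/\sin(\delta_i - \alpha_i) = \lambda_i$, the involution factor of Definition~\ref{dfn:InvFactors}. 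Consequently, the non-involutivity hypothesis of the lemma translates into $\lambda_1 \ne \lambda_2$, equivalently $\sin(\delta_1 + \alpha_1)\sin(\delta_2 - \alpha_2) \ne \sin(\delta_1 - \alpha_1)\sin(\delta_2 + \alpha_2)$.

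Next I would eliminate $z$ by the standard resultant formula for two quadratics. Writing $s_i^\pm = \sin(\delta_i \pm \alpha_i)$ and $a_i = \sin \alpha_i$, the Sylvester resultant of the two displayed equations with respect to $z$ is
\[
4 a_1^2 s_2^- s_2^+ w_1^2 - 4 a_1 a_2 (s_1^- s_2^+ + s_1^+ s_2^-) w_1 w_2 + 4 a_2^2 s_1^- s_1^+ w_2^2 + (s_1^- s_2^+ - s_1^+ s_2^-)^2 = 0.
\]
The non-involutivity assumption $\lambda_1 \ne \lambda_2$ is precisely what guarantees that the constant term $(s_1^- s_2^+ - s_1^+ s_2^-)^2$ does not vanish, so this resultant is a genuine bi-quadratic in $(w_1, w_2)$ of the shape~\eqref{eqn:LatDelt}.

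Finally I would translate the trigonometric coefficients into the form involving $d_i = \tan \delta_i / \tan \alpha_i$. The identity $\sin(A+B)\sin(A-B) = \sin^2 A - \sin^2 B$ gives $s_i^- s_i^+ = \sin^2\delta_i - \sin^2\alpha_i = -\sin^2 \alpha_i \cos^2 \delta_i \cdot (1 - d_i^2)$. A routine expansion via the addition formulas collapses the cross sums to
\[
s_1^- s_2^+ + s_1^+ s_2^- = 2 a_1 a_2 \cos\delta_1 \cos\delta_2 (d_1 d_2 - 1), \qquad s_1^- s_2^+ - s_1^+ s_2^- = 2 a_1 a_2 \cos\delta_1 \cos\delta_2 (d_1 - d_2).
\]
Dividing the resultant by the nonzero scalar $4 a_1^2 a_2^2 \cos^2 \delta_1 \cos^2 \delta_2$ (with an appropriate overall sign) produces exactly the coefficients $a = (1 - d_2^2)/\cos^2 \delta_1$, $c = (1 - d_1^2)/\cos^2\delta_2$, $b = -(1 - d_1 d_2)/(\cos\delta_1 \cos\delta_2)$, and $d = (d_1 - d_2)^2$ stated in the lemma. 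The only real obstacle is the sign-bookkeeping in this last step; degenerate cases where $\cos\delta_i = 0$ would require the alternative formulas from Definition~\ref{dfn:InvFactors}, but they do not occur in the generic situation treated here.
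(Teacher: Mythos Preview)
Your proof is correct and arrives at the same equation (up to an overall sign, which is immaterial since only the zero set matters), but the route differs from the paper's. The paper observes via Corollary~\ref{cor:ConfOrthInv} that for a laterally coupled deltoid the equation of $Z_i^0$ is \emph{linear} in the pair $(z,z^{-1})$, namely $w_i = a_i z + b_i z^{-1}$. Non-involutivity is then $\det\begin{pmatrix} a_1 & b_1\\ a_2 & b_2\end{pmatrix}\ne 0$, so one can invert the $2\times 2$ system to express $z$ and $z^{-1}$ as linear functions of $w_1,w_2$, and substituting into the single constraint $z\cdot z^{-1}=1$ immediately yields the quadric~\eqref{eqn:LatDelt}. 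This avoids the Sylvester resultant entirely and makes the quadratic-in-$(w_1,w_2)$ shape of the answer transparent from the start.

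Your resultant approach is perfectly valid and more self-contained (it goes straight from Lemma~\ref{lem:EqConfSpace} without passing through the involution-factor formalism), but it trades that conceptual shortcut for a heavier trigonometric computation. The paper's trick of treating $z$ and $z^{-1}$ as independent linear coordinates is worth internalizing: it is the reason orthodiagonal/deltoid configuration spaces behave so well under coupling throughout Section~\ref{sec:Couplings}.
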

\begin{proof}
The spaces $Z_1$ and $Z_2$ have equations of the form $w_i = a_i z + b_i z^{-1}$, $i=1,2$, see Corollary \ref{cor:ConfOrthInv}. The coupling is non-involutive if and only if
$$
\det\begin{pmatrix} a_1 & b_1\\ a_2 & b_2 \end{pmatrix} \ne 0
$$
If this is the case, then $z$ and $z^{-1}$ can be expressed as linear functions of $w_1$ and $w_2$, whose substitution in $zz^{-1} = 1$ yields equation \eqref{eqn:LatDelt}.
\end{proof}

\section{Proof of the classification theorem}
\label{sec:Proof}
For every Kokotsakis polyhedron, the dihedral angles at the edges of its central face determine its shape uniquely. These angles were denoted by $\psi_1$, $\phi$, $\psi_2$, and $\theta$, and the tangents of their halves by $w_1$, $z$, $w_2$, and $u$, respectively. See Section \ref{sec:SpherLink} and the beginning of Section \ref{sec:AlgReform}. Therefore a continuous isometric deformation is represented by a non-constant map
\begin{equation}
\label{eqn:Deform}
\begin{aligned}
I &\to (\Sph^1)^4\\
t &\mapsto (\psi_1(t), \phi(t), \psi_2(t), \theta(t)),
\end{aligned}
\end{equation}
where $I \subset \R$ is a segment. Recall that the dihedral angles at each pair of adjacent edges are related through an equation that is polynomial in the corresponding tangents of half-angles, see \eqref{eqn:PolSystem}.

\subsection{Trivially flexible polyhedra}
\label{sec:TrivClass}
A deformation will be called trivial, if one of the functions $\psi_1(t)$, $\phi(t)$, $\psi_2(t)$, or $\theta(t)$ is constant. Let us classify trivial deformations.

In every trivial deformation there is a pair of adjacent dihedral angles one of which remains constant while the other one varies. Let $\phi(t) = \const$ and $\psi_1(t)$ be changing. From the classification of the configuration spaces of spherical quadrilaterals in Section \ref{sec:ConfSpace} it follows that either $\phi(t) = 0$ or $\phi(t) = \pi$; in the former case, $Q_1$ is a deltoid with $\alpha_1 = \delta_1$ and $\beta_1 = \gamma_1$, in the latter case $Q_1$ is an antideltoid with $\alpha_1 + \delta_1 = \pi = \beta_1 + \gamma_1$. A case-by-case analysis of the behavior of the dihedral angles $\psi_2(t)$ and $\theta(t)$ leaves us with the following possibilities.
\begin{itemize}
\item
Both $\theta(t)$ and $\psi_2(t)$ are changing;
\item
$\theta(t)$ is changing, $\psi_2(t) = 0$ or $\pi$;
\item
$\theta(t) = 0$ or $\pi$, while $\psi_2(t)$ is changing;
\item
$\theta(t) = 0$ or $\pi$, while $\psi_2(t) = \const \notin \{0, \pi\}$.
\end{itemize}
The corresponding flexible polyhedra are described in Section \ref{sec:TrivType}.

\subsection{A diagram of branched covers}
\label{sec:DiagCovers}
Assume that the deformation \eqref{eqn:Deform} is non-trivial, that is neither of the angles remains constant during the deformation. Make the substitution \eqref{eqn:Variables} and consider the irreducible components of the configuration spaces $Z_i$ containing the deformation \eqref{eqn:Deform}:
$$
\begin{aligned}
(w_1(t), z(t)) \in Z_1^0, \quad &(z(t), w_2(t)) \in Z_2^0,\\
(w_2(t), u(t)) \in Z_3^0, \quad &(u(t), w_1(t)) \in Z_4^0 \quad \forall t \in I
\end{aligned}
$$
The components $Z_i^0$ are well-defined since by choosing, if needed, a subinterval of $I$ we may assume that the path avoids singular points of all $Z_i$.

Denote further by $Z_{12}^0$ the irreducible component of the fiber product (see Section \ref{sec:Pullback})
$$
Z_1^0 \times_{\CP^1} Z_2^0 = \{(w_1, z, w_2) \mid P_1(w_1, z) = 0, P_2(z, w_2) = 0\}
$$
that contains the path $(w_1(t), z(t), w_2(t))$ (again we might need to choose a subinterval for $Z_{12}^0$ to be well-defined). Define similarly $Z_{23}^0$, $Z_{34}^0$, and $Z_{41}^0$. Then define $Z_{123}^0$ as the irreducible component of
$$
Z_{12}^0 \times_{Z_2^0} Z_{23}^0 = \{(w_1, z, w_2, u) \mid P_1(w_1, z) = 0, P_2(z, w_2) = 0, P_3(w_2, u) = 0\},
$$
and similarly $Z_{234}^0$, $Z_{341}^0$, and $Z_{412}^0$. All these algebraic sets have dimension~1, because they are branched covers of $Z_{ij}^0$, which are branched covers of $Z_i^0$. On the other hand, the intersection of any two of them contains a non-constant path. It follows that
$$
Z_{123}^0 = Z_{234}^0 = Z_{341}^0 = Z_{412}^0 =: Z_{all}^0
$$
The set $Z_{all}^0$ is thus a 1-dimensional irreducible component of the solution set of \eqref{eqn:PolSystem}.
We obtain the diagram of branched covers on Figure \ref{fig:BigDiagram}.

\begin{figure}[ht]
$$
\xymatrix{
&&& \CP^1 &&&\\
& Z_3^0 \ar[urr]^{f_3} \ar[ddl]_{g_3} &&&& Z_4^0 \ar[ull]_{f_4} \ar[ddr]^{g_4} &\\
&&& Z_{34}^0 \ar[ull]^{\widetilde{f_4}} \ar[urr]_{\widetilde{f_3}} &&&\\
\CP^1 && Z_{23}^0 \ar[uul]^{\widetilde{g_2}} \ar[ddl]_{\widetilde{g_3}} & Z_{all}^0 \ar[u]_{h_{34}} \ar[l]_{h_{23}} \ar[r]^{h_{41}} \ar[d]^{h_{12}} & Z_{41}^0 \ar[uur]_{\widetilde{g_1}} \ar[ddr]^{\widetilde{g_4}} && \CP^1\\
&&& Z_{12}^0 \ar[dll]_{\widetilde{f_1}} \ar[drr]^{\widetilde{f_2}} &&&\\
& Z_2^0 \ar[uul]^{g_2} \ar[drr]_{f_2} &&&& Z_1^0 \ar[uur]_{g_1} \ar[dll]^{f_1}&\\
&&& \CP^1 &&&
}
$$
\caption{A diagram of branched covers associated with a (non-trivially) flexible Kokotsakis polyhedron.}
\label{fig:BigDiagram}
\end{figure}
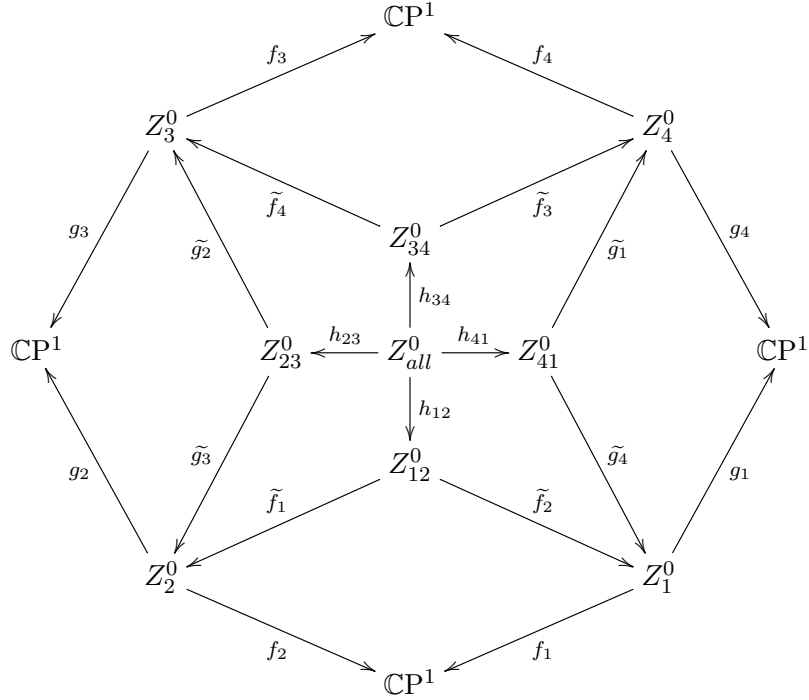

%
%

\subsection{Involutive polyhedra and involutive couplings}
\label{sec:InvNonInv}
Each of the maps on Figure \ref{fig:BigDiagram} is either an isomorphism or a two-fold branched cover. Let us concentrate on the multiplicities of the maps $h_{ij}$ in the center of the diagram.

\begin{dfn}
A deformation of a Kokotsakis polyhedron is called \emph{involutive}, if at least one of the maps $h_{ij}$ is two-fold. Otherwise, the deformation is called \emph{non-involutive}.
\end{dfn}
(By abuse of terminology, we will sometimes say that the polyhedron is involutive or non-involutive, although a priori it is possible that the same polyhedron has an involutive and a non-involutive deformation.)

If the deformation is non-involutive, then $Z_{all}^0$ can be identified with each of $Z_{ij}^0$ according to $h_{ij}$, and the big diagram collapses to a smaller one on Figure \ref{fig:SDiag1} in Section \ref{sec:WOInvol}, where the non-involutive case is dealt with.


\begin{lem}
\label{lem:InvCP}
An involutive polyhedron contains an involutive coupling.
\end{lem}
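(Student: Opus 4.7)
By the cyclic symmetry of the diagram in Figure \ref{fig:BigDiagram}, it suffices to handle one of the four central maps being two-fold; I will take $h_{12}$ and aim to show that the opposite coupling $(Q_3, Q_4)$ is involutive.

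The key input is the deck transformation $\iota \colon Z_{all}^0 \to Z_{all}^0$ of the two-fold cover $h_{12}$. Since $Z_{all}^0$ has coordinates $(w_1, z, w_2, u)$ while $Z_{12}^0$ keeps only $(w_1, z, w_2)$, the involution $\iota$ fixes the triple $(w_1, z, w_2)$ and sends $u$ to a generically distinct value $u'$ (otherwise $\iota = \mathrm{id}$ and $h_{12}$ would be an isomorphism). I would then push the two generic points $p, \iota(p) \in Z_{all}^0$ along $h_{34}$, which forgets $z$, to obtain two distinct points $(w_2, u, w_1)$ and $(w_2, u', w_1)$ of $Z_{34}^0$ sharing the same pair $(w_2, w_1)$.

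The projection of $Z_{34}^0$ to the $(w_2, w_1)$-plane has degree at most two, since $P_3$ and $P_4$ are each quadratic in $u$; exhibiting two distinct generic preimages above one point therefore forces the projection to be exactly two-fold, which is precisely Definition \ref{dfn:InvCoupl} for $(Q_3, Q_4)$. The only point requiring care is to confirm that $h_{34}(p)$ and $h_{34}(\iota(p))$ indeed lie in the fixed component $Z_{34}^0$ rather than some other component of the reducible fibre product $Z_3^0 \times_{\CP^1} Z_4^0$; this is immediate because $h_{34}$ is defined on all of $Z_{all}^0$ and lands in $Z_{34}^0$ by the construction in Section \ref{sec:DiagCovers}. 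The three remaining subcases ($h_{23}$, $h_{34}$, $h_{41}$ two-fold) are handled by the identical argument after cyclic rotation of indices, producing the involutive couplings $(Q_4, Q_1)$, $(Q_1, Q_2)$, $(Q_2, Q_3)$ respectively.
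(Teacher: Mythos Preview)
Your proposal is correct and takes essentially the same approach as the paper. The paper assumes $h_{34}$ is two-fold and concludes that $\pi_{12}\colon Z_{12}^0\to W^0$ is two-fold by noting that the square with vertices $Z_{all}^0$, $Z_{12}^0$, $Z_{34}^0$, $W^0$ is a fiber-product diagram; your argument is the concrete unpacking of that same square via the deck transformation, with the roles of $(1,2)$ and $(3,4)$ swapped.
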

\begin{proof}
Let $h_{34}$ be two-fold. Consider the commutative square
\begin{equation}
\label{eqn:WSquare}
\xymatrix {
& Z_{all}^0 \ar[dl]_{h_{12}} \ar[dr]^{h_{34}} &\\
Z_{12}^0 \ar[dr]_{\pi_{12}} && Z_{34}^0 \ar[dl]^{\pi_{34}}\\
& W^0 &
}
\end{equation}
where $\pi_{ij}$ is as in \eqref{eqn:Pi} and $W^0$ is an irreducible component of the partial configuration space
\[
W = \{(w_1, w_2) \in (\CP^1)^2 \mid \exists z, u \in \CP^1 \text{ such that } (w_1, z, w_2, u) \in Z_{all}^0\}
\]
This is the diagram of a fiber product, and therefore if $h_{34}$ is two-fold, so is $\pi_{12}$. By Definition \ref{dfn:InvCoupl} this means that the coupling $(Q_1, Q_2)$ is involutive.
\end{proof}

The multiplicities of the maps in a fiber product diagram can be only as shown on Figure \ref{fig:PullMult}. Here a double arrow represents a two-fold branched cover, and a simple arrow represents an isomorphism.

\begin{figure}[ht]
\centering
$$
{\xymatrix@=1pc{
& \bullet \ar[dl] \ar[dr] &\\
\bullet \ar[dr] && \bullet \ar[dl]\\
& \bullet &
}}
\quad
{\xymatrix@=1pc{
& \bullet \ar[dl] \ar[dr] &\\
\bullet \ar@2[dr] && \bullet \ar@2[dl]\\
& \bullet &
}}
\quad
{\xymatrix@=1pc{
& \bullet \ar@2[dl] \ar@2[dr] &\\
\bullet \ar@2[dr] && \bullet \ar@2[dl]\\
& \bullet &
}}
\quad
{\xymatrix@=1pc{
& \bullet \ar[dl] \ar@2[dr] &\\
\bullet \ar@2[dr] && \bullet \ar[dl]\\
& \bullet &
}}
$$
\caption{Possible multiplicities of maps in a fiber product.}
\label{fig:PullMult}
\end{figure}
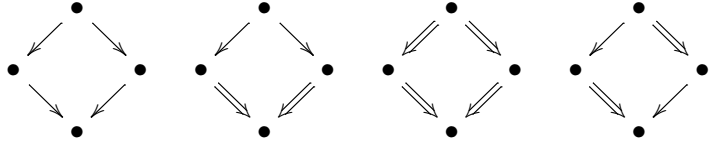

The case when both $(Q_1, Q_2)$ and $(Q_3, Q_4)$ are involutive is studied in Section \ref{sec:TwoInvol}. Note that in this case the commutative square \eqref{eqn:WSquare} might have the form of the second square on Figure \ref{fig:PullMult}, so that a priori there may be non-involutive polyhedra with involutive couplings.

If $(Q_1, Q_2)$ is involutive, and $(Q_3, Q_4)$ is not, then $h_{34}$ is double and $h_{12}$ is simple. This case is dealt with in Section \ref{sec:InvolPol}.



\subsection{Combination of two involutive couplings}
\label{sec:TwoInvol}
If $(Q_1, Q_2)$ and $(Q_3, Q_4)$ are involutive, then we have
$$
Z_{12}^0/j_{12} = W^0 = Z_{34}^0/j_{34}
$$
Equations describing the quotient of the configuration space of an involutive coupling are given in Lemmas \ref{lem:CompatOrthodiag} and \ref{lem:InvolRed}. Consider them case by case.

\subsubsection{Cubic case}
The space $W^0$ is described by an equation of the form
$$
a_1w_1 + b_1w_1^{-1} = a_2w_2 + b_2w_2^{-1}
$$
This is the case 1) of Lemma \ref{lem:CompatOrthodiag}. Thus both couplings $(Q_1, Q_2)$ and $(Q_3, Q_4)$ consist of compatible orthodiagonal quadrilaterals neither of which is an (anti)deltoid. By comparing the coefficients at the corresponding terms, we obtain
$$
\mu_1 = \mu_4, \quad \mu_2 = \mu_3, \quad \nu_1\nu_3 = \nu_2\nu_4
$$
The first two equations say that the involution factors of $Q_1$ and $Q_4$, respectively of $Q_2$ and $Q_3$, at their common vertex are equal. The third equation is equivalent to $\cos\delta_1\cos\delta_3 = \cos\delta_2\cos\delta_4$. Thus we get the orthodiagonal involutive type \ref{sec:OrthoInv}.

\subsubsection{Rational case}
The space $W^0$ is described by an equation of the form
$$
aw_1 + bw_1^{-1} = w_2^{\pm1}
$$
This means that $Q_1$ and $Q_2$ form a compatible pair of orthodiagonal quadrilaterals, $Q_2$ is an (anti)deltoid while $Q_1$ not, and the same is true for the pair $(Q_4, Q_3)$. By comparing the coefficients, we obtain
$$
\mu_1 = \mu_4, \quad n_2 = n_3, \quad \nu_1\xi_3 = \nu_4\xi_2
$$
Thus $Q_1$ and $Q_4$ have equal involution factors at their common vertex, and $Q_2$, $Q_3$ are either both deltoids or both antideltoids. By Definition \ref{dfn:Compatible} this means that all pairs of adjacent quadrilaterals are compatible. The last equation implies $\cos\delta_1\cos\delta_3 = \cos\delta_2\cos\delta_4$ so that we get again the orthodiagonal involutive type \ref{sec:OrthoInv}.


\subsubsection{Linear case}
The space $W^0$ is described by an equation of the form
$$
w_1w_2 = \const \text{ or } \frac{w_1}{w_2} = \const
$$
By switching, if needed, the right boundary strip (that is, replacing $\beta_1$, $\gamma_1$, $\beta_4$, and $\gamma_4$ by their complements to $\pi$) the former case can be reduced to the latter. We thus have a linear compound as defined in Section \ref{sec:LinComp}. By Lemmas \ref{lem:CompatOrthodiag} and \ref{lem:InvolRed} each of the couplings $(Q_1, Q_2)$ and $(Q_3, Q_4)$ has one of the following forms.
\begin{itemize}
\item
two deltoids or two antideltoids, coupled laterally and with equal involution factors at the common vertex;
\item
a reducible coupling of two elliptic quadrilaterals with the shift difference $t_1 - t_2 \in \{0, 2K\}$;
\item
a reducible coupling of two conic quadrilaterals with the shift difference $t_1 - t_2 \in \{0, \pi\}$.
\end{itemize}
The first and the second linear couplings are listed in Sections \ref{sec:LinLatDelt} and \ref{sec:LinEll}. Let us show that the third one can be reduced to the cases described in Section \ref{sec:LinCon}.
We have $m_1 = m_2$ by Lemma \ref{lem:InvolLift} and $n_1 = n_2$ because of $w_1 = c w_2$, hence the side lengths of both $Q_1$ and $Q_2$ satisfy the same of the four possible relations $\alpha_i \pm \beta_i \pm \gamma_i \pm \delta_i \equiv 0 (\mod 2\pi)$. By switching, if needed, the lower boundary strip, these four restrict to two possibilities \eqref{eqn:BothCyc} and \eqref{eqn:BothPer2Pi}.
The same can be done with the coupling $(Q_3, Q_4)$.

\subsection{Combination of an involutive coupling with a non-involutive}
\label{sec:InvolPol}

Assume that $(Q_1, Q_2)$ is involutive, and $(Q_3,Q_4)$ is not. The polyhedron is flexible if and only if $W_z = W_u$, where
$$
W_z := \pi_{12}(Z_{12}^0) = Z_{12}^0/j_{12}, \quad W_u := \pi_{34}(Z_{34}^0)
$$
We will consider all of the forms that $(Q_1,Q_2)$ can take and study the multiplicities of the maps in the diagram on Figure \ref{fig:BigDiagram}.

\subsubsection{The involutive coupling is equimodular}
\label{sec:InvolEquimod}
That is, $(Q_1,Q_2)$ has the form described in parts 2) and 3) of Lemma \ref{lem:InvolLift}. By switching, if needed, the boundary strips we can assume that $(Q_1, Q_2)$ is one of the linear couplings described in Sections \ref{sec:LinEll} and \ref{sec:LinCon}.

The maps between configuration spaces have multiplicities as shown on Figure \ref{fig:Diag1}, where dotted lines stand for the maps with unknown multiplicities. However, with the help of Figure \ref{fig:PullMult} we can determine most of them, see Figure \ref{fig:Diag1}.

\begin{figure}[ht]
\centering
$$
{\xymatrix@=1pc{
&&& \bullet &&&\\
& \bullet \ar@.[urr] \ar@.[ddl] &&&& \bullet \ar@.[ull] \ar@.[ddr] &\\
&&& \bullet \ar@.[ull] \ar@.[urr] &&&\\
\bullet && \bullet \ar@.[uul] \ar@.[ddl] & \bullet \ar@2[u] \ar@.[l] \ar@.[r] \ar[d] & \bullet \ar@.[uur] \ar@.[ddr] && \bullet\\
&&& \bullet \ar[dll] \ar[drr] &&&\\
& \bullet \ar@2[uul] \ar@2[drr] &&&& \bullet \ar@2[uur] \ar@2[dll]&\\
&&& \bullet &&&
}}
\qquad
{\xymatrix@=1pc{
&&& \bullet &&&\\
& \bullet \ar@.[urr] \ar[ddl] &&&& \bullet \ar@.[ull] \ar[ddr] &\\
&&& \bullet \ar[ull] \ar[urr] &&&\\
\bullet && \bullet \ar@2[uul] \ar[ddl] & \bullet \ar@2[u] \ar[l] \ar[r] \ar[d] & \bullet \ar@2[uur] \ar[ddr] && \bullet\\
&&& \bullet \ar[dll] \ar[drr] &&&\\
& \bullet \ar@2[uul] \ar@2[drr] &&&& \bullet \ar@2[uur] \ar@2[dll]&\\
&&& \bullet &&&
}}
$$
\caption{Solving the diagram for Section \ref{sec:InvolEquimod}.}
\label{fig:Diag1}
\end{figure}

The two remaining dotted arrows are either both simple or both double. Thus $(Q_3,Q_4)$ is either a coupling of (anti)isograms or a reducible coupling of (anti)deltoids. Theorem \ref{thm:ParamLin} and Lemma \ref{lem:RedCoupDelt} imply that such a coupling yields a linear dependence $w_1 = cw_2$ if and only if $Q_3$ and $Q_4$ are of the same type (e.~g. both are antideltoids) and allow us to compute the coefficient $c$.
We thus have a polyhedron of linear compound type, where $(Q_3,Q_4)$ is as in Section \ref{sec:LinIso} or \ref{sec:LinFrontDelt}.

\subsubsection{The involutive coupling consists of two (anti)deltoids}
\label{sec:InvolTwoDelt}
Modulo switching, $(Q_1, Q_2)$ is as described in Section \ref{sec:LinLatDelt}. Again, the diagram can be solved as shown on Figure \ref{fig:Diag2}.

\begin{figure}[ht]
\centering
$$
{\xymatrix@=1pc{
&&& \bullet &&&\\
& \bullet \ar@.[urr] \ar@.[ddl] &&&& \bullet \ar@.[ull] \ar@.[ddr] &\\
&&& \bullet \ar@.[ull] \ar@.[urr] &&&\\
\bullet && \bullet \ar@.[uul] \ar@.[ddl] & \bullet \ar@2[u] \ar@.[l] \ar@.[r] \ar[d] & \bullet \ar@.[uur] \ar@.[ddr] && \bullet\\
&&& \bullet \ar[dll] \ar[drr] &&&\\
& \bullet \ar@2[uul] \ar[drr] &&&& \bullet \ar@2[uur] \ar[dll]&\\
&&& \bullet &&&
}}
\qquad
{\xymatrix@=1pc{
&&& \bullet &&&\\
& \bullet \ar@.[urr] \ar[ddl] &&&& \bullet \ar@.[ull] \ar[ddr] &\\
&&& \bullet \ar[ull] \ar[urr] &&&\\
\bullet && \bullet \ar@2[uul] \ar[ddl] & \bullet \ar@2[u] \ar[l] \ar[r] \ar[d] & \bullet \ar@2[uur] \ar[ddr] && \bullet\\
&&& \bullet \ar[dll] \ar[drr] &&&\\
& \bullet \ar@2[uul] \ar[drr] &&&& \bullet \ar@2[uur] \ar[dll]&\\
&&& \bullet &&&
}}
$$
\caption{Solving the diagram for Section \ref{sec:InvolTwoDelt}.}
\label{fig:Diag2}
\end{figure}

It follows that $(Q_3,Q_4)$ has the same form as in the previous case, and we have a linear compound coupling.

\subsubsection{The involutive coupling is orthodiagonal with one (anti)deltoid}
\label{sec:InvolOneDelt}
Without loss of generality, let $Q_2$ be an (anti)\-deltoid, $Q_1$ orthodiagonal elliptic.
By switching, if needed, the left boundary strip, we can transform $Q_2$ to antideltoid. Then, according to Lemma \ref{lem:CompatOrthodiag} we have
\begin{equation}
\label{eqn:Z12/j12}
W_z = \{(w_1,w_2) \mid w_1 + \mu_1 w_1^{-1} = \frac{\nu_1}{\xi_2} w_2\}
\end{equation}

Figure \ref{fig:Diag3} shows the map multiplicities that we obtain by diagram chasing. The further case distinction depends on the multiplicity of the question mark map
\begin{figure}[ht]
\centering
$$
{\xymatrix@=1pc{
&&& \bullet &&&\\
& \bullet \ar@.[urr] \ar@.[ddl] &&&& \bullet \ar@.[ull] \ar@.[ddr] &\\
&&& \bullet \ar@.[ull] \ar@.[urr] &&&\\
\bullet && \bullet \ar@.[uul] \ar@.[ddl] & \bullet \ar@2[u] \ar@.[l] \ar@.[r] \ar[d] & \bullet \ar@.[uur] \ar@.[ddr] && \bullet\\
&&& \bullet \ar@2[dll] \ar[drr] &&&\\
& \bullet \ar@2[uul] \ar[drr] &&&& \bullet \ar@2[uur] \ar@2[dll]&\\
&&& \bullet &&&
}}
\qquad
{\xymatrix@=1pc{
&&& \bullet &&&\\
& \bullet \ar@.[urr] \ar@.[ddl] &&&& \bullet \ar@.[ull] \ar[ddr] &\\
&&& \bullet \ar@.[ull] \ar[urr] &&&\\
\bullet && \bullet \ar@2[uul] \ar@.[ddl] & \bullet \ar@2[u] \ar@.[l]_{?} \ar[r] \ar[d] & \bullet \ar@2[uur] \ar[ddr] && \bullet\\
&&& \bullet \ar@2[dll] \ar[drr] &&&\\
& \bullet \ar@2[uul] \ar[drr] &&&& \bullet \ar@2[uur] \ar@2[dll]&\\
&&& \bullet &&&
}}
$$
\caption{Solving the diagram for Section \ref{sec:InvolOneDelt}.}
\label{fig:Diag3}
\end{figure}

\noindent\emph{Case 1.} The question mark map on Figure \ref{fig:Diag3} is two-fold.

This yields the multiplicities on Figure \ref{fig:Diag4}, left. Thus $Q_4$ is an (anti)deltoid compatibly coupled with an orthodiagonal non-deltoid $Q_1$ (since the map $h_{23}$ is two-fold), and $Q_3$ is an (anti)isogram. By switching, if needed, the upper boundary strip we can transform $Q_4$ to an antideltoid. Let us find the equation of the space $W_u$.

By Theorem \ref{thm:ParamDegenCon} and Corollary \ref{cor:ConfOrthInv} we have
$$
Z_3^0 = \{(u,w_2) \mid u^{\pm 1} = \kappa_3 w_2\}, \quad Z_4^0 = \{(u,w_1) \mid w_1 + \mu_4 w_1^{-1} = \zeta_4 u^{-1}\}
$$
The equation of $W_u$ is obtained by substituting the first equation in the second one. We have $W_z = W_u$ if and only if the resulting equation is proportional to \eqref{eqn:Z12/j12}. Thus we must have $u^{-1} = \kappa_3 w_2$, which means that $Q_3$ is an isogram, and
$$
\mu_1 = \mu_4, \quad \kappa_3 \xi_2 \zeta_4 = \nu_1
$$
The first equation holds automatically, since $Q_1$ and $Q_4$ are compatible. The second one is an additional restriction. Altogether we obtain a polyhedron described in Section \ref{sec:TransType1}.

\smallskip

\begin{figure}[ht]
\centering
$$
{\xymatrix@=1pc{
&&& \bullet &&&\\
& \bullet \ar[urr] \ar[ddl] &&&& \bullet \ar@2[ull] \ar[ddr] &\\
&&& \bullet \ar@2[ull] \ar[urr] &&&\\
\bullet && \bullet \ar@2[uul] \ar[ddl] & \bullet \ar@2[u] \ar@2[l] \ar[r] \ar[d] & \bullet \ar@2[uur] \ar[ddr] && \bullet\\
&&& \bullet \ar@2[dll] \ar[drr] &&&\\
& \bullet \ar@2[uul] \ar[drr] &&&& \bullet \ar@2[uur] \ar@2[dll]&\\
&&& \bullet &&&
}}
\qquad
{\xymatrix@=1pc{
&&& \bullet &&&\\
& \bullet \ar@.[urr] \ar@2[ddl] &&&& \bullet \ar@.[ull] \ar[ddr] &\\
&&& \bullet \ar[ull] \ar[urr] &&&\\
\bullet && \bullet \ar@2[uul] \ar@2[ddl] & \bullet \ar@2[u] \ar[l] \ar[r] \ar[d] & \bullet \ar@2[uur] \ar[ddr] && \bullet\\
&&& \bullet \ar@2[dll] \ar[drr] &&&\\
& \bullet \ar@2[uul] \ar[drr] &&&& \bullet \ar@2[uur] \ar@2[dll]&\\
&&& \bullet &&&
}}
$$
\caption{Cases 1 and 2 of Section \ref{sec:InvolOneDelt}.}
\label{fig:Diag4}
\end{figure}

\noindent\emph{Case 2.} The question mark map on Figure \ref{fig:Diag3} is an isomorphism.

This yields the multiplicities on Figure \ref{fig:Diag4}, right, where the dotted lines are either both simple or both double. We make a further case distinction.

\smallskip

\noindent\emph{Case 2a).} The dotted lines on Figure \ref{fig:Diag4}, right, are simple.

Thus $Q_3$ is an (anti)deltoid, and $Q_4$ is an (anti)isogram. Let us find the equation of the space $W_u$.

By Theorem \ref{thm:ParamLin} and Corollary \ref{cor:ConfOrthInv} we have
$$
Z_3^0 = \{(u,w_2) \mid u + \lambda_3 u^{-1} = \xi_3 w_2^{n_2}\}, \quad Z_4^0 = \{(u,w_1) \mid u^{\pm 1} = \kappa_4 w_1\}
$$
By eliminating the variable $u$ we must obtain an equation of the form \eqref{eqn:Z12/j12}. It follows that $n_2 = 1$, so that $Q_3$ is a deltoid. By switching, if needed, the upper boundary strip, we can transform $Q_4$ to an antiisogram, so that $u = \kappa_4 w_1$. By making this substitution and comparing the coefficients with those in \eqref{eqn:Z12/j12}, we obtain the necessary and sufficient conditions for flexibility:
$$
\frac{\lambda_3}{\kappa_4^2} = \mu_1, \quad \frac{\xi_3}{\kappa_4} = \frac{\nu_1}{\xi_2}
$$
As a result, we obtain a polyhedron described in Section \ref{sec:TransType2}.


\smallskip

\noindent\emph{Case 2b).} The dotted lines on Figure \ref{fig:Diag4}, right, are double.

Thus $Q_4$ is an (anti)deltoid forming a reducible coupling with $Q_3$. It follows that $Q_3$ is conic. The two situation when $(Q_3,Q_4)$ is reducible are described in Lemma \ref{lem:RedCoupDelt}, and the equations for the corresponding space $W$ are given in Lemma \ref{lem:WRed}.

If $m_3 = m_4$, $p_3 = p_4$, then the equation of $W_u$ is
$$
w_1 + \mu_4 e^{\pm 2it_3} w_1^{-1} = \epsilon_4 \frac{2i\sqrt{-\mu_4} e^{\pm it_3}}{q_3} w_2^{n_3}
$$
which has the same form as \eqref{eqn:Z12/j12} if and only if $n_3 = 1$. By switching, if needed, the upper boundary strip, we can achieve $m_3 = m_4 = 1$. Then the conic quadrilateral $Q_3$ is circumscribed, and $Q_4$ is a deltoid. By equating the coefficients at $w_1^{-1}$ and $w_2$, we obtain necessary and sufficient conditions of flexibility, as described in Section \ref{sec:3bii}.

If $m_3 = -m_4$, $p_3 = \pm \frac1{p_4}$, then the set $W_u$ is described by equation of the form \eqref{eqn:WConDeltSkew}, thus $W_u = W_z$ cannot take place, and there is no flexible polyhedron in this case.

\subsubsection{The only involutive coupling is orthodiagonal without (anti)deltoids}
\label{sec:InvolWODelt}
By Lemma \ref{lem:CompatOrthodiag} we have
\begin{equation}
\label{eqn:EqnCase4}
W_z = \{(w_1,w_2) \mid w_1 + \mu_1w_1^{-1} = \frac{\nu_1}{\nu_2}(w_2 + \mu_2w_2^{-1})
\end{equation}


If both $h_{23}$ and $h_{41}$ are two-fold, then the couplings $(Q_2,Q_3)$ and $(Q_4,Q_1)$ are involutive. Up to a rotation of the diagram, this situation was considered in Section \ref{sec:TwoInvol}. Thus assume without loss of generality that $h_{23}$ is an isomorphism. Then we obtain the map multiplicities as on Figure \ref{fig:Diag5}.

\begin{figure}[ht]
\centering
$$
\xymatrix@=1pc{
&&& \bullet &&&\\
& \bullet \ar@.[urr] \ar@2[ddl] &&&& \bullet \ar@.[ull] \ar@.[ddr] &\\
&&& \bullet \ar[ull] \ar@.[urr] &&&\\
\bullet && \bullet \ar@2[uul] \ar@2[ddl] & \bullet \ar@2[u] \ar[l] \ar@.[r]^{?} \ar[d] & \bullet \ar@2[uur] \ar@.[ddr] && \bullet\\
&&& \bullet \ar@2[dll] \ar@2[drr] &&&\\
& \bullet \ar@2[uul] \ar@2[drr] &&&& \bullet \ar@2[uur] \ar@2[dll]&\\
&&& \bullet &&&
}
$$
\caption{The diagram for Section \ref{sec:InvolWODelt}.}
\label{fig:Diag5}
\end{figure}

\smallskip

\noindent\emph{Case 1.} The question mark map on Figure \ref{fig:Diag5} is two-fold.

This yields the multiplicities on Figure \ref{fig:Diag6}, left. The quadrilateral $Q_3$ is either conic or elliptic, thus its configuration space $Z_3^0 = Z_3$ is an irreducible curve described by an equation of the form \eqref{eqn:AdjZ}.
Since $Q_4$ is an isogram, the equation of $W_u$ is obtained from that of $Z_3^0$ by a linear substitution $u = cw_2$ or $u = cw_2^{-1}$. Therefore it cannot have the form \eqref{eqn:EqnCase4}. Thus there are no flexible Kokotsakis polyhedra with this diagram.

\begin{figure}[ht]
\centering
$$
{\xymatrix@=1pc{
&&& \bullet &&&\\
& \bullet \ar@2[urr] \ar@2[ddl] &&&& \bullet \ar[ull] \ar[ddr] &\\
&&& \bullet \ar[ull] \ar@2[urr] &&&\\
\bullet && \bullet \ar@2[uul] \ar@2[ddl] & \bullet \ar@2[u] \ar[l] \ar@2[r] \ar[d] & \bullet \ar@2[uur] \ar[ddr] && \bullet\\
&&& \bullet \ar@2[dll] \ar@2[drr] &&&\\
& \bullet \ar@2[uul] \ar@2[drr] &&&& \bullet \ar@2[uur] \ar@2[dll]&\\
&&& \bullet &&&
}}
\qquad
{\xymatrix@=1pc{
&&& \bullet &&&\\
& \bullet \ar@.[urr] \ar@2[ddl] &&&& \bullet \ar@.[ull] \ar@2[ddr] &\\
&&& \bullet \ar[ull] \ar[urr] &&&\\
\bullet && \bullet \ar@2[uul] \ar@2[ddl] & \bullet \ar@2[u] \ar[l] \ar[r] \ar[d] & \bullet \ar@2[uur] \ar@2[ddr] && \bullet\\
&&& \bullet \ar@2[dll] \ar@2[drr] &&&\\
& \bullet \ar@2[uul] \ar@2[drr] &&&& \bullet \ar@2[uur] \ar@2[dll]&\\
&&& \bullet &&&
}}
$$
\caption{Cases 1 and 2 of Section \ref{sec:InvolWODelt}.}
\label{fig:Diag6}
\end{figure}

\smallskip

\noindent\emph{Case 2.} The question mark map on Figure \ref{fig:Diag5} is an isomorphism.

This yields the multiplicities on Figure \ref{fig:Diag6}, right, and makes a further case distinction necessary.

\smallskip

\noindent\emph{Case 2a).} The dotted lines on Figure \ref{fig:Diag6}, right, are simple.

Then $(Q_3,Q_4)$ is a lateral coupling of (anti)deltoids. By switching, if needed, the left and/or the right boundary strips we can transform them to deltoids. Then $W_u$ has equation \eqref{eqn:LatDelt} which contradicts \eqref{eqn:EqnCase4} (the former polynomial cannot be a factor of the latter). Thus there are no flexible Kokotsakis polyhedra with this diagram.

\smallskip

\noindent\emph{Case 2b).} The dotted lines on Figure \ref{fig:Diag6}, right, are double.

Then $(Q_3,Q_4)$ is a reducible non-involutive coupling of conic or elliptic quadrilaterals. Equations of the reducible components of the space $W_u$ are described in Lemma \ref{lem:WRed} and are all different from \eqref{eqn:EqnCase4}. Hence this diagram also doesn't produce any flexible Kokotsakis polyhedra.



\subsection{Flexible polyhedra without involutive couplings}
\label{sec:WOInvol}
By the argument in Section \ref{sec:InvNonInv}, if a deformation contains no involutive couplings, then the diagram on Figure \ref{fig:BigDiagram} collapses to the diagram on Figure \ref{fig:SDiag1}. The further classification is based on the multiplicities of the maps $h_i$.

If $h_i$ is two-fold and $Z_i$ is conic or elliptic, then both couplings $(Q_{i-1}, Q_i)$ and $(Q_i, Q_{i+1})$ are irreducible. This case is analyzed in Section \ref{sec:IrredWOInvol}. Here we find the only flexible polyhedron whose vertices have elliptic configuration spaces coupled non-involutively and non-reducibly (which means that all resultant polynomials $R_{ij}$ are irreducible).

The cases when $h_i$ is two-fold, and the polyhedron $Q_i$ is either (anti)deltoid or (anti)isogram, are dealt with in Sections \ref{sec:HTwoDelt} and \ref{sec:HTwoIso}. In Section \ref{sec:AllIsom} we study the case when all $h_i$ are isomorphisms.

\subsubsection{Irreducible coupling with a conic or elliptic quadrilateral}
\label{sec:IrredWOInvol}



\begin{lem}
\label{lem:DoubleEll}
Assume that the quadrilateral $Q_1$ is conic or elliptic, and that $\widetilde{f_2}$ and $\widetilde{g_4}$ are equivalent two-fold branched covers. Then $Q_1$, $Q_2$, and $Q_4$ are all elliptic.

Besides, the corresponding phase shift $t_1$ satisfies either $\Im t_1 = \frac{iK'_1}{3}$ or $\Im t_1 = \frac{iK'_1}2$ and the branch set $C_1$ of $\widetilde{f_2}$ and $\widetilde{g_4}$ has one of the forms depicted on Figures \ref{fig:C1sn} and \ref{fig:C1cn}.
\end{lem}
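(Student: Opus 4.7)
The assumption that $\widetilde{f_2}$ and $\widetilde{g_4}$ are equivalent two-fold branched covers of $Z_1^0$ means they share the same branch locus $C_1 \subset Z_1^0$. The fiber product description of Lemma \ref{lem:FibProd} identifies
\[
C_1 = f_1^{-1}(A_2) = g_1^{-1}(B_4),
\]
where $A_2 \subset \CP^1$ is the branch set of $f_2$ and $B_4 \subset \CP^1$ is the branch set of $g_4$. The first equality makes $C_1$ automatically $i_1$-invariant, the second makes it $j_1$-invariant, so $C_1$ is preserved by $G := \langle i_1, j_1 \rangle$ and in particular by the translation $T := i_1\circ j_1$ which acts as $t\mapsto t+2t_1$ in the uniformizations of Theorems \ref{thm:ParamCon} and \ref{thm:ParamSnCn}.

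First I would rule out $Q_1$ conic: in that case Theorem \ref{thm:ParamCon} yields $\Im t_1 > 0$, so $T$ has infinite order on $\C/2\pi\Z$, incompatible with finiteness of the nonempty set $C_1$. Hence $Q_1$ is elliptic and $2t_1$ is a nontrivial torsion point of $\C/\Lambda_1$ of some order $N$. Next I would rule out the possibility $|A_2| = |B_4| = 2$ (which would correspond to $Q_2, Q_4$ conic or appropriately coupled (anti)deltoids). Generically this case would give $|C_1| = 4$, and since the coordinate projections $(f_1, g_1)\colon Z_1 \hookrightarrow (\CP^1)^2$ are injective, the four points of $C_1$ would map bijectively onto the four corners of $\{\pm a\} \times \{\pm b\}$, with $a = p_2^{m_2}$, $b = q_4^{n_4}$. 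Substituting these four corners into the Euler--Chasles equation \eqref{eqn:AdjZ} of $Z_1$ and subtracting the equations at $(a,b)$ and $(-a,b)$ gives $4c_{11}ab = 0$; but $a,b \ne 0$ and $c_{11} = -\sin\alpha_1\sin\gamma_1 \ne 0$, contradiction. The residual degenerate situations in which $A_2$ or $B_4$ meets the branch set of $f_1$ or $g_1$ are eliminated by the same substitution argument applied to the correspondingly fewer corners. Therefore $|A_2| = |B_4| = 4$, i.e., $Q_2$ and $Q_4$ are elliptic and $|C_1| = 8$.

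Finally, with $|C_1| = 8$ splitting simultaneously into four $i_1$-pairs over $A_2$ and four $j_1$-pairs over $B_4$, the orbit structure under the dihedral group $G$ of order $2N$ leaves only $N = 2$ and $N = 3$ consistent with both pair partitions; these correspond via Theorem \ref{thm:ParamSnCn} to $\Im t_1 = K'_1/2$ and $\Im t_1 = K'_1/3$ respectively. Enumerating the $G$-orbits on the fundamental parallelogram of $\Lambda_1$ in each case produces the point configurations shown on Figures \ref{fig:C1sn} and \ref{fig:C1cn}. The main obstacle is this last combinatorial classification, which must be carried out separately for $Q_1$ of $\sn$-type and of $\cn$-type since the period lattices and the branch sets $A_1, B_1$ of the parametrizing maps differ between the two cases.
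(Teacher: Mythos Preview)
Your opening---identifying $C_1 = f_1^{-1}(A_2) = g_1^{-1}(B_4)$, its $\langle i_1, j_1\rangle$-invariance, and ruling out $Q_1$ conic via the infinite order of $T$---matches the paper. The Euler--Chasles substitution to exclude the generic $|A_2|=|B_4|=2$ sub-case is a neat shortcut not in the paper. But your dismissal of the degenerate sub-case $A_2\subset A_1$, $B_4\subset B_1$ by ``the same substitution argument'' fails: there $(f_1,g_1)(C_1)$ consists of two antipodal corners, say $(a,b)$ and $(-a,-b)$, and these satisfy the \emph{same} instance of \eqref{eqn:AdjZ}, yielding no contradiction. (That sub-case is in fact empty---both points would be simultaneously fixed by $i_1$ and $j_1$, forcing $T=\id$---but this is not the argument you give.)

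The main gap is the claim $|C_1|=8$. In the case $\Im t_1 = K'_1/3$ the set $C_1$ has exactly six points, two of which are branch points of $f_1$; equivalently $A_2$ necessarily meets $A_1$ in a pair $\{\pm p_1\}$ or $\{\pm p_1/k_1\}$ (this is visible in Figures~\ref{fig:C1sn} and \ref{fig:C1cn}). An orbit count premised on $|C_1|=8$ therefore cannot recover this case. The paper's route is different: it first enlarges your group by the symmetry $t\mapsto -t$ (equivalently the translation $t\mapsto t+2K_1$), which comes from $A_2=-A_2$, so that $C_1$ is a union of cosets of $\Lambda_n=2K_1\Z+2t_1\Z$; then $|C_1|\in\{2n,4n\}$ together with $|C_1|\le 8$ forces $n\le 4$, after which $n=4$ is eliminated by checking each half-period of $\Lambda_4$, and $|A_2|=|B_4|=4$ is read off for $n\in\{2,3\}$. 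Without that extra generator the order $N$ of $T$ depends on $\Re t_1$ (for instance $N=6$ when $\Im t_1=K'_1/3$ and $\Re t_1=K_1$), so ``only $N=2,3$ are consistent'' does not translate into the desired conclusion on $\Im t_1$.
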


\begin{figure}[ht]
$$
\xymatrix{
&  &&&& Z_4^0 \ar[ddr]^{g_4} &\\
&&&  &&&\\
&&  &  & Z_{41} \ar[uur]_{\widetilde{g_1}} \ar[ddr]^{\widetilde{g_4}} && \CP^1\\
&&& Z_{12} \ar[dll]_{\widetilde{f_1}} \ar[drr]^{\widetilde{f_2}} \ar @{<->} [ur]^{\cong} &&&\\
& Z_2^0 \ar[drr]_{f_2} &&&& Z_1 \ar[uur]_{g_1} \ar[dll]^{f_1}&\\
&&& \CP^1 &&&
}
$$
\caption{A half of the diagram from Figure \ref{fig:BigDiagram}, under the assumption $\widetilde{f_2} \simeq \widetilde{g_4}$.}
\label{fig:PartDiag}
\end{figure}

\begin{proof}
Since $\widetilde{f_2}$ is two-fold, the map $f_2$ is also two-fold. The same holds for the map $g_4$. The two-fold branched covers $\widetilde{f_2}$ and $\widetilde{g_4}$ are equivalent if and only if they have the same branch set. By Lemma \ref{lem:FibProd}, this means
\begin{equation}
\label{eqn:C1}
f_1^{-1}(A_2) = g_1^{-1}(B_4) =: C_1,
\end{equation}
where $A_2$ and $B_4$ are the branch sets of the maps $f_2$ and $g_4$, respectively.

As a full preimage under $f_1$, the set $C_1$ is invariant under the action of the deck transformation $i_1$ of the two-fold cover $f_1$. Similarly, $C_1$ is invariant under the deck transformation $j_1$ of $g_1$:
$$
i_1(C_1) = C_1 = j_1(C_1)
$$
Consider a parametrization of $Z_1$ obtained in Theorems \ref{thm:ParamCon} and \ref{thm:ParamSnCn}:
$$
Z_1 = \{(p_1 F(t), q_1 F(t+t_1))\},
$$
where the function $F$ is $\sin$, $\sn$, or $\cn$. By identifying $Z_1$ with its parameter domain, we have
$$
f_1(t) = p_1 F(t), \quad g_1(t) = q_1 F(t+t_1)
$$
Due to
$$
i_1 \circ j_1(t) = t + 2t_1,
$$
the set $C_1$ is invariant under the shift by $2t_1$. This implies that $F \ne \sin$, because the orbit of the shift by $2t_1$ is infinite in $\C/2\pi Z$ (recall that $\Im t_1 \ne 0$), while the set $C_1$ is finite.

It follows that $Q_1$ is elliptic, $F = \sn(\cdot, k_1)$ or $\cn(\cdot, k_1)$. Let
$$
\Lambda =
\begin{cases}
4K_1\Z + 2iK'_1\Z, &\text{if }F = \sn\\
4K_1\Z + (2K + 2iK'_1)\Z, &\text{if }F = \cn
\end{cases}
$$
be the period lattice of $F$. We have $Z_1 = \C/\Lambda$. On the torus $\C/\Lambda$, the orbit of the shift by $2t_1$ is finite if and only if $\Im t_1 = \frac{iK'_1}{n}$ for some $n \ge 2$ (recall that $\Im t_1 \in (0, iK'_1)$ and $\Re t_1$ is a multiple of $K_1$).

Equation \eqref{eqn:C1} implies that $C_1 \subset \C/\Lambda$ is invariant also under the involution
$$
t \mapsto -t, \quad \text{if }F = \sn, \qquad t \mapsto 2K_1 - t, \quad \text{if }F = \cn
$$
Indeed, the set $A_2$ is symmetric with respect to $0$, and we have $-\sn t = \sn(-t)$ and $-\cn t = \cn(2K_1 - t)$.
Combined with the invariance with respect to involutions $i_1$ and $j_1$ this implies that $C_1$ is invariant under the group generated by three point reflections
$$
t \mapsto -t, \quad t \mapsto 2K_1 - t, \quad t \mapsto 2t_1 - t
$$
Another set of generators for the same group consists of two shifts and one point reflection:
$$
t \mapsto t + 2K_1, \quad t \mapsto t + 2t_1, \quad t \mapsto -t
$$
Thus we have
\begin{equation}
\label{eqn:CL}
C_1 = \left((x + \Lambda_n) \cup (-x + \Lambda_n)\right)/\Lambda
\end{equation}
for some $x \in \C$, where
$$
\Lambda_n := 2K_1\Z + 2t_1\Z = 2K_1\Z + \frac{2iK'_1}{n}\Z
$$
Since $|\Lambda_n/\Lambda| = 2n$, it follows that
\begin{equation}
\label{eqn:CardC1}
|C_1| =
\begin{cases}
4n, &\text{if }x \notin \frac12 \Lambda_n\\
2n, &\text{if }x \in \frac12 \Lambda_n
\end{cases}
\end{equation}
From the classification of the configuration spaces we know that $|A_2| = 2$ (if $Q_2$ is an (anti)deltoid or conic quadrilateral) or $|A_2| = 4$ (if $Q_2$ is an elliptic quadrilateral). Since $f_1$ is a two-fold cover, equation \eqref{eqn:C1} implies that $|C_1| \le 8$. Thus, due to \eqref{eqn:CardC1} we have $n \le 4$.

Let us show that $n \ne 4$. If $n = 4$, then by \eqref{eqn:CardC1} $x$ is a half-period of $\Lambda_n$. Without loss of generality,
$$
x \in \{0, K_1, -t_1, -t_1+K_1\}
$$
Consider first the case $F = \sn$. If $x = 0$, then $0 \in C_1$, hence $0 = f_1(0) \in A_2$, which is impossible, see Lemma \ref{lem:BranchInvol}. If $x = K_1$, then we have $|A_2| > 4$ because $K_1$ is a branch point for the function $\sn(\cdot, k_1)$ and
$$
|C_1| = 2|A_2| \Leftrightarrow C_1 \text{ contains no branch points of }f_1
$$
Similarly, $x = -t_1$ would imply $0 = g_1(-t_1) \in B_4$, and $x = -t_1 + K_1$ would imply that $C_1$ contains a branch point of $g_1$. Summarizing, there is no half-period shift of $\Lambda_4$ whose image under $f_1$ and $g_1$ would consist of four points, all different from $0$. In the case $F = \cn$ the argument is similar. Thus $n \ne 4$.

Let us look at the case $n = 3$. Due to \eqref{eqn:CardC1} and $|C_1| \le 8$, the point $x$ must be a half-period of $\Lambda_3$:
$$
x \in \{0, K_1, \frac{iK_1'}3, K_1 + \frac{iK_1'}3\}
$$
If $F = \sn$, then $x = 0$ would lead to $0 \in A_2$, and $x = \frac{iK_1'}3$ would lead to $\infty \in A_2$. It follows that $C_1$ is one of the sets depicted on Figure \ref{fig:C1sn}, with $C_1 + t_1$ being the other one. The images of $C_1$ and $C_1 + t_1$ under the map $t \mapsto p_1 \sn(t,k_1)$, respectively $t \mapsto q_1 \sn(t,k_1)$, consist of four points each. Thus we have $|A_2| = |B_4| = 4$, and therefore both $Q_2$ and $Q_4$ are elliptic quadrilaterals.

\begin{figure}
\begin{picture}(0,0)%
\includegraphics{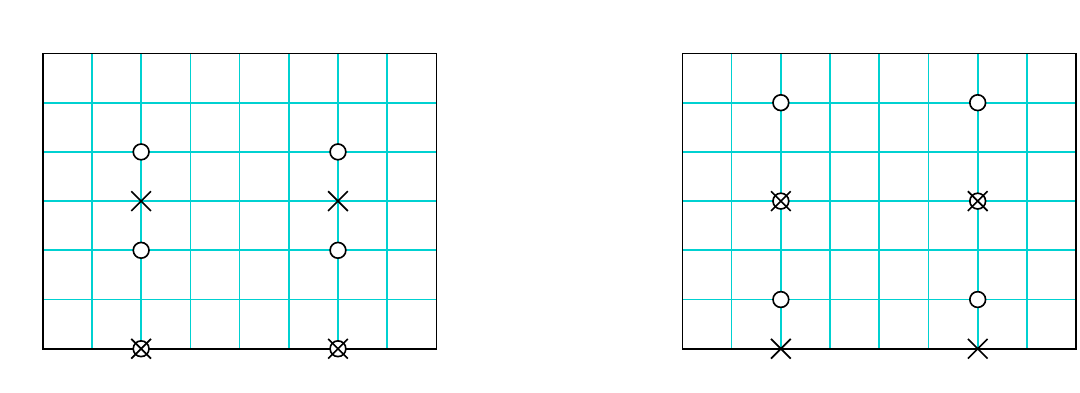}%
\end{picture}%
\setlength{\unitlength}{4144sp}%
\begingroup\makeatletter\ifx\SetFigFont\undefined%
\gdef\SetFigFont#1#2#3#4#5{%
  \reset@font\fontsize{#1}{#2pt}%
  \fontfamily{#3}\fontseries{#4}\fontshape{#5}%
  \selectfont}%
\fi\endgroup%
\begin{picture}(4935,1786)(-194,-710)
\put(2746,929){\makebox(0,0)[lb]{\smash{{\SetFigFont{10}{12.0}{\rmdefault}{\mddefault}{\updefault}{\color[rgb]{0,0,0}$2iK_1'$}%
}}}}
\put(2791,-646){\makebox(0,0)[lb]{\smash{{\SetFigFont{10}{12.0}{\rmdefault}{\mddefault}{\updefault}{\color[rgb]{0,0,0}$0$}%
}}}}
\put(4726,-646){\makebox(0,0)[lb]{\smash{{\SetFigFont{10}{12.0}{\rmdefault}{\mddefault}{\updefault}{\color[rgb]{0,0,0}$4K_1$}%
}}}}
\put(-179,929){\makebox(0,0)[lb]{\smash{{\SetFigFont{10}{12.0}{\rmdefault}{\mddefault}{\updefault}{\color[rgb]{0,0,0}$2iK_1'$}%
}}}}
\put(-134,-646){\makebox(0,0)[lb]{\smash{{\SetFigFont{10}{12.0}{\rmdefault}{\mddefault}{\updefault}{\color[rgb]{0,0,0}$0$}%
}}}}
\put(1801,-646){\makebox(0,0)[lb]{\smash{{\SetFigFont{10}{12.0}{\rmdefault}{\mddefault}{\updefault}{\color[rgb]{0,0,0}$4K_1$}%
}}}}
\end{picture}%
\caption{The sets $C_1$ and $C_1 + t_1$ (marked with $\circ$) for $\Im t_1 = \frac{iK_1'}3$ and $F = \sn$. The sets contains two branch points of $\sn$ (marked with $\times$) each.}
\label{fig:C1sn}
\end{figure}

For $F = \cn$ the situation is similar. The sets $C_1$ and $C_1 + t_1$ are shown on Figure \ref{fig:C1cn}.

\begin{figure}
\begin{picture}(0,0)%
\includegraphics{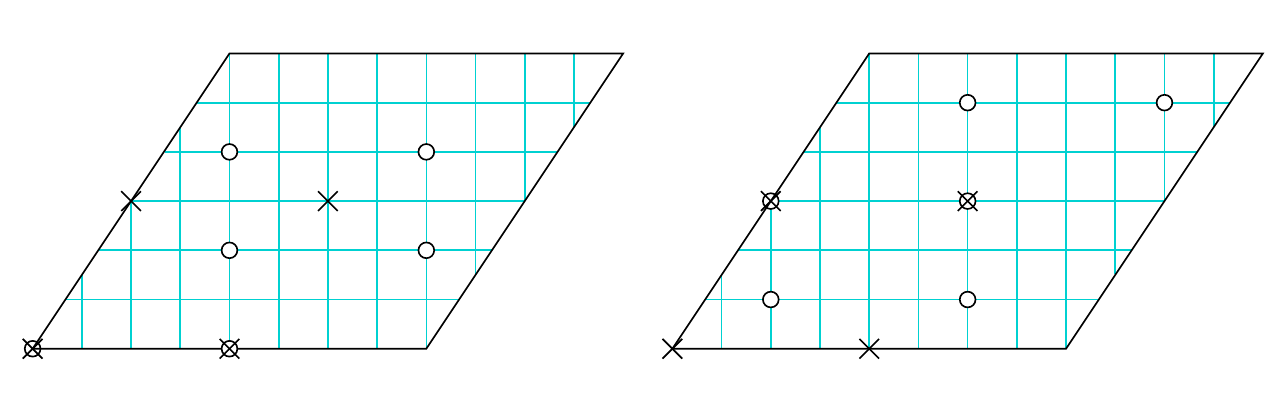}%
\end{picture}%
\setlength{\unitlength}{4144sp}%
\begingroup\makeatletter\ifx\SetFigFont\undefined%
\gdef\SetFigFont#1#2#3#4#5{%
  \reset@font\fontsize{#1}{#2pt}%
  \fontfamily{#3}\fontseries{#4}\fontshape{#5}%
  \selectfont}%
\fi\endgroup%
\begin{picture}(5787,1786)(-149,-710)
\put(2791,-646){\makebox(0,0)[lb]{\smash{{\SetFigFont{10}{12.0}{\rmdefault}{\mddefault}{\updefault}{\color[rgb]{0,0,0}$0$}%
}}}}
\put(4726,-646){\makebox(0,0)[lb]{\smash{{\SetFigFont{10}{12.0}{\rmdefault}{\mddefault}{\updefault}{\color[rgb]{0,0,0}$4K_1$}%
}}}}
\put(-134,-646){\makebox(0,0)[lb]{\smash{{\SetFigFont{10}{12.0}{\rmdefault}{\mddefault}{\updefault}{\color[rgb]{0,0,0}$0$}%
}}}}
\put(1801,-646){\makebox(0,0)[lb]{\smash{{\SetFigFont{10}{12.0}{\rmdefault}{\mddefault}{\updefault}{\color[rgb]{0,0,0}$4K_1$}%
}}}}
\put(316,929){\makebox(0,0)[lb]{\smash{{\SetFigFont{10}{12.0}{\rmdefault}{\mddefault}{\updefault}{\color[rgb]{0,0,0}$2K' + 2iK_1'$}%
}}}}
\put(3241,929){\makebox(0,0)[lb]{\smash{{\SetFigFont{10}{12.0}{\rmdefault}{\mddefault}{\updefault}{\color[rgb]{0,0,0}$2K' + 2iK_1'$}%
}}}}
\end{picture}%
\caption{The sets $C_1$ and $C_1 + t_1$ for $\Im t_1 = \frac{iK_1'}3$ and $F = \cn$. The sets contains two branch points of $\cn$ each.}
\label{fig:C1cn}
\end{figure}

Finally, consider the case $n = 2$. We have $\Im t_1 = \frac{K'_1}2$, therefore by Lemma \ref{lem:QuartPeriod} the space $Z_1$ is parametrized by $F_1 = \sn$. If $x \in \frac12 \Lambda_2$, then we have either $\Im x \equiv 0 (\mod K'_1)$ or $\Im (x + t_1) \equiv 0 (\mod K'_1)$. Without loss of generality, assume the former to be the case. Then we have either
$$
C_1 = x + \Lambda_2 = \{0, 2K_1, iK'_1, 2K_1 + iK'_1\},
$$
in which case $A_2 = \{0,\infty\}$, which is a contradiction, or
$$
C_1 = x + \Lambda_2 = \{K_1, 3K_1, K_1 + iK'_1, 3K_1 + iK'_1\},
$$
in which case $A_2 = A_1$ and the coupling $(Q_1,Q_2)$ is reducible, so that $\widetilde{f_2}$ is not two-fold. This contradiction shows that $x \notin \frac12 \Lambda_2$, so that $|C_1| = 8$ and hence $|A_2| = |B_4| = 4$, that is $Q_2$ and $Q_4$ are elliptic.

This finishes the proof of the Lemma.
\end{proof}


\begin{figure}[ht]
\centering
$$
\xymatrix@=1pc{
& Z_{23} \ar[dl] \ar[dr]^{\widetilde{g_3}} \ar @{<->} [rr]^{\cong} && Z_{12} \ar[dl]_{\widetilde{f_1}} \ar[dr]^{\widetilde{f_2}} \ar @{<->} [rr]^{\cong} && Z_{41} \ar[dl]_{\widetilde{g_4}} \ar[dr] &\\
Z_3 \ar[dr] && Z_2 \ar[dl]^{g_2} \ar[dr]_{f_2} && Z_1 \ar[dl]^{f_1} \ar[dr]_{g_1} && Z_4 \ar[dl]\\
& \CP^1 && \CP^1 && \CP^1 &\\
}
$$
\caption{The diagram for Lemma \ref{lem:TwoEquiv}.}
\label{fig:TwoEquiv}
\end{figure}

\begin{lem}
\label{lem:TwoEquiv}
Assume that the quadrilaterals $Q_1$ and $Q_2$ are elliptic, and that $\widetilde{f_2}$ and $\widetilde{g_4}$ are equivalent two-fold branched covers over $Z_1$, and $\widetilde{f_1}$ and $\widetilde{g_3}$ are equivalent two-fold branched covers over $Z_2$, see Figure \ref{fig:TwoEquiv}. Then the quadrilaterals $Q_1$ and $Q_2$ are orthodiagonal, and their involution factors at the common vertex (see Definition \ref{dfn:InvFactors}) are either equal or opposite: $\lambda_1 = \pm \lambda_2$.
\end{lem}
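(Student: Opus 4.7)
The plan is to combine Lemma~\ref{lem:DoubleEll}, applied separately to each of the two hypotheses, with an analysis of how the folding involutions $j_i$ descend to the shared $\CP^1$.

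First I would apply Lemma~\ref{lem:DoubleEll} to $\widetilde{f_2} \simeq \widetilde{g_4}$ (giving $\Im t_1 \in \{iK'_1/2,\, iK'_1/3\}$ and a branch set $C_1 = f_1^{-1}(A_2) = g_1^{-1}(B_4)$ of the explicit form $((x_1 + \Lambda_n) \cup (-x_1 + \Lambda_n))/\Lambda$) and symmetrically to $\widetilde{f_1} \simeq \widetilde{g_3}$. To rule out the case $n = 3$, I would compute $A_2 = f_1(C_1)$ in each of the sub-cases of Figures~\ref{fig:C1sn}--\ref{fig:C1cn}; for instance in the $\sn$-case with $x_1 = K_1 + iK'_1/3$, using $\sn(K_1 + u) = \cn u/\dn u$ and $\sn(u + 2K_1) = -\sn u$ one gets $A_2 = \{\pm p_1 s_1,\, \pm p_1/k_1\}$ with $s_1 = \cn(iK'_1/3, k_1)/\dn(iK'_1/3, k_1)$, and analogously $B_4 = \{\pm q_1,\, \pm q_1 s'_1\}$. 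These must coincide with the canonical branch sets $\{\pm p_2,\, \pm p_2/k_2\}$ and $\{\pm q_4,\, \pm q_4/k_4\}$ supplied by Lemma~\ref{lem:BranchInvol}; combined with the analogous matching from the $Z_2$-side, this produces an overdetermined system on the amplitudes and moduli that admits no solution. Hence $\Im t_i = iK'_i/2$ for both $i$, so by Lemma~\ref{lem:QuartPeriod} $F_i = \sn$ and $Q_1$, $Q_2$ are orthodiagonal.

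With orthodiagonality in hand, I would derive $\lambda_1 = \pm \lambda_2$ as follows. By Corollary~\ref{cor:ConfOrthInv} each $Z_i$ has equation $(z + \lambda_i z^{-1})(w_i + \mu_i w_i^{-1}) = \nu_i$, and by Lemma~\ref{lem:DescendInvol} the deck transformation $j_i$ descends to $\iota_i \colon z \mapsto \lambda_i/z$ on $\CP^1$. The branch set $A_2$ of $f_2$ is $\iota_2$-invariant, because $\iota_2$ preserves the discriminant condition $(z + \lambda_2/z)^2 = \nu_2^2/(4\mu_2)$ that cuts out $A_2$. On the other hand $A_2 = f_1(C_1)$ with $C_1 = g_1^{-1}(B_4)$ automatically $j_1$-invariant, so applying $f_1$ and descending gives $\iota_1$-invariance of $A_2$ as well. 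The composition $\iota_1 \circ \iota_2$ then acts as the homothety $z \mapsto (\lambda_1/\lambda_2)\,z$, which must permute $A_2 = \{\pm p_2,\, \pm p_2/k_2\}$; since $k_2 \in (0,1)$, the only scalings of $\CP^1$ preserving such a set are $z \mapsto \pm z$, and therefore $\lambda_1/\lambda_2 \in \{+1,-1\}$.

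The main obstacle is the case analysis in the orthodiagonality step: ruling out $\Im t_i = iK'_i/3$ requires working with explicit values of elliptic functions at third-period arguments, enforcing compatibility of all four branch sets $A_1$, $A_2$, $B_3$, $B_4$ simultaneously, and separately treating the mixed $\sn$/$\cn$ configurations arising from Figure~\ref{fig:C1cn}.
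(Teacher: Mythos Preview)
Your overall architecture matches the paper's: apply Lemma~\ref{lem:DoubleEll} symmetrically, eliminate the possibility $\Im t_i = K'_i/3$, conclude orthodiagonality via Lemma~\ref{lem:QuartPeriod}, and then extract the relation on involution factors. The difference lies in how the two nontrivial steps are executed.

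\medskip

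\textbf{Eliminating $\Im t_i = K'_i/3$.} This is where your plan is incomplete. You propose to write down all four branch sets and show the resulting system is overdetermined, but you have not isolated the mechanism that makes it inconsistent. The paper's argument is sharper and avoids heavy computation. First, it observes that each candidate $A_2$ in \eqref{eqn:A2sn}, \eqref{eqn:A2cn}, \eqref{eqn:A2snFor2} consists either of four real or four purely imaginary numbers; by Lemma~\ref{lem:BranchInvol} this forces $F_2 = \sn$, and by symmetry $F_1 = \sn$ as well, so the $\cn$ cases disappear at the outset rather than requiring separate treatment. Second, in the remaining $\sn$ case with $\Im t_1 = K'_1/3$, monotonicity of $\sn$ on the segment $[K_1, K_1 + iK'_1]$ gives the strict inequality $1 < \sn(K_1 + \tfrac{iK'_1}{3}, k_1) < 1/k_1$ (and similarly at $2iK'_1/3$). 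Matching \eqref{eqn:A2sn} against $\{\pm p_2, \pm p_2/k_2\}$ then forces either $p_1 = p_2$ or $p_1/k_1 = p_2/k_2$, and in both cases $k_1 < k_2$ strictly. Swapping the roles of $Q_1$ and $Q_2$ yields $k_2 < k_1$, a contradiction. The mixed case (one shift at a third-period, the other at a quarter-period) is then killed by combining this strict inequality with the product relation $p_1^2/k_1 = \pm p_2^2/k_2$ coming from \eqref{eqn:A2snFor2}. This monotonicity-plus-symmetry trick is the missing idea in your outline.

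\medskip

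\textbf{The relation $\lambda_1 = \pm\lambda_2$.} Here your route is genuinely different from the paper's and is arguably cleaner. The paper reaches $p_1^2/k_1 = \pm p_2^2/k_2$ by taking products in \eqref{eqn:A2snFor2} and \eqref{eqn:A2}, and then translates this into $\lambda_1 = \pm\lambda_2$ via the explicit formulas of Lemma~\ref{lem:InvFactAmpl}. Your argument instead uses only the structural fact from Lemma~\ref{lem:DescendInvol} that each $j_i$ descends to $\iota_i\colon z \mapsto \lambda_i/z$, checks that $A_2$ is invariant under both $\iota_1$ and $\iota_2$, and observes that the composition $z \mapsto (\lambda_1/\lambda_2)z$ can preserve $\{\pm p_2, \pm p_2/k_2\}$ only when the ratio is $\pm 1$. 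This bypasses Lemma~\ref{lem:InvFactAmpl} entirely and would work verbatim for any configuration where both $j_i$ descend.
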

\begin{proof}
By Lemma \ref{lem:DoubleEll}, the imaginary part of the phase shift $t_1$ of the configuration space $Z_1$ equals either $\frac{K_1'}3$ or $\frac{K_1'}2$. In the former case, equation \eqref{eqn:C1} and Figures \ref{fig:C1sn} and \ref{fig:C1cn} provide us with the following information about the branch set $A_2$.

If $\Im t_1 = \frac{K'_1}3$ and $F_1 = \sn(\cdot, k_1)$, then one of the following holds:
\begin{subequations}
\label{eqn:A2sn}
\begin{equation}
\label{eqn:A2sn1}
A_2 = \{\pm p_1, \pm p_1 \sn(K_1 + \frac{2iK'_1}3, k_1)\}
\end{equation}
\begin{equation}
\label{eqn:A2sn2}
A_2 = \{\pm \frac{p_1}{k_1}, \pm p_1 \sn(K_1 + \frac{iK'_1}{3}, k_1)\}
\end{equation}
\end{subequations}

If $\Im t_1 = \frac{K'_1}3$ and $F_1 = \cn(\cdot, k_1)$, then one of the following holds:
\begin{subequations}
\label{eqn:A2cn}
\begin{equation}
\label{eqn:A2cn1}
A_2 = \{\pm p_1, \pm p_1 \cn(\frac{2iK'_1}3, k_1)\}
\end{equation}
\begin{equation}
\label{eqn:A2cn2}
A_2 = \{\pm ip_1\frac{k'_1}{k_1}, \pm p_1 \cn(K_1 + \frac{iK'_1}{3}, k_1)\}
\end{equation}
\end{subequations}

If $\Im t_1 = \frac{K'_1}2$, then by Lemma \ref{lem:QuartPeriod} $F_1 = \sn(\cdot,k_1)$. Hence, equations \eqref{eqn:C1} and \eqref{eqn:CL} imply
\begin{equation}
\label{eqn:A2snFor2}
A_2 = \{\pm p_1 \sn(x,k_1), \pm \frac{p_1}{k_1 \sn(x,k_1)}\}
\end{equation}

On the other hand, by Lemma \ref{lem:BranchInvol} the set $A_2$ consists of four real or four purely imaginary points if $F_2 = \sn(\cdot,k_2)$, and from two real and two purely imaginary points if $F_2 = \cn(\cdot,k_2)$. Since
$$
\sn(K + \frac{2iK'}3),\, \sn(K + \frac{iK'}{3}),\, \cn(\frac{2iK'}3) \in \R, \quad \cn(K + \frac{iK'}{3}) \in i\R,
$$
each of the quadruples in \eqref{eqn:A2sn} and \eqref{eqn:A2cn} consists either of four real or four imaginary points. In \eqref{eqn:A2snFor2} also, we cannot have two real and two purely imaginary points, but only four of the same kind. It follows that $F_2 = \sn(\cdot,k_2)$.

By reversing the roles of $Q_1$ and $Q_2$ in the above argument we see that $F_1 = \sn(\cdot,k_1)$, so that the case \eqref{eqn:A2cn} falls out of consideration. Moreover, by Lemma \ref{lem:BranchInvol} we have
\begin{equation}
\label{eqn:A2}
A_2 = \{\pm p_2, \pm \frac{p_2}{k_2}\}
\end{equation}

%

\noindent\textit{Claim 1.} If $\Im t_1 = \frac{K'_1}3$, then $k_1 < k_2$ and either $p_1 = p_2$ or $\frac{p_1}{k_1} = \frac{p_2}{k_2}$.


Indeed, since $0 < k_1 < 1$, we have $|p_2| < \left|\frac{p_2}{k_2}\right|$. On the other hand,
due to the monotonicity of $\sn$ on the segment $[K_1, K_1 + iK'_1]$ we have
$$
1 < \sn(K_1 + \frac{2iK'_1}3, k_1) < \frac{1}{k_1}
$$
Thus, if \eqref{eqn:A2sn1} occurs, then the first entry in \eqref{eqn:A2} equals the first entry in \eqref{eqn:A2sn1}, and the second one equals the second one:
$$
p_2 = p_1, \quad \frac{1}{k_2} = \sn(K_1 + \frac{2iK'_1}3, k_1) < \frac{1}{k_1},
$$
which proves the claim in the situation of \eqref{eqn:A2sn1}. Similarly, if \eqref{eqn:A2sn2} occurs, then we have
$$
\frac{p_1}{k_1} = \frac{p_2}{k_2}, \quad p_1 \sn(K_1 + \frac{iK'_1}{3}, k_1) = p_2,
$$
which also implies $k_1 < k_2$.

\smallskip

\noindent\textit{Claim 2.} If $\Im t_1 = \frac{K'_1}2$, then $\frac{p_1^2}{k_1} = \pm \frac{p_2^2}{k_2}$.

\smallskip

This follows from taking the products of entries in \eqref{eqn:A2snFor2} and \eqref{eqn:A2}.

\smallskip

By reversing the roles of $Q_1$ and $Q_2$ we obtain analogs of Claims 1 and 2 with $k_1$ and $k_2$ as well as $p_1$ and $p_2$ interchanged.

It follows that we cannot have $\Im t_1 = \frac{K'_1}3$ and $\Im t_2 = \frac{K'_2}3$, because by Claim 1 this would imply $k_1 < k_2$ and $k_2 < k_1$. We cannot have $\Im t_1 = \frac{K'_1}3$ and $\Im t_2 = \frac{K'_2}2$ (or vice versa), because then $\frac{p_1^2}{k_1} = \pm\frac{p_2^2}{k_2}$ together with $p_1 = p_2$ or $\frac{p_1}{k_1} = \frac{p_2}{k_2}$ implies $k_1 = k_2$, which contradicts $k_1 < k_2$.

Hence we have $\Im t_1 = \frac{K'_1}2$ and $\Im t_2 = \frac{K'_2}2$.
By Lemma \ref{lem:QuartPeriod} both quadrilaterals $Q_1$ and $Q_2$ are then orthodiagonal. Their involution factors are equal or opposite by Claim 2 and Lemma \ref{lem:InvFactAmpl}.
\end{proof}

\begin{lem}
Let $(Q_1, Q_2, Q_3, Q_4)$ represent a flexible Kokotsakis polyhedron without involutive couplings. Assume that $Q_1$ is elliptic or conic, and that the coupling $(Q_1,Q_2)$ is irreducible. Then the polyhedron belongs to the orthodiagonal antiinvolutive type described in Section \ref{sec:OrthoAnti}.
\end{lem}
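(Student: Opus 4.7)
The plan is to chase the branched-cover diagram of Figure \ref{fig:BigDiagram} in the non-involutive case and invoke, in turn, Lemmas \ref{lem:DoubleEll}, \ref{lem:TwoEquiv}, and \ref{lem:InvolLift}. Since the polyhedron has no involutive couplings, Lemma \ref{lem:InvCP} forces every central map $h_{ij}$ to be an isomorphism, so $Z_{all}^0$ maps onto each $Z_i^0$ by a single map $h_i$. Because $Q_1$ is elliptic or conic, $f_1$ is two-fold. The framing hypothesis of this subsection (see the opening of Section \ref{sec:WOInvol}) is that $h_1$ is two-fold, so $\widetilde{f_2}$ and $\widetilde{g_4}$ are both two-fold; Lemma \ref{lem:FibProd} then shows that $f_2$ and $f_4$ are also two-fold and that the branch set of $f_2$ differs from that of $f_1$. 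Under the collapse identifications $Z_{12}^0\cong Z_{all}^0\cong Z_{41}^0$, the maps $\widetilde{f_2}$ and $\widetilde{g_4}$ become literally the same map $h_1$, hence equivalent two-fold covers with a common branch set.

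This is exactly the hypothesis of Lemma \ref{lem:DoubleEll}, whose conclusion is that $Q_1$, $Q_2$, and $Q_4$ are all elliptic. To close the cycle, the degree identity $\deg f_1\cdot\deg h_1=\deg f_2\cdot\deg h_2$ gives $\deg h_2=2$, and repeating the previous step at the vertex $A_2$ (now with the elliptic $Q_2$ playing the role of $Q_1$) shows that $Q_3$ is elliptic as well. With all four quadrilaterals elliptic, the full hypothesis of Lemma \ref{lem:TwoEquiv} is met at every interior vertex of the base, so every $Q_i$ is orthodiagonal and at each shared vertex the two involution factors agree up to sign. The plus sign would, by Lemma \ref{lem:InvolLift} part \ref{it:OrthoCompat}, render the corresponding coupling involutive, contradicting the standing hypothesis; hence the minus sign prevails at each vertex, giving
\[
\lambda_1=-\lambda_2,\qquad \mu_2=-\mu_3,\qquad \lambda_3=-\lambda_4,\qquad \mu_1=-\mu_4.
\]

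For the three $\nu$-identities of Section \ref{sec:OrthoAnti} I would use the canonical form of Corollary \ref{cor:ConfOrthInv}: each $Z_i^0$ is cut out by $(z+\lambda_iz^{-1})(w_i+\mu_iw_i^{-1})=\nu_i$ (with $u$ in place of $z$ for $i=3,4$). After substituting the sign-reversal relations and eliminating $z$ between the equations for $i=1,2$, and separately $u$ between the equations for $i=3,4$, one obtains two biquadratic relations in $(w_1,w_2)$ that must coincide as defining equations of the common image $\pi_{12}(Z_{all}^0)=\pi_{34}(Z_{all}^0)$ in the $(w_1,w_2)$-plane. Matching coefficients then delivers
\[
\frac{\nu_1^2}{\lambda_1\mu_1}=\frac{\nu_3^2}{\lambda_3\mu_3},\qquad \frac{\nu_2^2}{\lambda_2\mu_2}=\frac{\nu_4^2}{\lambda_4\mu_4},\qquad \frac{\nu_1^2}{\lambda_1\mu_1}+\frac{\nu_2^2}{\lambda_2\mu_2}=1,
\]
completing the orthodiagonal-antiinvolutive description.

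The main obstacle I expect is the coefficient bookkeeping in this last step, in particular when some $\delta_i=\pi/2$ and the involution factors must be read off by the alternative formulas of Definition \ref{dfn:InvFactors}; the auxiliary identities $p_i^2\zeta_i^2=4\mu_i$ and $q_i^2\xi_i^2=4\lambda_i$ recorded there should be what makes the match go through uniformly. By contrast, the conceptual crux --- ruling out the conic alternative for $Q_1$ and pinning down the phase shift to $\Im t_i=K'/2$ (i.e.\ orthodiagonality) --- has already been carried out inside the proofs of Lemmas \ref{lem:DoubleEll} and \ref{lem:TwoEquiv}.
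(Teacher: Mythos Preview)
Your proposal is correct and follows essentially the same route as the paper: collapse the diagram via Lemma~\ref{lem:InvCP}, apply Lemma~\ref{lem:DoubleEll} at $Q_1$ (and then at $Q_2$) to force all four quadrilaterals elliptic, invoke Lemma~\ref{lem:TwoEquiv} at each edge to obtain orthodiagonality with $\lambda_i=\pm\lambda_j$, exclude the plus sign via Lemma~\ref{lem:InvolLift}\,\ref{it:OrthoCompat}, and finish by eliminating $z$ and $u$ from the system of Corollary~\ref{cor:ConfOrthInv} --- exactly the paper's ``resultants or trigonometric substitutions'' step. One small slip: where you write ``$f_2$ and $f_4$ are also two-fold'' you mean $f_2$ and $g_4$ (it is $\widetilde{g_4}$, not $\widetilde{f_4}$, that lands on $Z_1^0$).
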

\begin{proof}
If $(Q_1,Q_2)$ is irreducible while $Q_1$ is conic or elliptic, then the map $\widetilde{f_2} \colon Z_{12} \to Z_1$ is two-fold. Since there are no involutive coupling components, the coverings $\widetilde{f_2}$ and $\widetilde{g_4}$ are equivalent, see the beginning of Section \ref{sec:WOInvol}. Thus we are in the situation of Lemma \ref{lem:DoubleEll}.

Lemma \ref{lem:DoubleEll} implies that $Q_1$, $Q_2$, and $Q_4$ are elliptic, and their configuration spaces are all doubly covered. By non-involutivity, the double covers over $Z_2$ and $Z_4$ are equivalent, which implies that $Q_3$ is also elliptic. It follows that we can apply Lemma \ref{lem:TwoEquiv} to any pair of adjacent quadrilaterals.

Thus all $Q_i$ are elliptic orthodiagonal quadrilaterals, and the involution factors at their common vertices are equal or opposite. If two involution factors are equal, then the corresponding coupling is involutive, which contradicts our assumption. Thus we have
$$
\lambda_1 = -\lambda_2, \quad \mu_2 = -\mu_3, \quad \lambda_3 = -\lambda_4, \quad \mu_4 = -\mu_1
$$

By Corollary \ref{cor:ConfOrthInv}, the configuration space of the polyhedron is the solution set of the system
$$
\begin{aligned}
(z + \lambda_1 z^{-1})(w_1 + \mu_1 w_1^{-1}) &= \nu_1\\
(z - \lambda_1 z^{-1})(w_2 - \mu_3 w_2^{-1}) &= \nu_2\\
(u + \lambda_3 u^{-1})(w_2 + \mu_3 w_2^{-1}) &= \nu_3\\
(u - \lambda_3 u^{-1})(w_1 - \mu_1 w_1^{-1}) &= \nu_4
\end{aligned}
$$
By computing the resultants or by making trigonometric substitutions, we obtain the necessary and sufficient conditions for this system to have a one-parameter set of solutions:
$$
\frac{\nu_1^2}{\lambda_1\mu_1} = \frac{\nu_3^2}{\lambda_3\mu_3}, \quad \frac{\nu_2^2}{\lambda_1\mu_3} = \frac{\nu_4^2}{\lambda_3\mu_1}, \quad \frac{\nu_1^2}{\lambda_1\mu_1} + \frac{\nu_2^2}{\lambda_1\mu_3} = 1
$$
Thus the polyhedron has the form described in Section \ref{sec:OrthoAnti}.
\end{proof}

\begin{figure}[ht]
\centering
$$
\xymatrix@=1pc{
&& \CP^1 &&\\
& Z_3^0 \ar[ur]^{f_3} \ar[dl]_{g_3} && Z_4^0 \ar[ul]_{f_4} \ar[dr]^{g_4} &\\
\CP^1 && Z_{all}^0 \ar[ul]_{h_3} \ar[dl]^{h_2} \ar[ur]^{h_4} \ar[dr]_{h_1} && \CP^1\\
& Z_2^0 \ar[ul]^{g_2} \ar[dr]_{f_2} && Z_1^0 \ar[ur]_{g_1} \ar[dl]^{f_1} &\\
&& \CP^1 &&
}
$$
\caption{A diagram of branched covers associated with a deformation without involutive couplings.}
\label{fig:SDiag1}
\end{figure}

\subsubsection{A double cover over an (anti)deltoid}
\label{sec:HTwoDelt}
Without loss of generality, the map $h_2$ is two-fold, and the quadrilateral $Q_2$ is an (anti)deltoid. Besides, we may assume that $Q_2$ is coupled to $Q_1$ frontally, so that the map $f_2$ is two-fold and the map $g_2$ an isomorphism. Completing fiber product squares according to Figure \ref{fig:PullMult} and using the assumption that no conic or elliptic is doubly covered, we determine multiplicities of all but two of the other maps, see Figure \ref{fig:SDiag2}, left and middle.

\begin{figure}[ht]
\centering
$$
{\xymatrix@=1pc{
&& \bullet &&\\
& \bullet \ar@.[ur] \ar@.[dl] && \bullet \ar@.[ul] \ar@.[dr] &\\
\bullet && \bullet \ar@.[ul] \ar@2[dl] \ar@.[ur] \ar@.[dr] && \bullet\\
& \bullet \ar[ul] \ar@2[dr] && \bullet \ar@.[ur] \ar@.[dl] &\\
&& \bullet &&
}}
\quad
{\xymatrix@=1pc{
&& \bullet &&\\
& \bullet \ar@.[ur] \ar@2[dl] && \bullet \ar@.[ul] \ar@2[dr] &\\
\bullet && \bullet \ar[ul] \ar@2[dl] \ar[ur] \ar@2[dr] && \bullet\\
& \bullet \ar[ul] \ar@2[dr] && \bullet \ar[ur] \ar@2[dl] &\\
&& \bullet &&
}}
\quad
{\xymatrix@=1pc{
&& \bullet &&\\
& \bullet \ar@2[ur] \ar@2[dl] && \bullet \ar@2[ul] \ar@2[dr] &\\
\bullet && \bullet \ar[ul] \ar@2[dl] \ar[ur] \ar@2[dr] && \bullet\\
& \bullet \ar[ul] \ar@2[dr] && \bullet \ar[ur] \ar@2[dl] &\\
&& \bullet &&
}}
$$
\caption{Solving the diagram for Section \ref{sec:HTwoDelt}.}
\label{fig:SDiag2}
\end{figure}

Since the maps $h_1$ and $h_2$ are two-fold, the coupling $(Q_1,Q_2)$ is irreducible, that is the branch sets of $f_1$ and $f_2$ are different: $A_1 \ne A_2$. Then the branch set $f_2^{-1}(A_1)$ of $h_2$ consists of four points. Since the covering $g_3$ is equivalent to $h_2$, it branches also over four points. It follows that the quadrilateral $Q_3$ is elliptic. Similarly, $Q_4$ is also elliptic, so that $f_3$ and $f_4$ are two-fold, see Figure \ref{fig:SDiag2}. Note that $(Q_3,Q_4)$ is a reducible coupling.

The spaces $Z_1^0$ and $Z_2^0$ have the equations
$$
w_1 + \mu_1 w_1^{-1} = \zeta_1 z^{m_1}, \quad w_2 + \mu_2 w_2^{-1} = \zeta_2 z^{m_2}
$$
If $m_1 = m_2$, then the space $W_z$ is the solution set of the equation
$$
w_1 + \mu_1 w_1^{-1} = \frac{\zeta_1}{\zeta_2}(w_2 + \mu_2 w_2^{-1}),
$$
which must be irreducible, because the coupling $(Q_1,Q_2)$ is irreducible by assumption. Thus we have $W_z \ne W^0_u$, because the latter is described by an irreducible equation from part \ref{it:WEll} of Lemma \ref{lem:WRed}. Thus we have $m_1 = -m_2$, that is one of $Q_1$, $Q_2$ is a deltoid, and the other an antideltoid.

For $m_1 = -m_2$ the equation of $W_z$ has the form
\begin{equation}
\label{eqn:Wz}
(w_1 + \mu_1 w_1^{-1})(w_2 + \mu_2 w_2^{-1}) = \zeta_1 \zeta_2
\end{equation}
At the same time, by Lemma \ref{lem:RedCoupEll} $W^0_u$ has the parametrization
$$
w_1 = q_1 F(t), \quad w_2 = q'_2 F(t + t_{12}),
$$
where $t_{12} = t_1 \pm t_2$ or $t_{12} = t_1 \pm it_2$, and $q'_2 = q_2$, respectively $\frac{p_1q_2}{p_2}$.
If this curve satisfies an equation of the form \eqref{eqn:Wz}, then the involution $(w_1, w_2) \mapsto (w'_1, w_2)$ descends to $\CP^1$, and thus $t_{12}$ is a quarter-period of $F$, by an argument from Section \ref{sec:Orthodiag}. If $F = \cn$, then $\mu_1$ and $\mu_2$ in \eqref{eqn:Wz} must be purely imaginary, which is not the case. Thus we have $F = \sn$ and $t_{12} = t_1 \pm t_2$. By changing the sign of $t_{12}$, if needed, we can assume
$$
t_{12} = \pm t_1 \pm t_2 \in K\Z + i\frac{K'}2
$$
It follows that $W^0_u$ has equation of the form $(z + \lambda z^{-1})(w + \mu w^{-1}) = \nu$ with coefficients given by Lemma \ref{lem:InvFactAmpl}, where $(q_1,q_2)$ must be substituted for $(p,q)$. By equating the coefficients to those in \eqref{eqn:Wz}, we obtain conditions described in Section \ref{sec:DeltAntideltEll}.

\subsubsection{A double cover over an (anti)isogram}
\label{sec:HTwoIso}
Without loss of generality, the map $h_2$ is two-fold, and the quadrilateral $Q_2$ is an (anti)isogram.

This yields the multiplicities on Figure \ref{fig:SDiag3}, and we make a case distinction according to the multiplicities of the two question mark maps.

\begin{figure}[ht]
\centering
$$
\xymatrix@=1pc{
&& \bullet &&\\
& \bullet \ar@.[ur]^{?} \ar@2[dl] && \bullet \ar@.[ul] \ar@.[dr] &\\
\bullet && \bullet \ar[ul] \ar@2[dl] \ar@.[ur] \ar[dr] && \bullet\\
& \bullet \ar[ul] \ar[dr] && \bullet \ar@.[ur]^{?} \ar@2[dl] &\\
&& \bullet &&
}
$$
\caption{The diagram in the Case 2.}
\label{fig:SDiag3}
\end{figure}

\smallskip

\noindent\emph{Case 1.} Both question mark maps are isomorphisms.

This leads to the diagram on Figure \ref{fig:SDiag35}, left. Thus $Q_1$ and $Q_3$ are antideltoids, and $Q_2$ and $Q_4$ are (anti)\-isograms. By switching, if needed, the lower and/or the left boundary strips, we can transform $Q_1$ and $Q_3$ to deltoids, so that their configuration spaces have equations
\begin{equation}
\label{eqn:LinDelt}
\begin{aligned}
w_1 + \mu_1 w_1^{-1} &= \zeta_1 z^{-1}\\
u + \lambda_3 u^{-1} &= \xi_3 w_2^{-1}
\end{aligned}
\end{equation}
Configuration spaces of (anti)isograms correspond to linear substitutions:
$$
u^{\pm 1} = \kappa_4w_1, \quad z^{\pm 1} = \kappa_2w_2
$$
In order for these substitutions to transform one of the equations \eqref{eqn:LinDelt} to the other, $Q_2$ must be an antiisogram: $z = \kappa_2 w_2$. But $Q_4$ may be either isogram or antiisogram (and its type can be changed by switching the upper boundary strip). This leads us to linearly conjugate antideltoids described in Section \ref{sec:LinDelt}.

\begin{figure}[ht]
\centering
$$
{\xymatrix@=1pc{
&& \bullet &&\\
& \bullet \ar[ur] \ar@2[dl] && \bullet \ar[ul] \ar[dr] &\\
\bullet && \bullet \ar[ul] \ar@2[dl] \ar[ur] \ar[dr] && \bullet\\
& \bullet \ar[ul] \ar[dr] && \bullet \ar[ur] \ar@2[dl] &\\
&& \bullet &&
}}
\qquad
{\xymatrix@=1pc{
&& \bullet &&\\
& \bullet \ar@2[ur] \ar@2[dl] && \bullet \ar@2[ul] \ar[dr] &\\
\bullet && \bullet \ar[ul] \ar@2[dl] \ar[ur] \ar[dr] && \bullet\\
& \bullet \ar[ul] \ar[dr] && \bullet \ar[ur] \ar@2[dl] &\\
&& \bullet &&
}}
$$
\caption{Cases 1 and 2 of Section \ref{sec:HTwoIso}.}
\label{fig:SDiag35}
\end{figure}

\smallskip

\noindent\emph{Case 2.} One question mark map is two-fold, the other an isomorphism.

Without loss of generality, we obtain the diagram on Figure \ref{fig:SDiag35}, right.

Then $(Q_3,Q_4)$ is a reducible coupling, and $Q_4$ is an (anti)deltoid. It follows that $Q_3$ is conic.

There are two types of reducible couplings between a conic quadrilateral and an (anti)deltoid, see Lemma \ref{lem:RedCoupDelt}. For the second type, equation of an irreducible component $Z_{34}^0$ has the form \eqref{eqn:WConDeltSkew}, which differs from the equation of $Z_{12}^0$ obtained by a linear substitution in the equation of an (anti)deltoid. Thus $Q_3$ and $Q_4$ satisfy the conditions described in part \ref{it:ConDeltPlain} of Lemma \ref{lem:RedCoupDelt}: $m_3 = m_4$, $p_3 = p_4$.

By Lemma \ref{lem:WRed}, the two irreducible components of $Z_{34}$ are described by equations
$$
w_1 + \mu_4 e^{\pm 2it_3} w_1^{-1} = \frac{2\epsilon_4 i\sqrt{-\mu_4} e^{\pm it_3}}{q_2} w_2^{n_3}
$$
At the same time, the non-trivial irreducible component of $Z_{12}$ is given by a linear substitution $z = cw_2^{\pm 1}$ in
$$
w_1 + \mu_1 w_1^{-1} = \zeta_1 z^{m_1}
$$
By switching, if needed, the lower and/or the upper boundary strips, we can transform $Q_1$ and $Q_4$ to antideltoids: $m_1 = -1$, $m_3 = m_4 = -1$. By switching the left boundary strip, we achieve $n_3 = -1$, so that $Q_3$ has perimeter $2\pi$. Then we must have $z = cw_2$, that is $Q_2$ is an antiisogram.

By performing the substitution, we obtain the conditions on the coefficients described in Section \ref{sec:2b}.

\smallskip

\noindent\emph{Case 3.} Both question mark maps are two-fold.

This leaves two possibilities shown on Figure \ref{fig:SDiag4}.

\smallskip

\noindent\emph{Case 3a).} Figure \ref{fig:SDiag4}, left.

Quadrilaterals $Q_1$ and $Q_3$ are conic or elliptic, $Q_2$ and $Q_4$ are (anti)\-iso\-grams. That is, irreducible components of $Z_2$ and $Z_4$ correspond linear substitutions
\begin{equation}
\label{eqn:LinSubst}
z^{\pm 1} = \kappa_2w_2, \quad u^{\pm 1} = \kappa_4w_1
\end{equation}
that should transform the equation of $Z_1$ into the equation of $Z_3$ (both $Z_1$ and $Z_3$ are irreducible curves). It follows that either both $Q_1$ and $Q_3$ are conic or both are elliptic.

If $Q_1$ and $Q_3$ are conic, then by switching boundary strips we can make them both to have perimeter $2\pi$. This means that the equations of $Z_1$ and $Z_3$ have the form \eqref{eqn:AdjZ} with $c_{00} = 0$ and all other coefficients different from $0$. Substitutions \eqref{eqn:LinSubst} preserve this property if and only if $\pm 1 = 1$ in both cases, that is if and only if $Q_2$ and $Q_4$ are antiisograms. The substitutions \eqref{eqn:LinSubst} must establish bijections between the branch sets of $f_1$ and $g_3$, and respectively those of $g_1$ and $f_3$. This implies that the following conditions are necessary:
$$
q_3 = \pm \kappa_2 p_1, \quad q_1 = \pm \kappa_4 p_3
$$
It can be seen that they are also sufficient, provided that the shifts $t_1$ and $t_3$ are accordingly related, see Section \ref{sec:LinConjCon}.

If $Q_1$ and $Q_3$ are elliptic, then we use switching to transform $Q_2$ and $Q_4$ into antiisograms, so that both exponents in \eqref{eqn:LinSubst} equal $1$. Then we have
$$
\begin{aligned}
W_z &= \{w_1 = q_1 F_1(t),\, w_2 = \frac{p_1}{\kappa_2} F_1(t+t_1)\}\\
W_u &= \{w_1 = \frac{p_3}{\kappa_4} F_3(t),\, w_2 = q_3 F_3(t+t_3)\}
\end{aligned}
$$
It follows that $F_1$ and $F_3$ is the same elliptic function with the same modulus. If $\kappa_2, \kappa_4 > 0$, then we have $W_u = W_z$ if and only if the amplitudes and the shifts in the two parametrizations coincide. (Recall that the amplitudes belong by definition to $\R_+ \cup i\R_+$.) If $\kappa_2 < 0$, then we have
$$
W_z = \{w_1 = q_1 F_1(t),\, w_2 = -\frac{p_1}{\kappa_2} F_1(t+ t_1 + 2K)\}
$$
Dealing with the other combinations of signs of $\kappa_2$ and $\kappa_4$ similarly, we obtain conditions described in Section \ref{sec:LinConjEll}.

\begin{figure}[ht]
\centering
$$
{\xymatrix@=1pc{
&& \bullet &&\\
& \bullet \ar@2[ur] \ar@2[dl] && \bullet \ar[ul] \ar[dr] &\\
\bullet && \bullet \ar[ul] \ar@2[dl] \ar@2[ur] \ar[dr] && \bullet\\
& \bullet \ar[ul] \ar[dr] && \bullet \ar@2[ur] \ar@2[dl] &\\
&& \bullet &&
}}
\qquad
{\xymatrix@=1pc{
&& \bullet &&\\
& \bullet \ar@2[ur] \ar@2[dl] && \bullet \ar@2[ul] \ar@2[dr] &\\
\bullet && \bullet \ar[ul] \ar@2[dl] \ar[ur] \ar[dr] && \bullet\\
& \bullet \ar[ul] \ar[dr] && \bullet \ar@2[ur] \ar@2[dl] &\\
&& \bullet &&
}}
$$
\caption{Subcases of Case 3, Section \ref{sec:HTwoIso}.}
\label{fig:SDiag4}
\end{figure}

\smallskip

\noindent\emph{Case 3b.} Figure \ref{fig:SDiag4}, right.

Quadrilaterals $Q_1$, $Q_3$, $Q_4$ are conic or elliptic, $Q_2$ is an (anti)isogram.
Both couplings $(Q_3,Q_4)$ and $(Q_4,Q_1)$ are reducible. Therefore either all of these quadrilaterals are conic or all elliptic.

Let us show that in the conic case each of the reducible couplings is of the first of the two types described in Lemma \ref{lem:RedCoupCon}. Assume the converse; then without loss of generality $(Q_3,Q_4)$ is of the second type. Then the space $W^0_u$ is the solution set of an irreducible equation of the form \eqref{eqn:WConSkew}. The equation of $W_z$ has a different form, since it is obtained by a linear substitution $z^{\pm 1} = \kappa_2 w_2$ in the equation of $Z_1$. Hence $W_z \ne W^0_u$. It follows that the couplings $(Q_3,Q_4)$ and $(Q_4,Q_1)$ satisfy $m_3 = m_4$, $n_4 = n_1$. By switching boundary strips, we can achieve
$$
m_i = n_i = -1, \quad \text{for }i = 3, 4, 1
$$
It is easy to see that $Q_2$ must be an antiisogram (cf. Case 1 of this Section).
Thus we have
$$
\begin{aligned}
W_z &= \{w_1^{-1} = q_1 \sin t,\, w_2^{-1} = \kappa_2p_1 \sin(t + t_1)\}\\
W^0_u &= \{w_1^{-1} = q_4 \sin t,\, w_2^{-1} = q_3 \sin(t + t_3 \pm t_4)\}
\end{aligned}
$$
If $\kappa_2 > 0$, then the amplitudes must be equal, and the shifts are either equal or opposite. If $\kappa_2 < 0$, then the shift $t_1$ must be changed by $\pi$. We obtain a polyhedron from Section \ref{sec:2ciiCon}.

If $Q_3$, $Q_4$, and $Q_1$ are elliptic, then the reducibility of the couplings $(Q_3, Q_4)$ and $(Q_4, Q_1)$ implies that either all three quadrilaterals are of $\sn$-type, have equal moduli and equal amplitudes at common vertices, or all three are of $\cn$-type with equal or conjugate moduli, see Section \ref{sec:RedEll}. Let us show that the moduli cannot be conjugate. Assume the converse. Then, using the parametrizations from Lemma \ref{lem:RedCoupEll}, one can show that some linear combination of $t_1$, $t_3$, $t_4$ with coefficients $\pm 1$, $\pm i$, involving at least one $\pm 1$ and at least one $\pm i$, must be a real half-period. But we cannot have, say,
$$
t_1 \pm it_3 \pm it_4 \in \{0, 2K\},
$$
since $\Im t_1 \in (0, K')$ and $\Im (it_3), \Im (it_4) \in \{0, K', 2K', 3K'\}$. In the case of one $\pm i$ coefficient, one looks at the real parts. Thus all moduli $k_i$, $i = 3,4,1$, are equal.

By switching, the quadrilateral $Q_2$ can be transformed to an antiisogram: $z = \kappa_2 w_2$. Then we have
$$
\begin{aligned}
W_z &= \{w_1 = q_1 F(t),\, w_2 = \frac{p_1}{\kappa_2} F(t + t_1)\}\\
W^0_u &= \{w_1 = q_4 F(t),\, w_2 = q_3 F(t + t_3 \pm t_4)\},
\end{aligned}
$$
where $F = \sn(\cdot,k)$ or $F = \cn(\cdot,k)$. We obtain a polyhedron from Section \ref{sec:2ciiEll}.

\subsubsection{All maps $h_i$ are isomorphisms}
\label{sec:AllIsom}
Then for every $\CP^1$ on Figure \ref{fig:SDiag1} the two covers over it have the same multiplicity. Up to rotation of the diagram, there are six possibilities shown on Figures \ref{fig:SDiag5} and \ref{fig:SDiag6}.

\begin{figure}[ht]
\centering
$$
{\xymatrix@=1pc{
&& \bullet &&\\
& \bullet \ar[ur] \ar[dl] && \bullet \ar[ul] \ar[dr] &\\
\bullet && \bullet \ar[ul] \ar[dl] \ar[ur] \ar[dr] && \bullet\\
& \bullet \ar[ul] \ar[dr] && \bullet \ar[ur] \ar[dl] &\\
&& \bullet &&
}}
\quad
{\xymatrix@=1pc{
&& \bullet &&\\
& \bullet \ar[ur] \ar[dl] && \bullet \ar[ul] \ar[dr] &\\
\bullet && \bullet \ar[ul] \ar[dl] \ar[ur] \ar[dr] && \bullet\\
& \bullet \ar[ul] \ar@2[dr] && \bullet \ar[ur] \ar@2[dl] &\\
&& \bullet &&
}}
\quad
{\xymatrix@=1pc{
&& \bullet &&\\
& \bullet \ar@2[ur] \ar[dl] && \bullet \ar@2[ul] \ar[dr] &\\
\bullet && \bullet \ar[ul] \ar[dl] \ar[ur] \ar[dr] && \bullet\\
& \bullet \ar[ul] \ar@2[dr] && \bullet \ar[ur] \ar@2[dl] &\\
&& \bullet &&
}}
$$
\caption{Cases 1, 2, 3 of Section \ref{sec:AllIsom}.}
\label{fig:SDiag5}
\end{figure}

\smallskip

\noindent\emph{Case 1.} There is no doubly covered $\CP^1$.

Then all $Q_i$ are (anti)isograms, and non-trivial irreducible components of their configuration spaces are described by the equations
$$
z^{\pm 1} = \kappa_1 w_1, \quad z^{\pm 1} = \kappa_2 w_2, \quad u^{\pm 1} = \kappa_3 w_2, \quad u^{\pm 1} = \kappa_4 w_1
$$
In order for this system to have a one-parameter set of solutions, the number of $+1$ among the exponents must be even. By switching a boundary strip, we change the exponents in two consecutive equations. Therefore we can transform all exponents to $+1$, and thus all quadrilaterals to antiisograms. As a result, we obtain a polyhedron described in Section~\ref{sec:Voss}.

\smallskip

\noindent\emph{Case 2.} One doubly covered $\CP^1$.

By switching, quadrilaterals $Q_3$ and $Q_4$ can be transformed to antiisograms. Then the space $W^0_u$ has an equation of the form $w_1 = cw_2$. The coupling $(Q_1, Q_2)$ is a reducible coupling of (anti)deltoids. There are two classes of such couplings, see parts 3 and 4 of Lemma \ref{lem:RedCoupDelt}. It is only in the case 3 that $W^0_z$ is described by a linear equation, that is if $Q_1$ and $Q_2$ are both deltoids or both antideltoids. Thus we obtain a compound of linear couplings from Sections \ref{sec:LinIso} and \ref{sec:LinFrontDelt}.

\smallskip

\noindent\emph{Case 3.} Two doubly covered, non-adjacent $\CP^1$.

If $Q_1$ and $Q_2$ are both deltoids or both antideltoids, then the space $W^0_z$ is described by a linear equation. Thus $Q_3$ and $Q_4$ must also be either both deltoids or both antideltoids, and we have a compound of two linear couplings from Section \ref{sec:LinFrontDelt}.

If $Q_1$ is a deltoid, and $Q_2$ an antideltoid (or vice versa, which is related by switching the lower boundary strip), then by the part \ref{it:WDeltAnti} of Lemma \ref{lem:WRed} the spaces $W^0_z$ and $W^0_u$ are described by equations
$$
\begin{aligned}
W^0_z = \{w_1 w_2 \mp \epsilon_2 \sqrt{-\mu_2} w_1 \pm \epsilon_1 \sqrt{-\mu_1} w_2 + \epsilon_1 \epsilon_2 \sqrt{-\mu_1} \sqrt{-\mu_2} = 0\}\\
W^0_u = \{w_1 w_2 \mp \epsilon_3 \sqrt{-\mu_3} w_1 \pm \epsilon_4 \sqrt{-\mu_4} w_2 + \epsilon_3 \epsilon_4 \sqrt{-\mu_3} \sqrt{-\mu_4} = 0\}
\end{aligned}
$$
It follows that we must have $\mu_2 = \mu_3$ and $\mu_1 = \mu_4$, that is the couplings $(Q_2, Q_3)$ and $(Q_4, Q_1)$ are involutive. This contradicts our assumption that the polyhedron contains no involutive couplings.

\begin{figure}[ht]
\centering
$$
{\xymatrix@=1pc{
&& \bullet &&\\
& \bullet \ar[ur] \ar@2[dl] && \bullet \ar[ul] \ar[dr] &\\
\bullet && \bullet \ar[ul] \ar[dl] \ar[ur] \ar[dr] && \bullet\\
& \bullet \ar@2[ul] \ar@2[dr] && \bullet \ar[ur] \ar@2[dl] &\\
&& \bullet &&
}}
\quad
{\xymatrix@=1pc{
&& \bullet &&\\
& \bullet \ar@2[ur] \ar@2[dl] && \bullet \ar@2[ul] \ar[dr] &\\
\bullet && \bullet \ar[ul] \ar[dl] \ar[ur] \ar[dr] && \bullet\\
& \bullet \ar@2[ul] \ar@2[dr] && \bullet \ar[ur] \ar@2[dl] &\\
&& \bullet &&
}}
\quad
{\xymatrix@=1pc{
&& \bullet &&\\
& \bullet \ar@2[ur] \ar@2[dl] && \bullet \ar@2[ul] \ar@2[dr] &\\
\bullet && \bullet \ar[ul] \ar[dl] \ar[ur] \ar[dr] && \bullet\\
& \bullet \ar@2[ul] \ar@2[dr] && \bullet \ar@2[ur] \ar@2[dl] &\\
&& \bullet &&
}}
$$
\caption{Cases 4, 5, 6 of Section \ref{sec:AllIsom}.}
\label{fig:SDiag6}
\end{figure}

\smallskip

\noindent\emph{Case 4.} Two doubly covered, adjacent $\CP^1$.

Similarly to the case 2 of Section \ref{sec:HTwoIso}, the reducible coupling between $Q_1$ and $Q_2$, as well as that between $Q_2$ and $Q_3$ must have the form described in part \ref{it:ConDeltPlain} of Lemma \ref{lem:RedCoupDelt}. By switching, we can achieve $m_1 = m_2 = -1$ and $n_2 = n_3 = -1$, so that $Q_2$ has perimeter $2\pi$, and $Q_1$ and $Q_3$ are antideltoids.

By Lemma \ref{lem:WRed} the space $W_z^0$ is described by one of the equations
\[
w_1 + \mu_1 e^{\pm 2it_2} w_1^{-1} = \epsilon_1 \frac{2i\sqrt{-\mu_1} e^{\pm it_2}}{q_2} w_2^{-1}
\]
The equation of $W_u$ is obtained by substituting in
\[
u + \lambda_3 u^{-1} = \xi_3 w_2^{-1}
\]
either $u = \kappa_4 w_1$, if $Q_4$ is an antiisogram, or $u^{-1} = \kappa_4 w_1$, if $Q_4$ is an isogram. The equation of $W_u$ has then the same form as equation of $W_z^0$. Note that the equality of the coefficients at $w_2^{-1}$ implies the equality of the coefficients at $w_1^{-1}$ due to $q_2 = q_3$. We obtain flexible polyhedra from Sections \ref{sec:IConIso} and \ref{sec:IConAntiiso}. 
%
%

\smallskip

\noindent\emph{Case 5.} Three doubly covered $\CP^1$.

Then each of $(Q_1,Q_2)$ and $(Q_4,Q_3)$ is a reducible coupling of an (anti)del\-toid with a conic quadrilateral. Lemma \ref{lem:WRed} implies that we have either $m_1 = m_2$ and $m_3 = m_4$ or $m_1 = -m_2$ and $m_3 = -m_4$, and furthermore $n_2 = n_3$ and $q_2 = q_3$. Thus by switching we can achieve that $Q_2$ and $Q_3$ have perimeter $2\pi$; then $Q_1$ and $Q_4$ are either both deltoids or both antideltoids.

If $Q_1$ and $Q_4$ are antideltoids, then we have
$$
\begin{aligned}
W_z^0 &= \{w_1 = \epsilon_1 \sqrt{-\mu_1} e^{it}, \quad w_2^{-1} = q_2 \sin(t \pm t_2)\}\\
W_u^0 &= \{w_1 = \epsilon_4 \sqrt{-\mu_4} e^{it'}, \quad w_2^{-1} = q_3 \sin(t' \pm t_3)\}
\end{aligned}
$$
The two parametrizations of $w_1$ differ by a shift: $t' = t + t_0$, where
\begin{equation}
\label{eqn:MuMu}
\epsilon_1 \sqrt{-\mu_1} = \epsilon_4 \sqrt{-\mu_4} e^{it_0}
\end{equation}
By substituting $t' = t + t_0$ in the second parametrization of $w_2$, we obtain $t_0 = \pm t_2 \pm t_3$. This leads to a polyhedron described in Section \ref{sec:3e}.

If $Q_1$ and $Q_4$ are deltoids, then we have
$$
\begin{aligned}
W_z^0 &= \{w_1 = \epsilon_1 \sqrt{-\mu_1} e^{it}, \quad w_2^{-1} = q_2 \frac{\cos t_2 + i \sin t_2 \cos t}{\sin t}\}\\
W_u^0 &= \{w_1 = \epsilon_4 \sqrt{-\mu_4} e^{it}, \quad w_2^{-1} = q_3 \frac{\cos t_3 + i \sin t_3 \cos t}{\sin t}\}
\end{aligned}
$$
Again, by looking at $w_1$ we obtain $t' = t+t_0$. But comparing the zeros of $w_2$ in both parametrizations we arrive at $t_0 \in \{0, \pi\}$. Then \eqref{eqn:MuMu} implies that $\mu_1 = \mu_4$, that is the coupling $(Q_1,Q_4)$ is involutive. This contradicts our assumption. (But we obtain a flexible polyhedron which is a compound of couplings from \ref{sec:LinLatDelt} and \ref{sec:LinCon}.)

\smallskip

\noindent\emph{Case 6.} All $\CP^1$ are doubly covered.

In this case every coupling must be reducible, hence either all quadrilaterals are elliptic or all of them are conic.

\smallskip

\noindent\emph{Case 6a.} All quadrilaterals are elliptic.

By Lemma \ref{lem:BranchInvol}, in the $\sn$ case the branch points are either all real or all imaginary, while in the $\cn$ case two of them are real, and two imaginary. Hence either all quadrilaterals are of $\sn$ type or all of them are of $\cn$ type.

In the $\sn$ case by Lemma \ref{lem:RedCoupEll} we have the following parametrizations of a component $W_z^0 = W_u^0$.
$$
\begin{aligned}
W_z^0 &= \{w_1 = q_1 \sn t, \quad w_2 = q_2 \sn(t + t_1 \pm t_2)\}\\
W_u^0 &= \{w_1 = q_4 \sn t', \quad w_2 = q_3 \sn(t' + t_4 \pm t_3)\}
\end{aligned}
$$
The parameters $t$ and $t'$ are either equal or related by $t' = 2K - t$. The second substitution leads to a reparametrization of $W_u^0$ with $w_2 = q_3 \sn(t - t_4 \mp t_3)$. It follows that the spaces $W_z$ and $W_u$ have a common irreducible component if and only if $t_1 \pm t_2 \pm t_3 \pm t_4$ is a period of $\sn$ for some of the eight possible choices of signs. Together with the reducibility conditions $p_1 = p_2$, $q_2 = q_3$, $p_3 = p_4$, $q_4 = q_1$ this leads to the description of the equimodular type of flexible polyhedra given in Section \ref{sec:EllCase}.

Let all quadrilaterals be of $\cn$ type. A reducible coupling of elliptic quadrilaterals of $\cn$ type can have one of two forms described in Lemma \ref{lem:RedCoupEll}. If all couplings are of the first type, then the situation is similar to that in the $\sn$ case, and the polyhedron is of equimodular type.
Otherwise let $k = k_1$ be the Jacobi modulus of $Q_1$. Each of the other moduli $k_i$ equals $k$ or $k' = \sqrt{1-k^2}$. From the parametrization \eqref{eqn:ParamEllSkew} the following necessary condition for flexibility can be deduced:
$$
t_1 + \epsilon_2 t_2 + \epsilon_3 t_3 + \epsilon_4 t_4 \in \{0, 2K_1\},
$$
where $\epsilon_i = \pm 1$ if $k_i = k$, and $\epsilon_i = \pm i$ if $k_i = k'$. By looking at the real or imaginary part of this linear combination, we conclude that exactly two of the moduli $k_i$ must be conjugate to $k$. Without loss of generality, there are two possibilities:
\begin{subequations}
\begin{equation}
\label{eqn:One}
k_1 = k_3 = k, \quad k_2 = k_4 = k'
\end{equation}
\begin{equation}
\label{eqn:Two}
k_1 = k_4 = k, \quad k_2 = k_3 = k'
\end{equation}
\end{subequations}
In both cases we have
$$
\frac{p_1}{p_2} = \pm i \frac{k_1}{k_2}, \quad \frac{p_3}{p_4} = \pm i \frac{k_3}{k_4}
$$
If \eqref{eqn:One} takes place, then we have the parametrizations
$$
\begin{aligned}
W_z^0 &= \{w_1 = q_1 \cn t, \quad w_2 = \frac{p_1q_2}{p_2} \cn(t + t_1 \pm it_2)\}\\
W_u^0 &= \{w_1 = \frac{p_3q_4}{p_4} \cn t', \quad w_2 = q_3 \cn(t' + t_3 \pm it_4)\}
\end{aligned}
$$
It follows that $t' = \pm t$ or $t' = \pm t + 2K$. In the former case we have $q_1 = \frac{p_3q_4}{p_4}$, in the latter $q_1 = -\frac{p_3q_4}{p_4}$. By comparing the two representations of $w_2(t)$, we arrive at the description given in Section \ref{sec:ConjMod1}.

If \eqref{eqn:Two} takes place, then we have $q_1 = q_4$ and $q_2 = q_3$, because the corresponding couplings are reducible and quadrilaterals have equal moduli. The parametrizations are
$$
\begin{aligned}
W_z^0 &= \{w_1 = q_1 \cn t, \quad w_2 = \frac{p_1q_2}{p_2} \cn(t + t_1 \pm it_2)\}\\
W_u^0 &= \{w_1 = q_4 \cn t', \quad w_2 = \frac{p_4q_3}{p_3} \cn(t' + t_4 \pm it_3)\}
\end{aligned}
$$
It follows that either $t' = t$ or $t' = -t$, so that we finally arrive at
$$
\frac{p_1}{p_2} \cn(t + t_1 \pm it_2) = \frac{p_4}{p_3} \cn(t \pm t_4 \pm it_3)
$$
Thus either $\frac{p_1}{p_2} = \frac{p_4}{p_3}$ and $t_1 \pm it_2 \pm it_3 \pm t_4 = 0$ or $\frac{p_1}{p_2} = -\frac{p_4}{p_3}$ and $t_1 \pm it_2 \pm it_3 \pm t_4 = 2K$. This type is described in Section \ref{sec:ConjMod2}.

\smallskip

\noindent\emph{Case 6b.} All quadrilaterals are conic.

If all couplings are of the type described in part \ref{it:ConPlain} of Lemma \ref{lem:RedCoupCon}, then we can achieve by switching that $m_i = -1$ and $n_i = -1$ for all $i$. We have the following parametrizations:
$$
\begin{aligned}
W_z^0 &= \{w_1 = q_1 \sin t, \quad w_2 = q_2 \sin(t + t_1 \pm t_2)\}\\
W_u^0 &= \{w_1 = q_4 \sin t', \quad w_2 = q_3 \sin(t' + t_4 \pm t_3)\}
\end{aligned}
$$
As $q_1 = q_4$, we have either $t' = t$ or $t' = \pi - t$. By equating the two expressions for $w_2$, we obtain $t_1 \pm t_2 \pm t_3 \pm t_4 = 0 (\mod\, 2\pi)$. Thus we have a polyhedron of conic equimodular type, see Section \ref{sec:ConEquimod}.

If the coupling $(Q_1,Q_2)$ is of the second kind, then the space $W_z^0$ is described by an equation of the form given in part \ref{it:WConSkew} of Lemma \ref{lem:WRed}. By comparing this with the equation in part \ref{it:WConPlain} of the same Lemma, we conclude that the coupling $(Q_3,Q_4)$ must also be of the second kind. We claim that then $(Q_2, Q_3)$ and $(Q_4, Q_1)$ must be of the first kind. Indeed, if all couplings are of the second kind, then we can achieve by switching $n_1 = n_2 = 1$ and $n_3 = n_4 = -1$. Then the spaces $W_z^0$ and $W_u^0$ are described by equations of the form
\begin{gather*}
a_{22}w_1^2w_2^2 + a_{20} w_1^2 + a_{02} w_2^2 + 2a_{11}w_1w_2 + a_{10} w_1 + a_{01} w_2 + a_{00} = 0\\
a'_{00}w_1^2w_2^2 + a'_{01} w_1^2w_2 + a'_{10} w_1w_2^2 + \cdots = 0
\end{gather*}
respectively, where $a_{01} \ne 0$. This contradicts the assumption $W_z^0 = W_u^0$.

Thus the only possibility, up to a cyclic shift of indices, is that $(Q_1,Q_2)$ and $(Q_3,Q_4)$ are of the second kind, while $(Q_2,Q_3)$ and $(Q_4,Q_1)$ are of the first kind. By switching, we can achieve $n_1 = n_4 = -1$ and $n_2 = n_3 = -1$, so that we have the parametrizations
$$
\begin{aligned}
W_z^0 &= \{w_1^{-1} = q_1 \sin t, \quad w_2 = q_2 \frac{\cos t_2 \pm i \sin t_2 \cos(t+t_1)}{\sin(t+t_1)}\}\\
W_u^0 &= \{w_1^{-1} = q_4 \sin t', \quad w_2 = q_3 \frac{\cos t_3 \pm i \sin t_2 \cos(t'+t_4)}{\sin(t'+t_4)}\}
\end{aligned}
$$
Since $q_1 = q_4$, we have either $t' = t$ or $t' = \pi - t$. In the first case, by looking at the zeros of $w_2$, we obtain $t_1 = t_4$, so that the coupling $(Q_4,Q_1)$ is involutive that contradicts the assumption of this Section.
The parameters cannot be related by $t' = \pi - t$, since this would lead to $t_1 = -t_4$, which contradicts $\Im t_i > 0$.

%
%
%

\def\cprime{$'$}

\end{document}